\newtheoremstyle{custom}
  {3pt}
  {3pt}
  {\slshape}
  {}
  {\bfseries}
  {.}
  { }
   {}
\theoremstyle{custom}
\newtheorem{theorem}{Theorem}[subsection]
\newtheorem{proposition}[theorem]{Proposition}
\newtheorem{proposition/definition}[theorem]{Proposition/Definition}
\newtheorem{lemma}[theorem]{Lemma}
\newtheorem{corollary}[theorem]{Corollary}
\theoremstyle{definition}
\newtheorem{definition}[theorem]{Definition}
\newtheorem{example}[theorem]{Example}
\theoremstyle{remark}
\newtheorem{remark}[theorem]{Remark}
\newtheoremstyle{exercise}
  {3pt}
  {6pt}
  {}
  {}
  {\bfseries}
  {:}
  { }
   {}
\theoremstyle{exercise}
\newtheorem{exercise}[theorem]{Exercise}
\newtheoremstyle{exercises}
  {3pt}
  {6pt}
  {}
  {}
  {\bfseries}
  {:}
  {\newline}
   {}
\theoremstyle{exercise}
\newtheorem{exercises}[theorem]{Exercises}
\def\intprod{\negthinspace
\mathbin{\raisebox{.4ex}{\hbox{\vrule height .5pt width 4pt depth 0pt %
          \vrule height 4pt width .5pt depth 0pt}}}}
\def\boxit#1{\vbox{\hrule height1pt\hbox{\vrule width1pt\kern3pt
  \vbox{\kern3pt#1\kern3pt}\kern3pt\vrule width1pt}\hrule height1pt}}
\def\overarrow{\vec}
\def\trank{\text{rank}}
\def\BC{\mathbb C}
\def\BP{\mathbb P}\def\BG{\mathbb G}
\def\pp#1{\mathbb P^{#1}}
\def\pp#1{{\mathbb P}^{#1}}
\def\tdim{{\rm dim}}
\def\hd{,...,}
\def\ww{\wedge}
\def\upperp{{}^\perp}
\def\cS{{\mathcal S}}
\def\cO{{\mathcal O}}
\def\11{\mathbf 1}
\def\fsl{{\mathfrak {sl}}}
\def\fe{{\mathfrak e}}
\def\fg{{\mathfrak g}}
\def\fp{{\mathfrak p}}
\def\l{\lambda}
\def\a{\alpha}
\def\o{\omega}
\def\O{\Omega}
\def\b{\beta}
\def\g{\gamma}
\def\s{\sigma}
\def\d{\delta}
\def\up#1{{}^{({#1})}}
\def\ot{{\mathord{ \otimes } }}
\def\op{{\mathord{\,\oplus }\,}}
\def\ra{{\mathord{\;\rightarrow\;}}}
\def\dim{{\rm dim}\;}
\def\La#1{\Lambda^{#1}}
\def\overarrow{\vec}
\def\frak{\mathfrak}
\def\fsl{\frak s\frak l}
\def\op{\oplus}
\def\op{\oplus}
\def\s{\sigma}
\def\a{\alpha}
\def\b{\beta}
\def\g{\gamma}
\def\l{\lambda}
\def\ol{\overline}
\def\BP{\mathbb  P}
\def\BC{\mathbb  C}
\def\pp#1{\mathbb  P^{#1}}
\def\tcodim{\text{codim}}
\def\fp{\mathfrak  p}
\def\fg{\mathfrak  g}
\def\hd{, \hdots ,}
\def\La#1{\Lambda^{#1}}
\def\pp#1{\mathbb  P^{#1}}
\def\ra{\rightarrow}
\def\tdeg{\operatorname{deg}}
\def\tdet{\operatorname{det}}\def\tpfaff{\operatorname{Pf}}
\def\tim{\operatorname{Image}}
\def\tdim{\operatorname{dim}}
\def\tker{\operatorname{ker}}
\def\tmin{\operatorname{min}}
\def\trank{\operatorname{rank}}
\def\up#1{{}^{({#1})}}
\def\upperp{{}^{\perp}}
\def\ww{\wedge}
\def\be{\begin{equation}}
\def\ene{\end{equation}}
\def\tzeros{{\rm Zeros}}
\def\trank{{\rm rank}}
 \def\rig#1{\smash{ \mathop{\longrightarrow}
    \limits^{#1}}}
\def\dow#1{\Big\downarrow
   \rlap{$\vcenter{\hbox{$\scriptstyle#1$}}$}}
\def\up#1{\Big\uparrow
   \rlap{$\vcenter{\hbox{$\scriptstyle#1$}}$}}
\def\O{{\mathcal O}}
\def\P#1{\BP^#1}
\newcommand{\qedd}{\hfill\framebox[2mm]{\ }\medskip}
\def\oar{\overarrow}
\def\dtwo{\lfloor \frac d2\rfloor}
 \def\rig#1{\smash{ \mathop{\longrightarrow}
    \limits^{#1}}}
\def\dow#1{\Big\downarrow
   \rlap{$\vcenter{\hbox{$\scriptstyle#1$}}$}}
\def\up#1{\Big\uparrow
   \rlap{$\vcenter{\hbox{$\scriptstyle#1$}}$}}
\def\O{{\mathcal O}}
\def\P#1{\BP^#1}
\def\oar{\overarrow}
\def\dtwo{\lfloor \frac d2\rfloor}
\def\La#1{\wedge^{#1}}
\begin{document}

\title{Equations for secant varieties of Veronese and other varieties}
 \author{J.M. Landsberg and Giorgio Ottaviani}
\begin{abstract} New classes of modules of equations for secant varieties of
Veronese varieties are defined  using representation theory and geometry. Some
old modules of equations (catalecticant minors) are revisited to determine when they are sufficient to give scheme-theoretic
defining equations. An algorithm to decompose a general ternary quintic as the sum of seven fifth powers
is given as an illustration of our methods.
Our new equations and results about them are put
into a larger context by introducing    vector bundle techniques for finding equations of
secant varieties in general. We include  a few homogeneous examples of this method. 
\end{abstract}
 \thanks{First author  supported by NSF grants  DMS-0805782 and 1006353. Second author is member of GNSAGA-INDAM.}
\email{jml@math.tamu.edu, ottavian@math.unifi.it}
\maketitle

\section{Introduction}

\subsection{Statement of problem and main results}
Let $S^d\BC^{n+1}=S^dV$ denote the space of homogeneous polynomials of degree $d$ in $n+1$ variables, equivalently the
space of symmetric $d$-way tensors over $\BC^{n+1}$. It is an important problem for complexity theory,
signal processing, algebraic statistics, and many other areas (see  e.g., 
\cite{BCS,MR2447451,MR2205865,MR2383305}) to find tests for
the {\it border rank} of a given tensor. Geometrically, in the symmetric case,  this amounts to finding set-theoretic defining
equations for the {\it secant varieties} of the {\it Veronese variety} $v_d(\BP V)\subset \BP V$, the
variety of rank one symmetric tensors.
  
For an algebraic variety $X\subset \BP W$, the $r$-th secant variety $\s_r(X)$ is defined
by
\begin{equation}
\s_r(X)     = \overline{ \bigcup_{x_1\hd x_r\in X}\BP \langle x_1\hd x_r\rangle }\subset \BP W
\end{equation}
where $\langle x_1\hd x_r\rangle\subset W$ denotes the linear span of the points $x_1\hd x_r$ and the overline
denotes Zariski closure.
When $X=v_d(\BP V)$, $\s_r(X)$ is the Zariski closure of the set of polynomials that
are the sum of $r$ $d$-th powers.

When $d=2$,   $S^2V$ may  be thought of as the space of $(n+1)\times (n+1)$ symmetric matrices
 via the inclusion $S^2V\subset V\otimes V$ and the equations for $\s_r(v_2(\BP V))$ are just the size $r+1$ minors (these equations
even generate the ideal). The first equations found  for secant varieties of higher Veronese varieties
were obtained by imitating this construction, 
  considering the inclusions $S^dV\subset S^aV\otimes S^{d-a}V$, where $1\leq a\leq \lfloor \frac d2\rfloor$: Given $\phi\in S^dV$,
one considers the corresponding linear map $\phi_{a,d-a}: S^aV^*\ra S^{d-a}V$ and 
  if $\phi\in \s_r(v_d(\BP V))$, then  $\trank (\phi_{a,d-a})\leq
r$, see \S\ref{flatsubsect}. Such equations are called {\it minors of symmetric flattenings} or {\it catalecticant minors},
and date back at least to  Sylvester  who coined the term \lq\lq catalecticant\rq\rq .
See \cite{catalecticant} for a history.

These equations are usually both too numerous and too few, that is,  there are redundancies
among them and even all of them usually will not give enough equations to
define $\s_r(v_d(\BP V))$ set-theoretically.

In this paper we
\begin{itemize}

\item Describe a large class of new sets of equations for $\s_r(v_d(\BP V))$, which we call
{\it Young Flattenings}, that generalize  the classical {\it Aronhold invariant}, see Proposition \ref{mosect}.

\item Show a certain  Young Flattening, $YF_{d,n}$, provides scheme-theoretic equations
for a large class of cases where usual flattenings fail,  see Theorem \ref{mainyoung}.  

\item 
Determine cases where  flattenings are sufficient to give   defining equations (more precisely, scheme-theoretic equations), 
Theorem \ref{bigflatthm}. Theorem \ref{bigflatthm}
is primarily a consequence of  work  of A. Iarrobino and V. Kanev \cite{MR1735271} and Diesel \cite{diesel}.

\item Put our results in a larger context by providing a uniform formulation of all known equations for secant varieties via vector
bundle methods. We use this  perspective to prove some of our results, including a key induction
Lemma \ref{zeta}. The  discussion of vector bundle methods is postponed to the latter part of the
paper to make the results on symmetric border rank more accessible to readers outside of algebraic geometry.

\end{itemize}

Here is a chart summarizing what is known about  equations of secant varieties
of Veronese varieties:

{\small
$$
\begin{array}{|l|c|c|c|}
\hline
{\bf case} & {\bf   equations} & {\bf cuts\  out}&{\bf reference} \\
\hline
\s_r(v_2(\pp n))& {\rm size}\ r+1 {\rm\ minors} &   ideal & \rm{classical}\\
\hline
\s_r(v_d(\pp 1))&  {\rm size}\ r+1    {\rm\ minors \ of\ any\ } \phi_{s,d-s} &   ideal& \textrm{ Gundelfinger,
\cite{MR1735271}} \\
\hline
\s_2(v_d(\pp n))&  \begin{matrix} {\rm size}\ 3  {\rm\ minors \ of\ any\ }\\ \phi_{1,d-1}{\rm\ and\ }\phi_{2,d-2}
\end{matrix} &   ideal&
\textrm{ \cite{kanev}} \\
\hline
\s_3(v_3(\pp n))& {\rm Aronhold\ } +{\rm \ size\ } 4  {\rm\ minors \ of\ } \phi_{1,2} &   ideal&\textrm{Prop. \ref{syminherit}} \\
&&&\textrm{Aronhold for\ }n=2\textrm{\cite{MR1735271}}\\
\hline
\s_3(v_d(\pp n)), d\ge 4&   {\rm size\ } 4 {\rm\ minors \ of }\ \phi_{2,2} {\rm\ and\ } \phi_{1,3} &   scheme&
\textrm{Thm.\ref{bigflatthm} (\ref{s3vdpnthm}}) \\
&&&\textrm{\cite{schreyer}}\ \textrm{  for\ }n=2,d=4\\
\hline
\s_4(v_d(\pp 2))&   {\rm size\ } 5  {\rm\ minors \ of\ } \phi_{a,d-a}, a=\dtwo &   scheme& \textrm{Thm. \ref{bigflatthm} 
(\ref{s45vdp2thm})} \\
&&&\textrm{\cite{schreyer}}\ \textrm{ for\ }d=4\\
\hline
\s_5(v_d(\pp 2)),d\ge 6{\rm\ and\ }d=4& {\rm size\ } 6  {\rm\ minors \ of\ } \phi_{a,d-a}, a=\dtwo  &   scheme& \textrm{Thm. 
\ref{bigflatthm} (\ref{s5vdp2thm})}\\
&&&\textrm{Clebsch for\ }d=4\textrm{\cite{MR1735271}}\\
\hline
\s_r(v_5(\pp 2)), r\le 5&  {\rm size\ } 2r+2   {\rm\ subPfaffians \ of }\phi_{31,31} &   irred. comp.& \textrm{Thm. \ref{s6v5p2}}\\
\hline
\s_6(v_5(\pp 2))&  {\rm size\ } 14   {\rm\ subPfaffians \ of }\phi_{31,31} &   scheme& \textrm{Thm. \ref{s6v5p2}}\\
\hline
\s_6(v_d(\pp 2)), d\ge 6&  {\rm size\ }7  {\rm\ minors \ of } \phi_{a,d-a}, a=\dtwo   &   scheme& \textrm{Thm. 
\ref{bigflatthm} (\ref{s6vdp2thm}})\\
\hline
\s_7(v_6(\pp 2))& \textrm{symm.flat. + Young flat.} &   irred.comp.
& \textrm{Thm. \ref{sextics}}\\
\hline
\s_8(v_6(\pp 2))&  \textrm{symm.flat. + Young flat.} &irred.comp.& \textrm{Thm. \ref{sextics}}\\
\hline
\s_9(v_6(\pp 2))&   \det\phi_{3,3} &   ideal&
\textrm{classical} \\
\hline
\s_j(v_7(\pp 2)), j\le 10& {\rm\ size\ }2j+2{\rm \ subPfaffians \ of }\phi_{41,41}  &   irred.comp.& \textrm{Thm. \ref{mainyoung}}\\
\hline
\s_j(v_{2\d}(\pp 2)),j\le{{\d+1}\choose 2}&\begin{matrix} {\rm rank }\phi_{a,d-a}=\min(j,{{a+2}\choose 2}),\\
 1\le a\le \d \end{matrix} & 
  scheme & \textrm{\cite{MR1735271}, Thm. 4.1A}\\
&\rm{open\ and\ closed\ conditions}&&\\
\hline
\s_j(v_{2\d+1}(\pp 2)),j\le{{\d+1}\choose 2}+1&\begin{matrix} {\rm rank}\phi_{a,d-a}=\min(j,{{a+2}\choose 2}),
\\ 1\le a\le \d \end{matrix} & 
  scheme &\textrm{ \cite{MR1735271}, Thm. 4.5A}\\
&\rm{open\ and\ closed\ conditions}&&\\
\hline
\s_j(v_{2\d}(\pp n)),j\le{{\d+n-1}\choose n}&{\rm size}\ j+1 {\rm\ minors\ of} \phi_{\d,\d}&   irred. comp. & \textrm{\cite{MR1735271}  Thm. 4.10A}\\
\hline
\s_j(v_{2\d+1}(\pp n)),j\le{{\d+n}\choose n}&\begin{matrix} {\rm size}\ {n\choose a}j+1 {\rm\ minors\ of\ } Y_{d,n},
\\  a=\lfloor n/2\rfloor\end{matrix} &   irred. comp. &
 \textrm{Thm. \ref{mainyoung}}\\
&\begin{matrix} \textrm{if\ }n=2a, a\textrm{\ odd}, {n\choose a}j+2\\
 {\rm\ subpfaff.\ of\ } Y_{d,n}\end{matrix} &&\\
\hline
\end{array}
$$
}

\subsection{Young Flattenings}\label{YFsect}
The simplest case of equations for secant varieties is for the space of rank at most $r$ matrices of size $p\times q$, which
is the zero set of the minors of size $r+1$. Geometrically let $A=\BC^p$, $B=\BC^q$ and let
$Seg(\BP A\times \BP B)\subset \BP (A\ot B)$ denote the Segre variety of rank one matrices. Then the ideal 
of $\s_r(Seg(\BP A\times \BP B))$ is generated by the space of minors of size $r+1$, which is $\La{r+1}A^*\ot \La{r+1}B^*$.
Now if $X\subset \BP W$ is a variety, and there is a linear injection  $W\ra A\ot B$ such that $X\subset \s_p(Seg(\BP A\times \BP B))$,
then the minors of size $pr+1$ furnish equations for $\s_r(X)$.
Flattenings are a special case of this method where $W=S^dV$, $A=S^aV$ and $B=S^{d-a}V$.

When a group $G$ acts linearly on $W$ and $X$ is invariant under the group action, then the equations of
$X$ and its secant varieties will be $G$-modules, and one looks for a $G$-module map $W\ra A\ot B$.
 Thus one looks for $G$-modules $A,B$ such that $W$ appears in the $G$-module decomposition of $A\ot B$.
We discuss this in detail for $X=v_d(\BP V)$ in \S\ref{yflatsect} and in general in \S\ref{grassexsect} and \S\ref{gpexamsect}. For now we focus on a special class of Young
flattenings that we describe in elementary language. We begin by reviewing the classical {\it Aronhold invariant}.

\begin{example}\label{aronex}[The Aronhold invariant] The classical
Aronhold invariant  is the equation for the hypersurface
$\s_3(v_3(\pp 2))\subset \pp 9$.   Map $S^3V\ra (V\ot \La 2 V)\ot (V\ot V^*)$, by
first embedding $S^3V\subset V\ot V\ot V$, then tensoring with $Id_V\in V\ot V^*$, and then skew-symmetrizing.
Thus, when  $n=2$,  $\phi\in S^3V$ gives rise to an element of $\BC^9\ot \BC^9$. In bases, if we write

\begin{align*}
\phi= &\phi_{000}x_0^3+\phi_{111}x_1^3+\phi_{222}x_2^3+3\phi_{001}x_0^2x_1+3\phi_{011}x_0x_1^2+3\phi_{002}x_0^2x_2\\
&+3\phi_{022}x_0x_2^2
+3\phi_{112}x_1^2x_2+3\phi_{122}x_1x_2^2+6\phi_{012}x_0x_1x_2,
\end{align*} 
the corresponding matrix is:

$$\left[\begin{array}{ccccccccc}
&&&\phi_{002}&\phi_{012}&\phi_{022}&-\phi_{010}&-\phi_{011}&-\phi_{012}\\
&&&\phi_{012}&\phi_{112}&\phi_{122}&-\phi_{011}&-\phi_{111}&-\phi_{112}\\
&&&\phi_{012}&\phi_{112}&\phi_{222}&-\phi_{012}&-\phi_{112}&-\phi_{122}\\
-\phi_{002}&-\phi_{012}&-\phi_{022}&&&&\phi_{000}&\phi_{001}&\phi_{002}\\
-\phi_{012}&-\phi_{112}&-\phi_{122}&&&&\phi_{001}&\phi_{011}&\phi_{012}\\
-\phi_{012}&-\phi_{112}&-\phi_{222}&&&&\phi_{001}&\phi_{011}&\phi_{022}\\
\phi_{010}&\phi_{011}&\phi_{012}&-\phi_{000}&-\phi_{001}&-\phi_{002}&&&\\
\phi_{011}&\phi_{111}&\phi_{112}&-\phi_{001}&-\phi_{011}&-\phi_{012}&&&\\
\phi_{012}&\phi_{112}&\phi_{122}&-\phi_{001}&-\phi_{011}&-\phi_{022}&&&\\
\end{array}\right].
$$

All the principal Pfaffians of size $8$ of the this matrix coincide, up to scale,
with the classical  {\it Aronhold invariant}. (Redundancy occurs here is because one
should really work with the submodule $S_{21}V\subset V\ot \La 2 V\simeq V\ot V^*$, where
the second identification uses a choice of volume form.  The Pfaffian of the map
$S_{21}\to S_{21}$  is the desired equation.)

 This construction, slightly different from the one in \cite{ottwaring},  shows  how the Aronhold invariant
is analogous to the invariant in    $S^9(\BC^3\ot \BC^3\ot\BC^3)$
that was  discovered   by Strassen \cite{Strassen505}, (see also \cite{ottrento},  and the paper \cite{MR0460330} 
by Barth, all in different settings.)
\end{example}

\medskip

Now consider the 
  inclusion $V\subset \La k V^*\ot \La{k+1}V$, given by  $v\in V$ maps to
the map $\o\mapsto v\ww \o$. In bases one obtains a matrix whose
entries are the coefficients of  $v$  or zero. In the special case $n+1=2a+1$ is odd
and $k=a$, one obtains a square matrix $K_n$, which is skew-symmetric for odd $a$ and
symmetric for even $a$.  For example, when
$n=2$, the matrix is
$$K_2=\begin{pmatrix}0&
      {x}_{2}&
      {-{x}_{1}}\\
      {-{x}_{2}}&
      0&
      {x}_{0}\\
      {x}_{1}&
      {-{x}_{0}}&
      0\\
      \end{pmatrix}
$$
 and, when $n=4$, the matrix is

$$K_4=\left[\begin{array}{rrrrrrrrrr}
&&&&&&&x_4&-x_3&x_2\\
&&&&&-x_4&x_3&&&-x_1\\
&&&&x_4&&-x_2&&x_1\\
&&&&-x_3&x_2&&-x_1\\
&&x_4&-x_3&&&&&&x_0\\
&-x_4&&x_2&&&&&-x_0\\
&x_3&-x_2&&&&&x_0\\
x_4&&&-x_1&&&x_0\\
-x_3&&x_1&&&-x_0\\
x_2&-x_1&&&x_0\\
\end{array}\right].$$

Finally,  consider the following generalization of both the Aronhold invariant and the $K_n$. 
Let  $a=\lfloor \frac{n}{2}\rfloor$ and  let $d=2\d+1$.
Map $S^dV\ra (S^{\d}V\ot \La aV^*) \ot (S^{\d }V\ot\La{a+1}V) $ by
first performing the inclusion $S^dV\ra S^{\d}V\ot S^{\d}V\ot V$
and then using the last factor to obtain a map 
$\La a V\ra \La{a+1}V$.
 We get:
\be\label{YFmap}
YF_{d,n}(\phi) : S^{\d}V^*\ot \La a V\ra S^{\d}V\ot \La{a+1}V.
\ene
If $n+1$ is odd, the matrix representing $YF_{d,n}(\phi)$
 is skew-symmetic, so we may take Pfaffians instead of minors.

For a decomposable $w^d\in S^dV$, the map is
$$
\a^{\d}\ot v_1\ww\cdots \ww v_a \mapsto
(\a(w))^{\d} w^{\d} \ot w\ww v_1\ww\cdots \ww v_a.
$$
In bases, one obtains a matrix in block form, where the
blocks correspond to  the entries of $K_n$ and the matrices in the blocks
are the square catalecticants $\pm (\frac{\partial\phi}{\partial x_i})_{\d,\d}$ in the place of $\pm x_i$.  

Let
$$
YF_{d,n}^r:=\{\phi\in S^dV\mid \trank(YF_{d,n}(\phi))\leq \binom n{\lfloor\frac n2\rfloor}r\}.
$$
Two interesting cases are $YF_{3,2}^3=\s_3(v_3(\pp 2))$ which defines the quartic Aronhold invariant
and $YF_{3,4}^7=\s_7(v_3(\pp 4))$ which defines the invariant of degree $15$ considered in \cite{ottwaring}.

\begin{remark} Just as with the Aronhold invariant above, there will be redundancies
among the minors and Pfaffians of $YF_{d,n}(\phi)$. See \S\ref{yflatsect} for a description without
redundancies.
\end{remark}

\begin{theorem}\label{mainyoung} 

Let $n\geq 2$, let $a=\lfloor \frac{n}{2}\rfloor$,  let $V=\BC^{n+1}$, and let $d=2\d+1$.

If $r\le {{\d+n}\choose{n}}$ then
$\s_r(v_d(\pp n))$ is an  irreducible component
of $YF_{d,n}^r$,    the variety given by the size ${{n}\choose a}r+1$ minors of $YF_{d,n}$.

In the case $n=2a$ with odd $a$,  $YF_{d,n}$ is skew-symmetric (for any $d$)
and one may instead  take the size ${{n}\choose a}r+2$ sub-pfaffians of $YF_{d,n}$.

In the case $n=2a$ with even $a$,   $YF_{d,n}$ is symmetric.
 \end{theorem}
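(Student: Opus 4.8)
The plan is to prove the inclusion $\s_r(v_d(\pp n))\subseteq YF_{d,n}^r$ by a rank count on decomposables, and then establish that $\s_r(v_d(\pp n))$ is an irreducible component of $YF_{d,n}^r$ by producing a single point of $\s_r$ at which the two varieties have the same (expected) dimension, i.e. a point where $YF_{d,n}^r$ is smooth of dimension $\dim \s_r(v_d(\pp n))$. First I would check the easy containment: for a decomposable $w^d$ the explicit formula $\a^{\d}\ot v_1\ww\cdots\ww v_a\mapsto (\a(w))^{\d}w^{\d}\ot w\ww v_1\ww\cdots\ww v_a$ shows $YF_{d,n}(w^d)$ has rank exactly $\binom{n}{a}$ (its image is $\langle w^{\d}\rangle\ot (w\ww\La a V)$, which has dimension $1\cdot\binom{n}{a}$ since $\dim(w\ww\La a V)=\binom{n}{a}$ as $w\neq 0$). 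By subadditivity of rank, $\phi=\sum_{i=1}^r w_i^d$ gives $\trank(YF_{d,n}(\phi))\le \binom{n}{a}r$, and since $YF_{d,n}$ is linear in $\phi$ and the rank condition is Zariski-closed, this passes to the closure $\s_r(v_d(\pp n))$. When $n=2a$ with $a$ odd the matrix is skew-symmetric, so a rank-$\le\binom{n}{a}r$ skew form is cut out by its $\binom{n}{a}r+2$ sub-Pfaffians; when $a$ is even it is symmetric (this last assertion is just a symmetry bookkeeping on the construction $\La a V\to\La{a+1}V$ composed with its transpose structure, and needs no separate proof beyond noting the sign).

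For the component statement, the strategy is the standard one: at a sufficiently general point $\phi_0=\sum_{i=1}^r w_i^d\in\s_r(v_d(\pp n))$, show that the Zariski tangent space $T_{\phi_0}YF_{d,n}^r$ has dimension $\le \dim\s_r(v_d(\pp n))$. Since $\s_r(v_d(\pp n))$ is irreducible and contained in $YF_{d,n}^r$, and a general point of it is a smooth point of $YF_{d,n}^r$ of the correct dimension, $\s_r$ must be an irreducible component. To compute the tangent space I would differentiate the rank condition: $T_{\phi_0}YF_{d,n}^r$ is contained in the set of $\psi\in S^dV$ such that the composite $\ker YF_{d,n}(\phi_0)^{T,\perp}\hookrightarrow \dots \xrightarrow{YF_{d,n}(\psi)}\dots\to \mathrm{coker}\,YF_{d,n}(\phi_0)$ vanishes, i.e. $YF_{d,n}(\psi)$ maps $\ker$ to $\mathrm{Im}$. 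Because $r\le\binom{\d+n}{n}=\dim S^{\d}V$, for general $w_i$ the catalecticants $(\frac{\partial\phi_0}{\partial x_i})_{\d,\d}$ behave as generically as possible, and one can identify $\ker YF_{d,n}(\phi_0)$ and $\mathrm{Im}\,YF_{d,n}(\phi_0)$ explicitly in terms of the $w_i$. The point is that the range $r\le\binom{\d+n}{n}$ is exactly what makes $YF_{d,n}(\phi_0)$ have the "expected" rank $\binom{n}{a}r$ and the expected kernel/cokernel, so that the linearized equations cut the tangent space down to precisely the span of $\{w_i^{d-1}\cdot V : i=1,\dots,r\}$, which is $T_{\phi_0}\s_r(v_d(\pp n))$ by Terracini's lemma.

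I expect the main obstacle to be the tangent space computation — specifically, showing that at a general $\phi_0$ the differentiated rank equations do not vanish beyond what they must, so that $\dim T_{\phi_0}YF_{d,n}^r$ does not exceed $\dim\s_r(v_d(\pp n))$. Rather than a direct hands-on computation, I anticipate it is cleaner to deduce this by induction on $r$, using the vector bundle induction Lemma \ref{zeta} advertised in the introduction: the Young flattening $YF_{d,n}$ arises (as explained in \S\ref{yflatsect}) from a vector bundle on $\pp n$ — essentially $\Omega^a(a)$ twisted into a Steiner-type resolution — and Lemma \ref{zeta} should let one pass from the statement for $r-1$ to the statement for $r$ by adding one general power $w_r^d$, controlling how the rank, kernel and cokernel change. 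The base case $r$ small (e.g. $r=1$, or the already-known Aronhold cases $YF_{3,2}^3$ and $YF_{3,4}^7$) anchors the induction. An alternative to the induction is to invoke Iarrobino–Kanev-type results (\cite{MR1735271}) on the generic rank of catalecticant maps to verify the genericity input directly; I would present whichever is shorter, but flag that the vector-bundle/Lemma \ref{zeta} route is the one that generalizes and is the intended proof.
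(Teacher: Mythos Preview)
Your containment argument and the overall tangent/conormal space framework are fine, and you correctly identify that the vector-bundle viewpoint together with Lemma~\ref{zeta} is the intended engine. However, there is a genuine gap: you have the direction of Lemma~\ref{zeta} backwards. Lemma~\ref{zeta} is a \emph{downward} induction --- if the surjectivity criterion of Theorem~\ref{irredcrit} holds for general $Z$ of length $k$ (and $X$ is not $k$-weakly defective), then it holds for general $Z'$ of length $k'\le k$. It does not let you pass from $r-1$ to $r$ by adjoining a power; it lets you pass from $r$ to $r-1$ by deleting one. Consequently the base case is not $r=1$ or the Aronhold cases, but the \emph{maximal} value $r=\binom{\d+n}{n}$, and this is exactly the case that requires a nontrivial argument.

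What is missing from your proposal is therefore the technical heart of the proof: establishing the conormal surjectivity
\[
H^0(I_Z\otimes\wedge^aQ(\d))\otimes H^0(I_Z\otimes\wedge^{n-a}Q(\d))\longrightarrow H^0(I_Z^2(2\d+1))
\]
for a general $Z$ of the top length $\binom{\d+n}{n}$ (here $E=\wedge^{n-a}Q(\d)$ is the bundle underlying $YF_{d,n}$). The paper does this by degeneration: one specializes $Z$ to the $\binom{\d+n}{n}$ vertices of a configuration of $\d+n$ general hyperplanes $h_1,\dots,h_{\d+n}$, proves a combinatorial lemma (Lemma~\ref{starodd}, equivalently Corollary~\ref{hypsingular}) giving an explicit monomial basis for $H^0(I_Z^2(2\d+1))$ in the $h_i$, and then exhibits explicit sections $q_Is_H$ of $\wedge^aQ(\d)$ whose wedge products hit each basis element. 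Neither the direct tangent-space computation you sketch nor an appeal to Iarrobino--Kanev supplies this step; the genericity input you hope for (that $YF_{d,n}(\phi_0)$ has the expected kernel and cokernel) is precisely what is hardest to verify at the top value of $r$, where there is no slack in the dimensions. Once the top case is in hand, Lemma~\ref{zeta} (together with the weak-defectivity classification) gives all smaller $r$ for free.
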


\medskip

The bounds given in   Theorem \ref{mainyoung} for $n=2$ are sharp (see Proposition \ref{boundodd}).

\subsection{Vector bundle methods}

As mentioned above, the main method  for finding equations of secant varieties for
$X\subset \BP V$ is to find a linear embedding
$V\subset A\ot B$, where $A,B$ are vector spaces,
such that $X\subset \s_q(Seg(\BP A\times \BP B))$, 
where $Seg(\BP A\times \BP B)$ denotes the Segre
variety of rank one elements. What follows is a
technique  to find such inclusions using vector bundles.

 Let $E$ be a vector bundle on $X$ of rank $e$, write $L=\cO_X(1)$, so $V=H^0(X,L)^*$.  Let $v\in V$ and  consider the linear map
\begin{align}\label{pairingmap}
A_{v}^E\colon H^0(E)&\to H^0(E^{*}\otimes L)^{*}
\end{align}
{induced by the natural map
$$A\colon H^0(E)\otimes H^0(L)^*\to H^0(E^*\otimes L)^*$$}
where $A^E_v(s)=A(s\ot v)$. 
In   examples $E$ will be chosen so that both
$H^0(E)$ and $H^0(E^{*}\otimes L)$ are nonzero, otherwise our construction is vacuous.
A key observation (Proposition \ref{construct}) is that 
{\it the size $(r e+1)$  minors of $A_v^E$   give equations for $\sigma_r(X)$}.

\subsection{Overview}
In \S\ref{backgroundsect} we establish notation and collect standard facts that we will need later.  In \S\ref{catsect}, we
first review work of Iarrobino and Kanev \cite{MR1735271} and S. Diesel \cite{diesel}, then show how their results
imply several new cases where $\s_r(v_d(\BP V))$ is cut out scheme-theoretically by flattenings (Theorem
\ref{bigflatthm}). 
 In \S\ref{yflatsect} we discuss Young Flattenings for Veronese varieties. The possible
inclusions $S^dV\ra S_{\pi}V\ot S_{\mu}V$ follow easily from the Pieri formula, however which of these
are useful is still not understood.     In \S\ref{surfacecasesect}
we make a detailed study of the $n=2$ case. 
The above-mentioned  Proposition \ref{construct} is proved in \S\ref{constsect}, where we also describe simplifications
when $(E,L)$ is a symmetric or skew-symmetric pair. We also give 
a sufficient criterion for $\s_r(X)$ to
be an irreducible component of the equations given by the 
$(r e+1)$  minors of $A_v^E$ (Theorem \ref{irredcrit}). In \S\ref{indthmsect} we prove a downward
induction lemma (Lemma \ref{zeta}). We prove Theorem \ref{mainyoung} in \S\ref{mainypfsect}, which includes
Corollary \ref{hypsingular} on linear systems of hypersurfaces, which may be of interest in its own right. To get explicit models for the maps $A^E_v$ it is sometimes
useful to factor $E$, as described in \S\ref{factoringsect}.
  In \S\ref{decompsect} we explain how to use equations to obtain decompositions
of polynomials into sums of powers, illustrating with    an algorithm to decompose a general ternary quintic as the sum of seven fifth powers. 
  We conclude, in \S\ref{grassexsect}-\ref{gpexamsect} with a few brief
examples of the construction for homogeneous varieties beyond Veronese varieties.

\subsection*{Acknowledgments}
We thank P. Aluffi, who pointed out the refined B\'ezout theorem \ref{refinedbezout}.
This paper grew out of questions raised at the
2008 AIM workshop {\it Geometry and representation theory of tensors
for computer science, statistics and other areas}, and the authors
thank AIM and the conference participants for inspiration.

\section{Background}\label{backgroundsect}

\subsection{Notation}
We work exclusively over the complex numbers.
$V,W$ will generally denote (finite dimensional) complex vector spaces.
{ The dual space of $V$ is denoted $V^*$. The projective space of lines through the origin of $V$ is denoted by $\BP V$ .} If $A\subset W$ is a subspace   $A^{\perp}\subset W^*$ is its annihilator,
the space  of   $f\in W^*$ such that $f(a)=0$ $\forall a\in A$.

For  a partition $\pi=(p_1\hd p_r)$ of $d$,   we
write $|\pi|=d$ and $\ell(\pi)=r$. 
If  $V$ is a vector space,  $S_{\pi}V$ denotes the  
irreducible 
$GL(V)$-module  determined by $\pi$  (assuming $\tdim V\geq\ell(\pi)$).
In particular
$S^dW=S_{(d)}W$ and $\wedge^aW=S_{1^a}W$ are respectively the $d$-th symmetric power and the $a$-th exterior power of $W$.
$S^dW=S_{(d)}W$ is also the space of homogeneous polynomials
of degree $d$ on $W^*$. Given $\phi\in S^dW$, $\tzeros(\phi)\subset \BP W^*$ denotes its zero set. 


For a subset $Z\subseteq \BP W$,   $\hat Z\subseteq W\backslash 0$ denotes the affine cone over $Z$.
For a projective variety  $X\subset \BP W$, $I(X)\subset Sym(W^*)$ denotes its ideal and
$I_X$ its ideal sheaf of   (regular) functions vanishing at $X$. 
For a smooth point $z\in X$, $\hat T_zX\subset V$ is the affine tangent space and
$\hat N^*_zX=(\hat T_zX)\upperp\subset V^*$ the affine conormal space.

We make the standard identification of a vector bundle  with the corresponding locally
free sheaf. 
For a sheaf $E$ on $X$,   $H^i(E)$  is the $i$-th cohomology space of $E$. In particular
$H^0(E)$ is the space of global sections of $E$, and $H^0(I_Z\otimes E)$
is the space of global sections of $E$ which vanish on a subset  $Z\subset X$.
{According to this notation $H^0(\BP V,\O(1))=V^*$.}

If $G/P$ is a rational homogeneous variety and  $E\ra G/P$ is an irreducible
homogeneous  vector bundle, we write $E=E_{\mu}$ where $\mu$ is the
highest weight of the irreducible $P$-module inducing $E$. We use the conventions
of \cite{MR1890629} regarding roots and  weights of simple Lie algebras. 

\subsection{Flattenings}\label{flatsubsect}Given $\phi\in S^dV$, write $\phi_{a,d-a}\in S^aV\ot S^{d-a}V$
for the $(a,d-a)$-polarization of $\phi$. We often consider
$\phi_{a,d-a}$ as a linear map $\phi_{a,d-a}: S^aV^*\ra S^{d-a}V$.
This notation is compatible with the more general one of Young flattenings that we will introduce in \S\ref{yflatsect}.

If $[\phi]\in v_d(\BP V)$ then for all $1\leq a\leq d-1$,  $\trank(\phi_{a,d-a})=1$, and    the $2\times 2$ minors
of $\phi_{a,d-a}$ generate the ideal of $v_d(\BP V)$ for any $1\leq a\leq d-1$. If $[\phi]\in\s_r(v_d(\BP V))$, then   $\trank(\phi_{a,d-a})\leq r$, so, the $(r+1)\times
(r+1)$ minors of $\phi_{a,d-a}$ furnish equations
for $\s_r(v_d(\BP V))$, i.e., 
$$
\La{r+1}(S^aV^*)\ot \La{r+1}(S^{d-a}V^*)\subset I_{r+1}(\s_r(v_d(\BP V))).
$$
Since $ I_{r }(\s_r(v_d(\BP V)))=0$, these modules,  obtained by {\it symmetric flattenings}, also called   {\it catalecticant homomorphisms},
 are
among the modules generating the ideal of $\s_r(v_d(\BP V))$. Geometrically the symmetric
flattenings are the equations for the varieties
$$
Rank^r_{a,d-a}(S^dV):=\s_r(Seg(\BP S^aV\times \BP S^{d-a}V))\cap \BP S^dV.
$$

\begin{remark}\label{splitflatrem}
The equations of $\s_r(v_2(\BP W))\subset \BP S^2W$ are those of
$\s_r(Seg(\BP W\times \BP W))$ restricted to $\BP S^2W$.
Since $S^{2p}V \subset S^2(S^pV)$,  when
$d=2a$ we may also describe the symmetric flattenings as the equations for
$\s_r(v_2(\BP S^aV))\cap \BP S^dV 
$.
\end{remark}

\subsection{Inheritance} The purpose of this subsection  is to explain why it is only necessary  to consider the \lq\lq primitive\rq\rq\ cases
of $\s_r(v_d(\pp n))$ for  {$n\leq r-1$}.

Let 
\begin{align*}
Sub_r(S^dV):&=\BP \{\phi\in S^dV\mid \exists V'\subset V,\
\tdim V'=r,\ \phi\in S^dV'\}\\
&=  \{[\phi]\in \BP S^dV\mid \tzeros(\phi)\subset \BP V^*{\rm\ is \ a\ cone\ over\ a\ linear\ space \ of \ codimension\ }r\}
\end{align*}
denote the {\it subspace variety}.   
The ideal of $Sub_r(S^dV)$ is generated in 
degree $r+1$ by all modules $S_{\pi}V^*\subset S^{r+1}(S^dV^*)$
where $\ell(\pi)>r+1$, see \cite[\S 7.2]{weyman}.
These modules may be realized explicitly as the 
$(r+1)\times (r+1)$ minors of $\phi_{1,d-1}$.
Note in particular that  $\s_r(v_d(\BP V))\subset Sub_r(S^dV)$  and that
  equality holds for $d\le 2$ or $r=1$. 
 Hence the equations of $Sub_r(S^dV)$ appear among the equations of $\s_r(v_d(\BP V))$.
 
Let $X\subset \BP W$ be a $G$-variety for some group $G\subset GL(W)$. We recall that a module $M\subset Sym(W^*)$
defines $X$ {\it set-theoretically}   if $\tzeros(M)=X$ as a set, that it defines $X$
{\it scheme-theoretically }  if there exists a $\d$ such that
the ideal generated by $M$ equals the ideal of $X$ in all degrees greater than $\d$, and
that $M$ defines  $X$ {\it ideal theoretically} if the ideal generated by $M$ equals the ideal of $X$.

\begin{proposition} [Symmetric Inheritance]\label{syminherit} Let $V$ be a vector space of dimension greater than $r$.

Let
$M\subset Sym((\BC^r)^*)$ be a module and let $U=[\La{r+1}V^*\ot \La{r+1}(S^{d-1}V^*)]\cap S^{r+1}(S^dV^*) 
\subset S^{r+1}(S^dV^*)$ be the module generating the ideal of $Sub_r(S^dV)$ given by  the $(r+1)\times (r+1)$-minors
of the flattening $\phi\mapsto \phi_{1,d-1}$.

If $M$ defines $\s_r(v_d(\pp{r-1}))$ set-theoretically, respectively scheme-theoretically, resp. ideal-theoretically,
let $\tilde M\subset Sym(V^*)$ be the module induced by $M$. Then  $\tilde M+U$ defines
$\s_r(v_d(\BP V))$ set-theoretically, resp. scheme-theoretically, resp. ideal-theoretically.
\end{proposition}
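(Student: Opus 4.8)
The plan is to reduce the general Veronese secant problem to the primitive case $n = r-1$ by exploiting the subspace variety as an intermediary. First I would record the set-theoretic inclusion $\s_r(v_d(\BP V)) \subseteq \tbase(\tilde M) \cap Sub_r(S^dV)$, which is immediate: $U$ cuts out $Sub_r(S^dV)$, and since $\tilde M$ is the module induced from $M$ (so its elements vanish on any $\phi$ lying in some $S^dV'$ on which $M$ vanishes), $\tilde M$ vanishes on $\s_r(v_d(\BP V))$ because the latter sits inside $Sub_r(S^dV)$. For the reverse inclusion, the key point is that every $[\phi] \in \tbase(U)$ lies in $Sub_r(S^dV)$, i.e. $\phi \in S^dV'$ for some $r$-dimensional $V' \subseteq V$ (for set-theoretic and scheme-theoretic purposes this is the defining property of $Sub_r$, proved in Weyman as cited). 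Fixing such a $V'$ and choosing coordinates so that $V' = \BC^r \subset V = \BC^{n+1}$, the elements of $\tilde M$ restrict to $S^dV'$ precisely as the elements of $M$ (by the definition of induced module and the Pieri/restriction behaviour of Schur functors under $GL(V') \subseteq GL(V)$). Hence if $\phi$ also annihilates $\tilde M$, then $\phi \in S^dV'$ annihilates $M$, so $\phi \in \widehat{\s_r(v_d(\BP V'))} \subseteq \widehat{\s_r(v_d(\BP V))}$ by hypothesis on $M$. This proves $\tbase(\tilde M + U) = \s_r(v_d(\BP V))$ set-theoretically.

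For the scheme-theoretic statement I would upgrade the argument to the level of ideals in high degree. The relevant fact is that $U$ generates the full ideal of $Sub_r(S^dV)$ (again Weyman), so modulo the ideal $(U)$ one is working on the scheme $Sub_r(S^dV)$, which — at least away from a suitable locus, or after passing to high enough degree — is covered by the open charts $\{\phi_{1,d-1} \text{ has a fixed } r\text{-dimensional image } V'\}$, each isomorphic to an affine space of forms in $S^dV'$. On each such chart the ideal generated by $\tilde M + U$ restricts to the ideal generated by $M$ in $Sym((\BC^r)^*)$, which by hypothesis agrees with the ideal of $\s_r(v_d(\pp{r-1}))$ in all degrees $> \delta$ for some $\delta$. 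A $GL(V)$-equivariance and finite-generation argument then lets one conclude that $\tilde M + U$ generates the ideal of $\s_r(v_d(\BP V))$ in all sufficiently large degrees: any homogeneous element of $I(\s_r(v_d(\BP V)))$ of large degree lies, modulo $(U)$, in the $GL(V)$-span of its restrictions to the charts, each of which is in the ideal generated by $M$ there, hence pulls back into $(\tilde M + U)$ up to raising the degree further. The ideal-theoretic case is the same argument with $\delta = -\infty$, i.e. with no degree truncation, using that $(U)$ is exactly $I(Sub_r(S^dV))$ and that $\s_r(v_d(\BP V))$ is scheme-theoretically the closure of its intersection with these charts.

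The main obstacle I anticipate is the bookkeeping that makes the phrase ``restricts to $M$ on the chart'' precise and $GL(V)$-equivariant: one must check that the module $\tilde M$ induced from $M$ really does pull back, under the inclusion $S^dV' \hookrightarrow S^dV$ dual to a coordinate projection, to $M$ itself and not to something larger, and that the degree bound $\delta$ transfers without deterioration. This is essentially the content of an ``inheritance'' principle and is standard in spirit (cf. the Landsberg–Manivel inheritance for Segre varieties), but it requires care because $Sub_r(S^dV)$ is singular, so one cannot naively intersect with a smooth chart; the cleanest route is probably to work with the affine cone and the explicit parametrization of the smooth locus of $Sub_r$ by pairs $(V', \psi)$ with $\psi \in S^dV'$ of full ``support'', then argue that the behaviour on the singular locus is controlled because it has the form $Sub_{r-1}$, which is handled by a downward induction on $r$ (the base case $r=1$ being $v_d(\BP V)$ itself, where $U$ already generates the ideal). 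I would also double-check the edge hypothesis $\tdim V > r$ is used exactly where it must be, namely to guarantee that $Sub_r(S^dV) \subsetneq \BP S^dV$ so that $U$ is nonzero and the construction is not vacuous.
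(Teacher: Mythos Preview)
The paper does not actually prove this proposition; it simply writes ``See \cite[Chapter 8]{Ltensorbook} for a proof.'' So there is no in-paper argument to compare against, and your task reduces to whether your sketch is sound and whether it matches the standard treatment in that reference.

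Your set-theoretic argument is correct and complete as written: the zero set of $U$ is $Sub_r(S^dV)$, any $\phi$ there lies in some $S^dV'$ with $\dim V'=r$, and the vanishing of $\tilde M$ on $\phi$ forces the vanishing of $M$ on $\phi\in S^dV'$, hence $\phi\in\s_r(v_d(\BP V'))\subset\s_r(v_d(\BP V))$. Note that for this direction you only need that the restriction of $\tilde M$ to $S^dV'$ \emph{contains} $M$, which is immediate from the definition of the induced module; you do not need equality.

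For the scheme- and ideal-theoretic statements your outline is the right one and is essentially what appears in the cited reference, but the cleanest way to make the ``charts'' precise is not via open subsets of the singular variety $Sub_r(S^dV)$ but via its Kempf--Weyman desingularization: the total space of the bundle $S^d\cS\to G(r,V)$ (with $\cS$ the tautological rank-$r$ subbundle) maps birationally onto $Sub_r(S^dV)$, and the preimage of $\s_r(v_d(\BP V))$ is a fiber bundle over $G(r,V)$ with fiber $\s_r(v_d(\pp{r-1}))$. On this smooth total space the fiberwise equations $M$ globalize to a $GL(V)$-module that is exactly $\tilde M$, and pushing forward (using that $U$ generates $I(Sub_r)$, i.e.\ Weyman \S 7.2 as the paper cites) gives the scheme- and ideal-theoretic conclusions without the ad hoc downward induction on $r$ or the worry about the singular locus that you flag. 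Your chart-by-chart $GL(V)$-span argument can be made to work, but the bundle formulation is what makes the degree bookkeeping transparent and is the form in which the result is proved in Landsberg's book.
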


See \cite[Chapter 8]{Ltensorbook}  for a proof.

\subsection{Results related to degree} Sometimes it is possible to conclude global information
from local equations if one has information about degrees.

We need   the following result about excess intersection.

\begin{theorem}\label{refinedbezout}
Let   $Z\subset\pp n$ be a variety of codimension $e$ and  $L\subset \pp n $ a linear subspace
of codimension $f$. Assume that $Z\cap L$ has an irreducible component  $Y$ of codimension $f+e\le n$
such that $\deg Y=\deg Z$. Then $Z\cap L=Y$.
\end{theorem}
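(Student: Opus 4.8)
The plan is to use the classical refined B\'ezout theorem (intersection product bound) together with the hypothesis $\deg Y = \deg Z$ to force $Y$ to be \emph{all} of $Z \cap L$. First I would recall the statement of the refined B\'ezout theorem in the form: if $Z \subset \pp n$ is a variety (say irreducible, reducing to this case below) and $L \subset \pp n$ is a linear subspace, then $\deg Z \cdot \deg L = \deg Z \geq \sum_j m_j \deg C_j$, where the sum runs over the irreducible components $C_j$ of $Z \cap L$ and $m_j \geq 1$ are the intersection multiplicities (which are positive integers); this is a consequence of Fulton's refined B\'ezout theorem applied to the intersection of $Z$ with the $f$ hyperplanes cutting out $L$. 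The point is that \emph{every} component $C_j$ of $Z\cap L$ of the expected codimension $e+f$ appears in this sum with a strictly positive coefficient and contributes $m_j \deg C_j \geq \deg C_j \geq 1$.

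The key step is then purely numerical. Since $Y$ is one of the components $C_j$ and it has the expected codimension $f+e \leq n$, it contributes $m_Y \deg Y \geq \deg Y = \deg Z$ to the sum. Combined with the B\'ezout inequality $\deg Z \geq \sum_j m_j \deg C_j \geq m_Y \deg Y \geq \deg Z$, every inequality must be an equality: so $m_Y = 1$, and there can be no other component $C_j$ with $j \neq Y$ (such a component would add a positive amount $m_j \deg C_j \geq 1$, violating the bound). Moreover there can be no lower-dimensional component of $Z \cap L$ either: were there an embedded or isolated component of $Z\cap L$ of codimension greater than $f+e$, it too would be detected by the refined B\'ezout bound (Fulton's version accounts for \emph{all} components, with the distinguished varieties carrying positive multiplicities), again contradicting the equality. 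Hence $Z \cap L = Y$ as sets (indeed as schemes, if one wants the stronger conclusion).

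For the reduction to the irreducible case: write $Z = \bigcup_i Z_i$ for the irreducible components of $Z$, each of codimension $e$ (a variety here being equidimensional), so $\deg Z = \sum_i \deg Z_i$. The component $Y$ of $Z \cap L$ lies in $Z_i \cap L$ for some $i$, and $Y$ is then a codimension-$(f+e)$ component of $Z_i \cap L$, so by the irreducible case applied to $Z_i$ we get $\deg Y \le \deg Z_i \le \deg Z = \deg Y$, forcing $\deg Z_i = \deg Z$, hence $Z$ was irreducible to begin with (only one $Z_i$), and $Z_i \cap L = Y$, so $Z \cap L = Y$.

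\textbf{Main obstacle.} The only real subtlety is making precise which version of B\'ezout to invoke and checking that it genuinely bounds the sum over \emph{all} components (not merely the ones of expected dimension) with positive integer multiplicities; this is exactly the content of Fulton's refined B\'ezout theorem (\cite{fulton}, Ch.~12, or the refined version in Ch.~8), which is what P.~Aluffi pointed out. Once that tool is in hand the argument is a two-line numerical squeeze, so there is no computational difficulty; the care needed is entirely in citing the excess-intersection statement in the correct generality so that a hypothetical extra component of any dimension is ruled out.
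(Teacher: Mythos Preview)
Your proposal is correct and takes essentially the same approach as the paper: both invoke Fulton's refined B\'ezout theorem (Theorem~12.3 in \emph{Intersection Theory}), from which the conclusion follows by exactly the numerical squeeze you describe. The paper's proof is in fact just the one-line citation ``This is an application of the refined B\'ezout theorem of [Fulton, Thm.~12.3]'', so you have simply unpacked the argument that the paper leaves implicit.
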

\begin{proof} This is an application of the refined B\'ezout theorem
of  \cite[Thm. 12.3]{FultonIT}.
\end{proof}

The degrees of $\sigma_r(v_d(\BP^2))$ for certain small values of $r$ and $d$     were computed by Ellingsrud and Stromme
in \cite{MR1317230}, see also \cite[Rem. 7.20]{MR1735271}.

\begin{proposition}{\cite{MR1317230}}\label{112}
\begin{enumerate}
\item   $\tdeg(\sigma_3(v_4(\BP^2))) =112$.

\item $\tdeg(\s_4(v_4(\pp 2))=35$.

\item   $\tdeg(\sigma_6(v_5(\BP^2))) =140$.

\item $\tdeg(\s_6(v_6(\pp 2)))= 28,314$.
\end{enumerate}
\end{proposition}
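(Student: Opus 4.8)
The plan is to express each of these degrees as an intersection number on the Hilbert scheme $\mathrm{Hilb}^r(\BP^2)$ of $r$ points of $\BP^2$ and then to evaluate it by torus localization (Bott's residue formula), which is the method of Ellingsrud and Str{\o}mme. First I would construct the \emph{secant bundle}. In all four cases $r\le d+1$, and this forces every length-$r$ subscheme $Z\subset\BP^2$ to impose independent conditions on curves of degree $d$ (the extremal case is a curvilinear length-$r$ scheme on a line, where one uses that $\O_{\BP^1}(d)\otimes I_Z$ has no nonzero section). Hence, writing $\mathcal Z\subset\mathrm{Hilb}^r(\BP^2)\times\BP^2$ for the universal subscheme with projections $\pi,p$, the sheaf $\mathcal A:=\pi_*\!\big(p^*\O_{\BP^2}(d)\otimes\O_{\mathcal Z}\big)$ is locally free of rank $r$ on the smooth $2r$-fold $\mathrm{Hilb}^r(\BP^2)$, and the evaluation $H^0(\O_{\BP^2}(d))\otimes\O\twoheadrightarrow\mathcal A$ exhibits it as a quotient of the trivial bundle $\BC^{N+1}\otimes\O$ with $N+1=\binom{d+2}{2}$. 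The associated $\BP^{r-1}$-bundle $\mathcal I:=\BP(\mathcal A)\subset\mathrm{Hilb}^r(\BP^2)\times\BP^N$ has fiber over $Z$ the linear span $\langle v_d(Z)\rangle\subset\BP^N$; it is smooth of dimension $3r-1$, and the second projection $q\colon\mathcal I\to\BP^N$ has image $\sigma_r(v_d(\BP^2))$.

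Next I would convert the degree into a Chern number. By Alexander--Hirschowitz the only defective secant variety of a Veronese surface is $\sigma_5(v_4(\BP^2))$, so for each of our four pairs $(r,d)$ one has $\dim\sigma_r(v_d(\BP^2))=3r-1$ and $q$ is generically finite; moreover $q$ is birational onto its image since in each of these cases the general sum of $r$ $d$-th powers has a unique such expression. Writing $h=q^*c_1(\O_{\BP^N}(1))=c_1(\O_{\BP(\mathcal A)}(1))$, the projective-bundle relation $\pi_*\!\big(h^{r-1+j}\big)=s_j(\mathcal A)$ gives
$$\tdeg\,\sigma_r(v_d(\BP^2))=\int_{\mathcal I}h^{3r-1}=\int_{\mathrm{Hilb}^r(\BP^2)}s_{2r}(\mathcal A),$$
the top Segre class of the secant bundle.

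Then I would evaluate this integral by Bott's residue formula for the action of the torus $T=(\BC^*)^2\subset PGL_3$ on $\mathrm{Hilb}^r(\BP^2)$. Its fixed points are the monomial subschemes, all smooth points, indexed by triples of partitions $\mu=(\mu^{(0)},\mu^{(1)},\mu^{(2)})$ with $|\mu^{(0)}|+|\mu^{(1)}|+|\mu^{(2)}|=r$, one partition for the monomial ideal at each coordinate point $e_i$. At the fixed point $Z_\mu$ the $T$-character of the tangent space $T_{Z_\mu}\mathrm{Hilb}^r(\BP^2)$ is given box-by-box by the standard arm/leg formula for Hilbert schemes of points on a surface, read off in the three affine charts, while the $T$-character of the fiber $\mathcal A_{Z_\mu}=H^0(\O_{Z_\mu}(d))$ is $\sum_i(\text{weight of }x_i^d)\cdot\sum_{m\notin\mu^{(i)}}(\text{weight of }m)$, the inner sum taken over monomials in the local coordinates not lying in the ideal $\mu^{(i)}$. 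The localization formula then reads
$$\int_{\mathrm{Hilb}^r(\BP^2)}s_{2r}(\mathcal A)=\sum_{\mu}\frac{h_{2r}\!\big(w_1(\mu),\dots,w_r(\mu)\big)}{\prod(\text{tangent weights at }Z_\mu)},$$
where $w_1(\mu),\dots,w_r(\mu)$ are the linearized weights of $\mathcal A_{Z_\mu}$ and $h_{2r}$ is the complete homogeneous symmetric polynomial of degree $2r$; each summand is an explicit rational function of the two torus parameters, and the total collapses to $112$, $35$, $140$, $28314$ in the four cases.

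The architecture is routine, and the work — the main obstacle — lies in two places. First, pinning down every convention: the exact twist in $\mathcal A$, the weights of the trivialization of $\O_{\BP^2}(d)$ in each affine chart, and the sub-versus-quotient convention for $\BP(\mathcal A)$, so that $\int s_{2r}(\mathcal A)$ is literally the degree. Here the birationality of $q$ is essential, for if $q$ had degree $m>1$ over its image one would compute $m\cdot\tdeg\sigma_r(v_d(\BP^2))$ instead; so confirming identifiability of the general fibre is a genuinely geometric point that needs its own argument or citation. Second, the localization sum itself: the number of triples of partitions is already in the dozens for $r=6$, so summing the resulting rational functions is the arithmetic heart of the proof, best delegated to a computer algebra system, exactly as Ellingsrud and Str{\o}mme did. (The set-up can be sanity-checked on the classical case $\tdeg\,\sigma_3(v_3(\BP^2))=4$, the Aronhold quartic.)
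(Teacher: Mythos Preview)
The paper does not give its own proof of this proposition; it simply quotes the result from Ellingsrud--Str{\o}mme \cite{MR1317230} (with a pointer to \cite[Rem.~7.20]{MR1735271}). Your proposal is a correct and faithful outline of precisely that method: realize $\sigma_r(v_d(\BP^2))$ as the image of the tautological $\BP^{r-1}$-bundle over $\mathrm{Hilb}^r(\BP^2)$, identify the degree with the top Segre class $\int s_{2r}(\mathcal A)$ of the secant bundle, and evaluate by Bott localization at monomial subschemes. The two delicate points you flag are exactly the right ones. For the bundle to exist one needs $\O_{\BP^2}(d)$ to be $(r-1)$-very ample, which holds since $r\le d+1$ in all four cases; and for the Segre-class integral to equal the degree (rather than a multiple) one needs $q$ birational, which follows because none of $(3,4,2)$, $(4,4,2)$, $(6,5,2)$, $(6,6,2)$ appears in the list of weakly defective Veronese surfaces (Theorem~\ref{weakclas}), so the generic fibre of $q$ is a single point. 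With those checks in place, the residue sum is a finite (if tedious) computation yielding the stated numbers.
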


\begin{proposition}(see, e.g., \cite[Cor. 3.2]{LMsec}) The minimal possible degree of a module
in $I(\s_r(v_d(\pp n)))$ is $r+1$.
\end{proposition}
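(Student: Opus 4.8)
The plan is to prove the lower bound by exhibiting a smooth point of $\s_r(v_d(\pp n))$ at which the tangent space is not the whole ambient space, then use the fact that a hypersurface (or more generally a proper subvariety) through a given point with a non-degenerate condition on its $1$-jet forces any defining equation to have positive degree, and that a module of degree $\le r$ would be too small to contain the required equations. More concretely, one argues by contradiction: suppose $I(\s_r(v_d(\pp n)))$ contains a nonzero element of degree $e\le r$. Since $\s_1(v_d(\pp n))=v_d(\pp n)$ is cut out (even ideal-theoretically) by the $2\times 2$ minors of $\phi_{1,d-1}$, hence has no equations of degree $1$, we may assume $e\ge 2$. Now restrict attention to a generic linear section or to the subvariety $\s_{r-1}(v_d(\pp n))\subset \s_r(v_d(\pp n))$: any equation of $\s_r$ vanishes on $\s_{r-1}$, so inductively we reduce to showing $\s_1(v_d(\pp n))=v_d(\pp n)$ itself has no equation of degree $<2$, which is clear, and then that the codimension-one "room" between consecutive secants forces the degree to jump by at least one each time.

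The cleanest route, and the one I would actually carry out, is the classical argument via the $(r+1)\times(r+1)$ catalecticant (symmetric flattening) minors together with a dimension count on restriction to a line. Pick $r+1$ general points $p_0,\dots,p_r\in \pp n$ and let $L\cong\pp r$ be a general line's worth — more precisely choose a generic pencil so that restricting $\phi\in S^dV$ to a generic $\pp 1\subset \pp n$ induces a surjection $S^dV\twoheadrightarrow S^d\BC^2$ carrying $\s_r(v_d(\pp n))$ into $\s_r(v_d(\pp 1))$. By the Gundelfinger/Iarrobino–Kanev result quoted in the chart, $\s_r(v_d(\pp 1))$ is defined ideal-theoretically by the size $r+1$ minors of $\phi_{1,d-1}$, and its ideal is generated in degree exactly $r+1$ and contains no lower-degree element (a size-$r$ minor of the generic binary catalecticant is not identically zero on $\s_r(v_d(\pp 1))$ — indeed $\s_{r}(v_d(\pp 1))$ is a proper subvariety but $\s_{r-1}$ is a proper subvariety of it, so its ideal starts in degree $r+1$). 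Pulling back: if $F\in I_e(\s_r(v_d(\pp n)))$ with $e\le r$, then its image under the restriction-to-$\pp 1$ map would be an element of $I_e(\s_r(v_d(\pp 1)))=0$ for all choices of $\pp 1$; since these restriction maps are jointly injective on $S^e(S^dV^*)$ (the coefficients of $F$ are recovered from its restrictions to enough lines), $F=0$, a contradiction. Finally, the bound is attained: the $(r+1)\times(r+1)$ minors of $\phi_{1,d-1}$ (or of any $\phi_{a,d-a}$) are honest degree-$(r+1)$ elements of $I(\s_r(v_d(\pp n)))$, nonzero because $I_r(\s_r(v_d(\pp n)))=0$ as noted in \S\ref{flatsubsect}.

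The main obstacle is the joint-injectivity claim for the restriction maps $S^e(S^dV^*)\to \prod_{\ell} S^e(S^d\BC^2{}^*)$: one must check that a polynomial of degree $e$ in the coefficients $\phi_I$ that vanishes after every substitution $\phi\mapsto \phi|_\ell$ is identically zero. This is true because the rational map $\phi\mapsto (\text{restrictions of }\phi\text{ to all lines through a fixed point})$ dominates, i.e.\ the union of the images of these linear restriction maps spans $S^dV$ (every monomial appears), so a form vanishing on all of them vanishes on a dense set; equivalently one invokes that the $v_d$-image of a line moves in a family covering $v_d(\pp n)$ and spanning $\langle v_d(\pp n)\rangle=\pp{}(S^dV)$. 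Alternatively, and perhaps more simply, one avoids restriction entirely and argues directly: a size-$r$ minor $m$ of $\phi_{1,d-1}$ is a degree-$r$ form that does \emph{not} vanish on $\s_r(v_d(\pp n))$ (evaluate at a generic point, where the flattening has rank exactly $r$ for $r$ small, so some $r\times r$ minor is nonzero), which shows $I_r=\dots=I_1=0$ is false only insofar as it would require every degree-$\le r$ form to vanish — but the cleanest phrasing is: $\s_r(v_d(\pp n))$ is not contained in any hypersurface of degree $\le r$ because it is not contained in $\{m=0\}$ for a suitable catalecticant minor $m$ of degree $r$, and an irreducible variety not lying in a fixed hypersurface of degree $e$ cannot have an equation of degree $e$ unless... — here one must instead argue that $I_e$ for $e<r$ is a $GL(V)$-submodule of $S^e(S^dV^*)$, and run through the Pieri-rule list of its constituents $S_\pi V^*$ to see each has $\ell(\pi)\le r<$ (what's needed to vanish on $Sub_r$), contradicting $\s_r\subset Sub_r$ having ideal starting in degree $r+1$; this representation-theoretic bookkeeping is the place where care is required. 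The reference \cite[Cor.\ 3.2]{LMsec} presumably carries out exactly one of these two arguments, and I would follow that.
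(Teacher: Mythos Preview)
The paper does not actually prove this proposition; it simply cites \cite[Cor.~3.2]{LMsec}. So there is no paper proof to compare against, and your proposal must stand on its own.

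Your main argument---restriction to lines $\pp 1\subset\pp n$---has a genuine gap. You correctly observe that if $F\in I_e(\s_r(v_d(\pp n)))$ then $F|_{S^d\BC^2}\in I_e(\s_r(v_d(\pp 1)))=0$ for $e\le r$. But the ``joint injectivity'' you then invoke is false: a polynomial $F\in S^e((S^dV)^*)$ that restricts to zero on every $S^d\BC^2\subset S^dV$ is precisely a polynomial vanishing on $Sub_2(S^dV)=\bigcup_{\BC^2\subset V}S^d\BC^2$, and for $n\ge 2$ this is a \emph{proper} subvariety of $\BP(S^dV)$. Your justification (``the union of the images spans $S^dV$'') confuses linear span with Zariski closure; indeed the $3\times 3$ minors of $\phi_{1,d-1}$ are nonzero degree-$3$ polynomials vanishing identically on $Sub_2(S^dV)$, so for $r\ge 3$ the argument breaks. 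Your fallback suggestions (exhibiting one nonvanishing $r\times r$ minor, or unspecified representation-theoretic bookkeeping) do not close the gap either: the former shows only that a particular degree-$r$ form is not in $I_r$, not that $I_r=0$.

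The standard proof (which is what \cite{LMsec} does, and which works for any nondegenerate $X\subset\BP V$) is a short induction on $r$ using polars. Suppose $F\in I_r(\s_r(X))$ has degree $r$. For any $x\in\hat X$ and any general $v=x_2+\cdots+x_r\in\hat\s_{r-1}(X)$, the line $s\mapsto sx+v$ lies in $\hat\s_r(X)$, so $F(sx+v)\equiv 0$; differentiating at $s=0$ gives $(\partial_xF)(v)=0$. Thus $\partial_xF\in I_{r-1}(\s_{r-1}(X))$, which is zero by induction. Since $X$ is nondegenerate, $\hat X$ spans $V$, so all first partials of $F$ vanish and $F=0$. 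The base case $r=1$ is just nondegeneracy of $v_d(\pp n)$. This is both shorter and correct; I recommend replacing your restriction argument with it.
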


We recall the following classical formulas, due to C. Segre. For a modern reference see
\cite{HaTu}.

\begin{proposition}\label{segre}
$$\textrm{codim\ }\sigma_r(v_2(\pp n))={{n-r+2}\choose{2}}\qquad\deg \sigma_r(v_2(\pp n))=\prod_{i=0}^{n-r}\frac{{{n+1+i}\choose{n-r-i+1}}}{{{2i+1}\choose{i}}}$$
 
$$\textrm{codim\ }\sigma_{r}(G (2,n+1))={{n-2r+1}\choose{2}}\quad\deg \sigma_{r}(G (2,n+1))=
\frac{1}{2^{n-2r}}\prod_{i=0}^{n-2r-1}\frac{{{n+1+i}\choose{n-2r-i}}}{{{2i+1}\choose{i}}}.
$$

 \end{proposition}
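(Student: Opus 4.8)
The statement to prove is Proposition \ref{segre}, which records C. Segre's classical formulas for the codimension and degree of $\sigma_r(v_2(\pp n))$ and of $\sigma_r(G(2,n+1))$.

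\medskip

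The plan is to reduce everything to the theory of determinantal and Pfaffian varieties. For the Veronese case, recall that $v_2(\pp n)\subset \pp{\binom{n+2}{2}-1}$ is the locus of rank-one symmetric $(n+1)\times(n+1)$ matrices, so $\sigma_r(v_2(\pp n))$ is exactly the symmetric determinantal variety $\{[\phi]\in\BP S^2V\mid \trank\phi\le r\}$; this is the substance of Remark \ref{splitflatrem} together with the classical fact that the size $r+1$ minors generate the ideal. First I would compute the codimension: the variety of symmetric matrices of rank $\le r$ in the space of all symmetric $(n+1)\times(n+1)$ matrices has codimension $\binom{n+1-r+1}{2}=\binom{n-r+2}{2}$, a standard fact (e.g. via the local structure: near a matrix of rank exactly $r$, the condition to drop rank is that the Schur complement, a symmetric $(n+1-r)\times(n+1-r)$ matrix, vanishes). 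This gives the first codimension formula. For the degree, I would invoke the classical Giambelli–Thom–Porteous-type formula for symmetric degeneracy loci: the degree of the rank $\le r$ symmetric determinantal variety is given by the product formula stated, which can be derived from the Giambelli formula for the class of such loci in terms of Chern classes, or — more elementarily — by the Harris–Tu resolution computation. The cleanest route for a ``modern reference'' proof is simply to cite \cite{HaTu} (Harris–Tu) for the degree of symmetric degeneracy loci and verify that their general formula specializes to $\prod_{i=0}^{n-r}\binom{n+1+i}{n-r-i+1}\big/\binom{2i+1}{i}$.

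\medskip

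For the Grassmannian case, the approach is parallel using the Plücker embedding $G(2,n+1)\subset \BP\La2 W$ with $\tdim W=n+1$: the points of $G(2,n+1)$ are the rank-two skew-symmetric forms (decomposable $2$-forms), and $\sigma_r(G(2,n+1))$ is the locus of skew-symmetric forms of rank $\le 2r$ in $\La2 W$. The codimension of the rank $\le 2r$ Pfaffian variety in the $\binom{n+1}{2}$-dimensional space $\La2 W$ is $\binom{n+1-2r}{2}=\binom{n-2r+1}{2}$, again by the standard local computation (the vanishing of a skew Schur complement of size $(n+1-2r)$). The degree is then the classical formula for Pfaffian degeneracy loci, which I would extract from Harris–Tu \cite{HaTu} and check specializes to $\frac1{2^{n-2r}}\prod_{i=0}^{n-2r-1}\binom{n+1+i}{n-2r-i}\big/\binom{2i+1}{i}$; the factor $2^{-(n-2r)}$ is the familiar normalization in Pfaffian (as opposed to determinantal) degree formulas.

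\medskip

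The main obstacle is purely bookkeeping: matching the indices in the general Giambelli/Harris–Tu formulas (which are usually stated in terms of the corank $n+1-r$, or a shifted parameter) against the particular indexing $i=0,\dots,n-r$ (resp. $i=0,\dots,n-2r-1$) used in the statement, and making sure the binomial coefficients line up, including the $2$-power normalization in the skew case. There is no conceptual difficulty — Segre's formulas are classical and the determinantal/Pfaffian variety interpretation is immediate — so in the paper I would simply give the two identifications ($\sigma_r(v_2(\pp n))$ with the symmetric rank-$\le r$ locus, $\sigma_r(G(2,n+1))$ with the skew rank-$\le 2r$ locus), record the codimensions by the local Schur-complement argument, and refer to \cite{HaTu} for the degree formulas, noting that a short calculation shows the quoted products agree with theirs.
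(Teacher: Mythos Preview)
Your proposal is correct and matches the paper's treatment exactly: the paper does not give a proof of this proposition but simply records it as classical (``due to C. Segre'') and points to \cite{HaTu} for a modern reference, which is precisely the route you outline. Your additional sketch of the codimension via Schur complements and the identification of $\sigma_r(v_2(\pp n))$ and $\sigma_r(G(2,n+1))$ with symmetric and skew-symmetric rank loci is more detail than the paper provides, but entirely in the same spirit.
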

 
\subsection{Conormal spaces}\label{conormal} The method used to  prove a module of equations locally
defines $\s_r(v_d(\pp n))$ will be to show that the conormal space at a smooth point
of the zero set of the module equals the conormal space to $\s_r(v_d(\pp n))$ at that
point.

Let $A,B$ be vector spaces and let $Seg(\BP A\times \BP B)\subset \BP (A\ot B)$ denote the Segre variety, so
if $[x]\in Seg(\BP A\times \BP B)$, then $x= a\ot b $ for some $a\in A$ and $b\in B$.
One has the affine tangent space
$\hat T_{[x]} Seg(\BP A\times \BP B)=a\ot B+ A\ot b\subset A\ot B$, and the affine conormal
space $\hat N^*_{[x]} Seg(\BP A\times \BP B)=a\upperp\ot b\upperp= \tker(x)\ot \tim(x)\upperp \subset A^*\ot B^*$,
where in the latter description we think of $a\ot b: A^*\ra B$ as a linear map.
Terracini's lemma implies that if $[z]\in \s_r(Seg(\BP A\times \BP B))$ is of rank $r$,
then 
\be\label{conormaleqn}
\hat N^*_{[z]}\s_r( Seg(\BP A\times \BP B))= \tker(z)\ot \tim(z)\upperp.
\ene

In particular, letting $Rank^r_{(a,d-a)}(S^dV)\subset \BP (S^dV)$ denote the zero set of the
size $(r+1)$-minors of the flattenings $S^aV^*\ra S^{d-a}V$, one has

\begin{proposition}\label{conormalflat} Let $[\phi]\in Rank^r_{(a,d-a)}(S^dV)$ be a sufficiently general point, then
$$
\hat N^*_{[\phi]}Rank^r_{(a,d-a)}(S^dV)=\tker(\phi_{a,d-a})\circ \tim(\phi_{a,d-a})\upperp \subset S^dV^*.
$$
\end{proposition}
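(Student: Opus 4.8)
The plan is to deduce Proposition~\ref{conormalflat} from equation~\eqref{conormaleqn} by realizing $Rank^r_{(a,d-a)}(S^dV)$ as a linear section of a secant variety of a Segre variety. Write $A=S^aV^*$ and $B=S^{d-a}V$, so that the flattening map $\phi\mapsto \phi_{a,d-a}$ gives a linear inclusion $\iota\colon S^dV \hookrightarrow A^*\ot B = S^aV\ot S^{d-a}V$ (equivalently $S^dV\subset \operatorname{Hom}(A,B)$). By definition, $Rank^r_{(a,d-a)}(S^dV) = \iota^{-1}\bigl(\s_r(Seg(\BP A^*\times \BP B))\bigr)$, i.e.\ it is the intersection of $\s_r(Seg(\BP A^*\times\BP B))$ with the linear subspace $\BP(\iota(S^dV))\subset \BP(A^*\ot B)$.

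First I would fix a sufficiently general $[\phi]\in Rank^r_{(a,d-a)}(S^dV)$; by ``sufficiently general'' I mean that $\phi_{a,d-a}$ has rank exactly $r$ (the generic rank on this variety, since the locus of strictly smaller rank is a proper closed subset), that $[\phi]$ is a smooth point of $Rank^r_{(a,d-a)}(S^dV)$, and that $\iota(S^dV)$ meets $\s_r(Seg)$ transversally at $\iota(\phi)$ in the sense that $\hat T_{[\phi]}Rank^r_{(a,d-a)}(S^dV) = \iota^{-1}\bigl(\hat T_{[\iota(\phi)]}\s_r(Seg(\BP A^*\times\BP B))\bigr)$. The second step is to invoke \eqref{conormaleqn}: since $\phi_{a,d-a}$ has rank $r$, Terracini's lemma gives $\hat N^*_{[\iota(\phi)]}\s_r(Seg(\BP A^*\times\BP B)) = \tker(\phi_{a,d-a})\ot \tim(\phi_{a,d-a})\upperp \subset A\ot B^*$. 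The third step is to pull this conormal space back along $\iota$: dualizing the transversal tangent equality, the conormal space of the linear section is the image of the conormal space of the ambient variety under the surjection $A\ot B^* = (A^*\ot B)^* \twoheadrightarrow \iota(S^dV)^* = S^dV^*$ adjoint to $\iota$. Unwinding what this restriction map does to the pure tensor $\tker(\phi_{a,d-a})\ot\tim(\phi_{a,d-a})\upperp$ — it sends $g\ot h$ (with $g\in S^aV$ vanishing on $\ker$, $h\in S^{d-a}V^*$ vanishing on $\operatorname{im}$) to the polynomial $g\cdot h\in S^dV^*$, i.e.\ the composition $\tker(\phi_{a,d-a})\circ\tim(\phi_{a,d-a})\upperp$ in the notation of the statement — yields exactly the claimed description $\hat N^*_{[\phi]}Rank^r_{(a,d-a)}(S^dV) = \tker(\phi_{a,d-a})\circ\tim(\phi_{a,d-a})\upperp$.

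The main obstacle is justifying the transversality/genericity claim, i.e.\ that for general $[\phi]$ the tangent space to the linear section is genuinely cut out by the ambient tangent space intersected with the linear span, so that the conormal space is the honest image under the restriction map rather than something larger. One clean way to secure this: the $(r+1)\times(r+1)$ minors of $\phi_{a,d-a}$ are the restrictions to $\iota(S^dV)$ of the $(r+1)\times(r+1)$ minors generating $I(\s_r(Seg))$, so at any point the differentials of the defining equations of $Rank^r_{(a,d-a)}(S^dV)$ are the $\iota$-pullbacks of the differentials of the defining equations of $\s_r(Seg)$; hence $\hat N^*_{[\phi]}Rank^r_{(a,d-a)}(S^dV)$ is always contained in the image of $\hat N^*_{[\iota(\phi)]}\s_r(Seg)$ under restriction, with equality precisely when $[\phi]$ is a smooth point of $Rank^r_{(a,d-a)}(S^dV)$ whose local dimension matches $\dim\iota(S^dV) - \operatorname{codim}\s_r(Seg)$ — which is the content of being ``sufficiently general.'' The remaining bookkeeping — identifying $(A^*\ot B)^* = A\ot B^*$, checking that the adjoint of $\iota$ is the multiplication map $S^aV\ot S^{d-a}V^*\to S^dV^*$ up to the natural pairings, and confirming that this sends $\ker\ot\operatorname{im}^\perp$ to $\ker\circ\operatorname{im}^\perp$ — is routine and I would not belabor it.
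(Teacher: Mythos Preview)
Your approach is correct and is exactly the argument the paper intends: the proposition is stated there with the words ``In particular'' immediately after equation~\eqref{conormaleqn}, so the paper treats it as the specialization of $\hat N^*_{[z]}\s_r(Seg(\BP A\times\BP B))=\ker(z)\ot\operatorname{im}(z)^\perp$ to the linear slice $\BP S^dV\subset \BP(S^aV\ot S^{d-a}V)$, with the restriction map $S^aV^*\ot S^{d-a}V^*\to S^dV^*$ being multiplication. One small slip in your write-up: in the parenthetical you want $g\in\ker(\phi_{a,d-a})\subset S^aV^*$ and $h\in\operatorname{im}(\phi_{a,d-a})^\perp\subset S^{d-a}V^*$, not ``$g\in S^aV$ vanishing on $\ker$''.
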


Now let $[y^d]\in v_d(\BP V)$, then 
\begin{align*}\hat N^*_{[y^d]}v_d(\BP V)&= \{ P\in S^dV^*\mid
 P(y)=0, \ dP_y=0\}\\
&=S^{d-2}V^*\circ S^2y\upperp\\
&=\{ P\in S^dV^*\mid \tzeros(P) {\rm{\ is\ singular\ at\ }} [y]\}.
\end{align*}

Applying Terracini's lemma yields:

\begin{proposition}\label{nstarsrvd}
Let $[\phi]=[y_1^d+\cdots + y_r^d]\in \s_r(v_d(\BP V))$.
Then
\begin{align*}
\hat N^*_{[\phi]} \s_r(v_d(\BP V))&\subseteq  (S^{d-2}V^*\circ S^2y_1\upperp) \cap \cdots\cap
(S^{d-2}V^*\circ S^2y_r\upperp)\\
&= \{ P\in S^dV^*\mid \tzeros(P) {\rm{\ is\ singular\ at\ }} [y_1]\hd [y_r]\} 
\end{align*}
and equality holds if $\phi$ is sufficiently general.
\end{proposition}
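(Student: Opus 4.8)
The plan is to obtain both assertions from Terracini's lemma, starting from the formula $\hat N^*_{[y^d]}v_d(\BP V)=S^{d-2}V^*\circ S^2y\upperp=\{P\in S^dV^*\mid \tzeros(P)\text{ is singular at }[y]\}$ recorded just above the statement. Note first that the asserted equality of the two descriptions of the right-hand side is immediate: a form $P\in S^dV^*$ has $\tzeros(P)$ singular at the finite set $\{[y_1]\hd[y_r]\}$ if and only if it is singular at each $[y_i]$ separately, which by that formula means precisely $P\in\bigcap_{i=1}^r\bigl(S^{d-2}V^*\circ S^2y_i\upperp\bigr)$.

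For the inclusion, fix a smooth point $[\phi]=[y_1^d+\cdots+y_r^d]$ of $\s_r(v_d(\BP V))$. I would show $\hat T_{[\phi]}\s_r(v_d(\BP V))\supseteq\sum_{i=1}^r\hat T_{[y_i^d]}v_d(\BP V)$ by the standard curve argument: for each $i$ and each $w\in V$ the curve $t\mapsto\sum_{j\ne i}y_j^d+(y_i+tw)^d$ lies in the affine cone over $\s_r(v_d(\BP V))$, passes through $\phi$ at $t=0$, and has derivative $d\,y_i^{d-1}w$ there; letting $w$ range over $V$ shows that $\hat T_{[y_i^d]}v_d(\BP V)=y_i^{d-1}\circ V$ is contained in $\hat T_{[\phi]}\s_r(v_d(\BP V))$, and one sums over $i$. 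Taking annihilators ($\upperp$ reverses inclusions and carries a sum of subspaces to the intersection of their annihilators) yields $\hat N^*_{[\phi]}\s_r(v_d(\BP V))\subseteq\bigcap_{i=1}^r\hat N^*_{[y_i^d]}v_d(\BP V)$, which is the claimed inclusion.

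For the equality when $\phi$ is general, I would invoke Terracini's lemma in full strength: for a general point $[\phi]$ of $\s_r(X)$ with $X$ irreducible, and general $[x_1]\hd[x_r]\in X$ whose span contains $\phi$, one has $\hat T_{[\phi]}\s_r(X)=\hat T_{[x_1]}X+\cdots+\hat T_{[x_r]}X$, not merely $\supseteq$. Since a general point of $\s_r(v_d(\BP V))$ has the form $[y_1^d+\cdots+y_r^d]$ with $[y_1]\hd[y_r]$ general in $\BP V$, passing to annihilators upgrades the inclusion of the previous paragraph to the desired equality.

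I do not expect a genuine obstacle here: once the conormal space of $v_d(\BP V)$ is in hand this is a routine application of Terracini. The one point worth flagging is the asymmetry between the two halves: the inclusion uses only the elementary curve direction of Terracini and holds at every smooth point of $\s_r(v_d(\BP V))$, whereas the reverse inclusion genuinely needs genericity, since $\sum_i\hat T_{[y_i^d]}v_d(\BP V)$ can be a proper subspace of $\hat T_{[\phi]}\s_r(v_d(\BP V))$ at special $\phi$ (this is exactly why the proposition claims only $\subseteq$ in its first line). If one wishes to avoid the smoothness hypothesis on $[\phi]$, the inclusion can instead be phrased with the Zariski tangent space in place of $\hat T$; smoothness at a general $\phi$ is all that is used elsewhere.
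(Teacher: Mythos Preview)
Your proposal is correct and takes essentially the same approach as the paper: the paper simply writes ``Applying Terracini's lemma yields'' the proposition, with no further argument, and what you have written is precisely the unpacking of that application (the curve computation being the elementary half of Terracini, and the genericity statement the nontrivial half).
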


\section{Symmetric Flattenings (catalecticant minors)}\label{catsect}

\subsection{Review of known results}

The following result dates back to  the work of Sylvester and Gundelfinger.
 
\begin{theorem} (see, e.g., \cite[Thm. 1.56]{MR1735271})\label{gsthm}
The ideal of $\s_r(v_d(\pp 1))$ is generated in degree $r+1$
by the size $r+1$   minors of 
$\phi_{u,d-u}$ for any $r\leq u\leq d-r$, i.e.,
by any of the modules $\La{r+1}S^u\BC^2\ot \La{r+1}S^{d-u}\BC^2$.
The variety $\s_r(v_d(\pp 1))$ is projectively
normal and  arithmetically Cohen-Macaulay,  its singular
locus is $\s_{r-1}(v_d(\pp 1))$, and its degree is
$\binom{d-r+1}{r}$ .
\end{theorem}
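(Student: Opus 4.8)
The plan is to work determinantally: since $V=\BC^2$, every graded Artinian Gorenstein quotient of $\BC[\partial_0,\partial_1]$ is a complete intersection, which controls all the Hilbert functions involved. I would organize the proof around the choice $u=r$, where the size $r+1$ minors of the $(r+1)\times(d-r+1)$ catalecticant (Hankel) matrix $\phi_{r,d-r}$ are its \emph{maximal} minors, and reduce general $u$ to this case at the end. First one checks $\{[\phi]:\trank(\phi_{u,d-u})\le r\}=\s_r(v_d(\pp 1))$ for $r\le u\le d-r$. The inclusion $\supseteq$ is immediate, since $\phi_{u,d-u}=\sum_i\lambda_i(y_i^u)\ot(y_i^{d-u})$ has rank $\le r$ on $\phi=\sum_i\lambda_iy_i^d$ and rank $\le r$ is a closed condition. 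For $\subseteq$, the kernel of $\phi_{u,d-u}\colon S^uV^*\to S^{d-u}V$ is the degree-$u$ piece of the apolar ideal $\phi^\perp=(g_1,g_2)$, with $\deg g_1\le\deg g_2$ and $\deg g_1+\deg g_2=d+2$; a short Hilbert-function computation then shows that, for $r\le u\le d-r$, one has $\trank(\phi_{u,d-u})=\deg g_1$ when $\deg g_1\le r$ and $\trank(\phi_{u,d-u})>r$ otherwise. So $\trank(\phi_{u,d-u})\le r$ forces $\deg g_1\le r$, and the degree-$\le r$ apolar form $g_1$ places $\phi$ in the span attached to a length-$\le r$ subscheme of $\pp 1$, hence exhibits $[\phi]$ as a limit of sums of $\le r$ $d$-th powers, i.e. $[\phi]\in\s_r(v_d(\pp 1))$. (This recovers Sylvester's classical description of the catalecticant ranks of a binary form.)

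\textbf{Ideal, Cohen--Macaulayness, degree (the case $u=r$).} The matrix $M:=\phi_{r,d-r}$ is \emph{$1$-generic}, because any of its generalized entries is a coefficient of a product of two nonzero binary forms, hence nonzero. By Eisenbud's structure theorem for $1$-generic matrices, the ideal $I_{r+1}(M)$ of maximal minors is prime, its zero locus has codimension exactly $(d-r+1)-(r+1)+1=d-2r+1$, and the quotient ring is Cohen--Macaulay with minimal free resolution the Eagon--Northcott complex. Since $I_{r+1}(M)$ is prime and, by the previous step, its zero locus is $\s_r(v_d(\pp 1))$, we get $I(\s_r(v_d(\pp 1)))=I_{r+1}(M)$: the ideal is generated in degree $r+1$ and $\s_r(v_d(\pp 1))$ is arithmetically Cohen--Macaulay. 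Reading off the multiplicity from the Eagon--Northcott complex (equivalently, invoking the classical degree $\binom{q}{p-1}$ of the maximal-minor locus of a $1$-generic $p\times q$ matrix of linear forms, which depends only on $p,q$) gives $\deg\s_r(v_d(\pp 1))=\binom{d-r+1}{r}$.

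\textbf{Singular locus and projective normality.} The inclusion $\s_{r-1}(v_d(\pp 1))\subseteq\operatorname{Sing}\s_r(v_d(\pp 1))$ is the general fact that a secant variety is singular along the previous one (alternatively, differentiate the $(r+1)$ minors). For the opposite inclusion, apply Terracini's lemma at a general point $[\phi]\in\s_r(v_d(\pp 1))\setminus\s_{r-1}(v_d(\pp 1))$, $\phi=\sum_{i=1}^r y_i^d$: then $\hat T_{[\phi]}\s_r(v_d(\pp 1))=\langle y_1^{d-1}V,\dots,y_r^{d-1}V\rangle$ has dimension $2r=1+\tdim\s_r(v_d(\pp 1))$, so the general point is smooth; combined with irreducibility and the local structure of the catalecticant locus at a point of rank exactly $r$ (or with the analysis in \cite{MR1735271}), this yields $\operatorname{Sing}\s_r(v_d(\pp 1))=\s_{r-1}(v_d(\pp 1))$, which has codimension $2$. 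Hence, by Serre's criterion together with the Cohen--Macaulayness above, $\s_r(v_d(\pp 1))$ is normal; being arithmetically Cohen--Macaulay it is then projectively normal.

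\textbf{Arbitrary $u$ --- the main obstacle.} It remains to see that $I_{r+1}(\phi_{u,d-u})$ generates the \emph{same} ideal for every $r\le u\le d-r$. Its zero locus is $\s_r(v_d(\pp 1))$ by the first step, so $I_{r+1}(\phi_{u,d-u})\subseteq I(\s_r(v_d(\pp 1)))$, and since the right-hand side is generated in degree $r+1$ it suffices to compare the degree-$(r+1)$ pieces. This is an $SL_2$-representation statement: by the exterior form of Hermite reciprocity the module of minors $\La{r+1}S^uV\ot\La{r+1}S^{d-u}V$ is isomorphic to $S^{r+1}S^{u-r}V\ot S^{r+1}S^{d-u-r}V\cong S^{u-r}W\ot S^{d-u-r}W$ with $W=S^{r+1}V$, which contains $S^{d-2r}W\cong S^{r+1}S^{d-2r}V\cong I(\s_r(v_d(\pp 1)))_{r+1}$ (the last isomorphism being the $u=r$ case just established), and one must show the natural map into $S^{r+1}(S^dV^*)$ is surjective onto each irreducible constituent of $I(\s_r(v_d(\pp 1)))_{r+1}$. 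I expect this to be the delicate point: it requires either an explicit weight-vector computation exhibiting, in each constituent, a nonvanishing size $r+1$ minor of $\phi_{u,d-u}$, or the classical argument of Gundelfinger as carried out in \cite[Thm.~1.56]{MR1735271}. Everything else in the theorem is apolarity bookkeeping together with the $1$-generic/Eagon--Northcott machinery.
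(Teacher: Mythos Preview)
The paper does not give its own proof of this theorem: it is stated with the attribution ``(see, e.g., \cite[Thm.~1.56]{MR1735271})'' and used as input to Corollary~\ref{kanevcor}, so there is no in-paper argument to compare against. What follows are comments on your sketch on its own merits.

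Your overall architecture is the standard one and is correct for $u=r$: the Hankel matrix $\phi_{r,d-r}$ is $1$-generic, Eisenbud's theorem gives primeness and the Eagon--Northcott resolution, hence arithmetic Cohen--Macaulayness and the degree $\binom{d-r+1}{r}$; the apolarity argument for the set-theoretic identification is also fine. One point to tighten is the singular-locus step: Terracini at a \emph{general} point only shows the generic point is smooth, which does not by itself exclude singularities on a proper closed subset of $\s_r\setminus\s_{r-1}$. The clean argument works at \emph{every} point $[\phi]$ with $\trank(\phi_{r,d-r})=r$: then $\tker(\phi_{r,d-r})\subset S^rV^*$ is a line and $\tim(\phi_{r,d-r})^{\perp}\subset S^{d-r}V^*$ has dimension $d-2r+1$, and the multiplication map $\tker\otimes\tim^{\perp}\to S^dV^*$ is injective because $\BC[x_0,x_1]$ is a domain; so the conormal space has the expected dimension and $[\phi]$ is smooth. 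This replaces your parenthetical ``local structure'' hand-wave. Projective normality then follows directly (the homogeneous coordinate ring is a Cohen--Macaulay domain regular in codimension one, hence normal by Serre's criterion).

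You are right that the passage to arbitrary $u$ with $r\le u\le d-r$ is the delicate point. Your Hermite-reciprocity reduction is the correct representation-theoretic bookkeeping, but as you note it stops short of checking that the inclusion of $SL_2$-modules actually lands nontrivially in each irreducible summand of $I(\s_r)_{r+1}$ inside $S^{r+1}(S^dV^*)$. One way to close this, avoiding a case-by-case weight computation, is to show directly that the ideals coincide: the \emph{entries} of $\phi_{u,d-u}$ for $u>r$ are linear combinations of entries of $\phi_{r,d-r}$ (every size-$(r+1)$ minor of the larger catalecticant expands, via Laplace along suitable rows, in the maximal minors of the $(r+1)\times(d-r+1)$ Hankel block), giving $I_{r+1}(\phi_{u,d-u})\subseteq I_{r+1}(\phi_{r,d-r})$; equality then follows because both cut out the same prime $\s_r(v_d(\pp 1))$ scheme-theoretically and $I_{r+1}(\phi_{u,d-u})$ already has the correct codimension (its zero set is $\s_r$ by your first step) together with the $1$-generic analysis applied to $\phi_{u,d-u}$ itself. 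Alternatively, this is exactly what Gundelfinger's classical argument (as in \cite[Thm.~1.56]{MR1735271}) establishes.
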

 
\begin{corollary}   [Kanev, \cite{kanev}]\label{kanevcor}  The ideal of $\s_2(v_d(\pp n))$ is generated in degree $3$
by the $3$ by $3$ minors of the $(1,d-1)$ and $(2,d-2)$ flattenings.
\end{corollary}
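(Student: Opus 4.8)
The plan is to deduce this from the Symmetric Inheritance Proposition (Proposition \ref{syminherit}) combined with Theorem \ref{gsthm} (Gundelfinger--Sylvester). By inheritance, it suffices to treat the ``primitive'' case $n \le r-1$, i.e. here $n=1$: one must show that the ideal of $\s_2(v_d(\pp 1))$ is generated in degree $3$ by the $3\times 3$ minors of $\phi_{1,d-1}$ and $\phi_{2,d-2}$. But actually more is true for $\pp 1$: Theorem \ref{gsthm} says the ideal of $\s_2(v_d(\pp 1))$ is generated in degree $3$ by the $3\times 3$ minors of $\phi_{u,d-u}$ for \emph{any} $2\le u\le d-2$, so in particular the $\phi_{2,d-2}$ minors alone generate the ideal of $\s_2(v_d(\pp1))$ for $d\ge 4$, and for $d=3$ the $\phi_{1,2}$ minors do. So take $M\subset Sym((\BC^2)^*)$ to be the module of $3\times 3$ minors of $\phi_{2,d-2}$ (or $\phi_{1,2}$ if $d=3$); it defines $\s_2(v_d(\pp 1))$ ideal-theoretically.

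First I would verify the hypotheses of Proposition \ref{syminherit} with $r=2$: $\tdim V = n+1 > 2$ (the case $n\ge 2$; for $n=1$ there is nothing to inherit and Theorem \ref{gsthm} is the statement directly). Then the inherited module $\tilde M\subset Sym(V^*)$, together with the module $U$ generating the ideal of $Sub_2(S^dV)$ — which by definition is the module of $3\times 3$ minors of $\phi_{1,d-1}$ — defines $\s_2(v_d(\BP V))$ ideal-theoretically. Second, I would identify $\tilde M$ explicitly as (a submodule of) the $3\times 3$ minors of $\phi_{2,d-2}$: restricting $\phi$ to polynomials supported on a $2$-dimensional $V'\subset V$ recovers exactly the $\pp1$ catalecticant minors, and the $GL(V)$-span of those is contained in the full module of $3\times 3$ minors of $\phi_{2,d-2}$ on $V$. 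Conversely every $3\times 3$ minor of $\phi_{2,d-2}$ on $V$ lies in the ideal generated by $\s_2(v_d(\BP V))$'s equations, hence the ideal generated by $\tilde M + U$ equals the ideal generated by all $3\times 3$ minors of $\phi_{1,d-1}$ and $\phi_{2,d-2}$; combined with inheritance this gives that the latter generate the ideal of $\s_2(v_d(\BP V))$.

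I expect the main obstacle to be the bookkeeping in the second step — precisely matching the abstractly ``induced'' module $\tilde M$ from Proposition \ref{syminherit} with the concrete module of catalecticant minors of $\phi_{2,d-2}$ on the big space $V$, and checking that adding $U$ (the $\phi_{1,d-1}$ minors) does not enlarge the cut-out scheme beyond $\s_2(v_d(\BP V))$. One clean way to handle this: note $\s_2(v_d(\BP V))\subset Sub_2(S^dV)$ and on $Sub_2(S^dV)$ every $\phi$ is (a limit of) a binary form, so the scheme cut out by the $\phi_{1,d-1}$ minors together with the $\phi_{2,d-2}$ minors is, set-theoretically and with the right ideal, exactly $\s_2(v_d(\pp1))$ spread out over the $\pp1$'s in $\pp n$, which is $\s_2(v_d(\BP V))$; the degree/ideal statement then follows from Theorem \ref{gsthm} through Proposition \ref{syminherit}. (Alternatively, one may cite \cite{kanev} directly, but the inheritance argument is the natural proof within this paper's framework.)
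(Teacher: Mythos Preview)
Your proposal is correct and takes essentially the same approach as the paper: the paper's proof is the single line ``Apply Proposition \ref{syminherit} to Theorem \ref{gsthm} in the case $r=2$,'' and your argument is a careful elaboration of exactly this. Your bookkeeping worry in the second step is easily resolved as you indicate: since $\tilde M + U$ is contained in the module of all $3\times 3$ minors of $\phi_{1,d-1}$ and $\phi_{2,d-2}$, and the latter minors all vanish on $\s_2(v_d(\BP V))$, both generate the same ideal once you know $\tilde M + U$ generates $I(\s_2(v_d(\BP V)))$.
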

\begin{proof}
Apply Proposition \ref{syminherit} to Theorem \ref{gsthm} in the case
$r=2$.
\end{proof}

{
\begin{remark} C. Raicu \cite{raicu} recently proved    that in Corollary \ref{kanevcor}
it is possible to replace the $(2,d-2)$ flattening with any $(i,d-i)$-flattening such that $2\le i\le d-2$.
\end{remark}
}



\begin{theorem}(see \cite[Thms. 4.5A, 4.10A]{MR1735271})
\label{maincata}
Let $n\geq 3$, and let $V=\BC^{n+1}$.   Let $\d=\llcorner \frac d2\lrcorner$, $\d'=\ulcorner \frac d2\urcorner$.
If $r\le {{\d-1+n}\choose{n}}$ then
$\s_r(v_d(\BP V))$ is an irreducible component of $\s_r(Seg(\BP S^{\d}V\times\BP S^{\d'}V))\cap \BP (S^{d}V)$.
In other words,  
$\s_r(v_d(\pp n))$ is an  irreducible component
of the  size $(r+1)$ minors of  the $(\d,\d')$-flattening. \end{theorem}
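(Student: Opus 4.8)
The plan is to show two things about $Z:=\s_r(Seg(\BP S^{\d}V\times\BP S^{\d'}V))\cap \BP(S^dV)$, the zero locus of the size $(r+1)$ minors of the $(\d,\d')$-flattening $\phi_{\d,\d'}$: first, that $\s_r(v_d(\BP V))$ is contained in $Z$ (this is immediate, since if $[\phi]\in\s_r(v_d(\BP V))$ then $\phi$ is a sum of $r$ $d$-th powers and each $y^d$ has $\trank(y^d_{\d,\d'})=1$, so $\trank(\phi_{\d,\d'})\le r$), and second, that at a sufficiently general point $[\phi]$ of $\s_r(v_d(\BP V))$ the two varieties have the same dimension, i.e. the same conormal (equivalently tangent) space. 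Since $\s_r(v_d(\BP V))$ is irreducible and both varieties are cut out by the same equations near such a point, this dimension equality forces $\s_r(v_d(\BP V))$ to be an irreducible component of $Z$.

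The heart of the matter is the conormal space comparison, which I would carry out using the machinery of \S\ref{conormal}. Take $\phi=y_1^d+\cdots+y_r^d$ general; by Proposition \ref{nstarsrvd}, $\hat N^*_{[\phi]}\s_r(v_d(\BP V))$ is the space of $P\in S^dV^*$ whose zero set is singular at each $[y_i]$, which equals $\bigcap_i (S^{d-2}V^*\circ S^2 y_i^\perp)$. On the other side, by Proposition \ref{conormalflat}, $\hat N^*_{[\phi]}Z=\tker(\phi_{\d,\d'})\circ \tim(\phi_{\d,\d'})^\perp$, provided $[\phi]$ is a general point of $Z$ of rank exactly $r$ — which holds here because $r\le\binom{\d-1+n}{n}$ guarantees the powers $y_i^\d$ are linearly independent in $S^\d V$ (indeed $r$ general points of $v_\d(\BP V)$ are linearly independent once $r\le\dim S^\d V=\binom{\d+n}{n}$, and the stronger bound will be used below). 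Now $\tim(\phi_{\d,\d'})=\langle y_1^{\d'},\dots,y_r^{\d'}\rangle$ and $\tker(\phi_{\d,\d'})=\langle y_1^\d,\dots,y_r^\d\rangle^\perp=\bigcap_i (y_i^\d)^\perp$ inside $S^\d V^*$, so $\hat N^*_{[\phi]}Z=\big(\bigcap_i(y_i^\d)^\perp\big)\circ\big(\bigcap_i(y_i^{\d'})^\perp\big)$. The inclusion $\hat N^*_{[\phi]}\s_r(v_d(\BP V))\subseteq\hat N^*_{[\phi]}Z$ is automatic (equations of $\s_r$ vanish on $Z$); the work is the reverse inclusion, i.e. showing every product of an element of $\bigcap_i(y_i^\d)^\perp$ with an element of $\bigcap_i(y_i^{\d'})^\perp$ is a polynomial singular at all the $[y_i]$.

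I expect this reverse inclusion to be the main obstacle, and the natural route is a dimension count rather than a direct argument. One computes $\dim\hat N^*_{[\phi]}\s_r(v_d(\BP V))$ from Terracini: for $\phi$ general it is $\binom{n+d}{n}-r(n+1)$ when $r$ is below the generic-rank threshold (no defectivity issue intervenes in the relevant range, or one argues more carefully using the known non-defectivity / the subspace bounds). Then one computes $\dim\hat N^*_{[\phi]}Z=\dim\tker(\phi_{\d,\d'})\cdot\dim\tim(\phi_{\d,\d'})^\perp=\big(\binom{\d+n}{n}-r\big)\big(\binom{\d'+n}{n}-r\big)$, valid because the hypothesis $r\le\binom{\d-1+n}{n}$ ensures $\phi_{\d,\d'}$ has rank exactly $r$ with $r$ general summands, so the apolar conditions "$P$ restricted to degree $\d$ (resp. $\d'$) kills each $y_i$" are independent — this independence is exactly where the bound $r\le\binom{\d-1+n}{n}$ (as opposed to $\binom{\d+n}{n}$) does real work, and is the crucial input, ultimately drawn from the Iarrobino--Kanev analysis of general Artinian Gorenstein algebras cited as \cite{MR1735271}. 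Comparing the two dimensions, one must verify they agree; if they do, then since $\s_r(v_d(\BP V))\subseteq Z$ with equal local dimension at a general point of $\s_r(v_d(\BP V))$, and $\s_r(v_d(\BP V))$ is irreducible, it is an irreducible component of $Z$. I would present the argument by first recording the two conormal descriptions, then doing the dimension bookkeeping, and finally invoking irreducibility; the delicate point to flag is justifying that the relevant apolarity conditions are independent in the stated range, which I would attribute to the quoted theorems of Iarrobino--Kanev (Theorems 4.5A, 4.10A of \cite{MR1735271}) rather than reprove.
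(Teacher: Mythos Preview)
The paper does not give its own proof of this theorem; it is quoted from Iarrobino--Kanev and used as input. So there is nothing to compare your argument to directly, but your sketch has two concrete errors that would need fixing before it could stand on its own.

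First, the direction of the automatic conormal inclusion is reversed. From $\s_r(v_d(\BP V))\subseteq Z$ one gets $\hat T_{[\phi]}\s_r\subseteq \hat T_{[\phi]}Z$, hence $\hat N^*_{[\phi]}Z\subseteq \hat N^*_{[\phi]}\s_r$, not the other way around. Consequently the step you label as ``the work'' --- showing that a product $P\cdot Q$ with $P\in\bigcap_i(y_i^{\d})^{\perp}$ and $Q\in\bigcap_i(y_i^{\d'})^{\perp}$ is singular at every $[y_i]$ --- is in fact trivial (both factors vanish at $y_i$, so the product vanishes to order two). The genuine content is the opposite inclusion: every $P\in S^dV^*$ singular at all the $y_i$ must lie in the span of such products. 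In the language of Theorem~\ref{irredcrit} (with $E=\cO(\d)$) this is the surjectivity of
\[
H^0(I_Z(\d))\otimes H^0(I_Z(\d'))\longrightarrow H^0(I_{Z^2}(d)),
\]
and that surjectivity for $r\le\binom{\d-1+n}{n}$ general points is exactly what Iarrobino--Kanev establish.

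Second, your dimension count cannot close the gap. You write $\dim\hat N^*_{[\phi]}Z=\big(\binom{\d+n}{n}-r\big)\big(\binom{\d'+n}{n}-r\big)$, but that is the dimension of $\tker(\phi_{\d,\d'})\otimes\tim(\phi_{\d,\d'})^{\perp}$, not of its image under multiplication into $S^dV^*$; the latter is what $\hat N^*_{[\phi]}Z$ actually is (Proposition~\ref{conormalflat}). The product is only an upper bound, and it does not match $\binom{n+d}{n}-r(n+1)$ (e.g.\ $n=3$, $d=4$, $r=1$ gives $81$ versus $31$). So the comparison you propose never balances; one must instead prove the surjectivity above, which is precisely the substance of the Iarrobino--Kanev theorems you cite at the end --- but for that statement, not for the ``independence of apolarity conditions'' you invoke them for.
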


The bound obtained in this theorem is the best  we know of  for even degree, apart from  some cases of small degree   listed below.
Theorem \ref{mainyoung} is an improvement of this bound in the case of odd degree.

When  $\dim V=3$ then a Zariski open subset of  $\s_r(v_d(\BP V))$ can be characterized by open and closed conditions,
that is, with the same assumptions of Theorem \ref{maincata}:
If  the rank of all $(a,d-a)$-flattenings computed at $\phi$
is equal   to $\min\left(r,{{a+2}\choose 2}\right)$
then $\phi\in\s_r(v_d(\BP V))$ and the general element of $\s_r(v_d(\BP V))$
can be described in this way 
(see \cite[Thm. 4.1A]{MR1735271}).

\begin{remark} The statement of Theorem \ref{maincata}  cannot be improved,
in the sense that   that the zero locus of the flattenings usually is  reducible.
Consider the case $n=2$, $d=8$, $r=10$, in this case
the $10\times 10$ flattenings of $\phi_{4,4}$ define a variety with   at least two irreducible components,
one of them is $\s_{10}(v_8(\pp 2))$ (see \cite[Ex. 7.11]{MR1735271}).
\end{remark}


 \begin{proposition} Assume $\s_r(v_d(\pp n))$ is not
 defective and $r< \frac 1{n+1}\binom{n+d}d$ so that
$\s_r(v_d(\pp n))$ is not the ambient space. 
 Let $\d=\llcorner \frac d2\lrcorner$, $\d'=\ulcorner \frac d2\urcorner$.
 There are non-trivial equations from flattenings iff
 $
r< \binom{\d+n }{\d}$.
 (The defective cases
where $r$ is allowed to be larger are understood as well.)
 \end{proposition}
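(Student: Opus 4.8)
The plan is to prove both directions of the equivalence by understanding exactly when the catalecticant (symmetric flattening) maps $\phi_{a,d-a}\colon S^aV^*\to S^{d-a}V$ can drop rank on $\sigma_r(v_d(\pp n))$, i.e. when $r < \dim S^aV = \binom{a+n}{a}$ for some $a$ in the admissible range $1\le a\le d-1$. Since the rank bound on a secant point is $\operatorname{rank}(\phi_{a,d-a})\le r$, a given flattening produces a \emph{non-trivial} equation (one not already vanishing identically on all of $S^dV$) precisely when the generic rank of $\phi_{a,d-a}$ on $S^dV$ exceeds $r$; because a generic $\phi$ has $\phi_{a,d-a}$ of full rank $\min\bigl(\binom{a+n}{a},\binom{d-a+n}{d-a}\bigr)$, the flattening $(a,d-a)$ gives a non-trivial equation iff $r < \min\bigl(\binom{a+n}{a},\binom{d-a+n}{d-a}\bigr)$. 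The smallest such binomial over $1\le a\le d-1$ is achieved at $a=1$ or, in the balanced regime, the quantity one really wants is the \emph{largest} value of $\min(\dots)$, which occurs at $a=\delta=\lfloor d/2\rfloor$, giving $\binom{\delta+n}{\delta}$. Hence some flattening gives a non-trivial equation iff $r<\binom{\delta+n}{\delta}$.

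First I would make the ``only if'' direction precise: suppose $r\ge \binom{\delta+n}{\delta}$. Then for every $a$ with $1\le a\le d-1$ we have $\min\bigl(\binom{a+n}{a},\binom{d-a+n}{d-a}\bigr)\le \binom{\delta+n}{\delta}\le r$, using the unimodality of $a\mapsto \binom{a+n}{a}$ on $[0,d]$ (it increases then decreases, with max-of-mins at the center $a=\delta$). Therefore $\operatorname{rank}(\phi_{a,d-a})\le \min(\dots)\le r$ for \emph{every} $\phi\in S^dV$, so the size-$(r+1)$ minors of each flattening vanish identically; no flattening yields a non-trivial equation. This direction is essentially the elementary computation $\max_{1\le a\le d-1}\min\bigl(\binom{a+n}{a},\binom{d-a+n}{d-a}\bigr)=\binom{\delta+n}{\delta}$ together with the monotonicity of binomials, which I would state as a short lemma.

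For the ``if'' direction, assume $r<\binom{\delta+n}{\delta}$. I would take $a=\delta$, $d-a=\delta'$, so the $(\delta,\delta')$-flattening $\phi_{\delta,\delta'}\colon S^\delta V^*\to S^{\delta'}V$ has source of dimension $\binom{\delta+n}{\delta}>r$ and (for $d$ even) equal target dimension, and for $d$ odd the target is larger. Since $\sigma_r(v_d(\pp n))$ is assumed non-defective and is a proper subvariety, $r+1\le \dim S^\delta V$, so for a \emph{generic} $\phi\in S^dV$ the map $\phi_{\delta,\delta'}$ has rank $>r$ (in fact full rank $\binom{\delta+n}{\delta}$, a classical fact that the generic catalecticant is of maximal rank, which also follows because $v_d(\pp n)$ itself lies in no proper $\operatorname{Seg}$-secant if the count permits). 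Hence the $(r+1)\times(r+1)$ minors of $\phi_{\delta,\delta'}$ do not all vanish on $S^dV$, i.e. $\Lambda^{r+1}(S^\delta V^*)\otimes\Lambda^{r+1}(S^{\delta'}V^*)$ contributes genuinely non-zero elements of $I(\sigma_r(v_d(\pp n)))$. This gives non-trivial flattening equations, completing the equivalence.

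The main obstacle I anticipate is the clean justification that the generic $(\delta,\delta')$-catalecticant has maximal rank $\min\bigl(\binom{\delta+n}{\delta},\binom{\delta'+n}{\delta'}\bigr)$ --- equivalently that $v_d(\pp n)$ spans in such a way that the flattening is generically surjective or injective on the relevant side --- since this is exactly the statement that the \emph{generic} polynomial is not captured by the naive rank bound. One way around it: the hypothesis that $\sigma_r(v_d(\pp n))$ is non-defective and proper already forces the generic rank of $\phi_{\delta,\delta'}$ to be $>r$ whenever $r<\binom{\delta+n}{\delta}$, because otherwise the flattening minors would cut out a proper subvariety of $S^dV$ containing a non-defective $\sigma_r$ of the ``wrong'' dimension; more directly, the expected dimension of $\sigma_r$ is $r(n+1)-1 \le \dim S^dV - 1$ under the stated bound on $r$, and comparing with the locus where $\operatorname{rank}\phi_{\delta,\delta'}\le r$ (whose codimension is the determinantal bound $(\binom{\delta+n}{\delta}-r)(\binom{\delta'+n}{\delta'}-r)$) shows the minors cannot vanish identically. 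I would phrase this comparison carefully as the crux of the argument, citing Theorem~\ref{maincata} and Proposition~\ref{segre}-style dimension counts as needed, and note that the parenthetical remark about defective cases is handled by the explicit (short) list of defective $\sigma_r(v_d(\pp n))$.
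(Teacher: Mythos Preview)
Your approach is correct and essentially the same as the paper's: the paper's proof is a single sentence observing that $\binom{\delta+n}{\delta}$ is the size of the maximal minors of the middle flattening $\phi_{\delta,\delta'}$, so these give equations for $\sigma_r$ precisely when $r<\binom{\delta+n}{\delta}$. Your write-up just unpacks this in more detail.

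Two small remarks. First, a slip: $a\mapsto\binom{a+n}{a}$ is monotone increasing in $a$, not unimodal; what is unimodal on $[0,d]$ is $a\mapsto\min\bigl(\binom{a+n}{a},\binom{d-a+n}{d-a}\bigr)$, with maximum $\binom{\delta+n}{\delta}$ at $a=\delta$. Your conclusion is unaffected. Second, the ``main obstacle'' you worry about---that the generic $(\delta,\delta')$-catalecticant has full rank---is elementary and does not require non-defectivity or determinantal codimension counts. Simply take $\phi=\sum_{i=1}^{N}\ell_i^{\,d}$ with $N=\binom{\delta+n}{\delta}$ and the $\ell_i\in V$ general; then the image of $\phi_{\delta,\delta'}$ is the span of $\ell_1^{\,\delta'},\dots,\ell_N^{\,\delta'}$, which are $N\le\binom{\delta'+n}{\delta'}$ general points on the nondegenerate variety $v_{\delta'}(\pp n)$ and hence linearly independent. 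So the generic rank is exactly $\binom{\delta+n}{\delta}>r$, and the size-$(r+1)$ minors are not identically zero. The hypotheses on non-defectivity and $r<\frac{1}{n+1}\binom{n+d}{d}$ only serve to place the statement in a meaningful regime (so that $\sigma_r$ is a proper subvariety), not to drive the argument.
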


  \begin{proof}
  The right hand side is the size of the maximal minors of
 $\phi_{\d,\d'}$, which give equations for 
$ \s_{\binom{\d+n}{\d} -1}(v_d(\pp n))$.
\end{proof}

For example, when $n=3$  and  $d=2\d$, $\s_r(v_d(\pp 3))$  is
 an irreducible  component of the zero set of the flattenings 
 for $r\leq \binom{\d+2}3$, the flattenings give some equations
 up to $r\leq\binom{\d+3}3$, and there are non-trivial equations
 up to $r\leq \frac 14\binom{2\d+3}3$.

We will have need of more than one symmetric flattening, so we make the following
definitions, following \cite{MR1735271}, but modifying the notation:

\begin{definition}\label{sflatdef} Fix a sequence $\overarrow{r}:=(r_1\hd r_{\llcorner \frac d2\lrcorner})$
and let
\begin{align*}
SFlat_{\overarrow{r}}(S^dW):=&
\{ \phi\in S^dW \mid \trank (\phi_{j,d-j})\leq r_j, \ j=1\hd \llcorner \frac d2\lrcorner\}
\\
&=
[\bigcap_{j=1}^{\llcorner \frac d2\lrcorner}
\s_{r_j}(Seg(\BP S^jW \times \BP S^{d-j}W))] \cap S^dW
\end{align*}

Call a  sequence $\overarrow{r}$ {\it admissible} if there exists 
$\phi\in S^dW$ such that $\trank (\phi_{j,d-j})= r_j$ for all $j=1\hd \llcorner \frac d2\lrcorner$.
It is sufficient to consider  $\oar{r}$ that are admissible because if $\oar r$ is non-admissible,
the zero set of $SFlat_{\oar r}$ will be contained in the union of   admissible $SFlat$'s associated
to smaller $\oar r$'s in the natural partial order.
Note that $SFlat_{\overarrow{r}}(S^dW)\subseteq Sub_{r_1}(S^dW)$.
\end{definition}

Even if $\oar r$ is admissible, it still can be the case that
$SFlat_{\overarrow{r}}(S^dW)$ is reducible. 
For example, when $\tdim W=3$ and 
  $d\ge 6$,  the zero set of the size $5$ minors of the $(2,d-2)$-flattening
has  two irreducible components, one of them is $\sigma_4(v_d(\P 2))$
and the other  has dimension $d+6$ \cite[Ex. 3.6]{MR1735271}.

To remedy this,
let $\oar r$ be admissible, and consider
$$
SFlat_{\overarrow{r}}^0(S^dW):= 
\{ \phi\in S^dW \mid \trank (\phi_{j,d-j})= r_j, \ j=1\hd \llcorner \frac d2\lrcorner\}
$$
and let
$$
Gor(\oar r):=\ol{SFlat_{\overarrow{r}}^0(S^dW)}.
$$
 
\begin{remark} In   the commutative
algebra literature (e.g. \cite{diesel,MR1735271}),    \lq\lq Gor\rq\rq\  is short for Gorenstein, see \cite[Def. 1.11]{MR1735271} for a history.
\end{remark}

Unfortunately, defining equations for $Gor(\oar r)$ are not known.
One can test for membership of $SFlat_{\overarrow{r}}^0(S^dW)$
by checking the required vanishing and non-vanishing of minors.  

\begin{theorem}\label{oarrirred} \cite[Thm 1.1]{diesel}  If $\tdim W=3$, and $\oar r$ is admissible,
then $Gor(\oar r)$ is irreducible.
\end{theorem}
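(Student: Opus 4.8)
\emph{Proof strategy.} The plan is to reduce the statement to the Buchsbaum--Eisenbud structure theorem for codimension three Gorenstein ideals, following Diesel. Since $Gor(\oar r)$ is a cone it suffices to prove that its projectivization is irreducible. First I would pass to the apolarity picture: writing $R=\mathrm{Sym}(W^*)$ (a polynomial ring in three variables), Macaulay's inverse system correspondence attaches to $\phi\in S^dW$ the graded Artinian Gorenstein quotient $A_\phi=R/\mathrm{Ann}(\phi)$ of socle degree $d$, with $H(A_\phi,j)=\trank(\phi_{j,d-j})=H(A_\phi,d-j)$. Thus the admissible sequence $\oar r$ is the same datum as an admissible Gorenstein Hilbert function $T=(T_0,\dots,T_d)$ in three variables, and $SFlat^0_{\oar r}(S^dW)$ is exactly $\{\phi:H(A_\phi)=T\}$; so the goal becomes to show that $\mathrm{Gor}(T):=\{[\phi]\in\BP(S^dW):H(A_\phi)=T\}$ is irreducible.

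Next I would bring in the structure theory. As $A_\phi$ is Artinian and $\dim R=3$, $\mathrm{Ann}(\phi)$ has codimension three, hence by Buchsbaum--Eisenbud it is minimally generated by the submaximal Pfaffians of an alternating $(2t+1)\times(2t+1)$ matrix of homogeneous forms, with a self-dual minimal free resolution of length three; the entry degrees form a symmetric \emph{degree matrix}. Conversely, for a generic alternating matrix with a prescribed degree matrix the submaximal Pfaffians generate a codimension three Gorenstein ideal. I would then invoke the classification (Diesel, Iarrobino--Kanev) of the degree matrices \emph{compatible with $T$}, i.e.\ those for which the generic such ideal is Artinian with Hilbert function $T$: these form a finite poset, the covering relation being $M\mapsto M\oplus\bigl(\begin{smallmatrix}0&1\\-1&0\end{smallmatrix}\bigr)$, which does not change the Pfaffian ideal. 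For each compatible $\mathcal{D}$, let $\mathcal{M}_\mathcal{D}$ be the affine space of alternating matrices with degree matrix $\mathcal{D}$, and let $\psi_\mathcal{D}$ be the rational map from $\mathcal{M}_\mathcal{D}$ to $\BP(S^dW)$ sending $M$ to the dual socle generator of $R/\mathrm{Pf}(M)$; $\psi_\mathcal{D}$ is regular on the dense open locus where $\mathrm{Pf}(M)$ is Artinian Gorenstein with Hilbert function $T$, and there its values lie in $\mathrm{Gor}(T)$. Since $\mathcal{M}_\mathcal{D}$ is irreducible, so is $\overline{\mathrm{im}\,\psi_\mathcal{D}}$.

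It then remains to establish two facts, and this is where the argument really lives. The first, which I expect to be the main obstacle, is purely combinatorial and special to codimension three: the poset of degree matrices compatible with $T$ has a \emph{unique maximal element} $\mathcal{D}_{\max}$. (Nothing like this holds in codimension $\ge 4$, where $\mathrm{Gor}(T)$ can be reducible, cf.\ Iarrobino--Kanev.) The second is a degeneration: if $[\phi]\in\mathrm{Gor}(T)$ has minimal alternating presentation $M$, with degree matrix $\mathcal{D}$, then since $\mathcal{D}\preceq\mathcal{D}_{\max}$ one may pad $M$ by hyperbolic blocks with zero connecting entries to obtain $\widetilde M\in\mathcal{M}_{\mathcal{D}_{\max}}$ with $R/\mathrm{Pf}(\widetilde M)=A_\phi$, so that $[\phi]$ lies in the image of $\psi_{\mathcal{D}_{\max}}$. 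Combining the two, $\mathrm{Gor}(T)$ is precisely the image of the irreducible affine space $\mathcal{M}_{\mathcal{D}_{\max}}$ under the rational map $\psi_{\mathcal{D}_{\max}}$, hence irreducible; taking closures and affine cones gives the irreducibility of $Gor(\oar r)$. The work is therefore concentrated in understanding which minimal free resolutions can occur for a fixed Gorenstein Hilbert function in three variables, and in proving that among them there is a largest one.
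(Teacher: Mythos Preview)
The paper does not prove this theorem at all: it is simply quoted from Diesel \cite[Thm 1.1]{diesel} and then used as a black box (together with Theorem \ref{maincata}) to derive Theorem \ref{bigflatthm}. So there is no ``paper's own proof'' to compare your proposal against.

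That said, your sketch is a faithful outline of Diesel's actual argument: pass to apolar rings, invoke the Buchsbaum--Eisenbud Pfaffian structure theorem for height-three Gorenstein ideals, parametrize by degree matrices, and show that the poset of degree matrices compatible with a given Hilbert function $T$ has a unique maximal (``saturated'') element, so that $\mathrm{Gor}(T)$ is the image of a single irreducible affine space of alternating matrices. The two points you flag as the real content---the existence of a unique maximal compatible degree matrix, and the padding-by-hyperbolic-blocks degeneration---are exactly where Diesel's work lies; the paper itself makes use of precisely this structure later on (facts (2)--(5) on page~\pageref{dieselfacts} and the proof of Lemma \ref{admisseqlem}). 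So your proposal is not so much a new proof as a correct high-level summary of the cited one.
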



Theorem \ref{oarrirred} combined with Theorem \ref{maincata} allows one  to extend the set of secant varieties
of Veronese varieties defined by flattenings.

\subsection{Consequences of Theorems \ref{oarrirred} and \ref{maincata}}

\begin{theorem}\label{bigflatthm} The following varieties are defined scheme-theoretically by minors of flattenings:
\begin{enumerate}

\item \label{s3vdpnthm}
Let $d\ge 4$. The variety $\sigma_3(v_d(\P n))$ is defined scheme-theoretically
by the $4\times 4$ minors of the $(1,d-1)$ and $(\dtwo, d-\dtwo)$ flattenings.

\item \label{s45vdp2thm} 
   For $d\ge 4$ the variety $\sigma_4(v_d(\P 2))$ is defined scheme-theoretically
by the $5\times 5$ minors of the 
 $(\dtwo, d-\dtwo)$ flattenings.

\item \label{s5vdp2thm} For $d\ge 6$ the variety $\sigma_5(v_d(\P 2))$ is defined scheme-theoretically
by the $6\times 6$ minors of the  $(\dtwo, d-\dtwo)$ flattenings.

 \item \label{s6vdp2thm} Let $d\ge 6$. The variety $\sigma_6(v_d(\P 2))$ is defined scheme-theoretically
by the $7\times 7$ minors of the $(\dtwo,d-\dtwo)$ flattenings. 
\end{enumerate}
\end{theorem}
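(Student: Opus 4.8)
The plan is to reduce each of the four statements to a combination of Theorem \ref{maincata} (flattenings cut out an irreducible component, for $r\le \binom{\d-1+n}{n}$) and Theorem \ref{oarrirred} (irreducibility of $Gor(\oar r)$ when $\dim W=3$), together with the inheritance Proposition \ref{syminherit} to pass from $\pp n$ to $\pp 2$ in part (\ref{s3vdpnthm}). First I would fix notation: write $\d=\lfloor d/2\rfloor$, $\d'=\lceil d/2\rceil$, and for a given $r$ let $\oar r$ be the admissible sequence recording the expected ranks $r_j=\min\bigl(r,\binom{j+n}{n}\bigr)$ of the flattenings $\phi_{j,d-j}$ of a general $\phi\in\s_r(v_d(\pp n))$. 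The scheme $Z_{\oar r}$ defined by the minors of the relevant flattenings is by definition the scheme $SFlat_{\oar r}(S^dV)$ (or the subscheme cut out by the two listed flattenings, in case (\ref{s3vdpnthm})); its reduced structure contains $\s_r(v_d(\pp n))$, and the content of the theorem is that after saturation the ideal of minors equals $I(\s_r(v_d(\pp n)))$, i.e.\ the two schemes agree scheme-theoretically.

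The core of the argument is set-theoretic plus a tangent-space (smoothness) count. For the set-theoretic equality: by Theorem \ref{maincata} the relevant $r$ are in the range $r\le\binom{\d-1+n}{n}$ where $\s_r(v_d(\pp n))$ is an irreducible component of $Z_{\oar r}$; by Theorem \ref{oarrirred} (for $n=2$) the closure $Gor(\oar r)$ of the locus where the flattening ranks are \emph{exactly} $r_j$ is irreducible, and one checks $\s_r(v_d(\pp 2))=Gor(\oar r)$ for $r\in\{3,4,5,6\}$ and the stated $d$ by a dimension comparison (using that $\s_r(v_d(\pp 2))$ is not defective in this range and has the expected dimension $3r-1$). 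The remaining components of $Z_{\oar r}$ are, by Definition \ref{sflatdef}, contained in $SFlat_{\oar s}$ for strictly smaller $\oar s$; one must rule these out, which is where the specific small values of $r$ and the degree hypotheses enter—e.g.\ for $\s_6(v_d(\pp 2))$, $d\ge 6$, the possible extraneous component coming from the $(2,d-2)$-flattening having rank $\le 5$ is the $(d+6)$-dimensional locus of \cite[Ex.\ 3.6]{MR1735271}, which is cut off by also imposing the $7\times 7$ minors of $\phi_{\d,\d'}$. So the genuinely delicate bookkeeping is: for each $r$ and each admissible $\oar s<\oar r$, verify that $SFlat_{\oar s}\cap Z_{\oar r}\subseteq\s_r(v_d(\pp 2))$, using the classification of such loci in \cite{MR1735271,diesel}.

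Having set-theoretic equality with $\s_r(v_d(\pp n))$ irreducible, I would upgrade to scheme-theoretic by showing the module of minors is saturated in high degree, equivalently (by the conormal-space method announced in \S\ref{conormal}) that at a general smooth point $[\phi]\in\s_r(v_d(\pp n))$ the conormal space to $Z_{\oar r}$ equals $\hat N^*_{[\phi]}\s_r(v_d(\pp n))$. By Proposition \ref{conormalflat} the conormal space to a single flattening locus $Rank^r_{(j,d-j)}(S^dV)$ at a general point is $\ker(\phi_{j,d-j})\circ\operatorname{im}(\phi_{j,d-j})^\perp$, and by Proposition \ref{nstarsrvd} the conormal space to $\s_r(v_d(\pp n))$ is the space of degree-$d$ forms singular at the $r$ points $[y_1],\dots,[y_r]$; so the claim reduces to the linear-algebra identity that, for general $y_i$, this latter space is spanned by the conormal contributions of the two (resp.\ one) flattenings used. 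For $n\ge 3$ in part (\ref{s3vdpnthm}) one first handles $n=2$ and then invokes Proposition \ref{syminherit}, noting $\s_3(v_d(\pp n))\subset Sub_3(S^dV)$ is already cut out scheme-theoretically by the $(1,d-1)$-minors away from the issue, so that adding the inherited $(\dtwo,d-\dtwo)$-minors suffices. The main obstacle I anticipate is precisely the exclusion of the extraneous irreducible components of the minor scheme for the borderline values of $d$ (especially $d=4$ in parts (\ref{s3vdpnthm}), (\ref{s45vdp2thm}) and $d=6$ in parts (\ref{s5vdp2thm}), (\ref{s6vdp2thm})), where the bound $r\le\binom{\d-1+n}{n}$ of Theorem \ref{maincata} is tight or nearly so and one cannot afford any slack; there one must lean directly on the explicit component analysis of Iarrobino–Kanev and Diesel rather than on a uniform argument.
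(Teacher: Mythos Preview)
Your overall strategy matches the paper's: reduce to $n=2$ by inheritance, then use Diesel's irreducibility of $Gor(\oar r)$ together with the Iarrobino--Kanev identification of $\s_r(v_d(\pp 2))$ with the appropriate $Gor$. But two concrete ingredients are missing, and a third step is where the real work lies.

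First, you never invoke the fact that lets a bound on the \emph{single} middle flattening control the full rank sequence: Macaulay's theorem (cited in the paper as \cite[Thm.\ 4.2]{MR0485835}) that for $n\le 2$ the ranks $\trank(\phi_{s,d-s})$ are nondecreasing in $s$ for $1\le s\le\lfloor d/2\rfloor$. Without it there is no reason the locus $\{\trank\phi_{\d,d-\d}\le r\}$ decomposes into the $SFlat_{\oar s}$ strata you describe; a priori some other flattening could have rank larger than $r$. The paper uses this repeatedly and explicitly.

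Second, your conormal-space computation does not upgrade set-theoretic to scheme-theoretic equality. Matching conormal spaces at a general smooth point only shows the minor scheme is \emph{generically reduced} along $\s_r$; it says nothing about embedded components supported on proper subvarieties, hence does not yield equality of saturated ideals. The paper does not take this route (and is itself terse on the point, leaning on the structure theory of the $Gor$ loci from Iarrobino--Kanev and Diesel rather than on a tangent-space count).

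Third, what you defer as ``delicate bookkeeping'' is in fact the substance of the proof. The paper replaces it with a concrete classification: Lemma \ref{admisseqlem} lists all admissible sequences $\oar r$ with $r_1=3$ and $r_i\le 5$, proved via the Buchsbaum--Eisenbud resolution of the apolar ring $A_\phi$ together with Diesel's saturated-resolution constraints. For $\s_5$ and $\s_6$ one must then show by hand that the ``extra'' $Gor(\oar r)$'s---for instance $Gor(3,4,5,\ldots,5)$ in part (\ref{s5vdp2thm})---are contained in $\s_r$; this is done by a direct analysis of the base locus of $\ker\phi_{2,d-2}$ (reducing to a binary form plus a power), not by citation. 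That explicit case-by-case containment is the content of the argument and cannot be bypassed.
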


\begin{remark} In the recent preprint \cite{bucbuc} it is proved that the variety
 $\sigma_r(v_d(\P n))$ is defined set-theoretically
by the $(r+1)\times (r+1)$ minors of the  $(i, d-i)$ flattenings
for $n\le 3$, $2r\le d$ and $r\le i\le d-r$.
    Schreyer proved  \eqref{s45vdp2thm} in the case $d=4$, 
\cite[Thm. 2.3]{schreyer}. 
\end{remark}

By \cite[Thm 4.2]{MR0485835}, when $n\leq 2$, 
$\trank(\phi_{s,d-s})$ is nondecreasing in $s$ for $1\leq s\leq \lfloor \frac d2\rfloor$. 
 We will use this fact often in this section.

To prove each case, we simply  show the scheme defined by the flattenings
in the hypotheses coincides with the scheme of some $Gor(\oar r)$ with $\oar r$ admissible and
then Theorem \ref{maincata} combined with Theorem \ref{oarrirred} implies the result. 
The first step is to determine which $\oar r$ are admissible.

\begin{lemma}\label{admisseqlem}  
The only admissible sequences $\oar{r}$ with $r_1=3$ and $r_i\leq 5$ are
\begin{enumerate}
\item $(3\hd 3)$ for $d\geq 3$,
\item $(3,4,4\hd 4)$ for $d\geq 4$,
\item $(3,5,5\hd 5)$ for $d\geq 4$,
\item $(3,4,5\hd 5)$ for $d\geq 6$.
\end{enumerate}
\end{lemma}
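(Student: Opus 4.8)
The plan is to determine, for a given degree $d$, exactly which sequences $\oar r = (r_1, \ldots, r_{\dtwo})$ with $r_1 = 3$ and all $r_i \le 5$ are admissible, meaning realized as the rank sequence $(\trank \phi_{j,d-j})_{j=1}^{\dtwo}$ of some $\phi \in S^d \BC^3$. I would rely crucially on the monotonicity fact quoted just before the lemma: by \cite[Thm.~4.2]{MR0485835}, for $n=2$ the sequence $\trank(\phi_{s,d-s})$ is nondecreasing in $s$ for $1 \le s \le \dtwo$. Since each catalecticant $\phi_{s,d-s}\colon S^s(\BC^3)^* \to S^{d-s}(\BC^3)$ has source of dimension $\binom{s+2}{2}$, we also have the automatic upper bound $r_s \le \binom{s+2}{2}$. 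Combined with $r_1 = 3$ (which forces $\phi \notin Sub_2(S^dV)$, i.e. $[\phi]$ genuinely uses all three variables) and the constraint $r_i \le 5$, monotonicity immediately restricts the possible sequences: $r_1 = 3$, then $r_2 \in \{3,4,5\}$ (it cannot be $6 = \binom 4 2$ since we cap at $5$), and once we reach a value it can only stay or increase toward $5$, so the sequence stabilizes. The only candidate profiles are therefore $(3,3,\ldots,3)$, $(3,4,4,\ldots,4)$, $(3,5,5,\ldots,5)$, $(3,4,5,5,\ldots,5)$ — a jump like $(3,4,5)$ requires at least three terms, hence $\dtwo \ge 3$, i.e. $d \ge 6$; the others need only $\dtwo \ge 2$, i.e. $d \ge 4$ (for $d=3$, $\dtwo = 1$ and only $(3)$ survives, subsumed in case (1)).

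The remaining work is the \emph{existence} direction: for each candidate profile and each $d$ in the claimed range, exhibit an explicit $\phi$ achieving it. For $(3,3,\ldots,3)$ the natural candidate is a general sum of three $d$-th powers, $\phi = \ell_1^d + \ell_2^d + \ell_3^d$ with $[\ell_i]$ in general position in $\BP V$: then every polarization has rank exactly $3$ (the flattening $\phi_{s,d-s}$ factors through the $3$-dimensional span of the $\ell_i^s$, and genericity of the points makes the rank maximal, $=3$), confirming admissibility for all $d \ge 3$. For the profiles ending in $4$ or $5$, I would take $\phi$ a general point of $\sigma_4(v_d(\pp 2))$ or $\sigma_5(v_d(\pp 2))$ respectively — a general sum of $4$ (resp.\ $5$) $d$-th powers — and check that $\trank \phi_{1,d-1} = 3$ (automatic, since $5$ general points span $\BP V$) while $\trank\phi_{s,d-s} = \min(r, \binom{s+2}{2})$ for $s \ge 2$, which for $d$ large enough gives $(3,4,4,\ldots)$ resp.\ $(3,5,5,\ldots)$; this is exactly the content of the genericity statement following Theorem \ref{maincata} (the \cite[Thm.~4.1A]{MR1735271} open/closed description), valid in the stated ranges. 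For the mixed profile $(3,4,5,\ldots,5)$ with $d \ge 6$, one needs a $\phi$ that is a general enough sum of $5$ powers but lying on the proper subvariety $Gor(3,4,5,\ldots,5)$ where the $(2,d-2)$-flattening drops to rank $4$; here I would invoke Diesel's Theorem \ref{oarrirred} (irreducibility of $Gor(\oar r)$ for $\tdim W = 3$) together with the dimension count in \cite{MR1735271}, e.g.\ \cite[Ex.~3.6]{MR1735271}, which is quoted in the excerpt as producing exactly a component of dimension $d+6$ inside the rank-$4$ locus of $\phi_{2,d-2}$ when $d \ge 6$ — this guarantees the profile is nonempty, and one checks the higher flattenings of its general point have rank $5$.

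**The main obstacle** I anticipate is the existence half for the delicate profile $(3,4,5,\ldots,5)$: ruling out that the locus where $\phi_{2,d-2}$ has rank $4$ while $\phi_{3,d-3}$ (and beyond) has rank $5$ is actually empty, or is forced to have $\phi_{1,d-1}$ also drop. This is where one cannot just take a "general" point of a secant variety but must argue on a specific Gorenstein stratum, and it genuinely uses the structure theory of \cite{MR1735271} and \cite{diesel} rather than elementary polarization bounds; in particular the constraint $d \ge 6$ is sharp precisely because for $d = 4,5$ there is no room ($\dtwo \le 2$) for a strictly increasing length-three initial segment. The converse direction (no other profiles occur) is, by contrast, essentially a two-line consequence of monotonicity and the dimension caps $r_s \le \binom{s+2}{2}$, so I would dispatch it first.
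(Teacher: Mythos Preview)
Your proposal has a genuine gap in the necessity direction, which you describe as ``essentially a two-line consequence of monotonicity and the dimension caps.'' Monotonicity and the bound $r_s \le \binom{s+2}{2}$ do \emph{not} by themselves restrict the candidates to the four listed profiles. For example, the sequences $(3,3,4,5,\ldots,5)$, $(3,4,4,5,\ldots,5)$, $(3,3,3,4,\ldots,4)$ are all nondecreasing, start at $3$, are bounded by $5$, and respect $r_s\le\binom{s+2}{2}$; yet none of them appears in the list, and you have given no argument excluding them. What is actually needed is a constraint on the \emph{growth} of the sequence: for a Gorenstein Artinian quotient of $\BC[x,y,z]$ (which $A_\phi$ is), Stanley's theorem says the first differences of the first half of the Hilbert function form an O-sequence. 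In particular, once a difference $r_{i+1}-r_i$ drops to $0$, all subsequent differences are $0$ as well, so the sequence stabilizes at that value. Applying this after the step $r_1=3$ immediately forces the three-way split on $r_2\in\{3,4,5\}$ to behave exactly as in the lemma and kills the spurious profiles above.

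The paper's own proof avoids the SI-sequence language and instead goes through the Buchsbaum--Eisenbud structure theorem for codimension-three Gorenstein ideals: since $r_2\le 5<6$ the resolution has exactly $u=5$ generators, and the symmetry relations $q_i+q_{u-i+2}=d+2$ together with the count of degree-$2$ generators (at most three) pin down the possible generator-degree sequences $(q_1,\ldots,q_5)$, from which the Hilbert functions (1)--(4) are read off. This is equivalent information to the SI-sequence constraint but packaged differently. Your existence arguments for the four profiles are fine in outline (and for $(3,4,5,\ldots,5)$ the explicit form $\phi=x^d+\psi(y,z)$ with $\psi$ a general binary sum of four $d$-th powers works cleanly), but without the missing growth constraint the classification half of the lemma is not established.
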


 Lemma \ref{admisseqlem} is proved below. We first show how it implies the results
above, by showing in each case a general  $\phi$ satisfying the hypotheses of the theorem
must be in the appropriate $SFlat_{\oar r}^0$ with $\oar r$ admissible.
Since $\s_r(v_d(\BP V))$ is irreducible, one concludes.

\begin{proof}[Proof of   (\ref{s3vdpnthm})] We may assume that $\trank(\phi_{1,d-1})=3$, 
otherwise we are in the case of two variables.
By inheritance, it is sufficient to prove the result for $n=2$.
In this case, 
if $\trank(\phi_{a,d-a})\le 3$ for   $a=\dtwo$,
then  $\trank(\phi_{a,d-a})\le 3$ for all $a$ such that $2\le a\le d-2$,
and   $\oar r=(3\hd 3)$ is admissible .
\end{proof}

\begin{proof}[Proof of    (\ref{s45vdp2thm})]
  We may assume that $\trank\phi_{1,d-1}=3$. 
Let $d=4$.   If $\trank(\phi_{2,d-2})\le 4$ then by Lemma \ref{admisseqlem}
the only possible $\oar{r}$ are $(3,3)$ and $(3,4)$.
The first case corresponds to $\sigma_3(v_4(\P 2))\subset\sigma_4(v_4(\P 2))$
and the second case is as desired.
The case $d\ge 5$ is analogous.
\end{proof}

\begin{proof}[Proof of    (\ref{s5vdp2thm})]
   If $\phi\not\in\s_4(v_d(\BP W))$, then 
 $\phi\in Gor(3,5\hd 5)=\s_5(v_d(\pp 2))$ or $\phi \in Gor(3,4,5\hd 5)$.
 
It remains to show $Gor(3,4,5\hd 5)\subset \s_5(v_d(\pp 2))$. To prove this
we generalize the argument of \cite[Ex. 5.72]{MR1735271}.

Let $\phi \in SFlat_{3,4,5\hd 5}^0(S^dW)$ and consider the kernel of $\phi_{2,d-2}$ as
  a $2$-dimensional space of plane conics. If the base locus is a zero-dimensional scheme
of length four, then $\phi\in\sigma_4(v_d(\P 2))$, and $\trank(\phi_{3,d-3})\le 4$
which contradicts $\phi \in SFlat_{3,4,5\hd 5}^0(S^dW)$.
 Thus the base locus has dimension one and, in convenient coordinates,
the kernel of $\phi_{2,d-2}$ is   $<xy,xz>$. It follows that 
$\phi=x^d+\psi(y,z)$ and from $\trank(\phi_{a,d-a})=5$
it follows $\trank(\psi_{a,d-a})= 4$.  
 Since $\psi$ has two variables, 
this implies  $\psi\in \sigma_4(v_d(\P 2))$ 
and  
$\phi\in \sigma_5(v_d(\P 2))$ as required. 
\end{proof}

\begin{proof}[Proof of    (\ref{s6vdp2thm})]  
If the sequence of ranks of $\phi_{a,d-a}$  is $\oar r=(1,3,6,\ldots,6,3,1)$
then $\phi\in Gor(\oar r)=\sigma_6(v_d(\P n))$.

Otherwise $\trank(\phi_{2,d-2})\le 5$.
In this case, by an extension of Lemma \ref{admisseqlem} ,
 there are just   two other  possibilities for $\oar r$, namely
$\oar r_1=(1,3,5,6\ldots ,6,5,3,1)$
and $\oar r_2=(1,3,4,5,6\ldots ,6,5,4,3,1)$ for $d\ge 8$.

Consider $\phi\in \oar r_2$. As   above, we may assume that 
 $\tker(\phi_{2,d-2})=<xy,xz>$. It follows that 
$\phi=x^d+\psi(y,z)$. If if $d\le 9$ then  $\psi\in \sigma_5(v_d(\P 2))$.
If $d\ge 10$,     
  $\trank(\psi_{5,d-5})= 5$ because  $\trank(\phi_{5,d-5})=6$. Since $\psi$ has two variables, 
  $\psi\in \sigma_5(v_d(\P 2))$. Thus
$\phi\in \sigma_6(v_d(\P 2))$ as desired.

Consider $\phi\in \oar r_1$. The cases
 $d=6$  and $d\geq 8$ respectively follow  from \cite[Ex. 5.72]{MR1735271}
 and \cite[Thm. 5.71 (i)]{MR1735271}.
Here is a uniform argument for all $d\geq 6$: There is a conic $C$ in the kernel
of $\phi_{2,d-2}$. The four dimensional space of cubics which is the kernel of
$\phi_{3,d-3}$ is generated by $C\cdot L$ where $L$ is any line and a cubic $F$.
Then the $6$ points of $C\cap F$ are  the base locus,
by \cite[Cor 5.69]{MR1735271}, so   
 $\phi\in\sigma_6(v_d(\P 2))$.
\end{proof}

\medskip

Let $\tdim W=3$. We will need the following facts to prove  Lemma \ref{admisseqlem}:
\begin{enumerate}\label{dieselfacts}

\item For $\phi\in S^dW$, consider the ideal $\phi\upperp$ generated in degrees $\leq d$,  by  $\tker(\phi_{a,d-a})\subset S^aW^*$,
$1\leq a\leq d$, and the ring $A_{\phi}:=Sym(W^*)/\phi\upperp$. 
Note that the values  of the Hilbert function of $A_{\phi}$,  
$H_{A_{\phi}}(j)$,
are $T(\oar r):=(1,r_1\hd r_{\llcorner\frac d2\lrcorner},r_{\llcorner\frac d2\lrcorner}\hd r_1,1,0\hd 0)$, where
recall that $r_j=\trank(\phi_{j,d-j})$.

\item 
  \cite[Thm. 2.1]{MR0453723} The  ring $A_{\phi}$  has a minimal free resolution of the form
\begin{align}\label{BE}
0&\rig{}Sym(W^*)(-d-3)\rig{g^*}\oplus_{i=1}^uSym(W^*)(-{d+3-q_i})\\
& \nonumber \ \ \ \rig{h}\oplus_{i=1}^uSym(W^*)(-q_i)\rig{g}Sym(W^*)(0) \ra A_{\phi}\ra 0
\end{align}
where the $q_j$ are non-decreasing. 
Here $Sym(W^*)(j)$ denotes $Sym(W^*)$ with the labeling of degrees shifted by $j$,
so   $\tdim (Sym(W^*)(j))_d=\binom{d+j+2}{2}$.

\item 
Moreover \cite[Thm 1.1]{diesel} there is a unique   resolution with the properties
$q_1\leq\frac{d+3}2$, $q_i+q_{u-i+2}=d+2$, $2\leq i\leq \frac{u+1}2$  having $T(\oar r)$ as the values of its Hilbert function.   

\item Recall that $H_{A_{\phi}}(j)$ is also the alternating sum of the dimensions of the degree $j$
term in \eqref{BE} (forgetting the last term). Thus the $q_j$ determine the $r_i$.

\item \cite[Thm. 3.3]{diesel} Letting $j_0$ be the smallest $j$ such that $\phi_{j,d-j}$ is not injective, then $u=2j_0+1$ in the resolution
above.

\end{enumerate}

\begin{remark} Although we do not need these facts from \cite{MR0453723} here, we note that above: 
  $h$ is skew-symmetric,
$g$ is defined by the principal sub-Pfaffians of $h$, $A_{\phi}$ is an Artinian Gorenstein graded $\BC$-algebra
and 
 any Artinian Gorenstein graded 
$\BC$-algebra is isomorphic to $A_\phi$ for a polynomial $\phi$ uniquely determined up to constants.
The ring $A_{\phi}$ is called the {\it apolar ring} of $\phi$, the resolution above
is called {\it saturated}.  
\end{remark}

\begin{proof}[Proof of Lemma \ref{admisseqlem}] 
 Let $\oar r$ be a sequence as in the assumptions. Since $\tdim S^2\BC^3=6>5$, $u=5$ by Fact \ref{dieselfacts}(5).
Consider the unique   resolution satisfying  Fact \ref{dieselfacts}(3) having $T(\oar r)$ as Hilbert function.
The number of generators in degree $2$ is the number of the  $q_i$ equal  to $2$.
We must have $q_1=2$, otherwise, by computing the Hilbert function, $H(A_{\phi})(2)=6$ contrary to  assumption.  
The maximal number of generators in degree $2$ is three, otherwise we would have
$2+2=q_4+q_{5-4+2}= d+2$, a contradiction. 

If there are three generators in degree $2$ then $q_i=(2,2,2,d,d)$,
  and computing the Hilbert function via the Euler characteristic, we are in case (1)
  of the Lemma.

If there are two generators in degree $2$
then there are the two possibilities $q_i=(2,2,3,n-1,n)$ (case(2))
or $q_i=(2,2,4,n-2,n)$ (case (4)). Note that $q_i=(2,2,5,n-3,n)$ for $n\ge 8$   is impossible because 
otherwise
  $\oar r=(1,3,4,5,6,\ldots )$ contrary to  assumption. By similar arguments
one shows that  there are no other possibilities.

If there is one generator in degree $2$ then
$q_i=(2,3,3,n-1,n-1 )$ (case (3)).
\end{proof}

  \begin{proposition}\label{boundeven}
Let $\tdim V=3$ and $p\ge 2$. If the variety $\s_{r}(v_{2p}(\BP V))$ 
 is an irreducible component of
 $\s_{r}(v_2(\BP S^pV))\cap \BP S^{2p}V$
 then  $r\le  \frac{1}{2}p(p+1)$ or $(p,r)=(2,4)$ or $(3,9)$.

 \end{proposition}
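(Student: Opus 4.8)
The plan is to bound $r$ by comparing the dimension of $\s_r(v_2(\BP S^pV))\cap \BP S^{2p}V$ with the dimension of $\s_r(v_{2p}(\BP V))$, using the classical Segre formula of Proposition \ref{segre} for the codimension of $\s_r(v_2(\pp N))$ inside $\BP S^2(\BC^{N+1})$. Here $N+1=\tdim S^pV=\binom{p+2}{2}$, so $\s_r(v_2(\BP S^pV))\subset \BP(S^2 S^pV)$ has codimension $\binom{N-r+2}{2}$ whenever this is a nondefective secant variety, i.e.\ whenever $r\le N$. First I would note that by Remark \ref{splitflatrem} the scheme $\s_r(v_2(\BP S^pV))\cap \BP S^{2p}V$ is exactly the rank-$\le r$ locus of the flattening $\phi_{p,p}$, i.e.\ $Rank^r_{(p,p)}(S^{2p}V)$, so that the hypothesis says $\s_r(v_{2p}(\BP V))$ is an irreducible component of the symmetric-flattening locus. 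If $\s_r(v_{2p}(\pp 2))$ is nondefective (the defective cases in dimension $3$ are classically known and give precisely the listed exceptions $(p,r)=(2,4),(3,9)$), its affine dimension is $3r$, while $\dim \BP S^{2p}V = \binom{2p+2}{2}-1$.

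The key inequality is then: for $\s_r(v_{2p}(\BP V))$ to be an \emph{irreducible component} of the flattening locus, we need its codimension to be at least the codimension of the flattening locus (a component of an intersection cannot have codimension smaller than that of the ambient-intersected variety), giving
$$
\binom{2p+2}{2}-1-3r \;\ge\; \binom{N-r+2}{2}
$$
as long as the $\s_r$ of the Veronese in $S^2 S^pV$ is nondefective so that Segre's formula applies; when it is defective, the relevant codimension only drops, making the inequality harder, and those defective cases again land in the exceptional list. Substituting $N=\binom{p+2}{2}-1$ and analyzing this polynomial-in-$p$ inequality, I expect that for $r$ slightly above $\tfrac12 p(p+1)$ the right-hand side becomes strictly larger than the left-hand side, forcing a contradiction; the threshold $r=\tfrac12 p(p+1)=\binom{p+1}{2}$ should emerge as exactly the value where $N-r = \binom{p+2}{2}-1-\binom{p+1}{2} = p$, i.e.\ the point at which the flattening $\phi_{p,p}$ first fails to be generically of maximal rank on $S^{2p}V$ and the count balances.

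The main obstacle will be handling the borderline and small-$p$ cases carefully: the Segre codimension formula requires nondefectivity of $\s_r(v_2(\BP S^pV))$, and both that secant variety and $\s_r(v_{2p}(\pp 2))$ can be defective for small parameters, so I would need to invoke the classification of defective Veronese secant varieties (Alexander--Hirschowitz in dimension $2$, which is classical) to isolate exactly $(p,r)=(2,4)$ and $(3,9)$ as the genuine exceptions, and then verify the clean inequality $r\le\tfrac12 p(p+1)$ in all remaining nondefective cases by a direct estimate on binomial coefficients. A secondary subtlety is making precise the step ``a component of $Z\cap L$ has codimension $\ge\operatorname{codim} Z$'': here $L = \BP S^{2p}V$ is linear, $Z$ is the determinantal variety $\s_r(Seg)$, and the claim is the standard fact that every component of $Z\cap L$ has codimension in $L$ at most $\operatorname{codim}_{}Z$ — wait, that is the wrong direction, so instead I would use that $\s_r(v_{2p}(\pp 2))$, being an irreducible component of $Z\cap L$, has dimension equal to the dimension of some component of that intersection, and a component of an intersection with a linear space has codimension $\le\operatorname{codim}Z$, hence $\dim\s_r(v_{2p}(\pp 2)) \ge \dim L - \operatorname{codim}_{\BP(S^2S^pV)}Z$, which rearranges to the displayed inequality above. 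Getting this bookkeeping exactly right is the crux.
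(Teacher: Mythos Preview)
Your overall strategy is exactly the paper's: write $\tcodim \s_r(v_{2p}(\pp 2)) = \binom{2p+2}{2} - 3r$ (valid away from the single defective case), bound the codimension of any irreducible component of the linear section by the Segre codimension $\tfrac12\bigl[\binom{p+2}{2}-r\bigr]\bigl[\binom{p+2}{2}-r+1\bigr]$ of $\s_r(v_2(\BP S^pV))$, and solve. You eventually land on the correct direction of the inequality (it is $\le$, not the $\ge$ you first display), and the off-by-one in $\binom{2p+2}{2}-1-3r$ is just projective-vs-affine bookkeeping.

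The genuine gap is your explanation of the exceptions. You plan to ``invoke the classification of defective Veronese secant varieties \ldots\ to isolate exactly $(p,r)=(2,4)$ and $(3,9)$,'' but neither of these is defective: for $v_{2p}(\pp 2)$ with $p\ge 2$ the only defective case is $(p,r)=(2,5)$, which is why the paper's proof singles that pair out when writing the codimension formula. The exceptions come purely from the algebra. Setting $s=\binom{p+2}{2}-r$, the inequality $\binom{2p+2}{2}-3r \le \tfrac12 s(s+1)$ rewrites as $p^2-3p-4 \le s^2-5s$, i.e.\ $(s-(p+1))(s-(4-p))\ge 0$. The branch $s\ge p+1$ gives the main bound $r\le \binom{p+1}{2}=\tfrac12 p(p+1)$; the branch $s\le 4-p$ is vacuous for $p\ge 4$ (it forces $s\le 0$, whence $\s_r(v_2(\BP S^pV))$ already fills), while for $p=2,3$ it yields exactly the extra values $r=4$ and $r=9$ after discarding $r$'s for which $\s_r(v_{2p}(\pp 2))$ fills $\BP S^{2p}V$. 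Your worry that $\s_r(v_2(\pp N))$ might itself be defective is also a red herring: symmetric rank loci always have the Segre codimension.
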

\begin{proof}
Recall that for $(p,r)\neq (2,5)$ 
 \begin{equation}
  \tcodim \s_{r}(v_{2p}(\BP V))= {{2p+2}\choose 2}-3r
\end{equation}
and from Proposition \ref{segre} every irreducible component $X$ of  $\s_{r}(v_2(\BP S^pV))\cap \BP S^{2p}V$
satisfies

\begin{equation} 
 \tcodim X \le
\frac{1}{2}\left[{{p+2}\choose{2}}-r\right]\left[{{p+2}\choose{2}}-r+1\right]
\end{equation}
The result follows  by solving the inequality.
\end{proof}

For the case  $(p,r)=(2,4)$ 
see   Theorem \ref{bigflatthm}(\ref{s45vdp2thm}) and for the case $(p,r)=(3,9)$
see   Theorem \ref{sextics}.

\begin{corollary}\label{symflat}
$\deg \s_{{{p+1}\choose 2}}(v_{2p}(\pp 2))\le  \prod_{i=0}^{p}\frac{{{(p+2)(p+1)/2+i}\choose{p-i+1}}}
{{{2i+1}\choose{i}}}$.

 If   equality holds then  
 $\s_{{{p+1}\choose 2}}(v_{2p}(\pp 2))$   
 is given scheme-theoretically by the $\left(\frac{1}{2}\right)(p^2+p+2)$ minors of 
 the $(p,p)$-flattening.                       
\end{corollary}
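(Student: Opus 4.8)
The plan is to deduce the degree bound by applying the codimension/degree estimate from B\'ezout-type considerations to the inclusion $\s_r(v_{2p}(\pp 2)) \subseteq \s_r(v_2(\BP S^pV)) \cap \BP S^{2p}V$, exactly in the spirit of the proof of Proposition \ref{boundeven}. Set $r = \binom{p+1}{2} = \tfrac12 p(p+1)$, so that, by Proposition \ref{boundeven} and Theorem \ref{maincata} (the bound $r \le \binom{\d-1+n}{n} = \binom{p+1}{2}$ is met with equality here since $\d = p$, $n = 2$), the variety $\s_r(v_{2p}(\pp 2))$ is an irreducible component of $Z := \s_r(v_2(\BP S^pV)) \cap \BP S^{2p}V$, which is the zero locus of the size $(r+1)$ minors of the $(p,p)$-flattening. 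First I would compute $\tcodim \s_r(v_{2p}(\pp 2)) = \binom{2p+2}{2} - 3r$ (non-defectivity holds here since $(p,r) \ne (2,5)$), and observe that $Z$ is cut out inside $\BP(S^2 S^pV)$ by the determinantal variety $\s_r(v_2(\BP S^pV))$ intersected with the linear subspace $\BP S^{2p}V$. One then applies the refined B\'ezout Theorem \ref{refinedbezout} with $X = \s_r(v_2(\BP S^pV))$ (of known codimension $\binom{N - r + 1}{2}$ and known degree from Proposition \ref{segre}, where $N = \binom{p+2}{2} - 1$) and $L = \BP S^{2p}V$: since $\s_r(v_{2p}(\pp 2))$ is a component of $Z$ of the expected codimension $f + e$ in $\pp N$, Theorem \ref{refinedbezout} gives $\deg \s_r(v_{2p}(\pp 2)) \le \deg \s_r(v_2(\BP S^pV))$, and the Segre formula of Proposition \ref{segre} with $r = \binom{p+1}{2}$, i.e. $\binom{p+2}{2} - r = p + 1$, evaluates the right-hand side to exactly $\prod_{i=0}^{p} \binom{(p+2)(p+1)/2 + i}{p - i + 1} / \binom{2i+1}{i}$.

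For the second assertion, suppose equality $\deg \s_r(v_{2p}(\pp 2)) = \deg \s_r(v_2(\BP S^pV))$ holds. Then the hypothesis of Theorem \ref{refinedbezout} is satisfied with $Y = \s_r(v_{2p}(\pp 2))$, $Z = \s_r(v_2(\BP S^pV))$, $L = \BP S^{2p}V$: $Y$ is an irreducible component of $Z \cap L$ of codimension $e + f$ with $\deg Y = \deg Z$. The theorem then concludes $Z \cap L = Y$ as \emph{sets}; but in fact Theorem \ref{refinedbezout} (via \cite[Thm. 12.3]{FultonIT}) gives more — the equality of degrees forces the intersection to carry no excess and to be generically reduced along $Y$, so that the scheme defined by the size $(r+1)$ minors of $\phi_{p,p}$ coincides with $\s_r(v_{2p}(\pp 2))$ scheme-theoretically. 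I would spell this out by noting that $\binom{N-r+1}{2} + \binom{p+2-r}{2}\cdot 0$-type dimension count already pins down the Hilbert polynomial, and then invoke that $\s_r(v_{2p}(\pp 2))$ being cut out scheme-theoretically by the $(r+1)\times(r+1) = (\tfrac12(p^2+p+2))\times(\tfrac12(p^2+p+2))$ minors is equivalent to the containment of ideals in high degree, which the degree equality supplies.

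The main obstacle is the scheme-theoretic (as opposed to merely set-theoretic) conclusion in the second part: Theorem \ref{refinedbezout} as stated only yields set equality, so I must extract scheme structure from the equality-of-degrees case of the refined B\'ezout theorem. The cleanest route is to use that when $\deg Y = \deg Z$ the refined B\'ezout distinguished-variety decomposition has $Y$ appearing with coefficient exactly $\deg Z / \deg Y = 1$ and no other distinguished varieties, which forces the intersection cycle $Z \cdot L$ to equal $[Y]$; combined with $Z \cap L = Y$ set-theoretically, this gives that the scheme $Z \cap L$ is generically reduced of the expected dimension, hence (being defined by the minors) equals $\s_r(v_{2p}(\pp 2))$ scheme-theoretically. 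A secondary point to check carefully is that the codimension bookkeeping goes through: $e = \tcodim_{\pp N}\s_r(v_2(\BP S^pV))$ and $f = \tcodim_{\pp N}\BP S^{2p}V = \binom{2p+p}{p}$-type quantity must satisfy $e + f = \binom{2p+2}{2} - 3r - 1$ in $\pp{(\binom{2p+2}{2}-1)}$, i.e. the two descriptions of the codimension of $\s_r(v_{2p}(\pp 2))$ agree — this is a finite computation using Proposition \ref{segre} and $\binom{p+2}{2} - r = p+1$, but it is the load-bearing numerical identity underlying the whole argument.
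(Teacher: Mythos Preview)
Your approach is essentially identical to the paper's: embed $\s_r(v_{2p}(\pp 2))$ in the linear section $\s_r(v_2(\BP S^pV))\cap \BP S^{2p}V$, invoke Theorem \ref{maincata} to see it is an irreducible component there, read off the degree bound from Proposition \ref{segre} via refined B\'ezout, and then apply Theorem \ref{refinedbezout} when equality holds. One small correction: Proposition \ref{boundeven} is a converse statement and plays no role here---the irreducible-component assertion comes directly from Theorem \ref{maincata} (the bound $r\le\binom{p+1}{2}$ is met with equality); also note that Theorem \ref{refinedbezout} as stated in the paper has $\deg Y=\deg Z$ as a \emph{hypothesis}, so the inequality $\deg Y\le\deg Z$ is really coming from the underlying Fulton result, exactly as you indicate.

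Your extended discussion of the scheme-theoretic conclusion is more careful than the paper's own proof, which simply cites Theorem \ref{refinedbezout} and stops. Your instinct is right that one needs the multiplicity of $Y$ in the intersection cycle to be $1$ and no embedded components; the cleanest way to close this is to note that $\s_r(v_2(\BP S^pV))$ is arithmetically Cohen--Macaulay (classical for symmetric determinantal loci), so a proper linear section remains Cohen--Macaulay and hence unmixed, and then multiplicity $1$ forces the section to be reduced and equal to $Y$ as a scheme. The codimension identity you flag at the end, namely $\binom{2p+2}{2}-\binom{p+2}{2}=3\binom{p+1}{2}$, is indeed the numerical check that $\tcodim Y = e+f$, and it holds.
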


 The values of the right hand side of the inequality for $p=1,\ldots 4 $ are respectively $4$, $112$,  $28314$, $81662152$.   

\begin{proof}
The right hand side is   
$\deg \s_{{{p+1}\choose 2}}(v_2(\BP^{p(p+3)/2}))$ by   Proposition \ref{segre}.
If $\phi\in\s_{{{p+1}\choose 2}}(v_{2p}(\pp 2))$ then $\trank(\phi_{p,p})\le
{{{p+1}\choose 2}}$. This means   $\s_{{{p+1}\choose 2}}(v_{2p}(\pp 2))$
is contained in a linear section of $\s_{{{p+1}\choose 2}}(v_2(\BP^{p(p+3)/2}))$,
and  by   Theorem \ref{maincata} it is a irreducible component of this linear section.
The result follows by the refined Bezout Theorem \ref{refinedbezout}.
\end{proof}

 Equality holds in Corollary \ref{symflat}  for  the cases $p=1, 2, 3$ by Proposition \ref{112}. 
The case $p=1$ corresponds to the quadratic Veronese surface and the cases $p=2, 3$ will be considered 
respectively in
Thm.\ref{bigflatthm} (\ref{s3vdpnthm}) and Theorem \ref{s6v6p2}.
For $p\ge 4$ these numbers are out of the range of the results in    \cite{MR1317230} (the points are too few 
to be fixed points of a torus action) and we do not know if   equality holds.

\section{Young flattenings for Veronese varieties}\label{yflatsect}

\subsection{Preliminaries}
 In what follows we fix $n+1=\tdim V$ and    endow $V$ with a volume form
 and thus identify (as $SL(V)$-modules)
 $S_{(p_1\hd p_{n+1})}V$ with $S_{(p_1-p_{n+1},p_2-p_{n+1}\hd p_{n}-p_{n+1},0)}V$.
 We   will say $(p_1-p_n,p_2-p_n\hd p_{n-1}-p_n,0)$ is the
{\it reduced  partition} associated to $(p_1\hd p_n)$.
  
 \smallskip

 The {\it Pieri formula} states that 
 $S_{\pi}V^*\subset S_{\nu}V^*\ot S^dV^*$ iff the Young diagram
 of 
 $\pi$ is obtained by adding $d$ boxes to the Young diagram
 of $\nu$, with no two boxes added to the same column. Moreover, 
   if this occurs, the multiplicity of $S_{\pi}V^*$ in
 $S_{\nu}V^*\ot S^dV^*$ is one.

 \smallskip
 
 We record the basic observation that the dual
 $SL(V)$-module to $S_{\pi}V$ is obtained
 by considering the complement to $\pi$ in the
 $\ell(\pi)\times \tdim V$ rectangle and rotating it
 to give a Young diagram whose associated partition
we denote $\pi^*$.

\smallskip
   
 Say $S_{\pi}V^*\subset S_{\nu}V\ot S^dV^*$ and consider the    map
 $S^dV\ra S_{\pi}V \ot S_{\nu}V^*$.
Let $S_{\mu}V=S_{\nu}V^*$ where $\mu$ is the reduced partition
with this property.
We obtain an inclusion $S^dV\ra S_{\pi}V \ot S_{\mu}V$.

 Given
 $\phi\in S^dV$,   let
 $\phi_{\pi,\mu} \in S_{\pi}V \ot S_{\mu}V$ denote the corresponding
 element.  If $S_{\mu}V=S_{\nu}V^*$ as an $SL(V)$-module, we will
also write $\phi_{\pi,\nu^*}=\phi_{\pi,\mu}$ when we consider it
as a linear map $S_{\nu}V\ra S_{\pi}V$.

{The following proposition is an immediate consequence of the subadditivity for ranks of linear maps.}

\begin{proposition}\label{mosect}
  Rank conditions on $\phi_{\pi,\mu}$  
 provide equations for the secant varieties of $v_d(\BP V)$ 
 as follows: Let $[x^d]\in v_d(\BP V)$.
 Say $\trank(x^d_{\pi,\mu})=t$. 
If $[\phi]\in \s_r(v_d(\BP V))$, then
$
\trank(\phi_{\pi,\mu})\leq rt
$.
Thus if $r+1\leq \tmin\{ \tdim S_{\pi}V,\tdim S_{\mu}V\}$,  the $(rt+1)\times (rt+1)$ minors of $\phi_{\pi,\mu}$ provide equations for
$\s_r(v_d(\BP V))$, i.e.,
$$
\La{rt+1}(S_{\pi}V^*)\ot \La{rt+1}(S_{\nu}V^*) \subset I_{rt+1}(\s_r(v_d(\BP V))).
$$
\end{proposition}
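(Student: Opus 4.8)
The plan is to reduce the statement to two elementary observations: the assignment $\phi\mapsto\phi_{\pi,\mu}$ is linear, and the rank of a sum of matrices is at most the sum of their ranks. First I would record that $S^dV\ra S_{\pi}V\ot S_{\mu}V$, $\phi\mapsto\phi_{\pi,\mu}$, is a $GL(V)$-equivariant linear map, being a composition of the $GL(V)$-equivariant maps constructed just above (the Pieri inclusion $S^dV\ra S_{\pi}V\ot S_{\nu}V^*$ together with the identification $S_{\mu}V=S_{\nu}V^*$). Since $GL(V)$ acts transitively on $\BP V$, hence on $\{[x^d]\mid [x]\in\BP V\}=v_d(\BP V)$, and since any $g\in GL(V)$ acts invertibly on $S_{\pi}V$ and on $S_{\mu}V$, the integer $\trank(x^d_{\pi,\mu})$ is independent of the choice of $[x^d]\in v_d(\BP V)$; this justifies calling it $t$.

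Second, I would apply this to a point of the form $\phi=x_1^d+\cdots+x_r^d$: by linearity $\phi_{\pi,\mu}=\sum_{i=1}^r (x_i^d)_{\pi,\mu}$, a sum of $r$ linear maps each of rank $t$, so subadditivity of rank gives $\trank(\phi_{\pi,\mu})\le rt$. Now the set $\{[\phi]\in\BP S^dV\mid \trank(\phi_{\pi,\mu})\le rt\}$ is Zariski closed, being the common zero locus of the $(rt+1)\times(rt+1)$ minors of $\phi_{\pi,\mu}$, and these minors are homogeneous polynomials of degree $rt+1$ on $S^dV$ because the entries of $\phi_{\pi,\mu}$ depend linearly on $\phi$. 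This closed set contains every point $[x_1^d+\cdots+x_r^d]$, and such points are dense in $\s_r(v_d(\BP V))$ by the very definition of the secant variety, so the closed set contains $\s_r(v_d(\BP V))$. Hence each of these minors vanishes identically on $\s_r(v_d(\BP V))$, i.e., lies in $I_{rt+1}(\s_r(v_d(\BP V)))$.

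To finish I would identify the module so obtained: regarding $\phi_{\pi,\mu}$ as a linear map $S_{\nu}V\ra S_{\pi}V$, its $(rt+1)\times(rt+1)$ minors span exactly the module $\La{rt+1}(S_{\pi}V^*)\ot\La{rt+1}(S_{\nu}V^*)$ inside $S^{rt+1}(S^dV^*)$, just as for minors of an arbitrary linear map between these two spaces, which is the asserted inclusion. The hypothesis $r+1\le\tmin\{\tdim S_{\pi}V,\tdim S_{\mu}V\}$ enters only to ensure the equations are not vacuous, i.e., that minors of the relevant size exist (bearing in mind $t\ge1$, the sharper condition one actually uses is $rt+1\le\tmin\{\tdim S_{\pi}V,\tdim S_{\nu}V\}$).

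I do not anticipate a genuine obstacle: as the authors remark, the proposition is an immediate consequence of subadditivity of rank. The only two points deserving explicit mention are the well-definedness of $t$, handled by $GL(V)$-equivariance together with transitivity of $GL(V)$ on $\BP V$, and the passage from the dense family $[x_1^d+\cdots+x_r^d]$ to all of $\s_r(v_d(\BP V))$ via Zariski closure, which is where the definition of $\s_r$ as a closure is used.
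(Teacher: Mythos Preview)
Your proof is correct and follows exactly the approach the paper indicates: the authors state only that the proposition ``is an immediate consequence of the subadditivity for ranks of linear maps,'' and your argument fleshes out precisely that, together with the linearity of $\phi\mapsto\phi_{\pi,\mu}$ and the standard Zariski-closure step.
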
 
  

\begin{remark}

 From the inclusion $S^dV\subset V\ot S^aV\ot S^{d-a-1}V$ we obtain  $v_d(\BP V)\subset Seg(\BP V\times \BP S^aV\times \BP S^{d-a-1}V)$.
  Strassen's equations for
$\s_{n+s}(Seg(\pp 2\times \pp{n-1}\times \pp{n-1}))$
give equations for secant varieties of Veronese varieties via this three-way
flattening. The Aronhold equation   comes from the Strassen equations  for $\s_3(Seg(\pp 2\times \pp 2\times\pp 2))$.


So far we have not obtained any new equations using three way symmetric flattenings and
Young flattenings.
We mention three-way symmetric flattenings because they  may be useful in future investigations, especially
when further modules of equations for
secant varieties of  triple Segre products are found.
\end{remark}

Let
$$
YFlat_{\pi,\mu}^t(S^dV):=\ol{
\{ \phi\in S^dV\mid \trank(\phi_{\pi,\mu})\leq t\} }
=
\s_t(Seg(\BP S_{\pi}V\times \BP S_{\mu}V))\cap \BP S^dV.
$$

\begin{remark} Recall  $YF_{d,n}^r$ from \S\ref{YFsect} whose
description had redundancies. We can now give an irredundant description
of its defining equations:
$$
YF_{d,n}^r=YFlat_{((\d+1)^a,\d^{n-a}), (\d+1,1^a)}^{\binom n{\lfloor \frac n2\rfloor} r}(S^dV).
$$

{This is a consequence of Schur's Lemma, because the module $S_{(\d+1,1^a)}V$ is the only one appearing in both sides of
$S^dV\otimes S^{\d}V^*\otimes\wedge^aV\to S^{\d}V\otimes\wedge^{a+1}V$}
\end{remark}

Note that if $S_{\pi}V\simeq S_{\mu}V$ as $SL(V)$-modules and the map is
symmetric, then
$$
YFlat_{\pi,\mu}^t(S^dV)=\s_t(v_2(\BP S_{\pi}V))\cap \BP S^dV,
$$
and if it is skew-symmetric, then 
$$
YFlat_{\pi,\mu}^t(S^dV)=\s_t(
G(2, S_{\pi}V))\cap \BP S^dV.
$$

 \subsection{The surface case, $\s_r(v_d(\pp 2))$}\label{surfacecasesect}
 
 In this subsection fix $\tdim V=3$ and a volume form $\Omega$ on $V$.
 From the general  formula for $\tdim S_{\pi}V$ (see, e.g., \cite[p78]{FH}), we record
 the special case:
 \be\label{dimsab}
 \tdim S_{a,b}\BC^3=\frac 12(a+2)(b+1)(a-b+1).
\ene

 \begin{lemma}\label{x3ranklem} Let $a\geq b$. Write $d=\a+\b+\g$ with $\alpha\leq b$,
 $\b\leq a-b$ so  $S_{(a+\g-\a,b+\b-\a)}V\subset
 S_{a,b}V\ot S^dV$. For $\phi\in S^dV$, consider the
 induced map
\be\label{inducedmap}
 \phi_{(a,b),(a+\g-\a,b+\b-\a)}: S_{a,b}V^*\ra S_{(a+\g-\a,b+\b-\a)}V.
\ene 
 Let $x\in V$, then 
 \be\label{rankxcube}\trank((x^d)_{(a,b),(a+\g-\a,b+\b-\a)})=
 \frac 12(b-\a+1)(a-b-\b+1)(a+\b-\a+2)=: R .
 \ene
Thus in this situation $\La {pR+1}(S_{ab}V)\ot \La {pR+1}(S_{a+\g-\a,b+\b-\a}V)$ gives
nontrivial degree $pR+1$ equations for $\s_p(v_d(\pp 2))$.
 \end{lemma}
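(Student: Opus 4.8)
The plan is to reduce the computation of $\trank((x^d)_{(a,b),(a+\g-\a,b+\b-\a)})$ to a purely representation-theoretic statement about which irreducible $SL(V)$-modules appear in a suitable tensor product, since the Young flattening map is $GL(V)$-equivariant and $x^d = v_d([x])$ is a highest weight vector for an appropriate Borel. First I would observe that, because $[x^d]$ is a highest weight line, the image of the map $(x^d)_{(a,b),(a+\g-\a,b+\b-\a)}\colon S_{a,b}V^*\to S_{(a+\g-\a,b+\b-\a)}V$ is the cyclic $\mathfrak{b}$-submodule (indeed the $P_x$-submodule, where $P_x$ is the parabolic stabilizing $[x]$) of $S_{(a+\g-\a,b+\b-\a)}V$ generated by the image of the highest weight vector of $S_{a,b}V^*$; equivalently, the rank equals the dimension of the smallest $SL(V)$-subrepresentation-theoretic \lq\lq shift\rq\rq\ one can detect. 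Concretely, I expect the cleanest route is: factor the map $x^d\colon S_{a,b}V^*\to S_{(a+\g-\a,b+\b-\a)}V$ through the decomposition into the three polarization steps $S^\a V\otimes S^\b V\otimes S^\g V$ (matching $\a\le b$, $\b\le a-b$, and $\g = d-\a-\b$) and then track what each factor does on the stabilizer of $[x]$.

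The key computational step is a \emph{Borel-orbit / weight} calculation. Fix a basis $e_0,e_1,e_2$ of $V$ with $x = e_0$, and let $B\subset SL(V)$ be the Borel fixing the flag $\langle e_0\rangle\subset\langle e_0,e_1\rangle$. The point $[x^d]$ is fixed by the parabolic $P$ stabilizing $[e_0]$. The rank of $(x^d)_{\pi,\nu}$ (with $\pi=(a,b)$, $\nu=(a+\g-\a,b+\b-\a)$ in our notation) equals $\dim$ of the image, which is a $P$-submodule of $S_\nu V$. I would identify this image explicitly as $S_{(b-\a,\,a-b-\b)}U \otimes (\text{line})$ for $U = V/\langle e_0\rangle \cong \BC^2$, after twisting by the appropriate power of the character by which $P$ acts on $\langle e_0\rangle$ — this is where the \lq\lq subtract $\a$ from each part of $(a,b)$, and add $\b$ to a part\rq\rq\ bookkeeping comes from. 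Then $\dim S_{(b-\a,a-b-\b)}\BC^2 = (b-\a) - (a-b-\b) + 1 = (b-\a+1) + \cdots$; here I would instead use the \emph{three-dimensional} dimension formula \eqref{dimsab}: the module $S_{(b-\a+?,\,a-b-\b+?)}V$ that is cut out has $\tdim S_{c,e}\BC^3 = \tfrac12(c+2)(e+1)(c-e+1)$, and plugging in $c = b-\a$ shifted appropriately and $e = a-b-\b$ yields exactly $R = \tfrac12(b-\a+1)(a-b-\b+1)(a+\b-\a+2)$. I would verify the three factors $(b-\a+1)$, $(a-b-\b+1)$, $(a+\b-\a+2)$ match the pattern $(e'+1)$, $(c'-e'+1)$, $(c'+2)$ for $c' = b-\a$, $e' = a-b-\b$: indeed $c'-e'+1 = (b-\a)-(a-b-\b)+1 = 2b-a+\b-\a+1$, hmm — so the identification of which of the three linear factors plays which role needs care, and I would pin it down by checking the small cases ($\a=\b=0$, where $R$ should be $\tdim S_{a,b}\BC^3$, and $(a,b)=(1,0)$, the Aronhold-type case) against \eqref{dimsab}.

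The final sentence of the statement — that $\La{pR+1}(S_{ab}V)\ot \La{pR+1}(S_{a+\g-\a,b+\b-\a}V)$ gives nontrivial degree $pR+1$ equations for $\s_p(v_d(\pp 2))$ — then follows \emph{immediately} from Proposition \ref{mosect} applied with $t = R$: for $[\phi]\in\s_p(v_d(\pp 2))$ we get $\trank(\phi_{\pi,\nu})\le pR$, hence the size $pR+1$ minors vanish, and nontriviality (i.e.\ that these minors are not identically zero on $\BP S^dV$) holds precisely because $R>0$ forces $pR+1 \le \min(\tdim S_{ab}V,\tdim S_{a+\g-\a,b+\b-\a}V)$ in the relevant range — one should check this inequality, or simply note that a generic $\phi$ has $\trank(\phi_{\pi,\nu})$ equal to the full $\min$ of the two dimensions, which exceeds $pR$ for $p$ small; this is the only place the hypothesis $p$ small (implicitly) enters, and I would state it as the mild genericity remark that Proposition \ref{mosect} already anticipates with its \lq\lq if $r+1\le\min\{\dim S_\pi V,\dim S_\mu V\}$\rq\rq\ clause.

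The main obstacle I anticipate is purely notational rather than conceptual: getting the reduced-partition bookkeeping exactly right so that the Pieri-admissible triple $(\a,\b,\g)$ with $\a\le b$, $\b\le a-b$, $\a+\b+\g=d$ produces precisely the target partition $(a+\g-\a,b+\b-\a)$ and so that the rank formula's three factors are correctly matched to the $SL_2$-dimension of the $P$-module image. I would handle this by doing the weight computation once, carefully, for $x=e_0$ and recording the lowest-weight (with respect to the parabolic) generator of the image, rather than trying to guess the answer from the shape of $R$.
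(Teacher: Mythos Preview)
Your proposal has a genuine gap at the heart of the computation. You want to identify the image of $(x^d)_{(a,b),\nu}$ as a single irreducible module for the parabolic $P$ stabilising $[x]$ (or for $SL(V)$ itself) and then read off its dimension from a hook-content formula. But the image is \emph{not} irreducible for the Levi $GL_2\times GL_1$ of $P$: an irreducible $GL_2$-module has dimension given by a single linear factor, whereas $R$ is a product of three linear factors. Nor does $R$ equal $\dim S_{c,e}\BC^3$ for any $(c,e)$: if you try $c=a+\b-\a$ and $e=b-\a$, then $c-e+1=a-b+\b+1$, not $a-b-\b+1$. Your own ``hmm'' is exactly this obstruction, and it is not merely notational---no assignment of the three factors to the three hook lengths of a two-row partition will work in general. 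So the shortcut of ``identify the image module, apply \eqref{dimsab}'' does not go through.

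The paper's proof takes the more pedestrian but effective route: choose $x=x_3$ to be a lowest weight vector, use the semi-standard tableaux basis of $S_{a,b}V$, and directly count which tableaux are \emph{not} killed by the map. The constraint is combinatorial: the first $\a$ columns must be $\begin{smallmatrix}1\\2\end{smallmatrix}$, the next $\b$ boxes in row one must avoid $3$, and so on. The count then breaks into cases according to the last entry in various blocks, and each case contributes a product of dimensions of smaller $S^k\BC^2$ or $S^k\BC^3$; summing these gives $R$. This is precisely a decomposition of the image as a (reducible) $P$-module, carried out explicitly. Your structural intuition that the image is a $P$-submodule is correct and is what makes the tableaux count work (images of distinct surviving weight vectors are independent), but you cannot bypass the case analysis: you must either do the tableaux count or, equivalently, decompose the image into Levi-irreducibles and sum their dimensions---which is the same work.
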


\begin{remark} Note that the right hand sides of  equations \eqref{dimsab} and \eqref{rankxcube} 
  are the same when $\a=\b=0$. To get useful equations one  wants $R$ small with respect to $\tdim S_{a,b}\BC^3$.  
\end{remark}

 \begin{proof}
In the following picture we   label  the first row containing $a$ boxes with $a$ and so on.

$$\young(aaaaa,bbb)\quad \to\quad \young(aaaaa\gamma\gamma,bbb\b,\a\a)$$

 Assume we have chosen a weight basis $x_1,x_2,x_3$  of $V$
 and $x=x_3$ is a vector of lowest weight.
 Consider the image of a weight basis of $S_{a,b}V$ under
 $(x^3_3)_{(a,b),(a+\g-\a,b+\b-\a)}$. Namely   consider all semi-standard
 fillings of the Young diagram corresponding to $(a,b)$, and count how many
 do not map to zero. By construction, the images of all the vectors that do
 not map to zero are linearly independent, so this count indeed gives the dimension
 of the image.

 In order to have a vector not in the kernel,  
 the first $\a$ boxes of the first row must be filled with
 $1$'s and the first $\a$ boxes of the second row must
 be filled with $2$'s. 
 
 Consider the next $(b-\a,b-\a)$ subdiagram. Let $\BC^2_{ij}$   denote the span of $x_i,x_j$
 and $\BC^1_i$ the span of $x_i$.
The boxes
 here can be filled with any semi-standard filling using
 $1$'s $2$'s and $3$'s, but the freedom to fill the rest will
 depend on the nature of the filling, so   split
 $S_{b-\a,b-\a}\BC^3$ into two parts, the first part where
 the entry in the last box in the first row is $1$, which
 has $\tdim S_{b-\a}\BC^2_{23}$ fillings and
 second where the entry in the last box in the first
 row is $2$, which has $(\tdim S_{b-\a,b-\a}\BC^3- \tdim S_{b-\a}\BC^2_{23})$
 fillings.
 
 In the first case, the next $\b$-entries are free
 to be any semi-standard   row filled with 
 $1$'s and $2$'s, of which there are $\tdim S_{\b}\BC^2_{12}$
 in number, but we split this further into two sub-cases, depending on
 whether the last entry is a $1$ (of which there is one ($=\tdim S_{\b}\BC^1_1$) such),
 or $2$ (of which there are $(\tdim S_{\b}\BC^2_{12}-\tdim S_{\b}\BC^1_1)$ such). In the first sub-case
 the last $a-(b+\b)$ entries admit $\tdim S_{a-(b+\b)}\BC^3$ fillings
 and in the second sub-case there are $\tdim S_{a-(b+\b)}\BC^2_{23}$ such. 
 Putting these together, the total number of fillings 
 for the various paths corresponding to the first part is
\begin{align*}&
 \tdim S_{b-\a}\BC^2_{12}
 [(\tdim S_{\b}\BC^1_1)(\tdim S_{a-b-\b}\BC^3)
 +(\tdim S_{\b}\BC^2_{12}-\tdim S_{\b}\BC^1_1)(\tdim S_{a-b-\b}\BC^2_{23})]
 \\
 &= (b-\a+1)[(1)\binom{a-b-\b+2}2 +(\b-1)(a-b-\b+1)].
 \end{align*}

For the second part,  the next $\b$ boxes in the first
row must be filled with $2$'s (giving $1=\tdim S_{\b}\BC^1_2$) and the last
$a-(b+\b)$ boxes can be filled with $2$'s or $3$'s semi-standardly,
i.e., there are $\tdim S_{a-b-\b}\BC^2_{23}$'s worth.
So the contribution of the second part is
\begin{align*}&
(\tdim S_{b-\a,b-\a}\BC^3-\tdim S_{b-\a}\BC^2_{12})
(\tdim S_{\b}\BC^1_2)(\tdim S_{a-b-\b}\BC^2_{23})\\
&=
(\binom{b-\a+2}{b-\a}-(b-\a+1))(1)(a-b-\b+1).
\end{align*}
Adding up gives the result.
 \end{proof}

We are particularly interested in cases where $(a,b)=(a+\g-\a,b+\b-\a)$. 
In this case  
\begin{align}
\a&=\g= \frac 13 {(d+2b-a)}\\
\b&=\frac 13(d-4b+2a).
\end{align}
Plugging into the conclusion of Lemma \ref{x3ranklem}, the  
rank of the image of a $d$-th power in this situation is
$$
\frac 19 (a+b-d+3)^2 {(a-b+1)}.
$$
To keep this small, it is convenient to take $d=a+b$ so the rank is
$a-b+1$. One can then fix this number and let $a,b$ grow to study
series of cases.

If \eqref{rankxcube} has  rank one  when $d=2p$, we   just recover the
usual symmetric flattenings as $S_{(p,p)}V=S_pV^*$.
We consider the next two cases in the theorems below, $(a,b)=(p+1,p)$ when 
$d=2p+1$ and $(a,b)=(p+2,p)$ when $d=2p+2$. Recall
that $(p+q,p)^*=(p+q,q)$ in the notation of \S\ref{mosect}.
 
 \medskip

 
Let $d=2p+1$.
The skew analog of   Proposition \ref{boundeven} is the following proposition,
which shows that the bound in the assumption of  
 Theorem \ref{mainyoung} is sharp.

\begin{proposition}\label{boundodd}
Let $\dim V=3$. If the variety $\sigma_r(v_{2p+1}(\pp {{}}V))$ is an irreducible component
of $YFlat^r_{(p+1,p),(p+1,p)}(S^{2p+1}V)$, 
then $r\le{{p+2}\choose 2}$.
\end{proposition}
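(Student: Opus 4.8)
The plan is to obtain the bound by a codimension comparison, closely following the proof of Proposition~\ref{boundeven}.

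Since $\dim V=3$ we have $a=\lfloor n/2\rfloor=1$, which is odd, so the map $\phi_{(p+1,p),(p+1,p)}$ is skew-symmetric. By the discussion following the definition of $YFlat$ this gives
$$
YFlat^r_{(p+1,p),(p+1,p)}(S^{2p+1}V)=\s_r\bigl(G(2,S_{(p+1,p)}V)\bigr)\cap \BP S^{2p+1}V.
$$
From \eqref{dimsab}, $\dim S_{(p+1,p)}V=\frac{1}{2}(p+3)(p+1)\cdot 2=(p+1)(p+3)$, so Proposition~\ref{segre} yields $\tcodim\,\s_r(G(2,S_{(p+1,p)}V))=\binom{(p+1)(p+3)-2r}{2}$. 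A linear section of an irreducible projective variety has every irreducible component of codimension at most that of the variety, so every component of $YFlat^r_{(p+1,p),(p+1,p)}(S^{2p+1}V)$ has codimension in $\BP S^{2p+1}V$ at most $\binom{(p+1)(p+3)-2r}{2}$.

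Next, $\s_r(v_{2p+1}(\BP V))$ is never defective (there are no defective secant varieties of Veronese surfaces in odd degree, by Alexander--Hirschowitz), hence whenever it is a proper subvariety its codimension is $\binom{2p+3}{2}-3r$. Assuming $\s_r(v_{2p+1}(\BP V))$ is an irreducible component of $YFlat^r_{(p+1,p),(p+1,p)}(S^{2p+1}V)$, combining the two codimension facts gives
$$
\binom{2p+3}{2}-3r\le\binom{(p+1)(p+3)-2r}{2}.
$$
Substituting $r=\binom{p+2}{2}+s$ one finds $(p+1)(p+3)-2r=(p+1)-2s$, and expanding both sides collapses the inequality to $s\,(s-p+1)\ge0$. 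For $p\ge 2$ this forces $s\le 0$, i.e.\ $r\le\binom{p+2}{2}$ as claimed, or else $s\ge p-1$. In the latter range $2r\ge p(p+5)\ge (p+1)(p+3)-1=\dim S_{(p+1,p)}V-1$, and since a skew form always has even rank this means $\s_r(G(2,S_{(p+1,p)}V))$ already equals $\BP(\La 2 (S_{(p+1,p)}V))$; hence $YFlat^r_{(p+1,p),(p+1,p)}(S^{2p+1}V)=\BP S^{2p+1}V$, and the hypothesis can hold only in the degenerate situation $\s_r(v_{2p+1}(\BP V))=\BP S^{2p+1}V$. As in Proposition~\ref{boundeven} these few values of $r$ are to be understood as exceptions and do not improve the bound of Theorem~\ref{mainyoung}.

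The geometric content — identifying $YFlat^r$ with a linear section of a Grassmannian secant variety and feeding in Segre's codimension formula — is routine. I expect the only points requiring care to be the algebra bringing the displayed inequality to the form $s(s-p+1)\ge 0$, and the handling of the boundary values $s\ge p-1$, which is precisely the bookkeeping that produces the exceptions $(p,r)=(2,4),(3,9)$ in the even-degree statement Proposition~\ref{boundeven}.
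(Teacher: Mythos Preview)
Your approach is essentially the same as the paper's: both set up the codimension comparison
\[
\tcodim\,\s_r(v_{2p+1}(\BP V))\;\le\;\tcodim\,\bigl(\s_r(G(2,S_{(p+1,p)}V))\cap\BP S^{2p+1}V\bigr)
\]
using Proposition~\ref{segre}, and then solve the resulting quadratic inequality in $r$. The paper's proof stops after writing down the two codimension expressions and saying ``the inequality is a consequence,'' leaving the algebra to the reader; you have carried out that algebra explicitly via the substitution $r=\binom{p+2}{2}+s$, obtaining $s(s-p+1)\ge 0$, and you have gone further by analyzing the spurious branch $s\ge p-1$, showing that there the Grassmannian secant already fills $\BP(\La 2 S_{(p+1,p)}V)$ so the hypothesis only holds in the degenerate case $\s_r(v_{2p+1}(\BP V))=\BP S^{2p+1}V$. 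This is more than the paper does (Proposition~\ref{boundodd}, unlike Proposition~\ref{boundeven}, does not list exceptions, and the paper's proof does not discuss them), so your treatment is in fact more complete.
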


\begin{proof}

Recall from Proposition \ref{segre}
 \begin{align}
  \tcodim \s_{r}(v_{2p+1}(\BP V))&= {{2p+3}\choose 2}-3r
\\ 
 \tcodim \sigma_r(G(2,S_{p+1,p}V)\cap \pp {{}}S^{2p+1}V& \le
\frac{1}{2}\left[(p+1)(p+3)-2r\right]\left[(p+1)(p+3)-2r-1\right]
\end{align}
The inequality is a consequence of $\tcodim \s_{r}(v_{2p+1}(\BP V))\leq
 \tcodim \sigma_r(G(2,S_{p+1,p}V)\cap \pp {{}}S^{2p+1}V$.
\end{proof}

Here is a pictorial description  when $p=2$ of
$\phi_{31,31}\colon S_{3 2}V\to S_{3 1}V$
  in terms of Young diagrams:
$$\yng(3,2)\otimes\young(*****)\quad\to\quad\young(\ \ \ **,\ \ *,**)\simeq\ \yng(3,1)$$

\begin{corollary}\label{degequal}
$\deg \s_{{{p+2}\choose 2}}(v_{2p+1}(\pp 2))\le  \frac{1}{2^p}\prod_{i=0}^{p-1}\frac{{{(p+3)(p+1)+i}\choose{p-i}}}{{{2i+1}\choose{i}}}$.

 If   equality holds then  
 $\s_{{{p+2}\choose 2}}(v_{2p+1}(\pp 2))$   
 is given scheme-theoretically by the size $(p+2)(p+1)+2$ sub-Pfaffians of 
 $\phi_{(p+1,p),(p+1,p)}$.   
\end{corollary}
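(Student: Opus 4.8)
The plan is to mimic exactly the proof of Corollary~\ref{symflat}, replacing the symmetric flattening by the skew-symmetric Young flattening $\phi_{(p+1,p),(p+1,p)}$ and the quadratic Veronese $\s_r(v_2(\BP W))$ by the secant variety of the Grassmannian $\s_r(G(2,W))$ with $W=S_{p+1,p}V$. First I would record the two ingredients: by \eqref{dimsab} we have $\tdim S_{p+1,p}\BC^3=\tfrac12(p+3)(p+1)\cdot 2=(p+3)(p+1)$, so $W=\BC^N$ with $N=(p+1)(p+3)$; and by Lemma~\ref{x3ranklem} (in the case $(a,b)=(a+\g-\a,b+\b-\a)=(p+1,p)$, $d=2p+1$, where the rank of a $d$-th power is $a-b+1=1$) the map $(x^{2p+1})_{(p+1,p),(p+1,p)}$ has rank one. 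Hence $v_{2p+1}(\BP V)\subset G(2,W)\cap\BP S^{2p+1}V$ under this Young flattening, and more generally, by the remark preceding \S\ref{surfacecasesect} on skew-symmetric pairs, $YFlat^r_{(p+1,p),(p+1,p)}(S^{2p+1}V)=\s_r(G(2,W))\cap \BP S^{2p+1}V$, a \emph{linear section} of $\s_r(G(2,W))$.

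Next I would plug $r=\binom{p+2}{2}=\tfrac12(p+2)(p+1)$ into the Segre-type formulas of Proposition~\ref{segre} for the Grassmannian secant variety. With $n+1=N=(p+1)(p+3)$ one computes $n-2r = (p+1)(p+3)-1-(p+2)(p+1) = p+1-1 = p$, wait—let me instead just cite that $\tcodim\s_r(G(2,N))=\binom{n-2r+1}{2}$ and $\deg\s_r(G(2,N))=\frac{1}{2^{n-2r}}\prod_{i=0}^{n-2r-1}\frac{\binom{n+1+i}{n-2r-i}}{\binom{2i+1}{i}}$, which with the above substitution yields the stated bound $\frac{1}{2^p}\prod_{i=0}^{p-1}\frac{\binom{(p+3)(p+1)+i}{p-i}}{\binom{2i+1}{i}}$ for $\deg\s_{\binom{p+2}{2}}(G(2,S_{p+1,p}V))$. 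Since $\s_{\binom{p+2}{2}}(v_{2p+1}(\BP V))$ is contained in the linear section $YFlat^{\binom{p+2}{2}}_{(p+1,p),(p+1,p)}(S^{2p+1}V)$ of $\s_{\binom{p+2}{2}}(G(2,W))$, the degree of the secant variety of the Veronese is at most the degree of the ambient Grassmannian secant variety — this gives the asserted inequality. (One must check the codimension count is consistent, i.e.\ that $\s_{\binom{p+2}{2}}(v_{2p+1}(\pp2))$ and $\s_{\binom{p+2}{2}}(G(2,W))\cap \BP S^{2p+1}V$ have the same expected codimension inside $\BP S^{2p+1}V$; this is exactly Proposition~\ref{boundodd}, which shows the bound $r\le\binom{p+2}{2}$ is forced by equating codimensions, and Theorem~\ref{mainyoung} guarantees $\s_r(v_d(\pp2))$ is an irreducible component.)

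For the second sentence, I would invoke the refined B\'ezout Theorem~\ref{refinedbezout} with $Z=\s_{\binom{p+2}{2}}(G(2,W))\subset\BP W$, $L=\BP S^{2p+1}V\subset \BP W$ the linear subspace, and $Y=\s_{\binom{p+2}{2}}(v_{2p+1}(\pp2))$, which by Theorem~\ref{mainyoung} is an irreducible component of $Z\cap L$ of the correct codimension $e+f$. If equality of degrees holds, $\deg Y=\deg Z$, so Theorem~\ref{refinedbezout} forces $Z\cap L = Y$ set-theoretically, with no other components and with the correct scheme structure — hence the size $N'+1$ subPfaffians of $\phi_{(p+1,p),(p+1,p)}$, where $N'=2r=(p+2)(p+1)$ so the subPfaffians have size $(p+2)(p+1)+2$, cut out $\s_{\binom{p+2}{2}}(v_{2p+1}(\pp2))$ scheme-theoretically. (Here I use that for a skew-symmetric $2k\times 2k$ map the rank is $\le 2r$ precisely when all $(2r+2)$-subPfaffians vanish, and that these give the same scheme as the $(2r+1)$-minors on the complement of deeper rank loci — the standard fact underlying the Buchsbaum–Eisenbud structure theorem cited in Fact~\ref{dieselfacts}.)

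The main obstacle I anticipate is not any of these structural steps — they are direct transcriptions of the even-degree argument — but rather the bookkeeping that makes the degree formula come out in the stated closed form: one must verify that the substitution $n+1=(p+1)(p+3)$, $r=\binom{p+2}{2}$ into $\frac{1}{2^{n-2r}}\prod_{i=0}^{n-2r-1}\frac{\binom{n+1+i}{n-2r-i}}{\binom{2i+1}{i}}$ really produces $\frac{1}{2^p}\prod_{i=0}^{p-1}\frac{\binom{(p+3)(p+1)+i}{p-i}}{\binom{2i+1}{i}}$, i.e.\ that $n-2r=p$ exactly. A secondary subtlety is confirming that the generic rank of a $(2p+1)$-st power under the Young flattening is $1$ (so that $G(2,W)$, not a higher secant, is the right ambient container) and that $\phi_{(p+1,p),(p+1,p)}$ is genuinely skew-symmetric for all $p$ — both follow from the setup in \S\ref{YFsect} and Lemma~\ref{x3ranklem}, but should be stated explicitly. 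Everything else is a mechanical application of Propositions~\ref{segre} and~\ref{boundodd}, Theorems~\ref{mainyoung} and~\ref{refinedbezout}.
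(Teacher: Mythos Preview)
Your approach is correct and essentially identical to the paper's proof: identify the right-hand side as $\deg\s_{\binom{p+2}{2}}(G(2,S_{p+1,p}V))$ via Proposition~\ref{segre}, observe the containment $\s_{\binom{p+2}{2}}(v_{2p+1}(\pp 2))\subset\s_{\binom{p+2}{2}}(G(2,S_{p+1,p}V))\cap\BP S^{2p+1}V$, invoke Theorem~\ref{mainyoung} for the irreducible-component statement, and finish with the refined B\'ezout Theorem~\ref{refinedbezout}.

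One arithmetic slip to fix: with $(a,b)=(p+1,p)$ you have $a-b+1=2$, not $1$. The rank of $(x^{2p+1})_{(p+1,p),(p+1,p)}$ is $2$ (necessarily even, as the map is skew-symmetric), and it is precisely this rank-$2$ statement that puts $v_{2p+1}(\BP V)$ inside $G(2,S_{p+1,p}V)$ and hence gives $\s_r(v_{2p+1}(\BP V))\subset\s_r(G(2,S_{p+1,p}V))$ with the \emph{same} index $r$ on both sides. Your computation $n-2r=(p+1)(p+3)-1-(p+2)(p+1)=p$ is correct and is exactly what makes the degree formula come out as stated.
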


 The values of the right hand side for $p=1,\ldots 4 $ are respectively $4$, $140$, 
 $65780$, $563178924$.

\begin{proof}
The right hand side is
$\deg \s_{{{p+2}\choose 2}}(G(\BC^2,S_{(p+1,p)}V)$ by   Proposition \ref{segre}.
Now   $\s_{{{p+1}\choose 2}}(v_{2p+1}(\pp 2))$
is contained in a linear section of $ \s_{{{p+2}\choose 2}}(G(\BC^2,S_{(p+1,p)}V))$,
and  by  Theorem \ref{mainyoung} it is a irreducible component of this linear section.
The result follows by the refined Bezout theorem  \ref{refinedbezout}.
\end{proof}

In Corollary \ref{degequal} equality holds in  the cases $p=1, 2$  
by Proposition \ref{112}. The case $p=1$ is just the Aronhold case and the case $p=2$ will be considered in
Theorem \ref{s6v5p2} (2).
For $p\ge 3$ these numbers are out of the range of  \cite{MR1317230} and we do not know if   equality holds.

Note that the usual symmetric
flattenings only give equations for  $\s_{k-1}(v_{2p+1}(\pp 2))$for $k\leq \frac 12(p^2+3p+2)$.

\medskip
Now let $d=2p+2$ be even, requiring $\pi=\mu$,
  the smallest possible $\trank((x^d)_{\pi,\mu})$ is three, which we obtain with
$\phi_{(p+2,p),(p+2,p)}$.

\begin{proposition}\label{pp2pprop}
Let $d=2p+2$.
The Young flattening $\phi_{(p+2,2),(p+2,2)}\in S_{p+2,2}V\ot S_{p+2,2}V$  is
symmetric. It is
of rank three for $\phi\in v_d(\pp 2)$ and gives degree $3(k+1)$ equations for
$\s_{r}(v_{2p+2}(\pp 2))$ for $r\leq \frac 12(p^2+5p+4)-1$.
A convenient model for the equations is given in the proof.
\end{proposition}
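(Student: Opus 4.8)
The plan is to verify the Pieri-combinatorics needed for $\phi_{(p+2,2),(p+2,2)}$ to make sense, check the claimed symmetry, compute the rank on a $d$-th power, and then invoke Proposition \ref{mosect}. First I would confirm that $S_{(p+2,2)}V$ occurs in $S_{(p+2,2)}V\otimes S^dV$ with $d=2p+2$; by Pieri this amounts to finding a skew shape $(p+2+\gamma-\alpha,2+\beta-\alpha)$ equal (as a reduced partition in $\BC^3$) to $(p+2,2)$ with $\alpha+\beta+\gamma=d$ and $\alpha\le 2$, $\beta\le p$. Setting this up with $a=p+2$, $b=2$ in the notation of Lemma \ref{x3ranklem}, the condition that the target reduced partition equals $(a,b)$ forces $\alpha=\gamma=\tfrac13(d+2b-a)$ and $\beta=\tfrac13(d-4b+2a)$; plugging $d=2p+2$, $a=p+2$, $b=2$ gives $\alpha=\gamma=2$ and $\beta=2p-4$, which are nonnegative integers provided $p\ge 2$ (and for $p$ slightly smaller one checks directly). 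So the map $\phi_{(p+2,2),(p+2,2)}\colon S_{(p+2,2)}V^*\to S_{(p+2,2)}V$ is well-defined.

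Next I would compute the rank on a decomposable $x^d$. This is a direct application of Lemma \ref{x3ranklem} with $(a,b)=(p+2,2)$, $\alpha=\gamma=2$, $\beta=2p-4$: formula \eqref{rankxcube} gives
$$
R=\tfrac12(b-\alpha+1)(a-b-\beta+1)(a+\beta-\alpha+2)=\tfrac12(1)(1)(2p-2+2+2)\cdot\tfrac{\text{(check)}}{\;}
$$
— more carefully, $b-\alpha+1=1$, $a-b-\beta+1 = (p+2)-2-(2p-4)+1 = 5-p$, which is not what we want, so I would instead observe that the correct reading of the Remark after Lemma \ref{x3ranklem} and the paragraph ``To keep this small'' is that one should take the parameters producing rank three, i.e. one uses the formula $\tfrac19(a+b-d+3)^2(a-b+1)$ specialized appropriately; with $a=p+2$, $b=2$ this is $\tfrac19(p+7-d)^2(p+1)$, and choosing the intended shift so that $a+b-d+3=3$ — equivalently $d=p+2$ is wrong, so the genuinely intended statement is that with the symmetric choice $\pi=\mu=(p+2,2)$ and $d=2p+2$ the rank of a $d$-th power is exactly $3$. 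The honest way to see this is to redo the semistandard-filling count of the proof of Lemma \ref{x3ranklem} directly for this shape: count semistandard fillings of $(p+2,2)$ that survive multiplication by $x_3^d$ into $S_{(p+2,2)}V$, and show there are exactly three. I expect this bookkeeping to be the main obstacle, since it requires getting the skew-shape/reduced-partition translation exactly right; everything else is formal.

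Then the symmetry: I would argue that the composite $S^dV\to S_{(p+2,2)}V\otimes S_{(p+2,2)}V$ lands in $S^2(S_{(p+2,2)}V)$ rather than $\wedge^2(S_{(p+2,2)}V)$. Because $S_{(p+2,2)}V$ appears with multiplicity one in $S_{(p+2,2)}V\otimes S^dV$ (Pieri), Schur's lemma shows $S^dV$ injects into exactly one of $S^2(S_{(p+2,2)}V)$, $\wedge^2(S_{(p+2,2)}V)$; to decide which, one can compute whether $S^dV\subset S_{(p+2,2)}V\otimes S_{(p+2,2)}V$ is detected in the symmetric or alternating square, e.g. by a plethysm/character computation in $\BC^3$, or by the general principle (used for $YF_{d,n}$ and noted in \S\ref{mosect}) that the parity is governed by $(-1)^{\text{something}}$ — concretely, for the Veronese/$K_n$ construction symmetry occurs when the relevant exterior-power index is even, and the analog here makes the pair $(S_{(p+2,2)}V, \cO(1))$ a symmetric pair. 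I would phrase it as: the pairing is visibly symmetric because the two tensor factors are produced by the same Pieri insertion applied to $x^{\delta}\otimes x^{\delta}\otimes(\cdots)$ symmetrically, exactly as in the $n=2a$, $a$ even case of Theorem \ref{mainyoung}.

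Finally, with rank $t=3$ on $d$-th powers established and $r+1\le\dim S_{(p+2,2)}V$ (which, using \eqref{dimsab}, $\dim S_{(p+2,2)}\BC^3=\tfrac12(p+4)(3)(p+1)$, holds for the stated range $r\le\tfrac12(p^2+5p+4)-1$), Proposition \ref{mosect} immediately gives that the size $3(r+1)$ minors — equivalently, since the map is symmetric, the size $3(r+1)$ symmetric/principal minors — of $\phi_{(p+2,2),(p+2,2)}$ vanish on $\s_r(v_{2p+2}(\pp2))$, yielding degree $3(r+1)$ equations. I would close by recording the ``convenient model'': in a weight basis, $\phi_{(p+2,2),(p+2,2)}$ has a block form whose blocks are, up to sign, second-order partials $(\partial^2\phi/\partial x_i\partial x_j)$ realized as catalecticants, exactly parallel to the block description of $YF_{d,n}(\phi)$ in terms of $K_n$ given in \S\ref{YFsect} — this is the model referred to in the statement, and writing it out is routine once the $\BC^3$-plethysm $S_{(p+2,2)}\subset S_{(p+2,2)}\otimes S^{2p+2}$ is made explicit via the two-step insertion $S^{2p+2}V\hookrightarrow S^pV\otimes S^2V\otimes S^pV \to \cdots$.
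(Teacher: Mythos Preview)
Your rank computation breaks down because you are feeding the wrong $(a,b)$ into Lemma~\ref{x3ranklem}. In the notation of \S\ref{yflatsect}, $\phi_{\pi,\mu}$ with $\pi=\mu=(p+2,2)$ corresponds to the linear map $S_{\nu}V\to S_{\pi}V$ where $S_{\mu}V=S_{\nu}V^*$, i.e.\ $\nu=\mu^*=(p+2,p)$ (using $(p+q,p)^*=(p+q,q)$). Thus in Lemma~\ref{x3ranklem} one must take $(a,b)=(p+2,p)$, not $(p+2,2)$. With $(a,b)=(p+2,p)$ and $d=2p+2$ one gets
\[
\alpha=\gamma=\tfrac13(d+2b-a)=p,\qquad \beta=\tfrac13(d-4b+2a)=2,
\]
which are always integers and satisfy $\alpha\le b$, $\beta\le a-b$; then \eqref{rankxcube} gives $R=\tfrac12(1)(1)(6)=3$ immediately, as the paragraph following Lemma~\ref{x3ranklem} (``take $d=a+b$ so the rank is $a-b+1$'') already predicts. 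Your values $\alpha=\gamma=2$, $\beta=2p-4$ are also arithmetically wrong even for $(a,b)=(p+2,2)$: the formulas give $(p+4)/3$ and $(4p-2)/3$, which are non-integral unless $p\equiv 2\pmod 3$, so that route is a dead end. Once you use $(a,b)=(p+2,p)$, there is no need to ``redo the semistandard-filling count''.

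For the symmetry, your Schur's-lemma/parity idea is reasonable in principle, but you never actually decide between $S^2$ and $\wedge^2$, and no general ``$(-1)^{\text{something}}$'' rule is available here. The paper does something concrete and different: it writes down an explicit enlarged model
\[
M_\phi\colon S^pV^*\otimes S^2(\La 2 V^*)\ \longrightarrow\ S^pV\otimes S^2(\La 2 V),
\]
defined on $x^{2p+2}$ by $\alpha_1\cdots\alpha_p\otimes(\gamma_1\wedge\delta_1)\circ(\gamma_2\wedge\delta_2)\mapsto \prod\alpha_i(x)\,x^p\otimes\tilde\Omega(x\intprod\gamma_1\wedge\delta_1)\circ\tilde\Omega(x\intprod\gamma_2\wedge\delta_2)$, checks symmetry by writing out a single $6\times6$ block of the matrix of $M_{e_1^{2p+2}}$, and then observes that after decomposing $S^pV\otimes S^2(\La2 V)$ the only two-part summand is $S_{p+2,2}V$, so $M_\phi$ \emph{is} the desired Young flattening. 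This model is the ``convenient model'' promised in the statement; it is not the block-of-second-partials picture you sketched.
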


A pictorial description when $p=2$ is as follows:
$$\yng(4,2)\otimes\young(******)\quad\to\quad\young(\ \ \ \ **,\ \ **,**)\simeq\ \yng(4,2).
$$

\begin{proof} Let $\tilde\Omega\in \La 3V$ be dual to the volume form $\Omega$.
To prove the symmetry, for $\phi=x^{2p+2}$,   consider the map,
\begin{align*}
M_{x^{2p+2}}: S^{p}V^*\ot S^2(\La 2 V^*)&\ra S^pV\ot S^2(\La 2 V)\\
\a_1\cdots \a_p   \ot (\g_1\ww\d_1)\circ (\g_2\ww\d_2) &\mapsto
\a_1(x)\cdots \a_p(x) x^p\ot \tilde\Omega(x\intprod \g_1\ww \d_1)\circ \tilde\Omega(x\intprod \g_2\ww \d_2)
\end{align*}
and define $M_{\phi}$ for arbitrary $\phi\in S^{2p+2}V$ by linearity and polarization.
If we take bases of $S^2V\ot S^2(\La 2 V)$ as above, with indices $((i_1\hd i_p),(kl),(k'l'))$, 
most of the   matrix of $M_{e_1^{2p+2}}$ is zero. The upper-right hand
$6\times 6$ block, where $(i_1\hd i_p)=(1\hd 1)$ in both rows and columns and 
the order on the other indices 
$$((12),(12)),((13),(13)),((12),(13)),((12),(23)) ,((13),(23)),((23),(23)),
$$
 is
$$
\begin{pmatrix}
0&1&0&0&0&0\\1& 0&0&0&0&0\\ 0&0&1&0&0&0\\ 0&0&0&0&0&0\\0&0&0&0&0&0\\0&0&0&0&0&0 
\end{pmatrix}
$$
showing the symmetry.
Now
\begin{align*}
(S^pV\ot S^2(\La 2 V))^{\ot 2}&= 
(S^pV\ot (S_{22}V\op S_{1111}V)^{\ot 2}\\
&=S_{p+2,2}V\op stuff 
\end{align*}
where all the terms in $stuff$ have partitions with at least three parts.
On the other hand, from the nature of the image we conclude it is just the first factor  
and
$M_{\phi}\in S^2(S_{p+2,2}V)
$.
\end{proof}

\medskip
 


Note that the usual symmetric flattenings give nontrivial equations 
for
$\s_{k-1}(v_{2p+2}(\pp 2))$ for $k\leq \frac 12(p^2+5p+6)$, a larger range than
in Proposition \ref{pp2pprop}. However we show (Theorem \ref{sextics}) that the symmetric flattenings alone are not
enough to cut out $\s_7(v_6(\pp 2))$,  but with  the $((p+1,2),(p+1,2))$-Young
flattening they are.

Here is a more general Young flattening:

\begin{proposition} Let $d=p+4q-1$. The Young flattening 
$$\phi_{(p+2q ,2q-1 ),(p+2q ,2q-1 )}\in S_{(p+2q ,2q-1 )}V\ot S_{(p+2q ,2q-1 )}V,
$$
  is
  skew-symmetric if $p$ is even and symmetric if $p$ is odd. 

Since it has rank $p$ if $\phi\in v_d(\pp 2)$, if $p$ is even (resp. odd),  the size $kp+2$ sub-Pfaffians 
(resp. size $kp+1$ minors) of 
$\phi_{(p+2q,2q-1),(p+2q,2q-1)}$ give degree $\frac{kp}2+1$ (resp. $kp+1$) equations for
$\s_{k}(v_{p+4q-1}(\pp 2))$ for 
$$k\leq \frac {q(p+2q+2)(p+2)}{p}
.
$$
\end{proposition}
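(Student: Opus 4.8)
The plan is to mimic the structure of the $d=2p+2$ case treated in Proposition \ref{pp2pprop}, but now with an exterior-power twist of order $2q-1$ rather than $1$. First I would exhibit the Young flattening as coming from an explicit bundle-free model: writing $d=p+4q-1=p+(2q-1)+(2q-1)+1$, the inclusion
$$
S^dV\hookrightarrow S^{p-1}V\ot V\ot S^{2q-1}(\La 2V)^{\ot 2}
$$
is obtained by first polarizing $S^dV\hookrightarrow S^{p-1}V\ot V\ot S^{2q-1}V\ot S^{2q-1}V$ and then, using the chosen volume form $\Omega$ on $V=\BC^3$ to identify $V\cong\La 2V^*$, turning each $S^{2q-1}V$ into $S^{2q-1}(\La 2V)$ up to the middle $V$ factor which mediates a map $\La aV\to\La{a+1}V$ exactly as in \S\ref{YFsect}. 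Concretely, for a decomposable $\phi=x^d$ I would define
$$
M_{x^d}\colon S^{p-1}V^*\ot S^{2q-1}(\La 2V^*)^{\ot 2}\to S^{p-1}V\ot S^{2q-1}(\La 2V)^{\ot 2}
$$
by $\a_1\cdots\a_{p-1}\ot(\omega_1)\circ\cdots\circ(\omega_{2q-1})\ot(\eta_1)\circ\cdots\mapsto \bigl(\prod\a_i(x)\bigr)x^{p-1}\ot\prod\tilde\Omega(x\intprod\omega_j)\circ\prod\tilde\Omega(x\intprod\eta_k)$, then extend by polarization, exactly the pattern of the proof of Proposition \ref{pp2pprop}. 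Identifying source and target via $\Omega$ turns this into a map $\phi_{(p+2q,2q-1),(p+2q,2q-1)}$ once I check that the only module common to both sides with the right highest weight is $S_{(p+2q,2q-1)}V$; this is the Pieri-computation promised in \S\ref{yflatsect} and is where $(p+2q-1,\ldots)$ gets its partition. The symmetry/skew-symmetry is then read off exactly as in Proposition \ref{pp2pprop}: examine the small explicit block of $M_{e_1^d}$ in a weight basis; the sign is the parity of the permutation swapping the two $S^{2q-1}(\La 2V)$ tensor factors together with the $(2q-1)$ transpositions inside each wedge-power contraction, which is $(-1)^{2q-1}=-1$ from the wedge, times nothing from the symmetric $S^{2q-1}$, times the parity of exchanging the middle $V$-slot contraction — working this out gives $(-1)^{p-1}$, i.e.\ skew for $p$ even and symmetric for $p$ odd. (I would present just the relevant nonzero sub-block, as in Proposition \ref{pp2pprop}.)

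Next I would compute $\trank\bigl((x^d)_{(p+2q,2q-1),(p+2q,2q-1)}\bigr)$. This is an instance of Lemma \ref{x3ranklem} with $(a,b)=(p+2q,2q-1)$ and $d=\a+\b+\g$ forced by $\pi=\mu$; plugging into \eqref{rankxcube} — equivalently into the simplified formula $\tfrac19(a+b-d+3)^2(a-b+1)$ — with $a+b=p+4q-1=d$ gives $(a+b-d+3)^2(a-b+1)/9=(3)^2(p+1)/9$... and after the correct bookkeeping (the $(a,b)=(p+2q,2q-1)$ values are not on the line $d=a+b$ unless $q$ is adjusted, so I would instead apply \eqref{rankxcube} directly with the genuine $\a,\b,\g$) this collapses to $R=p$. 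So for $\phi\in v_d(\pp 2)$ the flattening has rank $p$, and by the subadditivity argument of Proposition \ref{mosect}, $\phi\in\s_k(v_d(\pp 2))$ forces $\trank(\phi_{\pi,\mu})\le kp$. Hence the size $kp+1$ minors (or, in the skew case, the size $kp+2$ sub-Pfaffians) vanish on $\s_k(v_d(\pp 2))$, and their degrees are $kp+1$ and $\tfrac{kp}2+1$ respectively, as claimed.

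Finally, the range of $k$ for which these equations are nontrivial is just the condition that $kp+1\le\tdim S_{(p+2q,2q-1)}V$ (resp.\ $kp+2\le\tdim S_{(p+2q,2q-1)}V$ in the skew case, where the generic rank of a skew map is even). Using \eqref{dimsab} with $a=p+2q$, $b=2q-1$:
$$
\tdim S_{p+2q,2q-1}\BC^3=\tfrac12(p+2q+2)(2q)(p+2),
$$
so $kp\le \tdim S_{p+2q,2q-1}\BC^3$ reads $k\le \tfrac{q(p+2q+2)(p+2)}{p}$, which is exactly the stated bound; I would check the off-by-one in the two parities is absorbed by the floor implicit in "equations for $\s_k$" and does not affect the displayed inequality. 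I expect the main obstacle to be the bookkeeping in the rank computation — making sure the semistandard-tableau count of Lemma \ref{x3ranklem} is applied with the correct $\a,\b,\g$ (which are determined by $\pi=\mu$ and $d=p+4q-1$, not by the convenient $d=a+b$ normalization used earlier) and that it genuinely simplifies to $p$ — together with getting the sign of the symmetry right, since that requires carefully tracking the parity contributions of the two wedge-contractions and the tensor-factor swap. Everything else is a direct transcription of the methods already established in Propositions \ref{mosect} and \ref{pp2pprop} and Lemma \ref{x3ranklem}.
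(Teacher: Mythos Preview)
Your overall strategy is exactly the paper's: exhibit an explicit model $M_\phi$ generalizing the one in Proposition \ref{pp2pprop}, read off the symmetry/skew-symmetry from a small block, invoke Lemma \ref{x3ranklem} for the rank, and then apply Proposition \ref{mosect} and the dimension formula \eqref{dimsab}. The paper's proof is literally the one-liner ``consider $M_\phi: S^{p-1}V^*\otimes S^q(\La 2 V^*)\to S^{p}V\otimes S^q(\La 2V)$ given for $\phi=x^{p+4q-1}$ by $\alpha_1\cdots\alpha_{p-1}\otimes\beta_1\wedge\gamma_1\cdots\beta_q\wedge\gamma_q\mapsto\prod_j\alpha_j(x)\,x^{p-1}\otimes\tilde\Omega(x\intprod\beta_1\wedge\gamma_1)\cdots\tilde\Omega(x\intprod\beta_q\wedge\gamma_q)$, and argue as above.''

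Where you diverge is in the choice of model, and this is where the proposal has a genuine gap. Your space $S^{p-1}V^*\otimes S^{2q-1}(\La 2V^*)^{\otimes 2}$ is both more complicated than needed and arithmetically off: your claimed polarization $S^dV\hookrightarrow S^{p-1}V\otimes V\otimes S^{2q-1}V\otimes S^{2q-1}V$ has total degree $(p-1)+1+(2q-1)+(2q-1)=p+4q-2\neq d$, and the subsequent description conflates the $YF_{d,n}$ mechanism (a middle $V$-factor mediating $\La^aV\to\La^{a+1}V$) with the $M_\phi$ mechanism (contractions $\tilde\Omega(x\intprod\cdot)$). These are different constructions and mixing them here does not produce a well-defined map with the right source and target. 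The paper's model avoids all of this: it is the direct $q$-fold analogue of Proposition \ref{pp2pprop} (where $q=2$ was replaced by general $q$), with a \emph{single} factor $S^q(\La 2V^*)$ rather than a tensor square of $S^{2q-1}(\La 2V^*)$. Once you use that simpler model, the block computation for the sign and the ``argue as above'' go through verbatim; your remaining steps (Lemma \ref{x3ranklem} for the rank, Proposition \ref{mosect}, and \eqref{dimsab} for the bound on $k$) are then exactly as the paper intends.
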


\begin{proof}
Consider $M_{\phi} : S^{p-1}V^*\ot S^q(\La 2 V^*)\ra S^pV \ot S^q(\La 2 V )$
given for $\phi=x^{p+4q-1}$ by
$$
\a_1\cdots \a_{p-1}\ot \b_1\ww \g_1\cdots \b_q\ww\g_q
\mapsto
\Pi_j (\a_j(x))x^{p-1}\ot \tilde\Omega(x\intprod \b_1\ww \g_1)\cdots \tilde\Omega(x\intprod \b_q\ww \g_q)
$$
and argue as above.
\end{proof}

Here the usual flattenings give degree $k$ equations for $\s_{k-1}(v_d(\pp 2))$ in the generally
larger range $k\leq \frac 18(p+4q+2)(p+4q)$.

Recall that we have already determined the ideals  of  $\s_r(v_d(\pp 2))$ for $d\le 4$,
(Thm. \ref{bigflatthm} and   the chart in the introduction) so we next consider 
the case $d=5$.

 {\it Case $d=5$}:
The symmetric flattening given by the size $(k+1)$ minors of
$\phi_{2,3}$ define $\s_k(v_5(\pp 2))$ up to $k=4$ by Theorem \ref{maincata} (3).
Note that the size $6$ minors define a subvariety of codimension $5$, strictly containing 
$\s_5(v_5(\pp 2))$, which has codimension $6$. So, in this case, the bound
provided by Theorem \ref{maincata} is sharp.

\begin{theorem}\label{s6v5p2} \

\begin{enumerate}
\item $\s_k(v_5(\pp 2))$ for $k\le 5$ is  an irreducible component of $YFlat^{2k}_{31,31}(S^5\BC^3)$,  the variety given by the principal
size $2k+2$ Pfaffians  of the $[(31),(31)]$-Young flattenings.

\item  the principal
size $14$ Pfaffians 
of the $[(31),(31)]$-Young flattenings are scheme-theoretic defining equations for $\s_6(v_5(\pp 2))$, i.e., 
as schemes, $\s_6(v_5(\pp 2))= YFlat^{12}_{31,31}(S^5\BC^3)$. 
\item $\s_7(v_5(\pp 2))$ is the ambient space.
\end{enumerate}
 \end{theorem}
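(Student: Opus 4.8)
The three parts are largely independent and each follows by specializing machinery already in place; I would organize the argument accordingly. For part~(1) I would simply apply Theorem~\ref{mainyoung} with $n=2$, $V=\BC^{3}$, $d=5$, so that $a=\lfloor n/2\rfloor=1$ and $\d=2$. Here $YF_{5,2}$ is precisely the $[(3,1),(3,1)]$-Young flattening $\phi_{31,31}$ pictured above; using $S_{(3,1)}V^{*}\cong S_{(3,2)}V$ as $SL(V)$-modules it is a self-map, skew-symmetric because $n=2a$ with $a=1$ odd, and a decomposable $x^{5}$ has rank $\binom{n}{a}=2$ under it, so the locus ``rank $\le 2k$'' is cut out by the principal size $\binom{n}{a}k+2=2k+2$ sub-Pfaffians. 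Since the hypothesis $r\le\binom{\d+n}{n}=\binom{4}{2}=6$ of Theorem~\ref{mainyoung} holds for every $k\le 5$, that theorem gives at once that $\s_{k}(v_5(\pp 2))$ is an irreducible component of $YFlat^{2k}_{31,31}(S^{5}\BC^{3})$.

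For part~(2) I would invoke Corollary~\ref{degequal} with $p=2$: it asserts $\deg\s_{6}(v_5(\pp 2))\le 140$, with scheme-theoretic equality $\s_{6}(v_5(\pp 2))=YFlat^{12}_{31,31}(S^{5}\BC^{3})$ as soon as the bound $140$ is attained, and it is attained because $\deg\s_{6}(v_5(\pp 2))=140$ by Proposition~\ref{112}(3). For completeness I would unpack the mechanism behind Corollary~\ref{degequal}: the map $\phi_{31,31}$ embeds $S^{5}V$ linearly in $\La^{2}S_{(3,2)}V\cong\La^{2}\BC^{15}$ (here $\dim S_{(3,2)}\BC^{3}=15$ by \eqref{dimsab}), so $YFlat^{12}_{31,31}(S^{5}\BC^{3})=\s_{6}(G(2,S_{(3,2)}V))\cap\BP S^{5}V$. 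By Proposition~\ref{segre}, $\s_{6}(G(2,\BC^{15}))$ has codimension $\binom{3}{2}=3$ in its Pl\"ucker space and degree $\tfrac14\binom{15}{2}\cdot\tfrac{16}{3}=140$; meanwhile $\s_{6}(v_5(\pp 2))$ has codimension $3$ in $\BP^{20}$ (it is not defective) and, by part~(1), is an irreducible component of the linear section $\s_{6}(G(2,S_{(3,2)}V))\cap\BP S^{5}V$, of the expected codimension. Since its degree equals that of $\s_{6}(G(2,\BC^{15}))$, the refined B\'ezout theorem~\ref{refinedbezout} forces the whole scheme-theoretic intersection to equal $\s_{6}(v_5(\pp 2))$; equivalently, the size $14$ sub-Pfaffians of $\phi_{31,31}$, which are forms of degree $7$, cut it out scheme-theoretically.

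For part~(3) the cleanest route is the Alexander--Hirschowitz theorem: $(n,d)=(2,5)$ is not among its defective exceptions, so $\dim\s_{7}(v_5(\pp 2))=\min(3\cdot 7-1,\,20)=20=\dim\BP S^{5}\BC^{3}$, and since $\s_{7}(v_5(\pp 2))$ is irreducible it coincides with the ambient space. Equivalently, by Terracini's lemma this is the statement that seven general double points of $\pp^{2}$ impose independent conditions on plane quintics --- there is no quintic singular at seven general points --- which can be verified on a single explicit configuration, or constructively via the algorithm of \S\ref{decompsect}, which exhibits a decomposition of the general ternary quintic as a sum of seven fifth powers.

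The step carrying the real weight is not visible inside this proof, since it is absorbed into Theorem~\ref{mainyoung} and into the degree $\deg\s_{6}(v_5(\pp 2))=140$ of Proposition~\ref{112}(3); granting those, the only genuine verifications are the $SL_{3}$-module bookkeeping identifying $\phi_{31,31}$ with $YF_{5,2}$, the numerical coincidence $\deg\s_{6}(G(2,\BC^{15}))=140$ coming out of Segre's formula in Proposition~\ref{segre}, and the non-defectivity inputs (for the codimension count in part~(2) and for part~(3)).
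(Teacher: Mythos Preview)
Your proposal is correct and follows essentially the same route as the paper: part~(1) is deduced from Theorem~\ref{mainyoung}, part~(2) from the coincidence of codimension and degree ($3$ and $140$) between $\s_6(G(2,\BC^{15}))$ (Proposition~\ref{segre}) and $\s_6(v_5(\pp 2))$ (Proposition~\ref{112}) via the refined B\'ezout theorem~\ref{refinedbezout}, and part~(3) is classical (the paper does not argue it). The only cosmetic difference is that you route part~(2) through Corollary~\ref{degequal}, whereas the paper argues it directly; the content is identical.
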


\begin{proof} 
(1) is a consequence of Theorem \ref{mainyoung}.
 
To prove the second assertion,   by the Segre formula (Proposition \ref{segre}) the subvariety of
size $15$ skew-symmetric  
matrices of rank $\le 12$ has codimension $3$ and degree $140$. By  Proposition \ref{112},  $\s_6(v_5(\pp 2))$ 
has codimension $3$ and degree $140$. We conclude  as in the proof of Corollary \ref{symflat}.
\end{proof}

 {\it Case $d=6$}:
In the case $\s_k(v_6(\pp 2))$,
the symmetric flattening given by the $(k+1)$ minors of
$\phi_{2,4}$ define $\s_k(v_6(\pp 2))$ as an irreducible component up to $k=4$
(in the case $k=4$ S. Diesel  \cite[Ex. 3.6]{MR1735271}  showed that there are two irreducible components)
and the  symmetric flattening given by the $(k+1)$ minors of
$\phi_{3,3}$ define $\s_k(v_6(\pp 2))$ as an irreducible component up to $k=6$.

  The following is a special case of the Thm. 
\ref{bigflatthm}(\ref{s6vdp2thm}), we include this second proof because it is very short.

\begin{theorem}\label{s6v6p2} As schemes,  $\s_6(v_6(\pp 2))=Rank_{3,3}^6(S^6\BC^3)$, i.e., 
the size $7$ minors of $\phi_{3,3}$ cut out  $\s_6(v_6(\pp 2))$ scheme-theoretically.
\end{theorem}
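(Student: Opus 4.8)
The plan is to recognize this as the $p=3$ case of Corollary \ref{symflat} and to supply the numerical input that turns the inequality there into an equality. By Remark \ref{splitflatrem}, since $S^6V\subset S^2(S^3V)$, the scheme $Rank_{3,3}^6(S^6\BC^3)$ cut out by the size $7$ minors of $\phi_{3,3}$ is exactly the linear section $\s_6(v_2(\BP S^3V))\cap \BP S^6V$: restricting the size $7$ minors of a generic symmetric $10\times 10$ matrix to the linear subspace $\BP S^6V\subset \BP S^2(S^3V)=\pp{54}$ gives precisely the size $7$ minors of $\phi_{3,3}$, compatibly with scheme structures. So I would set $Z=\s_6(v_2(\pp 9))$ (rank $\le 6$ symmetric $10\times 10$ matrices, with its determinantal scheme structure), $L=\BP S^6V$, and study $Z\cap L$.

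First, Theorem \ref{maincata} with $n=2$, $d=6$, $\d=\d'=3$ (note $6=\binom{4}{2}$) shows that $\s_6(v_6(\pp 2))$ is an irreducible component of $Z\cap L$. Next I would compare degrees and codimensions. By the Segre formula (Proposition \ref{segre}), $\deg Z=\prod_{i=0}^{3}\frac{\binom{10+i}{4-i}}{\binom{2i+1}{i}}=28314$, while $\deg \s_6(v_6(\pp 2))=28314$ by Proposition \ref{112}(4); moreover $\s_6(v_6(\pp 2))$ has codimension $\binom{8}{2}-3\cdot 6=10$ in $\BP S^6V=\pp{27}$ (the codimension formula used in Proposition \ref{boundeven}), hence codimension $10+27=37\le 54$ in $\pp{54}$, which is exactly $\tcodim Z+\tcodim L$. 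Therefore the refined B\'ezout theorem \ref{refinedbezout} (with $e=10$, $f=27$) applies and forces $Z\cap L=\s_6(v_6(\pp 2))$ set-theoretically, with multiplicity one along this component.

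For the scheme-theoretic conclusion I would use that $Z$, a symmetric determinantal variety, is arithmetically Cohen--Macaulay, so that the section $Z\cap L$ of the expected codimension is again Cohen--Macaulay and hence has no embedded components; combined with the multiplicity-one statement from refined B\'ezout this makes $Z\cap L$ reduced, i.e. the size $7$ minors of $\phi_{3,3}$ cut out $\s_6(v_6(\pp 2))$ scheme-theoretically. This is exactly what Corollary \ref{symflat} delivers for $p=3$ once the degree equality is established. I do not expect a genuine obstacle here: the only non-formal ingredient is the value $\deg\s_6(v_6(\pp 2))=28314$ coming from Proposition \ref{112} (ultimately Ellingsrud--Str\o mme), and checking that it matches the Segre degree of $\s_6(v_2(\pp 9))$ is a short arithmetic verification -- which is precisely why this argument is much shorter than the one via Lemma \ref{admisseqlem} and Theorem \ref{oarrirred}.
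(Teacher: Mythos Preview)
Your proposal is correct and follows essentially the same approach as the paper's own proof, which simply invokes the Segre formula (Proposition~\ref{segre}) and Proposition~\ref{112}(4) to match degree and codimension, then appeals to the refined B\'ezout argument of Corollary~\ref{symflat}. You have in fact been more careful than the paper by spelling out the Cohen--Macaulay step needed to upgrade the cycle-theoretic equality from refined B\'ezout to a genuine scheme-theoretic one, a point the paper leaves implicit.
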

\begin{proof}
 By the Segre formula (Proposition \ref{segre}) the subvariety of symmetric $10\times 10$
matrices of rank $\le 6$ has codimension $10$ and degree $28,314$. By Proposition \ref{112},  $\s_6(v_6(\pp 2))$ 
has  codimension $10$ and degree $28,314$. We conclude  as in the proof of  Corollary \ref{symflat}.
\end{proof}

The size $8$ minors of $\phi_{3,3}$ define a subvariety of codimension $6$, strictly containing
$\s_7(v_6(\pp 2))$, which has codimension $7$. In the same way, the size $9$ minors of $\phi_{3,3}$ define a subvariety 
of codimension $3$, strictly containing
$\s_8(v_6(\pp 2))$, which has codimension $4$.
 Below we construct   equations in terms of Young flattenings.

$\det (\phi_{42,42})$ is a polynomial of degree 27, which is not the power of a lower degree polynomial. 
This can be proved by cutting with a random projective line, and
using Macaulay2. The two variable  polynomial obtained is not the power of a lower degree polynomial.

If $\phi$ is decomposable then $\trank (\phi_{42,42})=3$ by Lemma \ref{x3ranklem},
so that when $\phi\in \s_k(v_6(\pp 2))$ then $\trank (\phi_{42,42})\le 3k$.

\begin{theorem}\label{sextics} \
\begin{enumerate}
\item  $\s_7(v_6(\pp 2))$ is an irreducible component of $Rank_{3,3}^{7}(S^6\BC^3)\cap YFlat_{42,42}^{22}(S^6\BC^3)$, i.e., of
 the variety defined by the size $8$ minors of the symmetric flattening $\phi_{3,3}$
and by the size  $22$ minors of the $[(42),(42)]$-Young flattenings.
 
\item  $\s_8(v_6(\pp 2))$ is an irreducible component of
$Rank_{3,3}^{8}(S^6\BC^3)\cap YFlat_{42,42}^{24}(S^6\BC^3)$, i.e., of
 the variety defined by the size $9$ minors of the symmetric flattening $\phi_{3,3}$
and by the  size $25$ minors of the $[(42),(42)]$-Young flattenings.

\item  $\s_9(v_6(\pp 2))$ is the hypersurface of degree $10$ defined by $\tdet (\phi_{3,3})$.
\end{enumerate}
\end{theorem}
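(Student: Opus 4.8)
\emph{Part (3).} The plan is a short dimension count. If $\phi=\sum_{i=1}^{9} y_i^6$ then $\trank(\phi_{3,3})\le 9<10=\tdim S^3\BC^3$, so $\tdet(\phi_{3,3})=0$, and hence $\s_9(v_6(\pp 2))$ is contained in the hypersurface $\{\tdet\phi_{3,3}=0\}$, which has degree $10$ (the determinant of a $10\times 10$ matrix whose entries are linear in the coefficients of $\phi$, and which is not identically zero since $\phi_{3,3}$ is invertible for generic $\phi$). It is classical (see \cite{MR1735271}) that $\tdet\phi_{3,3}$ is irreducible and that $\s_9(v_6(\pp 2))$ is non-defective, hence of codimension $1$. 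Then $\{\tdet\phi_{3,3}=0\}$ and $\s_9(v_6(\pp 2))$ are two irreducible codimension-$1$ varieties, one containing the other, so they coincide and $\tdet\phi_{3,3}$ generates the ideal.

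\emph{Parts (1) and (2).} Here I would prove that $\s_r(v_6(\pp 2))$, for $r=7,8$, is an irreducible component of the intersection $Z_r$ of the two determinantal loci in the statement, by a Terracini--conormal computation at a general point, in the spirit of Theorem \ref{maincata} and of the proof of Theorem \ref{mainyoung}. The inclusion $\s_r(v_6(\pp 2))\subseteq Z_r$ is immediate from subadditivity of rank (Proposition \ref{mosect}): a decomposable sextic has $\phi_{3,3}$-rank $1$ and, by Proposition \ref{pp2pprop} (equivalently Lemma \ref{x3ranklem}), $\phi_{42,42}$-rank $3$, so a sum of $r$ sixth powers has $\trank\phi_{3,3}\le r$ and $\trank\phi_{42,42}\le 3r$. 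Now fix a general $\phi=\sum_{i=1}^{r}y_i^6$ and put $\Gamma=\{[y_1],\dots,[y_r]\}\subset\BP V^*$; by Proposition \ref{nstarsrvd} the conormal $N(\Gamma):=\hat N^*_{[\phi]}\s_r(v_6(\pp 2))$, i.e.\ the space of sextics singular at every point of $\Gamma$, has dimension $28-3r$ ($=7$ for $r=7$, $=4$ for $r=8$). Since $\s_r(v_6(\pp 2))$ is irreducible and lies in $Z_r$, it is an irreducible component of $Z_r$ provided $\tdim\hat N^*_{[\phi]}Z_r\ge 28-3r$; and because the Zariski tangent space of an intersection lies in the intersection of the Zariski tangent spaces, $\hat N^*_{[\phi]}Z_r$ contains $\hat N^*_{[\phi]}Rank^r_{3,3}+\hat N^*_{[\phi]}YFlat_{42,42}$, both summands being automatically contained in $N(\Gamma)$. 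So everything reduces to showing that these two conormal spaces together span $N(\Gamma)$.

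The first summand is given by Proposition \ref{conormalflat}: for general $\phi$ the symmetric map $\phi_{3,3}$ has rank $r$, its kernel $(\tim\phi_{3,3})^\perp=I_3(\Gamma)$ is the space of cubics through $\Gamma$ (of dimension $10-r$), and $\hat N^*_{[\phi]}Rank^r_{3,3}=I_3(\Gamma)\circ I_3(\Gamma)$, the sextics that are sums of products of two cubics through $\Gamma$. For $r$ general points one computes this to have dimension $6$ when $r=7$ and $3$ when $r=8$ --- in either case strictly below $\tdim N(\Gamma)$, which is exactly why the symmetric flattening alone does not make $\s_r(v_6(\pp 2))$ a component (compare the cutoff $r\le 6$ in Theorem \ref{maincata}). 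Thus the point is to show $\hat N^*_{[\phi]}YFlat_{42,42}$ contributes the missing dimension: a sextic double along $\Gamma$ which is not a sum of products of pairs of cubics through $\Gamma$.

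For this I would pass to the vector bundle description. For general $\phi\in\s_r(v_6(\pp 2))$ one has $\trank\phi_{42,42}=3r$, so $[\phi]$ is a smooth point of $YFlat_{42,42}$ and its conormal is computed by the evident analogue of Proposition \ref{conormalflat}; moreover, by Proposition \ref{pp2pprop} the map $\phi_{42,42}$ is, up to identifications, the map $M_\phi$ of that proposition, which is the map $A^E_v$ of \S\ref{constsect} for an explicit rank-$3$ equivariant bundle $E$ on $\pp 2$ (built from the quotient bundle, reflecting the $S^2(\La 2 V)$-factors of $M_\phi$). The conormal description of the determinantal loci of $A^E_v$ in \S\ref{constsect} then identifies $\hat N^*_{[\phi]}YFlat_{42,42}$, inside $S^6V^*$, with the image of a cohomology group of $E$ restricted away from $\Gamma$, so that the non-containment we need becomes a cohomology vanishing: the vanishing of an $H^0$ (or $H^1$) of a suitable twist of $E$ tensored with the ideal sheaf of a general $\Gamma$ of $r$ points. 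I expect this vanishing, which I would establish by degenerating $\Gamma$ to a convenient configuration and invoking semicontinuity (or via the downward induction Lemma \ref{zeta} together with the component criterion of \S\ref{constsect}, Theorem \ref{irredcrit}), to be the main obstacle. Once it is in hand, $\hat N^*_{[\phi]}Z_r$ has dimension $28-3r$, so $Z_r$ is smooth of dimension $\tdim\s_r(v_6(\pp 2))$ at $[\phi]$ and $\s_r(v_6(\pp 2))$ is an irreducible component, proving (1) and (2).
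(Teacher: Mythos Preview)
Your framework for (1) and (2) matches the paper's exactly: show the inclusion by subadditivity, then check at a general $\phi=\sum y_i^6$ that the sum of the conormal spaces of the two determinantal loci fills up $\hat N^*_{[\phi]}\s_r(v_6(\pp 2))$. Your identification of the symmetric-flattening conormal with $I_3(\Gamma)\circ I_3(\Gamma)$ and its dimensions ($6$ for $r=7$, $3$ for $r=8$) is also what the paper uses. Part (3) the paper simply declares well known, as you do.

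The divergence is in how the remaining step is executed. You propose to identify $\hat N^*_{[\phi]}YFlat_{42,42}^{3r}$ via the vector bundle $E$ underlying Proposition~\ref{pp2pprop}, reduce the ``extra dimension'' to a surjectivity/vanishing statement for $H^0(I_\Gamma\otimes E)$, and then prove that by degeneration or by Lemma~\ref{zeta}. This is plausible in spirit, but you have not actually carried it out, and it is not covered by any result in the paper: Theorem~\ref{irredcrit} handles a \emph{single} bundle, not an intersection of two rank loci, and Lemma~\ref{zeta} requires a top case to induct down from. So as written there is a genuine gap --- the sentence ``I expect this vanishing'' is exactly the content of the theorem.

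The paper, by contrast, does not attempt a theoretical argument here at all. It picks $\phi$ a sum of $r$ random sixth powers, builds the matrix of $\phi_{42,42}$ with the Macaulay2 package \texttt{PieriMaps}, computes the tangent space of $Rank_{3,3}^r$ at $\phi$ (codimension $3$ for $r=8$), and then checks that the tangent hyperplane of a single size-$(3r+1)$ minor of $\phi_{42,42}$ (containing a fixed invertible $3r\times 3r$ submatrix) is not contained in that subspace, forcing codimension $\ge 3r$. So the ``missing dimension'' is supplied by one explicit minor, verified numerically. Your conceptual route would be more satisfying if completed, but the paper's proof is a computer check, and that is what your proposal is missing.
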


\begin{proof}
  
To prove (2),  we   picked
a polynomial $\phi$  which is the sum of $8$ random fifth powers of linear forms, and
a  submatrix of $\phi_{42,42}$ of order $24$ which is invertible.
The matrix representing $\phi_{42,42}$ can be constructed explicitly by the package
PieriMaps of Macaulay2  \cite{M2,pierimaps}.

The affine tangent space at $\phi$ of $Rank_{3,3}^{8}(S^6\BC^3)$
has codimension $3$ in $S^6\BC^3$.
In order to compute a tangent space,   differentiate, as usual, each line of the matrix, substitute $\phi$
in the other lines, compute the determinant and then sum  over the lines.
It is enough to pick one minor of order $25$ of $\phi_{42,42}$ containing the invertible ones
of order $24$. The tangent space of this minor at $\phi$ is not contained in
the subspace of codimension $3$, yielding the desired subspace of codimension $4$.

(1) can be proved in the same way.
(3) is   well known.
\end{proof}

\begin{remark} 
$\s_7(v_6(\pp 2))$ is the first example where the known equations are not of minimal possible degree.
\end{remark}

{\it Case $d=7$}: 
In the case of  $\s_k(v_7(\pp 2))$,
the symmetric flattening given by the size $(k+1)$ minors of
$\phi_{3,4}$ define $\s_k(v_7(\pp 2))$ as an irreducible component up to $k=8$.  

\begin{theorem}\label{v7}\ 
\begin{enumerate}
\item  For $k\le 10$ $\s_{k}(v_7(\pp 2))$  is an irreducible component
of $YFlat_{41,41}^{k}(S^7\BC^3)$, which is defined by
   the size $(2k+2)$
subpfaffians of  
 of $\phi_{41,41}$.

\item   $\s_{11}(v_7(\pp 2))$ has codimension $3$ and it is contained in the hypersurface
$YFlat_{41,41}^{22}(S^7\BC^3)$ of degree $12$ 
defined by 
$\tpfaff (\phi_{41,41})$.
\end{enumerate}
\end{theorem}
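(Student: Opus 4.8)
The plan is to treat the two parts in parallel with the methods already developed for the $d=5$ case (Theorem \ref{s6v5p2}) and the $d=6$ case (Theorem \ref{sextics}), since $\phi_{41,41}\colon S_{41}V\to S_{41}V$ is the Young flattening $YF_{7,2}$ and is skew-symmetric by the discussion in \S\ref{YFsect} (here $n=2$, $a=1$, so $n-a=1$ is odd, and for $n=2a$ with $a=1$ odd the matrix is skew-symmetric). First I would record that $\tdim S_{41}\BC^3 = \frac12(4+2)(1+1)(4-1+1)=24$ by \eqref{dimsab}, so $\phi_{41,41}$ is a skew-symmetric $24\times 24$ matrix, and that by Lemma \ref{x3ranklem} (with $d=7$, $(a,b)=(4,1)$, hence $\a=\g=2$, $\b=1$) a decomposable $x^7$ gives $\trank((x^7)_{41,41})=1$. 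Therefore if $[\phi]\in\s_k(v_7(\pp 2))$ then $\trank(\phi_{41,41})\le k$, so the size $(2k+2)$ sub-Pfaffians of $\phi_{41,41}$ vanish on $\s_k(v_7(\pp 2))$; this gives one inclusion in both parts. For part (1), since $\s_k(v_7(\pp 2))$ is irreducible, it suffices to observe that Theorem \ref{mainyoung} applied with $n=2$, $d=7$, $\d=3$ gives exactly that $\s_k(v_7(\pp 2))$ is an irreducible component of $YFlat^k_{41,41}(S^7\BC^3)$ for $k\le\binom{\d+n}{n}=\binom52=10$, and the equations there are the size $\binom21 k+2 = 2k+2$ sub-Pfaffians. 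So part (1) is essentially a direct citation of Theorem \ref{mainyoung}; the only thing to check is the numerology of the bound, which I would do explicitly.

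For part (2), I would proceed as in Theorem \ref{s6v6p2} and Corollary \ref{degequal}: the relevant count is $k=11$, for which $\trank(\phi_{41,41})\le 11$ means $\phi_{41,41}$, as a skew-symmetric $24\times 24$ matrix, has rank at most $22$, i.e.\ vanishing of the $24\times 24$ Pfaffian $\tpfaff(\phi_{41,41})$ (a polynomial of degree $24/2=12$ in the entries of $\phi$, which are linear in $\phi$ — hence degree $12$). This cuts out a hypersurface, of codimension $1$. On the other hand $\s_{11}(v_7(\pp 2))$ has expected codimension $\binom{7+2}{2}-3\cdot 11 = 45-33 = 12$; I would need to confirm that $\s_{11}(v_7(\pp 2))$ is not defective so that its actual codimension is the expected one. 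Wait — the statement claims codimension $3$, not $12$, so let me recheck: $\tdim S^7\BC^3 = \binom92 = 36$, $\dim\s_{11}(v_7(\pp 2)) = \min(36, 3\cdot 11) = 33$ generically (if nondefective), so codimension $36-33=3$. I would cite the Alexander–Hirschowitz classification (or the relevant nondefectivity statement) to get codimension exactly $3$; then the containment $\s_{11}(v_7(\pp 2))\subset\{\tpfaff(\phi_{41,41})=0\}$ follows from the rank bound above, and since the latter is a hypersurface (codimension $1$) and the former has codimension $3$, the containment is proper and we simply record it. The degree $12$ of the Pfaffian is immediate from its being a degree-$12$ polynomial in linear forms.

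The main obstacle I anticipate is not in part (1) — that is a bookkeeping exercise given Theorem \ref{mainyoung} — but in making part (2) fully rigorous: one must know the true dimension of $\s_{11}(v_7(\pp 2))$ (nondefectivity at this specific $(r,d,n)$) and one must verify that $\tpfaff(\phi_{41,41})$ is genuinely a nonzero polynomial (equivalently, that $\phi_{41,41}$ is generically of full rank $24$ as $\phi$ ranges over $S^7\BC^3$), so that the zero set really is a hypersurface rather than all of $\pp{35}$; this is the analogue of the remark about $\det(\phi_{42,42})$ in the $d=6$ discussion and would be checked by a specialization argument, e.g.\ cutting with a general line and using Macaulay2 with the PieriMaps package \cite{M2,pierimaps}, exactly as done in the proof of Theorem \ref{sextics}. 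Unlike Theorems \ref{s6v5p2}(2) and \ref{s6v6p2}, I would not expect a degree-matching (refined B\'ezout) argument to upgrade the containment in part (2) to scheme-theoretic equality, since the codimensions $3$ and $1$ do not match — hence the statement only claims containment in a hypersurface.
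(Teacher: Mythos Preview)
Your approach is the same as the paper's --- the paper's entire proof is ``(1) follows from Theorem \ref{mainyoung}. (2) is obvious.'' --- but there is a genuine arithmetic slip in your setup.

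You claim that $\trank((x^7)_{41,41})=1$, applying Lemma \ref{x3ranklem} with $(a,b)=(4,1)$ and $\alpha=\gamma=2$, $\beta=1$. This is wrong on two counts: first, $\alpha+\beta+\gamma$ must equal $d=7$, but $2+1+2=5$; second, the hypothesis $\alpha\le b$ forces $\alpha\le 1$. The correct computation uses $(a,b)=(4,3)$ (since as $SL_3$-modules $S_{41}V^*\cong S_{43}V$, so $\phi_{41,41}$ is the self-dual flattening $S_{43}V\to S_{41}V$). Then $\alpha=\gamma=(d+2b-a)/3=3$, $\beta=(d-4b+2a)/3=1$, and the rank is $\frac19(a+b-d+3)^2(a-b+1)=\frac19\cdot 3^2\cdot 2=2$, which agrees with the general formula $\binom{n}{\lfloor n/2\rfloor}=\binom21=2$ from \S\ref{YFsect}. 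Consequently your claim ``$\trank(\phi_{41,41})\le k$ for $\phi\in\sigma_k$'' should read ``$\le 2k$'', and this is exactly why the equations are the size $2k+2$ sub-Pfaffians. (Your later sentence ``$\trank(\phi_{41,41})\le 11$ means \ldots\ rank at most $22$'' reveals the same confusion.)

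For part (2), your added caveats --- nondefectivity of $\sigma_{11}(v_7(\pp 2))$ and nonvanishing of $\mathrm{Pf}(\phi_{41,41})$ --- are reasonable to check but are not an obstacle: nondefectivity is Alexander--Hirschowitz, and the Pfaffian is not identically zero since by part (1) the locus $\{\trank\le 20\}$ has $\sigma_{10}$ as a proper irreducible component (so the generic rank exceeds $20$, hence equals $24$). The paper simply records the containment as obvious.
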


\begin{proof}
(1) follows from   Theorem \ref{mainyoung}. (2) is obvious.
\end{proof}

\begin{remark}
We emphasize that $\s_{11}(v_7(\pp 2))$ is the first case where we do not know, even conjecturally,
further equations.
 \end{remark}

 {\it Case $d=8$}:  It is possible that 
  $\s_k(v_8(\pp 2))$ is an irreducible component of $Rank_{4,4}^{k+1}(S^8\BC^3)$
  up to $k=12$, but the bound given in Theorem \ref{maincata}
is just $k\le 10$.
The size $14$ minors of $\phi_{4,4}$ define a subvariety of codimension $4$,
which strictly contains $\s_{13}(v_8(\pp 2))$
which has codimension $6$. Other equations of degree $40$ for 
$\s_{13}(v_8(\pp 2))$ are given by the size $40$  minors of   $\phi_{53,52}$. It is possible that
minors of $\phi_{(53),(52)}$ could be used to get a collection of set-theoretic equations  for  $\s_{12}(v_8(\pp 2))$ and $\s_{11}(v_8(\pp 2))$. In the same way,
$\det\phi_{4,4}$ is just an equation of degree $15$ for $\s_{14}(v_8(\pp 2))$
which has codimension $3$.

\smallskip
 {\it Case $d=9$}: It is possible that 
 $\s_k(v_9(\pp 2))$ is an irreducible component of $Rank_{4,5}^{k+1}(S^9\BC^3)$
  up to $k=13$, but the bound given in Theorem \ref{maincata}
is just $k\le 11$.
 $Rank_{4,5}^{14}(S^9\BC^3)$
  strictly contains $\s_{14}(v_9(\pp 2))$
which has codimension $13$.
The principal Pfaffians of order $30$ of $\phi_{54,51}$ give further equations for $\s_{14}(v_9(\pp 2))$.
In the same way the principal Pfaffians of order $32$ (resp. $34$) of $\phi_{54,51}$ give some equations for $\s_{15}(v_9(\pp 2))$
(resp. $\s_{16}(v_9(\pp 2))$).
Another equation for $\s_{15}(v_9(\pp 2))$ is the Pfaffian
of the skew-symmetric morphism
$\phi_{63,63}\colon S_{6,3}V\to S_{6,3}V$
which can be pictorially described in terms of Young diagrams, by
$$\yng(6,3)\otimes\young(*********)\quad\to\quad\young(\ \ \ \ \ \ ***,\ \ \ ***,***)\simeq\ \yng(6,3)$$

We do not know any equations for  $\s_{k}(v_9(\pp 2))$ when $k=17, 18$.
The $k=18$ case is particularly interesting because the corresponding secant variety is a hypersurface.

  \section{Construction of equations from vector bundles}\label{constsect}

\subsection{Main observation}



Write $e:=\trank(E)$ and 
consider the {\it determinantal varieties} or {\it rank varieties} defined by the minors of $A^E_v:  H^0(E) \to H^0(E^{*}\otimes L)^{*}$  (defined by equation (\ref{pairingmap})),  
\begin{align*}Rank_k(E):&= \BP \{v\in V| \trank(A_{v}^E)\le ek\}\\
&
=\s_{ek}(Seg(\BP H^0(E)^{*}\times \BP H^0(E^*\ot L)^{*}))\cap \BP V.
\end{align*}

\begin{proposition}\label{construct} Let $X\subset \BP V=\BP H^0(L)^*$ be a variety, and $E$
a rank $e$   {vector bundle on $X$.}  Then 
$$\sigma_r(X)\subseteq Rank_r(E)
$$
i.e.,  the size $(r e+1)$  minors of $A_v^E$   give equations for $\sigma_r(X)$.
\end{proposition}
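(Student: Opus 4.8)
The plan is to reduce the statement to the classical fact that the locus of matrices of rank at most $s$ is Zariski closed and contains all its secant varieties, combined with linearity of the construction $v\mapsto A_v^E$. First I would observe that $v\mapsto A_v^E$ is a \emph{linear} map $V\to \Hom(H^0(E),H^0(E^*\ot L)^*)$, since it is obtained from the fixed bilinear map $A\colon H^0(E)\ot H^0(L)^*\to H^0(E^*\ot L)^*$ by plugging $v$ into the second slot; hence it descends to a morphism $\BP V\dashrightarrow \BP\Hom(H^0(E),H^0(E^*\ot L)^*)$ defined wherever $A_v^E\neq 0$, and the rank of $A_v^E$ is a well-defined lower-semicontinuous function of $[v]$. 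Thus $Rank_k(E)$ is genuinely a Zariski-closed subvariety of $\BP V$ cut out by the size $(ek+1)$ minors of $A_v^E$, as claimed in the statement.

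The key step is the rank-one case: if $[x]\in X$, I claim $\trank(A_x^E)\le e=\trank E$. This is where the geometry of $E$ enters. The natural map $A\colon H^0(E)\ot H^0(L)^*\to H^0(E^*\ot L)^*$ is, up to the identification $H^0(L)^*=V$, the dual of the multiplication/evaluation pairing; concretely $A_x^E$ factors through the fiber of $E$ at $x$. Indeed, evaluation at $x$ gives $\ev_x\colon H^0(E)\to E_x$, and dually $x\in V=H^0(L)^*$ together with a section of $E^*\ot L$ evaluated at $x$ pairs a section of $E^*\ot L$ against $E_x$ (using that $L_x$ is canonically trivialized by the choice of representative $x\in\widehat{v_1(X)}$, i.e. by $x\in\hat X\subset V$). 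So $A_x^E$ is the composite $H^0(E)\xrightarrow{\ev_x} E_x\to H^0(E^*\ot L)^*$, and since $\dim E_x=e$ this forces $\trank(A_x^E)\le e$. I should write this factorization out carefully, since it is the crux: the point $x\in V$ gives a trivialization of $L$ near the point of $X$ it represents (this is the standard "cone point" normalization), and $A$ is literally $\ev\otimes(\text{pairing with }x)$.

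From the rank-one case the general case follows by subadditivity of rank. If $[\phi]\in\sigma_r(X)$ is a general point, write $\phi=x_1+\dots+x_r$ with $[x_i]\in X$ (lifting to the affine cone), so $A_\phi^E=\sum_{i=1}^r A_{x_i}^E$ by linearity, whence $\trank(A_\phi^E)\le\sum_i\trank(A_{x_i}^E)\le re$. Therefore the general point of $\sigma_r(X)$ lies in $Rank_r(E)$; since $Rank_r(E)$ is closed and $\sigma_r(X)$ is the closure of its general points, $\sigma_r(X)\subseteq Rank_r(E)$. Equivalently, $A^{re+1}H^0(E)^*\ot A^{re+1}H^0(E^*\ot L)^*$ vanishes on $\sigma_r(X)$.

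\textbf{Main obstacle.} The only nontrivial point is the factorization $A_x^E=(E_x\to H^0(E^*\ot L)^*)\circ\ev_x$ and, inside it, the bookkeeping of the trivialization of $L$ at $x$ that makes the pairing $H^0(E^*\ot L)\ot E_x\to\BC$ canonical once $x\in V=H^0(L)^*$ is fixed (not just $[x]\in\BP V$). Everything else — semicontinuity of rank, the closedness of $Rank_r(E)$, and subadditivity — is formal. I expect the write-up to spend most of its effort making this fiberwise description precise and checking it is compatible with the definition of $A$ given in \eqref{pairingmap}.
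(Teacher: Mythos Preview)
Your proposal is correct and follows the same overall architecture as the paper: linearity of $v\mapsto A_v^E$, a rank bound $\trank(A_x^E)\le e$ for $[x]\in X$, subadditivity for a general point of $\sigma_r(X)$, then closedness.

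The only difference is in how the rank-$e$ bound at a point of $X$ is obtained. You bound the \emph{image} by factoring $A_x^E$ through the $e$-dimensional fiber $E_x$; the paper instead bounds the \emph{kernel} by observing that $H^0(I_x\otimes E)\subseteq\ker A_x^E$, a subspace of codimension at most $e$ in $H^0(E)$. These are dual versions of the same observation. Your factorization is arguably more geometric and makes the role of the rank of $E$ transparent, while the paper's kernel formulation has the advantage of being the $r=1$ case of the inclusion $H^0(I_Z\otimes E)\subseteq\ker A_v$ used immediately afterward in Proposition~\ref{kerdec} and in the criterion of Theorem~\ref{irredcrit}. The ``main obstacle'' you flag (the trivialization of $L_x$ by the cone point $x$) is handled implicitly in both arguments and is indeed only bookkeeping.
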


When $E$ is understood, we will write $A_v$ for $A_v^E$.

\begin{proof}

By \eqref{pairingmap}, if $x=[v]\in X$, then 
$H^0(I_{x}\otimes E)\subseteq \ker A_v$.  
 {The subspace $H^0(I_{x}\otimes E)\subseteq H^0(E)$ has codimension
at most $e$ in $H^0(E)$, hence the same is true for
the subspace $\ker A_x$ and it follows $\trank(A_{x}) \le e$.
If $v\in\hat \sigma_r(X)$ is a general point, then it may be expressed as  $v=\sum_{i=1}^rx_i$, with $x_i\in \hat X$.
Hence 
$$\trank(A_v) =\trank(\sum_{i=1}^rA_{x_i})\le
\sum_{i=1}^r\trank(A_{x_i})\le r e
$$ 
as claimed. Since the inequality  is a closed condition, it
holds for all $v\in\hat \sigma_r(X)$.}
\end{proof}

\begin{example}\label{cata} Let  $X=v_d(\BP W)$, $L=\cO(d)$, $E=\cO(a)$ so $E^*\ot L=\cO(d-a)$, then
$Rank_r(E)$ is the {\it catalecticant variety} of symmetric flattenings in
$S^{a}W\otimes S^{d-a}W$ of rank at most $r$.
\end{example}
\begin{example}     Let   $E$ be a   homogeneous 
bundle on $\pp n$ with   $H^0(E)^{*}=S_{\mu}V$, $H^0(E^{*}\otimes L)^{*}=S_{\pi}V$.
Then $A_v$ corresponds to $\phi_{\pi,\mu}$ of \S\ref{yflatsect}. 
\end{example}

  The generalization  of
  symmetric flattenings to   Young flattenings for Veronese varieties 
is a representation-theoretic version of  the generalization  from line bundles to higher rank vector bundles.
 
\begin{example}
 A general source of examples is given by curves obtained as determinantal loci.
This topic is studied in detail in \cite{MR944326}. In \cite{ginensky}, 
A. Ginensky  considers 
the secant varieties $\sigma_k(C)$ to smooth curves $C$ in their bicanonical embedding.
With our notations this corresponds to the symmetric pair $(E,L)=(K_C,K_C^2)$.
He proves   \cite[Thm. 2.1]{ginensky} that $\sigma_k(C)=Rank_k(K_C)$ if $k<\textrm{Cliff}(C)$
and $\sigma_k(C) \subsetneq  Rank_k(K_C)$ for larger $k$. Here $\textrm{Cliff}(C)$ denotes the Clifford index of $C$.
\end{example}

\medskip

\subsection{The construction in the symmetric and skew-symmetric cases}
We say that $(E,L)$ is a {\it symmetric pair} if the
 isomorphism $E\rig{\alpha} E^{*}\otimes L$ is symmetric,
that is the transpose isomorphism $E\otimes L^{*}\rig{\alpha^t}E^{*}$,
after tensoring by  $L$ and multiplying the map by $1_L$ equals $\alpha$,
i.e.,  $\alpha=\alpha^t\otimes 1_L$. In this case   
 $S^2E$ contains $L$ 
as a direct summand, the morphism $A_v$ is symmetric,
and $Rank_k(E)$ is defined by the $(ke+1)$-minors of $A_v$.

Similarly,  $(E,L)$ is a {\it skew-symmetric pair} if $\alpha=-\alpha^t\otimes 1_L$.
In this case $ e$ is even,
 $\wedge^2E$ contains $L$ 
as a direct summand, the morphism $A_v$ is skew-symmetric,
and $Rank_k(E)$ is defined by the 
size $(k e+2)$ subpfaffians of $A_v$, which are equations of degree $\frac{ke}2+1$.

  {
\begin{example} Let $X=\BP^2\times\BP^n$ embedded by $L=\cO(1,2)$.
The equations for $\sigma_k(X)$ recently considered in
\cite{lukesegver}, where they  have been called 
{\it exterior flattenings},  fit in this setting. Call $p_1, p_2$ the two projections.
Let $Q$ be the tautological quotient bundle on $\BP^2$ and let $E=p_1^*Q\otimes p_2^*\cO(1)$, then $(E,L)$ is a skew-symmetric pair which gives rise to  the equations (2) in Theorem 1.1
of \cite{lukesegver}, while the equations (1) are obtained with $E=p_2^*\cO(1)$.
\end{example}
}

\subsection{The conormal space}

The results reviewed in \S\ref{conormal}, restated in the language of vector bundles, say
the affine conormal space of $Rank_k(E)$ at $[v]\in Rank_k(E)_{smooth}$ is   the image of the map
$$
\ker A_{v}\otimes \textrm{Im\ }A_{v}^{\perp}\to H^0(L)=V^*.
$$

If $(E,L)$ is a symmetric (resp. skew symmetric) pair, there is a symmetric
(resp. skew symmetric) isomorphism $\ker A_{v}\simeq \textrm{Im\ }A_{v}^{\perp}$
and the conormal space of $Rank_k(E)$ at $v$ is given by the image of the map
$S^2\left(\ker A_{v}\right)\to H^0(L)$, (resp. $\wedge^2\left(\ker A_{v}\right)\to H^0(L)$).

\subsection{A sufficient criterion for $\s_k(X)$ to
be an irreducible component of $Rank_k(E)$}\label{critsect}

Let  $v\in \hat X$,     in the proof
of Proposition \ref{construct}, we saw   $H^0(I_{v}\otimes E)\subseteq \ker A_v$.
In the same way, 
$H^0(I_{v}\otimes E^{*}\otimes L)\subseteq \textrm{Im\ }A_{v}^{\perp}$, by taking transpose.
Equality holds if $E$ is spanned at $x=[v]$.
This is generalized by the following Proposition.

\begin{proposition}\label{kerdec}
Let $v=\sum_{i=1}^kx_i\in V$, with $[x_i]\in X$,  and let $Z=\{[x_1],\ldots , [x_k]\}$.
Then 
\begin{align*}H^0(I_{Z}\otimes E)&\subseteq \ker A_v\\
 H^0(I_{Z}\otimes E^{*}\otimes L)&\subseteq \textrm{Im\ }A_{v}^{\perp}.
\end{align*}
The first inclusion is an equality if 
$H^0(E^{*}\otimes L)\to H^0(E^{*}\otimes L_{|Z})$
is surjective.
The second inclusion is an equality if
$H^0(E)\to H^0(E_{|Z})$
is surjective. \end{proposition}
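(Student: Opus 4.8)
The plan is to prove the two inclusions first, then analyze the equality conditions by a dimension count. For the inclusions, recall that the map $A\colon H^0(E)\otimes H^0(L)^*\to H^0(E^*\otimes L)^*$ is induced by the natural multiplication pairing $H^0(E)\otimes H^0(E^*\otimes L)\to H^0(L)$. Given a point $[x_i]\in X$, the multiplication pairing is local in the sense that for a section $s\in H^0(E)$ vanishing at all the $x_i$ (i.e. $s\in H^0(I_Z\otimes E)$) and any section $t\in H^0(E^*\otimes L)$, the product $s\cdot t\in H^0(L)$ vanishes at each $x_i$. Hence, viewing $x_i\in V=H^0(L)^*$ as evaluation at $x_i$, we get $\langle A(s\otimes x_i),t\rangle = (s\cdot t)(x_i)=0$ for every $t$, so $A^E_{x_i}(s)=0$; summing over $i$ gives $A^E_v(s)=\sum_i A^E_{x_i}(s)=0$, i.e. $s\in\ker A_v$. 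This proves $H^0(I_Z\otimes E)\subseteq\ker A_v$. The second inclusion $H^0(I_Z\otimes E^*\otimes L)\subseteq\operatorname{Im} A_v^\perp$ follows by the same argument applied to the transpose map $A_v^t$, since the pairing is symmetric in the roles of $E$ and $E^*\otimes L$; concretely, for $t\in H^0(I_Z\otimes E^*\otimes L)$ and arbitrary $s\in H^0(E)$, $\langle A_v(s),t\rangle = \sum_i (s\cdot t)(x_i)=0$, so $t\in (\operatorname{Im} A_v)^\perp$.

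For the equality statements I would argue by comparing dimensions. Consider the exact sequence $0\to I_Z\otimes E\to E\to E_{|Z}\to 0$, which gives
\[
0\to H^0(I_Z\otimes E)\to H^0(E)\to H^0(E_{|Z})
\]
and similarly for $E^*\otimes L$. If $H^0(E)\to H^0(E_{|Z})$ is surjective, then $\operatorname{codim}\bigl(H^0(I_Z\otimes E)\subseteq H^0(E)\bigr)=h^0(E_{|Z})$; since $Z$ is a reduced set of (at most) $k$ points and $E$ has rank $e$, this codimension is at most $ek$, with equality when the $x_i$ are distinct and $E$ is locally free there (which holds on $X$). The key point is then to show that under the surjectivity hypothesis on $H^0(E)\to H^0(E_{|Z})$ one has $\operatorname{Im} A_v^\perp = H^0(I_Z\otimes E^*\otimes L)$: the right-hand side already sits inside the left by the inclusion just proved, so it suffices to check the reverse, equivalently that $\operatorname{rank} A_v$ is at least $h^0(E^*\otimes L) - h^0(I_Z\otimes E^*\otimes L)$. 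For this I would use that $A_v$ factors as $H^0(E)\twoheadrightarrow H^0(E)/H^0(I_Z\otimes E)\hookrightarrow$ (a complementary space to $H^0(I_Z\otimes E^*\otimes L)^\perp$ inside $H^0(E^*\otimes L)^*$): the surjectivity hypothesis guarantees that the pairing restricted to $\bigl(H^0(E)/H^0(I_Z\otimes E)\bigr)\times\bigl(H^0(E^*\otimes L)/H^0(I_Z\otimes E^*\otimes L)\bigr)$, which is the fibrewise pairing $\bigoplus_i E_{x_i}\times \bigoplus_i (E^*\otimes L)_{x_i}\to\bigoplus_i L_{x_i}$, is perfect. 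That perfect fibrewise pairing forces $\ker A_v$ to be exactly $H^0(I_Z\otimes E)$ — indeed if $s\notin H^0(I_Z\otimes E)$ then its image in some fibre $E_{x_i}$ is nonzero, and by surjectivity of $H^0(E^*\otimes L)\to H^0((E^*\otimes L)_{|Z})$ we can find $t$ pairing nontrivially with it at $x_i$ and vanishing at the other $x_j$, whence $A_v(s)\neq 0$. Running the symmetric argument with the roles of $E$ and $E^*\otimes L$ exchanged gives the equality for $\operatorname{Im} A_v^\perp$ under the other surjectivity hypothesis.

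The main obstacle I anticipate is bookkeeping the ``locality'' of the multiplication pairing with respect to the scheme structure of $Z$ — making precise that $s\cdot t$ vanishes at $x_i$ whenever $s$ does, and that the quotient pairing on $\bigoplus_i E_{x_i}\times\bigoplus_i(E^*\otimes L)_{x_i}$ induced by multiplication is the obvious perfect duality pairing (tensored with $L_{x_i}$). Since $Z$ here is reduced (a finite set of closed points) and on $X$ the bundles are genuinely locally free, this is essentially the statement that evaluation commutes with the natural map $E\otimes(E^*\otimes L)\to L$ fibrewise, which is immediate; the only care needed is to phrase everything so the argument also covers the degenerate cases where the $x_i$ are not distinct — but those are handled either by restricting to general $v$ (as in Proposition \ref{construct}) or by passing to the appropriate length-$k$ subscheme, at which point the surjectivity hypotheses are exactly what is needed to keep the dimension count tight. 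The remaining steps are routine diagram-chasing with the exact sequences above.
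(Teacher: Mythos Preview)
Your proposal is correct and takes essentially the same approach as the paper: prove the inclusions via locality of the multiplication pairing, then for the equalities use the surjectivity hypothesis to produce sections $t$ that are supported fibrewise at a single point of $Z$ and pair nontrivially with a given nonvanishing $s$. The paper's proof is just the contrapositive form of your direct argument, choosing a full spanning set $t_{h,j}$ rather than a single $t$.

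Two small presentational issues worth cleaning up. First, your equality paragraph opens by announcing you will prove $\operatorname{Im} A_v^\perp = H^0(I_Z\otimes E^*\otimes L)$ under surjectivity of $H^0(E)\to H^0(E_{|Z})$, but the argument you actually write out proves $\ker A_v = H^0(I_Z\otimes E)$ under surjectivity of $H^0(E^*\otimes L)\to H^0((E^*\otimes L)_{|Z})$, and then you invoke symmetry; the content is right but the labels at the top of the paragraph are swapped. Second, the claim that the induced pairing on the two quotients is ``perfect'' is not quite justified with only one surjectivity hypothesis --- only one of the quotients is identified with the full fiber sum --- but you do not actually use that claim; the direct argument that follows it (find $t$ supported at $x_i$ pairing nontrivially with $s$) is all you need, and it only uses the one surjectivity. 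The dimension-count setup is also unnecessary once you have that direct argument.
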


\begin{remark} In   \cite{BFS}, a line bundle $F\ra X$ is defined to be {\it $k$-spanned} if
$H^0(F)$ surjects onto $H^0(F|_Z)$ for all $Z=\{[x_1],\ldots , [x_k]\}$. In that paper
and in subsequent work they study which
line bundles have this property. In particular $k$-spanned-ness of $X\subset \BP H^0(F)^*$ implies
that for all $k$-tuples of  points $([x_1]\hd [x_k])$ on $X$, writing $Z=\{[x_1],\ldots , [x_k]\}$,
then  $\langle Z\rangle=\pp{k-1}$,
as for example occurs with Veronese varieties $v_d(\BP V)$ when $k\leq d+1$.
\end{remark}

\begin{proof} If $E^{*}\otimes L$ is spanned at {$x=[w]$} then 
we claim { $H^0(I_x\otimes E)=\ker A_w$.}
To see this, 
work over an open set where $E,L$ are trivializable and take   trivializations.
There are $t_j \in H^0(E^*\otimes L )$
such that in a  basis $e_i$ of  the $\BC^e$ which  
we identify with the fibers of $E$ on this open subset,  $\langle e_i,t_j(x)\rangle =\delta_{ij}$. 
Take  {$s\in \tker(A_w)$.} By assumption
$\langle s(x), t_j(x)\rangle =0$ for every $j$, hence,
 writing  $s=\sum s_ie_i$,  $s_j(x)=0$ for every $j$, i.e.,   
$s\in H^0(I_x\otimes E)$. 

  Since
$$H^0(I_{Z}\otimes E)=\cap_{i=1}^kH^0(I_{x_i}\otimes E)\subseteq \cap_{i=1}^k\ker A_{x_i}\subseteq \ker A_v,
$$
the   inclusion for the kernel follows. To see the equality assertion, 
if $H^0(E^{*}\otimes L)\to H^0(E^{*}\otimes L_{|Z})$
is surjective, for every $j=1\hd  k$ we can choose $t_{h,j}\in H^0(E^*\otimes L)$, for $h=1\hd e$, such that $t_{h,j}(x_i)=0$ for $i\neq j$, $\forall h$ and $t_{h,j}$ span the fiber of $E^*\otimes L$ at $x_j$.
It follows that if $s\in \ker(A_v)$ then {$t_{h,j}(x_j)\cdot s(x_j)=0$} for every $h,j$
which implies $s(x_j)=0$, that is $s\in H^0(I_Z\otimes E)$. 
The dual statement is similar.
\end{proof}

The following theorem gives a useful   criterion
to find local equations of secant varieties.  

\begin{theorem}\label{irredcrit}

Let $v=\sum_{i=1}^r x_i \in V$ and let $Z=\{[x_1],\ldots , [x_r]\}$, where $[x_j]\in X$.
If 
$$H^0(I_{Z}\otimes E)\otimes H^0(I_{Z}\otimes E^{*}\otimes L) \rig{\ } H^0(I_{Z^2}\otimes L)
$$
is surjective,  then  $\sigma_r(X)$ 
is an  irreducible component of  $Rank_r(E)$.

If $(E,L)$ is a symmetric, resp. skew-symmetric  pair,  
and 
 \begin{align*} S^2\left(H^0(I_{Z}\otimes E)\right)&\to H^0(I_{Z^2}\otimes L)\\
 {\rm resp.\ \ }
 \wedge^2\left(H^0(I_{Z}\otimes E)           \right)&\to H^0(I_{Z^2}\otimes L)
 \end{align*}
is surjective,  then  $\sigma_r(X)$ 
is an  irreducible component of  $Rank_r(E)$. 
\end{theorem}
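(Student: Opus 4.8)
The key is a dimension count via conormal spaces. The statement to prove is that surjectivity of the multiplication map $H^0(I_Z\otimes E)\otimes H^0(I_Z\otimes E^*\otimes L)\to H^0(I_{Z^2}\otimes L)$ at a general point $v=\sum x_i$ of $\hat\sigma_r(X)$ forces $\sigma_r(X)$ to be an irreducible component of $Rank_r(E)$. Since by Proposition \ref{construct} we already have the inclusion $\sigma_r(X)\subseteq Rank_r(E)$, it suffices to show the two varieties have the same dimension near $[v]$, equivalently that the Zariski tangent space to $Rank_r(E)$ at $[v]$ has dimension at most $\dim\sigma_r(X)$. I would argue this on the level of conormal spaces: $\hat N^*_{[v]}Rank_r(E)\supseteq \hat N^*_{[v]}\sigma_r(X)$ always, and I will show the reverse inclusion holds, which pins down the tangent space and forces the component statement.

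First I would recall (from \S\ref{conormal}, restated in vector bundle language just above) that for a general smooth point, the conormal space of $Rank_r(E)$ is the image of $\ker A_v\otimes \mathrm{Im}\,A_v^\perp\to H^0(L)=V^*$. Next, by Terracini's lemma (Proposition \ref{nstarsrvd} in the Veronese case, but the general statement is the same) the conormal space of $\sigma_r(X)$ at a general point $[v]=[\sum x_i]$ is $\bigcap_i \hat N^*_{[x_i]}X = H^0(I_{Z^2}\otimes L)$, where $Z^2$ denotes the first-order neighborhood (the scheme of $r$ double points) — this identification is exactly the statement that a section of $L$ vanishes to order two at all the $x_i$ iff it is in the conormal directions of $X$ at each point. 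So the goal becomes: show that the image of $\ker A_v\otimes \mathrm{Im}\,A_v^\perp\to H^0(L)$ is contained in $H^0(I_{Z^2}\otimes L)$. Then combining with Proposition \ref{kerdec}, which gives $H^0(I_Z\otimes E)\subseteq\ker A_v$ and $H^0(I_Z\otimes E^*\otimes L)\subseteq\mathrm{Im}\,A_v^\perp$, and the surjectivity hypothesis, I get
$$
H^0(I_{Z^2}\otimes L) = \mathrm{Im}\big(H^0(I_Z\otimes E)\otimes H^0(I_Z\otimes E^*\otimes L)\to H^0(L)\big)\subseteq \mathrm{Im}\big(\ker A_v\otimes \mathrm{Im}\,A_v^\perp\to H^0(L)\big)\subseteq H^0(I_{Z^2}\otimes L),
$$
forcing equality $\hat N^*_{[v]}\sigma_r(X)=\hat N^*_{[v]}Rank_r(E)$, hence equality of tangent spaces, hence $\sigma_r(X)$ is an irreducible component of $Rank_r(E)$ (they have the same dimension at $[v]$, and $\sigma_r(X)$ is irreducible and contained in $Rank_r(E)$).

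The step I expect to be the main obstacle is verifying cleanly that the image of the pairing $\ker A_v\otimes\mathrm{Im}\,A_v^\perp\to H^0(L)$ lands inside $H^0(I_{Z^2}\otimes L)$, i.e. that products of a kernel section with a cokernel-annihilator section vanish to second order along $Z$. The first-order vanishing along $Z$ is immediate from Proposition \ref{kerdec}'s inclusions once one unwinds that sections of $E$ in $\ker A_v$ and sections of $E^*\otimes L$ in $\mathrm{Im}\,A_v^\perp$ each vanish at the points $x_i$; the second-order vanishing at each $x_i$ is a local Leibniz-rule computation — a product of two sections each vanishing at a point automatically vanishes to order two there — carried out in a local trivialization of $E$ and $L$, exactly as in the proof of Proposition \ref{kerdec}. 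For the symmetric/skew-symmetric case the argument is identical, replacing $\ker A_v\otimes\mathrm{Im}\,A_v^\perp$ by $S^2(\ker A_v)$ resp. $\wedge^2(\ker A_v)$, using the symmetric resp. skew isomorphism $\ker A_v\simeq\mathrm{Im}\,A_v^\perp$ recorded above and the corresponding description of the conormal space of $Rank_k(E)$ as the image of $S^2(\ker A_v)\to H^0(L)$ resp. $\wedge^2(\ker A_v)\to H^0(L)$; the hypothesis then provides the matching surjection onto $H^0(I_{Z^2}\otimes L)$ and one concludes in the same way.
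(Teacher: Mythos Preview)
Your overall architecture is the same as the paper's: identify $\hat N^*_{[v]}\sigma_r(X)=H^0(I_{Z^2}\otimes L)$ via Terracini, identify $\hat N^*_{[v]}Rank_r(E)$ with the image of $\ker A_v\otimes(\mathrm{Im}\,A_v)^\perp\to H^0(L)$, and then sandwich using Proposition~\ref{kerdec} and the hypothesis. The paper packages this as a commutative square with $H^0(I_Z\otimes E)\otimes H^0(I_Z\otimes E^*\otimes L)\to H^0(I_{Z^2}\otimes L)$ on top and $\ker A_v\otimes(\mathrm{Im}\,A_v)^\perp\to H^0(L)$ on the bottom, but the content is identical.

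There is one genuine slip, though, and it is exactly the step you flagged as the ``main obstacle''. You wrote that $\hat N^*_{[v]}Rank_r(E)\supseteq \hat N^*_{[v]}\sigma_r(X)$ always; the automatic inclusion goes the other way. From $\sigma_r(X)\subseteq Rank_r(E)$ one gets $\hat T_{[v]}\sigma_r(X)\subseteq \hat T_{[v]}Rank_r(E)$ and hence $\hat N^*_{[v]}Rank_r(E)\subseteq \hat N^*_{[v]}\sigma_r(X)=H^0(I_{Z^2}\otimes L)$. This \emph{is} the statement that the image of $\ker A_v\otimes(\mathrm{Im}\,A_v)^\perp\to H^0(L)$ lands in $H^0(I_{Z^2}\otimes L)$, so your rightmost inclusion in the displayed chain is free and no Leibniz argument is needed. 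Conversely, the Leibniz argument you sketch does not work as written: it requires that every $s\in\ker A_v$ vanish along $Z$, but Proposition~\ref{kerdec} only gives $H^0(I_Z\otimes E)\subseteq\ker A_v$; the reverse inclusion needs the extra spannedness hypotheses there, which you have not assumed. Once you reverse the ``always'' inclusion, your chain
\[
H^0(I_{Z^2}\otimes L)\;\subseteq\;\mathrm{Im}\bigl(\ker A_v\otimes(\mathrm{Im}\,A_v)^\perp\to H^0(L)\bigr)\;\subseteq\;H^0(I_{Z^2}\otimes L)
\]
is justified (left inclusion by Proposition~\ref{kerdec} plus the surjectivity hypothesis, right inclusion by $\sigma_r(X)\subseteq Rank_r(E)$), and the proof is complete. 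The symmetric/skew cases are, as you say, the obvious modification.
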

\begin{proof}
Write $v=x_1+\cdots + x_r$ for a smooth point of $\hat\s_r(X)$.
Recall that by Terracini's Lemma, 
$\BP\hat N^*_{[v]}\s_r(X)$ is the space of hyperplanes $H\in \BP V^*$
such that  $H\cap  \s_r(X)$ is singular at the 
$[x_i]$, i.e., $\hat N^*_{[v]}\s_r(X)=H^0(I_{Z^2}\otimes L)$.
Consider the commutative diagram
$$
\begin{array}{ccc}
H^0(I_{Z}\otimes E)\otimes H^0(I_{Z}\otimes E^{*}\otimes L)& \rig{\ }& H^0(I_{Z^2}\otimes L)\\
\dow{i}&&\dow{i} \\
\ker A_{v}\otimes \textrm{Im\ }A_{v}^{\perp}&\rig{\ }& H^0(L)\end{array}
$$

The surjectivity of the map in the first row
implies that the rank of map in the second row is at least  $\dim H^0(I_{Z^2}\otimes L)$.
By Proposition \ref{kerdec},  $\ker A_v\otimes \textrm{Im\ }A_{v}^{\perp}\to H^0(I_{Z^2}\otimes L)$
is surjective, so that the conormal spaces of $\sigma_k(X)$ and 
of $Rank_k(E)$ coincide at $v$, proving  the general case.
The symmetric and skew-symmetric cases are analogous. 
\end{proof}

\section{The induction Lemma}\label{indthmsect}
 
\subsection{Weak defectivity}

  The notion of weak defectivity, which dates back to Terracini,
was applied by Ciliberto and Chiantini in  \cite{MR1859030} to show
many cases of tensors  and symmetric tensors admitted a  unique 
decomposition as a sum of rank one tensors. We review here it as it is
used to prove the promised induction Lemma \ref{zeta}.
 
\begin{definition} \cite[Def. 1.2]{MR1859030}
A projective variety $X\subset \BP V$ is {\it  $k$-weakly defective}  if the general hyperplane
tangent in $k$ general points of $X$ exists and   is tangent along a variety of positive dimension.
\end{definition}

\noindent{\bf Notational warning}: what we call
$k$-weakly defective is  called $(k-1)$-weakly defective in \cite{MR1859030}. We shifted the index
in order to uniformize to our notion of $k$-defectivity: by Terracini's lemma, 
  $k$-defective varieties (i.e., those where $\tdim\s_k(X)$ is less than the expected dimension) are also $k$-weakly defective.

The Veronese varieties which are weakly defective have been classified.

\begin{theorem}[Chiantini-Ciliberto-Mella-Ballico]\label{weakclas}\cite{MR1859030,MR2238925,ballico}
The $k$-weakly defective varieties $v_d(\P n)$  are  the triples $(k,d,n)$:

(i) the $k$-defective varieties, namely $(k,2,n)$, $k=2,\ldots,{{n+2}\choose 2}$, 
$(5,4,2)$, $(9,4,3)$, $(14,4,4)$, $(7,3,4) $,

and 

(ii) $(9,6,2)$, $(8,4,3)$.  
\end{theorem}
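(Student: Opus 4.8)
The plan is to assemble the statement from the three cited sources rather than to reprove it; what follows is the logical skeleton one would fill in. First I would dispose of part (i). As already noted in the excerpt, Terracini's Lemma shows that every $k$-defective Veronese is $k$-weakly defective: if $\s_k(v_d(\P n))$ has less than the expected dimension, then the span of $k$ general tangent spaces is deficient, so a general hyperplane containing all of them is tangent along the (positive-dimensional) contact locus. The list of $k$-defective Veroneses is the Alexander--Hirschowitz theorem, which produces exactly the triples $(k,2,n)$ for $2\le k\le\binom{n+2}{2}$ together with $(5,4,2)$, $(9,4,3)$, $(14,4,4)$, $(7,3,4)$. This accounts for (i), and it remains to identify the non-defective but weakly defective cases and to show the list is exhausted by $(9,6,2)$ and $(8,4,3)$.

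The key translation for the remaining analysis is that $v_d(\P n)$ is $k$-weakly defective iff, for general $p_1,\dots,p_k\in\P n$, the general member of the linear system $|\cO_{\P n}(d)\otimes I_{2p_1+\cdots+2p_k}|$ of degree-$d$ hypersurfaces singular at all the $p_i$ is singular along a subscheme of positive dimension; this is the failure of the conormal description in Proposition \ref{nstarsrvd} to "move maximally." One then studies that system through the fat-point scheme $2p_1+\cdots+2p_k$. For $n=2$ this is the plane-curve analysis of Chiantini--Ciliberto \cite{MR1859030}: degree-$d$ curves with $k$ general nodes, where a dimension count combined with a degeneration of the nodes to special position (a Horace-type argument) isolates $(9,6,2)$ — plane sextics with nine general nodes, tied to the pencil of cubics through nine general points — as the only non-defective weakly defective surface case. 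For $n\ge 3$ one argues by induction on $n$, restricting a general tangent hyperplane section to a general $\P{n-1}$ and invoking the lower-dimensional classification together with Castelnuovo/Horace-type vanishing to control the residual system; this is carried out by Mella \cite{MR2238925} and completed by Ballico \cite{ballico}, and yields the single further sporadic triple $(8,4,3)$, quartic threefolds with eight general double points.

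The main obstacle, and the reason the full argument must be quoted here rather than redone, is exhaustiveness: for every triple outside (i) and (ii) one must show that the general hyperplane tangent at $k$ general points is tangent only at those $k$ points, with no hidden positive-dimensional contact locus. A naive parameter count does not by itself exclude the sporadic phenomena, so this is precisely where the delicate specialization arguments of \cite{MR1859030,MR2238925,ballico} are indispensable; the triples $(9,6,2)$ and $(8,4,3)$ are exactly the residue of that analysis, and checking that nothing else survives requires the base-case computations of \cite{MR1859030} together with the inductive bookkeeping of \cite{MR2238925,ballico}. Accordingly, the statement is recorded here as a citation, with the reduction in the first paragraph above being the only part reproved.
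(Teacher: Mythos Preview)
The paper offers no proof of this theorem: it is stated with attribution to \cite{MR1859030,MR2238925,ballico} and then used as a black box in Proposition \ref{nonlinear} and Lemma \ref{zeta}. Your proposal correctly recognizes this and in fact goes further than the paper by sketching the logical structure of the cited arguments (Terracini plus Alexander--Hirschowitz for part (i), the contact-locus analysis of Chiantini--Ciliberto and the inductive arguments of Mella and Ballico for part (ii) and for exhaustiveness). Since there is no proof in the paper to compare against, your treatment is appropriate; the honest acknowledgment that exhaustiveness cannot be redone here and must be quoted is exactly right.
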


Let $L$ be an ample line bundle on a variety $X$. Recall that the {\it discriminant variety} of a 
subspace $V\subseteq H^0(L)$ is given by the elements of $\BP  V $ whose zero sets (as sections of $L$) are singular
outside the base locus of common zeros of elements of $V$.
When $V$ gives an embedding of $X$, then the discriminant variety coincides with the dual variety
of $X\subset \BP V^*$.

\begin{proposition}\label{nonlinear}
Assume that $X\subset\BP V= \BP H^0(L)^*$ is not $k$-weakly defective and that $\sigma_k(X)$
is a proper subvariety of $\BP V$. Then
 for a general $Z'$ of length $k'< k$, the discriminant subvariety in
$\BP H^0(I_{Z'^2}\otimes L)$, given by the hyperplane sections having an additional singular point outside $Z'$,
is not contained in any hyperplane.
\end{proposition}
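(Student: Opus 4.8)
The plan is to argue by contradiction: suppose that for a general subscheme $Z'$ of length $k' < k$, the discriminant subvariety in $\BP H^0(I_{Z'^2}\otimes L)$ (consisting of hyperplane sections singular at some point away from $Z'$) \emph{is} contained in a hyperplane of $\BP H^0(I_{Z'^2}\otimes L)$. Since $\sigma_k(X)$ is a proper subvariety, a general hyperplane tangent at $k$ general points of $X$ exists; by hypothesis $X$ is not $k$-weakly defective, so such a hyperplane is tangent at only finitely many points, i.e. exactly at those $k$ points. First I would pick general points $[x_1],\dots,[x_k]$ on $X$, set $Z' = \{[x_1],\dots,[x_{k'}]\}$ and $Z = \{[x_1],\dots,[x_k]\}$, and observe that a general hyperplane $H$ singular at all of $Z$ (which exists and has the expected codimension by non-weak-defectivity) restricts to a point of the discriminant variety $\mathcal D_{Z'}\subset \BP H^0(I_{Z'^2}\otimes L)$, since it acquires the extra singular points $[x_{k'+1}],\dots,[x_k]$ outside $Z'$.

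The heart of the argument is then a dimension count on the family of such $H$'s as the extra points $[x_{k'+1}],\dots,[x_k]$ vary. If $\mathcal D_{Z'}$ were contained in a hyperplane $\mathcal H\subset \BP H^0(I_{Z'^2}\otimes L)$, then every hyperplane section of $X$ singular at $Z'$ and at one further general point would lie in the fixed proper linear subspace $\mathcal H$. I would exhibit a point $[x_{k'+1}]$ general on $X$ and vary it in a one-parameter family within $X$; each choice produces a distinct linear condition (the section must be singular at $[x_{k'+1}]$), hence the locus of sections singular at $Z'$ and at $[x_{k'+1}]$ moves inside $\BP H^0(I_{Z'^2}\otimes L)$. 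Being forced to stay inside the fixed hyperplane $\mathcal H$ while the tangency point moves over a positive-dimensional subvariety of $X$ would say precisely that a general hyperplane tangent at $Z'$ and one more point is in fact tangent along a positive-dimensional subvariety --- that is, $X$ is $(k'+1)$-weakly defective. Since $k'+1 \le k$ and weak defectivity is inherited downward (a $j$-weakly defective variety is $j'$-weakly defective for the relevant smaller indices in the hierarchy --- more carefully, I would use that non-$k$-weak-defectivity of $X$, together with generality, forces the tangency locus through $k$ general points to be zero-dimensional, and deduce the same for fewer points via a specialization/semicontinuity argument), this contradicts the hypothesis that $X$ is not $k$-weakly defective.

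More precisely, the clean way to run the specialization is: a hyperplane tangent at the $k' < k$ general points of $Z'$ and along a positive-dimensional locus $\Sigma$ would, upon choosing $k - k'$ further general points on $\Sigma$, produce a hyperplane tangent at $k$ general points of $X$ but along the positive-dimensional $\Sigma$, i.e. $X$ is $k$-weakly defective --- contradiction. So the discriminant cannot lie in a hyperplane. I would phrase this via Terracini's Lemma to identify $H^0(I_{Z'^2}\otimes L)$ with the conormal space of $\sigma_{k'}(X)$ at the corresponding point, which makes the connection between "discriminant lies in a hyperplane" and "extra tangency is non-isolated" transparent.

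The main obstacle I anticipate is the bookkeeping in the downward-inheritance step: one must be careful that ``$X$ not $k$-weakly defective'' really does control tangency loci through fewer than $k$ points, since weak defectivity is not literally monotone in $k$ in the naive sense (a variety can fail to be $k$-weakly defective yet be $j$-weakly defective for some $j > k$, cf. the sporadic cases in Theorem \ref{weakclas}). The resolution is that the implication we need goes the safe direction: positive-dimensional tangency at \emph{fewer} points propagates, by adding general points on the tangency locus, to positive-dimensional tangency at \emph{more} points, so non-$k$-weak-defectivity does rule out positive-dimensional tangency at $k' \le k$ general points. Making that propagation argument precise --- in particular checking that the added points can be taken general on $\Sigma$ and that the resulting configuration of $k$ points is general in $X^{k}$ --- is where the real care is needed; everything else is a routine translation through Terracini's Lemma and the definition of the discriminant variety.
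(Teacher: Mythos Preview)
Your proposal has a genuine gap at the key inferential step. You claim: ``Being forced to stay inside the fixed hyperplane $\mathcal H$ while the tangency point moves over a positive-dimensional subvariety of $X$ would say precisely that a general hyperplane tangent at $Z'$ and one more point is in fact tangent along a positive-dimensional subvariety.'' This does not follow. The condition $\mathcal D_{Z'}\subset\mathcal H$ says only that the \emph{union} over $p\in X$ of the linear spaces $H^0(I_{(Z'\cup p)^2}\otimes L)$ sits inside $\mathcal H$; it says nothing about any individual section having a positive-dimensional singular locus. Unwinding what $\mathcal D_{Z'}\subset\mathcal H$ actually means: a hyperplane $\mathcal H$ corresponds to a point $[\bar v]\in\BP(V/\hat T_{Z'})$, and the containment is equivalent to $\bar v$ lying in the projected tangent space of $X$ at \emph{every} point --- i.e.\ the tangential projection $\pi(X)$ is a cone with vertex $[\bar v]$. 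That geometric consequence is what forces positive-dimensional tangency, and your argument never reaches it.

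Your ``clean way'' in the final paragraph has a separate problem: choosing the extra $k-k'$ points on the tangency locus $\Sigma$ does not produce a \emph{general} $k$-tuple in $X^k$, so you cannot conclude $k$-weak defectivity from it. You correctly flag this as the delicate point but do not resolve it. (The downward inheritance ``not $k$-weakly defective $\Rightarrow$ not $k'$-weakly defective for $k'\le k$'' is true, but it is proved by a projection argument in \cite{MR1859030}, not by adding points on $\Sigma$.)

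The paper's proof goes through the projection $\pi$ from the span of the $k'$ tangent spaces at $Z'$, identifies $\BP H^0(I_{Z'^2}\otimes L)$ with the dual ambient space of $\pi(X)$, observes that the dual variety $\pi(X)^*$ is contained in the discriminant, and then invokes the classical fact that $\pi(X)^*$ lies in a hyperplane iff $\pi(X)$ is a cone, hence $1$-weakly defective. Non-$k$-weak-defectivity of $X$ forces $\pi(X)$ to be not $1$-weakly defective (via \cite[Prop.~3.6]{MR1859030}), giving the contradiction. The cone characterization of degenerate dual varieties is exactly the missing bridge in your argument.
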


\begin{proof} If $X$ is not  $k$-weakly defective, then it is  not $k'$-weakly defective
for any $k'\le k$. Hence we may assume $k'=k-1$. Consider the   projection $\pi$ of $X$ centered at the span of $k'$ general tangent spaces 
at $X$.
The image $\pi(X)$ is not $1$-weakly defective,  \cite[Prop 3.6]{MR1859030}   which means that the Gauss map of $\pi(X)$ is nondegenerate
\cite[Rem. 3.1 (ii)]{MR1859030}.
Consider the dual variety of $\pi(X)$, which is contained in the discriminant, 
 see \cite[p. 810]{lanterimunoz}.
If the dual variety of $\pi(X)$ were contained in a hyperplane, then   $\pi(X)$ would be
 a cone (see, e.g. \cite[Prop. 1.1]{E2}), and thus be $1$-weakly defective, which is a contradiction.
Hence the linear span of the discriminant  variety is the ambient space.
\end{proof}

\subsection{If $X$ is not  $k$-weakly  defective and the criterion
of \S\ref{critsect} works for $k$, it works   for $k'\le k$}

\begin{lemma}\label{zeta}Let $X\subset\BP V= \BP H^0(L)^*$ be a variety and $E\ra X$ be a vector bundle on $X$. 

Assume that 

$$H^0(I_{Z}\otimes E)\otimes H^0(I_{Z}\otimes E^{*}\otimes L) \rig{\ } H^0(I_{Z^2}\otimes L)$$
is surjective for the general $Z$ of length $k$,  
that 
$X$ is not $k$-weakly defective,  and that $\sigma_k(X)$
is a proper subvariety of $\BP V$.

Then 
$$H^0(I_{Z'}\otimes E)\otimes H^0(I_{Z'}\otimes E^{*}\otimes L) \rig{\ } H^0(I_{Z'^2}\otimes L)
$$ 
is surjective for  general $Z'$ of length $k'\le k$.

The analogous  results hold  in the symmetric and  skew-symmetric cases.

\end{lemma}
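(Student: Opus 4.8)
The plan is a downward induction on the length: it is enough to show that if the displayed surjectivity holds for a general $Z$ of some length $m$ with $2\le m\le k$, then it holds for a general $Z'$ of length $m-1$ (the hypotheses that $X$ is not $m$-weakly defective and that $\sigma_m(X)$ is proper are inherited from the corresponding statements for $k$, as recalled in the proof of Proposition \ref{nonlinear}). Fix a general $Z'$ of length $m-1$ and a general point $x\in X$, and put $Z=Z'\cup\{x\}$, a general configuration of length $m$. From $Z'\subset Z$ we get the inclusions $H^0(I_Z\otimes E)\subseteq H^0(I_{Z'}\otimes E)$, $H^0(I_Z\otimes E^*\otimes L)\subseteq H^0(I_{Z'}\otimes E^*\otimes L)$, and $H^0(I_{Z^2}\otimes L)\subseteq H^0(I_{Z'^2}\otimes L)=:W$. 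Moreover, since a contraction $\langle s,t\rangle$ of sections $s\in H^0(I_{Z'}\otimes E)$ and $t\in H^0(I_{Z'}\otimes E^*\otimes L)$ vanishes to second order along $Z'$, the multiplication map $H^0(I_{Z'}\otimes E)\otimes H^0(I_{Z'}\otimes E^*\otimes L)\to H^0(L)$ has image contained in $W$; call this image $U$. Applying the length-$m$ surjectivity to $Z$ and combining it with the three inclusions above, the image of $H^0(I_Z\otimes E)\otimes H^0(I_Z\otimes E^*\otimes L)$ in $H^0(L)$ is at once all of $H^0(I_{Z^2}\otimes L)$ and contained in $U$; hence $H^0(I_{Z^2}\otimes L)\subseteq U$.

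Now let $x$ vary. For $x$ in a dense open subset $\Omega\subseteq X$ — those $x\notin Z'$ for which $Z'\cup\{x\}$ remains in the (dense open) locus where the length-$m$ surjectivity holds — we obtain $H^0\!\big(I_{(Z'\cup\{x\})^2}\otimes L\big)\subseteq U$. As $x$ ranges over $X\setminus Z'$ these linear subspaces sweep out the discriminant subvariety of $\BP W$ appearing in Proposition \ref{nonlinear} (hyperplane sections with an additional singular point outside $Z'$, namely at $x$); since $X$ is irreducible, restricting $x$ to the dense open $\Omega$ does not change the linear span of this cone, which by Proposition \ref{nonlinear} is all of $\BP W$. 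Therefore $U=W$, which is exactly the assertion that $H^0(I_{Z'}\otimes E)\otimes H^0(I_{Z'}\otimes E^*\otimes L)\to H^0(I_{Z'^2}\otimes L)$ is surjective. This completes the induction step, hence the general case.

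For the symmetric (resp.\ skew-symmetric) case one repeats the argument verbatim with $H^0(I_Z\otimes E)\otimes H^0(I_Z\otimes E^*\otimes L)$ replaced by $S^2 H^0(I_Z\otimes E)$ (resp.\ $\wedge^2 H^0(I_Z\otimes E)$), using $S^2 H^0(I_Z\otimes E)\subseteq S^2 H^0(I_{Z'}\otimes E)$ (resp.\ $\wedge^2 H^0(I_Z\otimes E)\subseteq \wedge^2 H^0(I_{Z'}\otimes E)$); Proposition \ref{nonlinear} is insensitive to the symmetry type, so the spanning step is unchanged.

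The step I expect to require the most care is the genericity bookkeeping in the second paragraph: one must check that ``$Z'\cup\{x\}$ general of length $m$'' and ``$x$ general outside $Z'$'' are simultaneously achievable on a common dense open set of $x$, and that passing from the full discriminant subvariety of Proposition \ref{nonlinear} to the part swept out by $x\in\Omega$ does not shrink the linear span. Both reduce to the irreducibility of $X$ (equivalently, of the relevant incidence over $X\setminus Z'$), together with the fact that deleting a proper closed subset of the base cannot change the span of a cone swept over it, so the verification is routine but should be written out with some care.
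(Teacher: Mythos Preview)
Your proof is correct and follows essentially the same approach as the paper's: reduce to the step $k'\!=\!k-1$, use Proposition~\ref{nonlinear} to see that the discriminant locus (sections of $H^0(I_{Z'^2}\otimes L)$ with an extra singular point $x\notin Z'$) spans the target, and then observe that each such section lies in the image because $H^0(I_{(Z'\cup\{x\})^2}\otimes L)$ is hit by the length-$k$ surjectivity and the source for $Z'\cup\{x\}$ sits inside the source for $Z'$. The paper phrases this by writing an arbitrary $s\in H^0(I_{Z'^2}\otimes L)$ as $\sum s_i$ with each $s_i$ in the discriminant, while you phrase it as $U\supseteq\bigcup_x H^0(I_{(Z'\cup\{x\})^2}\otimes L)$ and hence $U\supseteq$ its span $=W$; these are the same argument, and your treatment of the genericity issue (restricting $x$ to a dense open $\Omega$ and noting this does not shrink the span) makes explicit a point the paper handles with the single phrase ``we may assume that the $p_i$ are in general linear position.''
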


\begin{proof} It is enough to prove the case when $k'=k-1$. Let $s\in H^0(I_{Z'}^2\otimes L)$.
By the assumption and Proposition \ref{nonlinear} there are sections  $s_1, \ldots, s_t$, with $t$ at most $\tdim H^0(I_{Z'}^2\otimes L)$,  with   singular points
respectively   $p_1, \ldots, p_t$ outside $Z'$, such that $s=\sum_{i=1}^ts_i$. Let $Z_i=Z'\cup \{p_i\}$ for
$i=1,\ldots , t$. We may assume that the $p_i$ are in general linear position.
By   assumption $s_i$ is in the image of 
$H^0(I_{Z_i}\otimes E)\otimes H^0(I_{Z_i}\otimes E^{*}\otimes L) \rig{\ } H^0(I_{Z_i^2}\otimes L)$.
So all $s_i$ come from $H^0(I_{Z'}\otimes E)\otimes H^0(I_{Z'}\otimes E^{*}\otimes L)$.  The symmetric and skew-symmetric cases are analogous.\end{proof}

\begin{example}[Examples where downward induction fails]  
  Theorem \ref{weakclas} (ii)  furnishes  cases where the hypotheses of 
Lemma \ref{zeta} are not satisfied. Let $k=9$ and $L=\O_{\P 2}(6)$.
Here $9$ general singular points impose independent conditions on sextics,
but for $k'=8$, all the sextics singular at eight general points
  have an additional singular point, namely  the ninth point given by the intersection
of all cubics through the eight points, so it is not  general.
Indeed,  in this case $\sigma_9(v_6(\P 2))$ is the catalecticant hypersurface
of degree $10$ given by the determinant of $\phi_{3,3}\in S^3\BC^3\ot S^3\BC^3$, but the $9\times 9$ minors of $\phi_{3,3}$
define a variety of dimension $24$ which strictly contains $\sigma_8(v_6(\P 2))$, 
which has dimension $23$.

Similarly,  the $9\times 9$ minors of $\phi_{2,2}\in S^2\BC^4\ot S^2\BC^4$ define $\sigma_8(v_4(\P 3))$,
but the $8\times 8$ minors of $\phi_{2,2}$
define a variety of dimension $28$ which strictly contains $\sigma_7(v_4(\P 3))$,  
which has dimension $27$.
\end{example}

\section{Proof of Theorem \ref{mainyoung}}\label{mainypfsect}
Recall $a=\lfloor \frac n2\rfloor$, $d=2\d+1$, $V=\BC^{n+1}$.  By Lemma \ref{zeta},   it is sufficient to prove the case
$t=\binom{\d+n}n$.  
Here $E= \wedge^{n-a}Q(\d))$ where $Q\ra \BP V$ is the tautological quotient bundle.






By   Theorem \ref{irredcrit}
it is sufficient  to prove the map
\be\label{wz}
H^0(I_Z\otimes \wedge^aQ(\d ))\otimes H^0(I_Z\otimes \wedge^{n-a}Q(\d))\to H^0(I_Z^2(2\d+1))
\ene
is surjective.
In the case $n=2a$, $a$ odd we have to prove that
the map
\be\label{wzsk}  \wedge^2H^0(I_Z\otimes \wedge^aQ(\d))\to H^0(I_Z^2(2\d+1))
\ene
is surjective. The arguments are similar.

We need the following lemma, whose proof is given below:

\begin{lemma} \label{starodd}
Let $Z$ be a set of ${{{\d}+n }\choose {n}}$ points in $\P n$ obtained as the
intersection of ${\d}+n $ general hyperplanes $H_1\hd  H_{\d+n}$ (${\d}\ge 1$).

Let $h_i\in V^*$ be an equation for $H_i$. A basis of the  space of polynomials of degree $2{\d}+1$ which are singular on $Z$ is given by
$P_{I,J}:=\left(\prod_{i\in I}h_i\right)\cdot\left(\prod_{j\in J}h_j\right)$ where  
$I, J\subseteq\{1,\ldots {\d}+n \}$
are multi-indices (without repetitions)  satisfying  $|I|={\d+1}$, $|J|={\d} $,  and $|I\cap J|\le {\d}-1$ (that is $J$ cannot be contained in $I$).
\end{lemma}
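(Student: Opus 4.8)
The plan is to prove two things: (a) every $P_{I,J}$ in the list is singular on $Z$, and (b) these $P_{I,J}$ span the space $H^0(I_Z^2(2\d+1))$ of degree-$(2\d+1)$ forms singular on $Z$; they then form a basis once the linear relations inherited from the (non-trivial, for $\d\ge 2$) relations among $h_1,\dots,h_{\d+n}$ are discarded. For (a), a point of $Z$ has the form $p_K=\bigcap_{k\in K}H_k$ for an $n$-element $K\subseteq\{1,\dots,\d+n\}$, and generality of the $H_i$ gives $h_\ell(p_K)=0$ iff $\ell\in K$; hence the multiplicity of $P_{I,J}=\prod_{i\in I}h_i\cdot\prod_{j\in J}h_j$ at $p_K$ is $|I\cap K|+|J\cap K|$. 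Writing $K^c:=\{1,\dots,\d+n\}\setminus K$, which has size $\d$, we always get $|I\cap K|\ge 1$; the multiplicity is $1$ only if $|I\cap K|=1$ and $J\cap K=\emptyset$, and $J\cap K=\emptyset$ with $|J|=\d=|K^c|$ forces $J=K^c$, whence $|I\cap J|=|I\cap K^c|=\d$. So $|I\cap J|\le\d-1$ is exactly the condition guaranteeing multiplicity $\ge 2$ at every $p_K$, i.e.\ $P_{I,J}\in H^0(I_Z^2(2\d+1))$.

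For (b) I would invoke two standard facts. First, $Z$ is a star configuration of points, so $I_Z$ is generated in degree $\d+1$ by the products $\prod_{i\in I}h_i$ with $|I|=\d+1$. Second, the $\binom{\d+n}{\d}$ products $\prod_{j\in J}h_j$ with $|J|=\d$ span $S^{\d}V$ — a general-position statement that one can verify on a single explicit arrangement and propagate by semicontinuity. Together these show that $H^0(I_Z(2\d+1))$ is spanned by \emph{all} products $P_{I,J}$ with $|I|=\d+1$, $|J|=\d$, with no constraint on $I\cap J$. Thus a given $F\in H^0(I_Z^2(2\d+1))\subseteq H^0(I_Z(2\d+1))$ can be written $F=\sum_{I,J}c_{I,J}P_{I,J}$, and the task reduces to killing the forbidden terms, those with $J\subseteq I$.

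For a forbidden pair, put $\{i_0\}:=I\setminus J$, so that $P_{I,J}=h_{i_0}\bigl(\prod_{j\in J}h_j\bigr)^2$; by the multiplicity count of (a) this form is singular at every $p_K$ except $K=J^c$, where it vanishes to order exactly $1$ with differential at $p_{J^c}$ a nonzero scalar multiple of $\mathrm{d}h_{i_0}$. Now fix an $n$-subset $K$ and compare linear parts at $p_K$: since $F$ is singular at $p_K$, and since every $P_{I,J}$ with $J\not\subseteq I$, as well as every forbidden term with $J\ne K^c$, is singular at $p_K$ and contributes $0$, the forbidden terms with $J=K^c$ give $\sum_{i_0\in K}c_{K^c\cup\{i_0\},\,K^c}\,\lambda_{i_0}\,\mathrm{d}h_{i_0}|_{p_K}=0$ with all $\lambda_{i_0}\ne 0$. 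Because the $H_k$, $k\in K$, meet transversally at $p_K$, the covectors $\mathrm{d}h_{i_0}|_{p_K}$, $i_0\in K$, form a basis of the cotangent space, so each of these coefficients vanishes; letting $K$ range over all $n$-subsets kills every forbidden coefficient, and hence $F$ lies in the span of the $P_{I,J}$ with $|I\cap J|\le\d-1$.

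The two structural inputs in (b) are classical, but for a self-contained account the generation of $I_Z$ in degree $\d+1$ takes a little work — one checks that the products $\prod_{i\in I}h_i$ cut out $Z$ and generate a saturated, unmixed ideal with the correct Hilbert function (e.g.\ via liaison). I expect the genuinely new and most delicate step to be the elimination of the forbidden terms above: this is precisely where the hypothesis $|I\cap J|\le\d-1$ is used in the non-trivial direction, and it is transversality of the $H_k$ at $p_K$ that turns \lq\lq $F$ singular at $p_K$\rq\rq\ into the vanishing of the relevant coefficients. As a byproduct the argument also shows $Z$ imposes independent double-point conditions on forms of degree $2\d+1$, i.e.\ $\dim H^0(I_Z^2(2\d+1))=\binom{2\d+1+n}{n}-(n+1)\binom{\d+n}{n}$, which is what the proof of Theorem \ref{mainyoung} ultimately needs.
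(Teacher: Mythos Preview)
Your proof is correct and follows the same broad outline as the paper: verify that the admissible $P_{I,J}$ lie in $H^0(I_Z^2(2\d+1))$, use the star-configuration facts (the paper's Propositions \ref{basism} and \ref{castmumford}) to write any $F\in H^0(I_Z(2\d+1))$ as a combination of \emph{all} $P_{I,J}$, and then eliminate the forbidden terms by a local computation at each point of $Z$. There are two noteworthy differences. First, your elimination step is more careful than the text of Corollary \ref{hypsingular}: at a given $p_K$ there are in fact $n$ forbidden monomials that are nonsingular there (those with $J=K^c$ and $i_0\in K$), not just one, and it is precisely the linear independence of the $dh_{i_0}|_{p_K}$ for $i_0\in K$ that kills all $n$ coefficients at once --- exactly as you argue. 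Second, the paper establishes the expected codimension $(n+1)\binom{\d+n}{n}$ separately, by the double induction of Proposition \ref{sing2}, before proving spanning; you obtain it as a byproduct, since your argument shows that the forbidden $P_{I,J}$ map onto a basis of $\bigoplus_K T^*_{p_K}$ under $f\mapsto (df|_{p_K})_K$, so $H^0(I_Z(2\d+1))\to\bigoplus_K T^*_{p_K}$ is surjective with kernel $H^0(I_Z^2(2\d+1))$. Your route is therefore a bit shorter, at the cost of quoting the generation of $I_Z$ in degree $\d+1$ rather than proving it via Castelnuovo--Mumford regularity as the paper does.
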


We prove that the map \eqref{wz} is surjective for
$t={{\d+n}\choose n}$ by degeneration to the case when
the ${{\d+n}\choose n}$ points are the vertices of a configuration given by the
union of $\d+n$ general hyperplanes given by linear forms $h_1,\ldots , h_{\d+n}\in V^*$.

First consider  the case $n$ is even.
For any multi-index $I\subset\{1,\ldots , \d+n\}$ with $|I|=\d$
write $q_I=\prod_{k\in I}h_k\in H^0(\O(\d))$  and for any 
multi-index $H$, of length $a+1$, let $s_{H}$ be the section
of $\wedge^aQ$ represented by the linear subspace of dimension $a-1$
given by $h_H=\{h_i=0, i\in H\}$.
Indeed the linear subspace is a decomposable element of $\wedge^aV=H^0(\wedge^aQ)$.
The section $s_H$ is represented by the Pl\"ucker coordinates of $h_H$.

The section $q_{I}s_H\in H^0(\wedge^aQ(\d))$ vanishes on the reducible variety consisting of the linear subspace 
 $h_{H}$ of dimension $a-1$
and the degree $\d$ hypersurface $q_{I}=0$.
In particular, if $I\cap H=\emptyset$, then $q_Is_{H}\in H^0(I_Z\otimes \wedge^aQ(\d))$.

Let $I=I_0\cup\{u\}$ with $|I_0|=\d$, consider $|J|=\d$ such that
$u\notin J$ and $|I\cap J|\le \d-1$.
It is possible to choose  $K_1, K_2$ such that $|K_1|=|K_2|=a$ and such that $K_1\cap K_2=K_1\cap I=K_2\cap J=\emptyset$.

We get $q_{I_0}s_{\{u\}\cup K_1}\wedge q_{J}s_{\{u\}\cup K_2}$ is the degree $2\d+1$ hypersurface
 $q_{I_0}q_{J}h_u=q_Iq_J$. By Lemma \ref{starodd} these hypersurfaces generate $H^0(I_Z^2(2\d+1))$.
Hence $W_Z$ is surjective and the result is proved in the case $n$ is even.

\smallskip 

When  $n$  is odd, 
the morphism $A_\phi$ is represented by a rectangular matrix,
and it induces a map
$B\colon H^0(\wedge^aQ(\d))\otimes H^0(\wedge^{n-a}Q(\d))\to H^0(\O(2\d+1))$.

For any
multi-index $H$, of length $n-a+1$, let $s_{H}$ be the section
of $\wedge^aQ$ represented by the linear subspace of dimension $a-1$
given by $h_H=\{h_i=0, i\in H\}$.
Indeed the linear subspace is a decomposable element of $\wedge^aV=H^0(\wedge^aQ)$.
The section $s_H$ is represented by the Pl\"ucker coordinates of $h_H$.

The section $q_{I}s_H\in H^0(\wedge^aQ(\d))$ vanishes on the reducible variety consisting of the linear subspace 
 $h_{H}$ of dimension $a-1$
and the degree $\d$ hypersurface $q_{I}$.
In particular, if $I\cap H=\emptyset$, then $q_Is_{H}\in H^0(I_Z\otimes \wedge^aQ(\d))$.

Let $I=I_0\cup\{u\}$ with $|I_0|=\d$, consider $|J|=\d$ such that
$u\notin J$ and $|I\cap J|\le \d-1$.
The modification to the above proof is that it is possible to choose  $K_1, K_2$ 
such that $|K_1|=n-a=a+1$, $|K_2|=a$ and such that $K_1\cap K_2=K_1\cap I=K_2\cap J=\emptyset$.

Then  $q_{I_0}s_{\{u\}\cup K_1}\in H^0(\wedge^{a+1}Q(\d))$,
$q_{J}s_{\{u\}\cup K_2}\in H^0(\wedge^{a}Q(\d))$, 

and $q_{I_0}s_{\{u\}\cup K_1}\wedge q_{J}s_{\{u\}\cup K_2}$ is the degree $2\d+1$ hypersurface
 $q_{I_0}q_{J}h_u=q_Iq_J$. By Lemma \ref{starodd} these hypersurfaces generate $H^0(I_Z^2(2\d+1))$.
Hence the map \eqref{wz} is surjective and the result is proved. \qedd

\subsection*{Proof of Lemma \ref{starodd}} 
Let  $Z$ be a set of ${{{\d}+n }\choose {n}}$ points in $\P n=\BP V$ obtained as the
intersection of ${\d}+n $ general hyperplanes $H_1,\ldots , H_{{\d}+n}$ (${\d}\ge 1$), and
$h_i\in V^*$ is an equation for $H_i$.

We begin by discussing    properties of
the hypersurfaces through $Z$, which may be  of  independent interest.

\begin{proposition}\label{basism}\

(i) For $d\ge {\d}$, the products $\prod_{i\in I}h_i$ for every $I\subseteq\{1,\ldots, n+{\d} \}$ such that $|I|=d$ are independent in $S^dV^*$.

(ii)  For $d\le {\d}$, the products $\prod_{i\in I}h_i$ for every $I\subseteq\{1,\ldots, n+{\d} \}$ such that $|I|=d$ span 
$S^dV^*$.

\end{proposition}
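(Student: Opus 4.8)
The plan is to prove part (i) by a restriction argument that generalises one-variable Lagrange interpolation, and then to deduce part (ii) from it.

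For (i): since the hypothesis only asks that the $H_i$ be general, it suffices to verify the conclusion for one convenient configuration, so I may assume that every subcollection of at most $n$ of the hyperplanes meets transversally and, moreover, that whenever $m\le n$ of them cut out a linear subspace $L\subset\BP V$ none of the remaining $H_i$ contains $L$ (each is a dense open condition, and there are finitely many). Suppose $\sum_{|I|=d}c_I\prod_{i\in I}h_i=0$ in $S^dV^*$, with $\d\le d\le n+\d$ (if $d>n+\d$ there are no such $I$ and nothing to prove). Fix $I_0$ with $|I_0|=d$ and set $U:=\bigcap_{j\notin I_0}\ker h_j\subset V$, where $j$ ranges over the $m:=n+\d-d$ indices outside $I_0$; by general position $\tdim U=d-\d+1$, so $L:=\BP U$ has dimension $d-\d\ge 0$. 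Apply the restriction map $S^dV^*\to S^dU^*$ to the relation: a term $\prod_{i\in I}h_i$ is killed unless $I$ is disjoint from the complement of $I_0$, i.e. unless $I\subseteq I_0$, i.e. (equal cardinalities) unless $I=I_0$. Hence $c_{I_0}\prod_{i\in I_0}(h_i|_U)=0$ inside $Sym(U^*)$, which is an integral domain in which each factor $h_i|_U$ ($i\in I_0$) is a nonzero linear form by general position; therefore $c_{I_0}=0$. Since $I_0$ was arbitrary, (i) follows. (When $d=\d$ this is literally: evaluate the relation at each of the $\binom{\d+n}{n}$ points of $Z$, each of which lies off all hyperplanes except the $n$ defining it.)

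For (ii): when $d=\d$ there are exactly $\binom{\d+n}{\d}=\binom{\d+n}{n}=\tdim S^\d V^*$ products of size $\d$, so by (i) they form a basis and in particular span. When $d<\d$, apply the case $d=\d$ of (i) with $\d$ replaced by $d$ to the first $d+n$ of the (still general) hyperplanes: the products $\prod_{i\in I}h_i$ with $I\subseteq\{1,\dots,d+n\}$ and $|I|=d$ already form a basis of $S^dV^*$, and they occur among the products in the statement. This proves (ii).

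I do not foresee a serious obstacle. The only point needing care is the bookkeeping of the general-position hypotheses used in the restriction step (transversality of every subcollection of at most $n$ of the $H_i$, together with the non-containment of each such intersection in the remaining hyperplanes); since these are finitely many open dense conditions, a general configuration satisfies all of them simultaneously, which is exactly what justifies passing to a convenient configuration at the outset. An alternative would be to realise the $h_i$ explicitly (say as Vandermonde rows) and argue by a determinant evaluation, but the restriction argument seems cleaner.
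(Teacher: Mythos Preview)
Your argument for (i) is correct and essentially the paper's: the paper simply evaluates the relation at a single point $P_0\in U=\bigcap_{j\notin I_0}\ker h_j$ with $h_i(P_0)\neq 0$ for all $i\in I_0$, rather than restricting to all of $U$ and invoking the integral domain property of $Sym(U^*)$; but the idea is identical---once all $h_j$ with $j\notin I_0$ are killed, only the $I_0$ term survives.

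For (ii) your route is genuinely different and a bit simpler. The paper proceeds by a multiply-and-divide trick: given $f\in S^dV^*$ with $d<\d$, multiply by $\prod_{i=1}^{\d-d}h_i$, express the resulting degree-$\d$ polynomial in the basis supplied by (i), then check (by evaluating at suitable points of $Z$) that every monomial appearing with nonzero coefficient is divisible by $\prod_{i=1}^{\d-d}h_i$, and finally divide. You instead observe that the proof of (i) makes no use of the specific value of $\d$, so one may apply it with $\d$ replaced by $d$ to any $n+d$ of the (still general) hyperplanes; the $\binom{n+d}{d}=\tdim S^dV^*$ products so obtained form a basis of $S^dV^*$ and lie among the products in the statement. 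Your argument trades the divisibility bookkeeping for a second invocation of (i) with a smaller parameter; both are short, but yours avoids the auxiliary evaluation step.
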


\begin{proof}  Write $h_I=\prod_{\in I}h_i$.
Consider a linear combination $\sum_Ia_Ih_I=0$. Let $d\ge {\d}$. If $d\geq n+\d$ the statement is vacuous, so
assume $d<n+\d$.  For each  $I_0$ such that $|I_0|=d$ 
 there is a point $P_0\in V$ such that $h_i(P_0)=0$ for $i\notin I_0$
and $h_i(P_0)\neq 0$ for $i\in I_0$. The equality $\sum_Ia_Ih_I(P_0)=0$ implies  $a_{I_0}=0$, 
proving (i). For $d={\d}$,  (ii) follows as well by counting dimensions, as $\tdim S^{\d}V^*={{{\d}+n}\choose n}$.
For $d\le {\d}-1$ pick $f\in S^dV^*$.
  By the case already proved, there exist coefficients $a_I$, $b_J$ such that
\begin{equation}\label{fgen}
f\prod_{i=1}^{{\d}-d}h_i=\sum_Ia_Ih_I+\sum_Jb_Jh_J
\end{equation}
 where the first sum is over all $I$ with $|I|={\d}$ such that
$\{1,\ldots , {\d}-d\}\subseteq I$ and the second sum
over all $J$ with $|J|={\d}$ such that
$\{1,\ldots , {\d}-d\}\not\subseteq J$.

For every $J_0$ such that $|J_0|={\d}$ and $\{1,\ldots , {\d}-d\}\not\subseteq J_0$
there is   a (unique) point $[P_0]$  such that $h_i(P_0)=0$ for $i\notin J_0$
and $h_i(P_0)\neq 0$ for $i\in J_0$. In particular $\prod_{i=1}^{{\d}-d}h_i(P_0)=0$ and
substituting $P_0$ into \eqref{fgen} shows 
  $b_{J_0}=0$,  for each $J_0$. Thus
$$f\prod_{i=1}^{{\d}-d}h_i=\sum_Ia_Ih_I$$
and   dividing both sides by $\prod_{i=1}^{{\d}-d}h_i$  proves (ii). 
\end{proof}

Recall that the subspace of $S^dV^*$ of  polynomials which pass through
$p$ points and are singular through $q$ points has codimension $\le \min\left({{n+d}\choose n},p+q(n+1)\right)$.
When   equality holds one says that the conditions imposed by the points are independent
and that the subspace  has the expected codimension.

Let $I_Z$ denote  the ideal sheaf of $Z$ and $I_Z(d)=I_Z\otimes\O_{\P n}(d)$. These sheaves have cohomology groups $H^p(I_Z(d))$, where in particular
$H^0(I_Z(d))=\{ P\in S^dV^*\mid Z\subseteq \tzeros(P)\}$.

\begin{proposition}\label{castmumford} Let $Z$ be a set of $\binom{n+\d}n$ points in $\pp n$ obtained as above. Then

(i) $H^0(I_Z(d))=0$ if $d\le {\d}$.

(ii) $H^0(I_Z({\d+1}))$ has dimension ${{{\d}+n}\choose {\d+1}}$ and it is generated by the products $h_I$ with $|I|= {\d}+1$.

(iii) If $d\ge {\d}+1$ then $H^0(I_Z(d))$ is generated in degree ${\d}+1$, that is
the natural morphism $H^0(I_Z({\d}+1))\otimes H^0(\O_{\P n}(d-{\d}-1))\rig{}H^0(I_Z(d))$
is surjective. 
\end{proposition}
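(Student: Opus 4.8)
The plan is to establish (i) first by a restriction induction, and then to read off (ii) and (iii) as consequences. For (i), I would argue by double induction, outer on $n$ and inner on $\d$, restricting a putative section to one of the defining hyperplanes. Write $Z$ as the set of intersection points of the general hyperplanes $H_1,\dots,H_{n+\d}$, with $h_i$ an equation of $H_i$, and suppose $P\in H^0(I_Z(d))$ with $d\le\d$. The points of $Z$ lying on $H_{n+\d}$ are exactly the intersections of $n-1$ of the hyperplanes $H_i\cap H_{n+\d}$ ($i<n+\d$) inside $H_{n+\d}\cong\P{n-1}$, so they form a configuration of the same type with $n$ replaced by $n-1$ and the same $\d$; by the outer inductive hypothesis $P|_{H_{n+\d}}$ vanishes identically, i.e. $h_{n+\d}\mid P$. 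Writing $P=h_{n+\d}P'$ with $\deg P'=d-1\le\d-1$, the form $P'$ vanishes at every point of $Z$ off $H_{n+\d}$, and those points form the configuration cut out by $H_1,\dots,H_{n+\d-1}$, which is the same type with $\d$ replaced by $\d-1$; by the inner inductive hypothesis $P'=0$, hence $P=0$. The base cases ($n=1$, where a binary form of degree $\le\d$ cannot vanish at $\d+1$ distinct points; and $\d=0$, where $Z$ is a single point) are immediate.

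For (ii), a point of $Z$ has the form $p_A=\bigcap_{i\in A}H_i$ with $A\subseteq\{1,\dots,n+\d\}$, $|A|=n$. If $|I|=\d+1$ then $|I|+|A|=\d+1+n>\d+n$, forcing $I\cap A\ne\emptyset$, so $h_I=\prod_{i\in I}h_i$ vanishes at $p_A$; hence all $\binom{\d+n}{\d+1}$ such products lie in $H^0(I_Z(\d+1))$, and they are linearly independent by Proposition \ref{basism}(i). To see they exhaust the space, I would show the evaluation map $H^0(\O_{\P n}(\d+1))\to\bigoplus_{p\in Z}\BC$ is onto: the product $q_A:=\prod_{i\notin A}h_i$ has degree $\d$, does not vanish at $p_A$ by genericity, and vanishes at every other $p_{A'}$ since $A'\ne A$ and $|A'|=|A|=n$ force $A'\not\subseteq A$, so some factor $h_i$ with $i\notin A$ kills $p_{A'}$; multiplying $q_A$ by any linear form nonzero at $p_A$ gives a section of $\O(\d+1)$ separating $p_A$ from the rest of $Z$. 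Therefore $\dim H^0(I_Z(\d+1))=\binom{n+\d+1}{n}-\binom{n+\d}{n}=\binom{n+\d}{\d+1}$ by Pascal's identity, so the $h_I$ with $|I|=\d+1$ are a basis.

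For (iii), I would deduce the Castelnuovo--Mumford regularity of $I_Z$. Part (i) with $d=\d$ says $H^0(\O_{\P n}(\d))\hookrightarrow H^0(\O_Z(\d))$ is an injection between spaces of equal dimension $\binom{n+\d}{n}=|Z|$, hence an isomorphism, so $H^1(I_Z(\d))=0$; and since $Z$ is zero-dimensional, $H^i(I_Z(\d+1-i))=0$ for all $i\ge 2$ (immediate from $0\to I_Z\to\O_{\P n}\to\O_Z\to 0$ together with the vanishing of intermediate and top cohomology of line bundles on $\P n$ in those degrees). Thus $I_Z$ is $(\d+1)$-regular, and the standard consequence of Mumford's theorem gives the surjectivity of $H^0(I_Z(\d+1))\otimes H^0(\O_{\P n}(d-\d-1))\to H^0(I_Z(d))$ for every $d\ge\d+1$.

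The main obstacle is the bookkeeping in (i): one must be sure that restricting to $H_{n+\d}$ and passing to the residual points genuinely return configurations of the \emph{same} combinatorial type cut out by \emph{general} hyperplanes (of $\P{n-1}$, respectively of $\P n$), so that both inductive hypotheses apply; once that is granted the argument is formal, and (ii)--(iii) follow quickly.
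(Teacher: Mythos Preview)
Your proof is correct. The only genuine difference from the paper is in part (i): the paper argues directly from Proposition \ref{basism}(ii), observing that the $\binom{n+\d}{n}$ products $h_I$ with $|I|=\d$ restrict to a basis of $H^0(\O_Z(\d))$ (each $h_I$ is nonzero at exactly the single point $\cap_{j\notin I}H_j$), so the restriction map $H^0(\O_{\P n}(\d))\to H^0(\O_Z(\d))$ is an isomorphism and $H^0(I_Z(d))=0$ for $d\le\d$ follows immediately. Your double induction on $n$ and $\d$ by restriction to $H_{n+\d}$ is perfectly valid and more self-contained, but longer; the \lq\lq genericity bookkeeping\rq\rq\ you flag is harmless, since any subset of a general hyperplane arrangement is again general (in $\P n$ or in a member $H_{n+\d}\cong\P{n-1}$). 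Ironically, the separating sections $q_A$ you build in (ii) to show evaluation is onto are exactly the $h_I$ with $|I|=\d$ that the paper uses to dispatch (i) in one stroke. Parts (ii) and (iii) are essentially identical in both treatments: independence via Proposition \ref{basism}(i), the dimension count from the short exact sequence, and Castelnuovo--Mumford regularity for the surjectivity in (iii).
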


\begin{proof} Consider the exact sequence of sheaves
$$0\rig{}I_Z(d)\rig{}\O_{\P n}(d)\rig{}\O_Z(d)\rig{}0.
$$
Since $Z$ is finite,   $\tdim H^0(\O_Z(d))= {{{\d}+n}\choose n}=\deg Z$
for every $d$.

 The space  $H^0(\O_{\P n}({\d}))$ has dimension ${{{\d}+n}\choose n}$
and by  Proposition  \ref{basism}, for every $I$ with $|I|={\d}$ the element $h_I$ vanishes on all the points of $Z$ with just one exception,
given by $\cap_{j\notin I}H_j$. Hence the restriction map $H^0(\O_{\P n}({\d}))\rig{}H^0(\O_Z({\d}))$
is an isomorphism. It follows that $H^0(I_Z({ \d}))=H^1(I_Z({ \d}))=0$, and thus $H^0(I_Z({  d}))=0$
for $ d<\d$ proving (i).

Since $\dim Z=0$,   $H^i(\O_Z(k))=0$ for $i\ge 1$, and all  $k$.
>From this it follows that $H^i(I_Z(\d-i+1))=0$ for $i\ge 2$,
because $H^i(\O_{\BP^n}(\d-i+1))=0$. 
  The vanishing for $i=1$ was   proved above,
so that $I_Z$ is $({\d}+1)$-regular and by the Castelnuovo-Mumford criterion \cite[Chap. 14]{MR0209285}
$I_Z({\d}+1)$ is globally generated, $H^1(I_Z(k))=0$ for $k\ge {\d}+1$, and   part (iii) follows.

In order to prove (ii), consider the products $h_I$ with $|I|= {\d}+1$,
which are independent by   Proposition \ref{basism}.i, so   they span  a   ${{{\d}+n}\choose {\d+1}}$-dimensional
subspace  of $H^0(I_Z({\d}+1))$.

The long exact sequence in  cohomology implies  
\begin{align*}h^0(I_Z({\d}+1))&=h^0(\O_{\P n}({\d}+1))-h^0(\O_Z({\d}+1))+h^1(I_Z({\d}+1))\\
&=
{{{\d}+1+n}\choose n}-{{{\d}+n}\choose n}+0={{{\d}+n}\choose {{\d}+1}}
\end{align*}
which concludes the proof. 
\end{proof}

\begin{proposition}\label{sing1} Notations as above.
Let $L_0\subset \{ 1\hd n+\d\}$ have cardinality $n$.

The space of polynomials of degree ${\d}+1$ which pass through the points $y_L$ and are singular at $y_{L_0}$
has a basis given by the products $\prod_{i\in J}h_i$ for every $J\subseteq\{1,\ldots, n+{\d} \}$ such that $|J|={\d}+1$ and
$\#\left(J\cap L_0\right)\ge 2$. This space has the expected codimension
$\binom{\d+n}n +n$. 
\end{proposition}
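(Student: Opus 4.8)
The plan is to realize the space $W$ in the statement as a subspace of $H^0(I_Z(\d+1))$, whose structure is already completely pinned down by Proposition \ref{castmumford}: it has dimension $\binom{\d+n}{\d+1}$, with basis the products $h_J:=\prod_{i\in J}h_i$ over the subsets $J\subseteq\{1,\dots,n+\d\}$ with $|J|=\d+1$. Since a form singular at $y_{L_0}:=\bigcap_{i\in L_0}H_i$ automatically vanishes at $y_{L_0}$, and $y_{L_0}\in Z$, we have $W=\{\,s\in H^0(I_Z(\d+1))\mid s\text{ is singular at }y_{L_0}\,\}$. So I would split the argument into (a) exhibiting the claimed basis vectors inside $W$ together with their independence, and (b) computing $\dim W$ and matching the count.

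For (a), I would note that a product $h_J$ with $|J|=\d+1$ already vanishes on $Z$, and if in addition $\#(J\cap L_0)\ge 2$ then at least two of its linear factors $h_i$ (those with $i\in J\cap L_0$) vanish at $y_{L_0}$, so $h_J\in\mathfrak m_{y_{L_0}}^2$, i.e.\ $h_J$ is singular there. These products are automatically independent, being a subset of the basis of $H^0(I_Z(\d+1))$. A direct count settles the complement: writing $\overline{L_0}$ for the $\d$-element complement of $L_0$, the case $\#(J\cap L_0)=0$ is impossible (since $|\overline{L_0}|=\d<\d+1$), while $\#(J\cap L_0)=1$ occurs precisely for the $n$ sets $J=\{i\}\cup\overline{L_0}$ with $i\in L_0$. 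Hence the proposed basis has exactly $\binom{\d+n}{\d+1}-n$ elements.

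For (b), I would work in a local affine chart near $y_{L_0}$ trivializing $\O(1)$ and consider the differential map $\lambda\colon H^0(I_Z(\d+1))\to \mathfrak m_{y_{L_0}}/\mathfrak m_{y_{L_0}}^2$, whose target is $n$-dimensional and whose kernel is exactly $W$. It remains to prove $\lambda$ is surjective, and for this I would use the $n$ sections $g_i:=h_{\{i\}\cup\overline{L_0}}=h_i\cdot\prod_{k\in\overline{L_0}}h_k$, $i\in L_0$: the second factor is nonzero at $y_{L_0}$ by general position (as $\overline{L_0}\cap L_0=\emptyset$), so $\lambda(g_i)$ is a nonzero scalar multiple of the class of $h_i$. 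Since $y_{L_0}$ is the transverse intersection of the $n$ general hyperplanes $H_i$, $i\in L_0$, the classes of these $h_i$ form a basis of $\mathfrak m_{y_{L_0}}/\mathfrak m_{y_{L_0}}^2$, so the $\lambda(g_i)$ do too and $\lambda$ is onto. Therefore $\dim W=\binom{\d+n}{\d+1}-n$, which equals the number of proposed basis vectors, and combined with (a) this forces them to be a basis of $W$. Finally $\dim S^{\d+1}V^*-\dim W=\binom{\d+1+n}{n}-\bigl(\binom{\d+n}{\d+1}-n\bigr)=\binom{\d+n}{n}+n$ by Pascal's rule, which is the expected codimension: one condition at each of the $\binom{\d+n}{n}-1$ points of $Z\setminus\{y_{L_0}\}$ together with the $n+1$ conditions for a singular point at $y_{L_0}$, minus the single overlap.

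I expect the one genuinely delicate point to be the bookkeeping around $y_{L_0}$: that imposing a singularity there costs only $n$ further conditions rather than $n+1$ (because $y_{L_0}\in Z$ already), and that these $n$ conditions are truly independent. The two facts needed for the independence — transversality of the $n$ hyperplanes through $y_{L_0}$ and $\prod_{k\in\overline{L_0}}h_k(y_{L_0})\ne 0$ — are exactly the instances of "general position" that the hypothesis supplies, so once these are recorded the argument should go through without further subtlety.
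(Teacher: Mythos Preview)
Your argument is correct and follows essentially the same route as the paper's proof. Both proofs start from the basis $\{h_J:|J|=\d+1\}$ of $H^0(I_Z(\d+1))$ furnished by Proposition~\ref{castmumford}, isolate the $n$ exceptional products $h_i\,h_{\overline{L_0}}$ with $i\in L_0$, and use that their differentials at $y_{L_0}$ are independent (because the $dh_i$, $i\in L_0$, span the cotangent space) to cut the dimension down by exactly $n$; your formulation via the surjectivity of $\lambda\colon H^0(I_Z(\d+1))\to\mathfrak m_{y_{L_0}}/\mathfrak m_{y_{L_0}}^2$ is a clean way to package the same independence check the paper does implicitly, and your final Pascal computation agrees with the paper's count.
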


\begin{proof}  Note that if $|J|={\d}+1$ then
$\#\left(J\cap L_0\right)\ge 1$ and   equality holds just for the $n$ products $h_ih_{L_0^c}$
for $i\in L_0$, where $L_0^c=\{ 1\hd n+\d\}\backslash L_0$. Every linear combination of the $h_J$ which has a nonzero coefficient
in these $n$ products is nonsingular at $[y_{L_0}]$. 

Hence the products which are different from these $n$ generate the space
of polynomials of degree ${\d}+1$ which pass through the points $[y_L]$ and are singular at $[y_{L_0}]$.

There are  ${{{\d}+n}\choose {n-1}}-n$ such polynomials,
which is  the expected number  ${{{\d}+n+1}\choose n}-(n+1)-\left[{{{\d}+n}\choose n}-1\right]$. 
Since the codimension is always at most  the expected one, it follows
that these generators give a basis.
\end{proof}

In the remainder of this section, we use  products $h_I$ where $I$ is a   multi-index where repetitions are allowed.
Given such a multi-index  $I=\{i_1\hd i_{|I|}\}$, we write $h_I=h_1^{k_1}\cdots h_{n+\d}^{k_{n+\d}}$, where
$j$ appears $k_j$ times in $I$ and we write  $|I|=k_1+\cdots + k_{n+\d}$.
The {\it support } $s(I)$ of a multi-index  $I$ is the set of $j\subset \{ 1\hd n+\d\}$ with $k_j>0$.

An immediate consequence of (ii) and (iii) of   Proposition  \ref{castmumford} is:
\begin{corollary}\label{4012}
For $d\ge {\d}+1$, the vector space $H^0(I_Z(d))$ is generated by the monomials $h_I$
with $|I|=d$ and  $|s(I)|\ge {\d}+1$.
\end{corollary}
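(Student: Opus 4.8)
The plan is to combine parts (ii) and (iii) of Proposition \ref{castmumford} with the elementary fact that the general linear forms $h_1,\dots,h_{n+\d}$ span $V^*$. Since $n+\d\ge n+1$ and the $H_i$ are general, these forms do span $V^*$; consequently, for every $m\ge 0$ the monomials $h_K=\prod_i h_i^{k_i}$ with $|K|=m$ span $S^mV^*=H^0(\O_{\P n}(m))$. Indeed $S^mV^*$ is spanned by products $\ell_1\cdots\ell_m$ of linear forms, and writing each $\ell_j$ as a linear combination of the $h_i$ and expanding exhibits any such product as a linear combination of the $h_K$.

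Now fix $d\ge\d+1$ and $F\in H^0(I_Z(d))$. By Proposition \ref{castmumford}(iii) the multiplication map $H^0(I_Z(\d+1))\otimes H^0(\O_{\P n}(d-\d-1))\to H^0(I_Z(d))$ is surjective, so $F$ is a finite sum of products $q\cdot g$ with $q\in H^0(I_Z(\d+1))$ and $g\in H^0(\O_{\P n}(d-\d-1))$. By Proposition \ref{castmumford}(ii), each $q$ is a linear combination of products $h_J$ with $J\subseteq\{1,\dots,n+\d\}$ a set of $\d+1$ distinct indices, so that $|s(J)|=\d+1$. By the observation above, each $g$ is a linear combination of monomials $h_K$ with $|K|=d-\d-1$. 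Multiplying out, $F$ becomes a linear combination of monomials $h_Jh_K=h_I$, where $I$ is the multi-index of total degree $d$ obtained by concatenating $J$ and $K$; since $s(J)\subseteq s(I)$ we get $|s(I)|\ge\d+1$. This is exactly the asserted generating set.

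I do not expect a genuine obstacle here: all the substance is contained in Proposition \ref{castmumford}, which is already proved, and the remainder is bookkeeping with supports of multi-indices together with the standard fact that products of spanning linear forms span each symmetric power. The only point requiring a little care is keeping straight that the degree $\d+1$ generators coming from \ref{castmumford}(ii) use distinct indices (hence have support exactly $\d+1$), so that concatenating with an arbitrary $h_K$ can only enlarge the support. As a consistency check (not needed for the statement itself) these monomials really do lie in $H^0(I_Z(d))$: a point $y_S\in Z$ is cut out by $n$ of the $H_i$, say those indexed by $S$ with $|S|=n$, and if $|s(I)|\ge\d+1$ then $|s(I)\cap S|\ge(\d+1)+n-(n+\d)=1$, so $h_I(y_S)=0$.
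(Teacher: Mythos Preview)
Your proof is correct and follows exactly the approach the paper has in mind: the paper states the corollary as ``an immediate consequence of (ii) and (iii) of Proposition \ref{castmumford}'', and your argument is a faithful unpacking of that implication. The one extra ingredient you supply---that the general $h_1,\dots,h_{n+\d}$ span $V^*$, so their monomials span each $S^mV^*$---is implicit in the paper's setup and is the right way to make the ``immediate'' explicit.
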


\begin{proposition}\label{exp2}The space of polynomials of degree $2{\d}+1$ which contain $Z$
is generated by products $h_I$ with $|I|=2\d+1$, $|s(I)|\ge {\d}+1$, 
and all the exponents in $h_I$ are at most $  2$.
\end{proposition}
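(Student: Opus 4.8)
The plan is to bootstrap from Corollary \ref{4012}, which already gives that $H^0(I_Z(2\delta+1))$ is spanned by the monomials $h_I$ with $|I|=2\delta+1$ and $|s(I)|\geq\delta+1$. So it suffices to show that any such monomial which has an exponent $\geq 3$ can be rewritten as a linear combination of monomials of the same degree $2\delta+1$, still with support of size $\geq\delta+1$ (hence still in $H^0(I_Z(2\delta+1))$), but with strictly smaller \emph{excess} $\mathrm{exc}(I):=\sum_i\max(k_i-2,0)$, where $h_I=\prod_i h_i^{k_i}$. Since $\mathrm{exc}(I)=0$ is exactly the condition that all exponents be $\leq 2$, a straightforward induction on $\mathrm{exc}(I)$ then finishes the proof.

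For the inductive step, suppose $h_I$ has $|s(I)|\geq\delta+1$ and, after relabelling, $k_1\geq 3$. Write $s=|s(I)|$ and $b=\#\{i : k_i\geq 2\}$. From $b\leq\sum_{i\in s(I)}(k_i-1)=(2\delta+1)-s\leq\delta$, and since $b=\delta$ would force $s=\delta+1$ and then every index with $k_i\geq 2$ to satisfy $k_i=2$ exactly --- impossible as $k_1\geq 3$ --- we get $b\leq\delta-1$. Hence there are at least $(\delta+n)-b\geq n+1$ indices $j\neq 1$ with $k_j\leq 1$; fix $n+1$ of them, $j_1,\dots,j_{n+1}$. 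The $n+2$ general linear forms $h_1,h_{j_1},\dots,h_{j_{n+1}}$ lie in the $(n+1)$-dimensional space $V^*$, so they satisfy a linear relation all of whose coefficients are nonzero, giving $h_1=\sum_{t=1}^{n+1}c_t h_{j_t}$. Multiplying through by $h_I/h_1$ yields $h_I=\sum_{t=1}^{n+1}c_t\,h_{j_t}(h_I/h_1)$, and each monomial $h_{j_t}(h_I/h_1)$ has degree $2\delta+1$, support containing $s(I)$ (hence of size $\geq\delta+1$), and excess exactly $\mathrm{exc}(I)-1$: the exponent of $h_1$ drops from $k_1$ to $k_1-1$, while $h_{j_t}$ moves from exponent $\leq 1$ to exponent $\leq 2$, and all other exponents are unchanged. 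This is the required reduction, and the induction expresses $h_I$ in terms of monomials with all exponents $\leq 2$ and support $\geq\delta+1$.

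The one genuinely load-bearing point is the counting inequality $b\leq\delta-1$ forced by the presence of an exponent $\geq 3$: this is precisely what guarantees enough ``slack'' indices $j_t$ so that the rewriting never pushes any exponent above $2$. Everything else is routine bookkeeping on degrees and supports, and the small cases are harmless --- e.g.\ for $\delta=1$ a monomial of degree $3$ with support of size $\geq 2$ automatically has all exponents $\leq 2$, so the induction is vacuous there.
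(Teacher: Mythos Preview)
Your proof is correct and follows essentially the same approach as the paper's: both start from Corollary \ref{4012}, establish the key counting fact that at most $\delta-1$ indices carry exponent $\geq 2$ (so at least $n+1$ forms have exponent $\leq 1$), and then use a linear dependence among $n+2$ of the $h_i$ in the $(n+1)$-dimensional space $V^*$ to trade a high exponent for low ones. The only difference is the choice of induction variable: the paper inducts on $\gamma=\#\{i:k_i\geq 3\}$, while you induct on the excess $\mathrm{exc}(I)=\sum_i\max(k_i-2,0)$; your choice is arguably cleaner, since a single substitution always decreases $\mathrm{exc}$ by exactly one regardless of whether $k_1=3$ or $k_1\geq 4$.
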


\begin{proof} By  Corollary \ref{4012}, it just remains  to prove   the statement about the exponents.
Let $h_I=\prod_{i=1}^{{\d}+n}h_i^{k_i}$ and let
$n_j(I)=\#\{i|k_i=j\}$ for $j=0,\ldots, 2{\d}+1$. Hence $\sum_{j\ge 1}jn_j(I)=2{\d}+1$
and $\sum_{j\ge 1}n_j(I)\ge {\d}+1$. 

Assume that $\gamma:=\sum_{j\ge 3}n_j(I)> 0$. It is enough to show that $h_I=\sum c_Jh_J$
where every   $J$ which appears in the sum satisfies $|s(J)|\ge {\d}+1$, $|J|=2{\d}+1$ and  $\sum_{j\ge 3}n_j(J)<\gamma$.

Indeed 
$$2{\d}+1=n_1(I)+2n_2(I)+3n_3(I)+\ldots\ge n_1(I)+2n_2(I)+3\gamma\ge \left({\d}+1-n_2(I)-\gamma\right) 
+2n_2(I)+3\gamma
$$
that is, 
$n_2(I)+\gamma\le {\d}-\gamma\le {\d}-1$. Hence there at least $n+1$ forms $h_i$ which appear with exponent at most one in $h_I$.
 We can express all the remaining forms $h_s$ as linear combinations of these $n+1$ forms.
By expressing a form with exponent at least $3$ as linear combination of these $n+1$ linear forms $h_i$,
we get $h_I=\sum c_Jh_J$
where each summand has the required properties. 
\end{proof}

To better understand the numbers in the following proposition, recall that
$\sum_{i=1}^{\d+1}{{{\d}+n -i}\choose {n-1}}={{{\d}+n }\choose n}$ as taking $E=\BC^n$, $F=\BC^1$, one has
$S^n(E+F)=S^nE\op S^{n-1}E\ot F\op S^{n-2}E\ot S^2F\op\cdots\op S^nF$.

\begin{proposition}\label{sing2}
For $k=0,\ldots, {\d} $, let $I_{{\d},k,n}$ be the linear system of hypersurfaces of degree $2{\d}+1-k$ which 
contain $Z$
and are singular
on the points of $Z$ which lie outside $\cup_{i=1}^{k}H_i$.   
The space $I_{{\d},k,n}$ has the expected codimension 
$(n+1){{{\d}+n }\choose n}-n\sum_{i=1}^k{{{\d}+n -i}\choose {n-1}}$ .
\end{proposition}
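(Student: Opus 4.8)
The plan is to reformulate the statement as the vanishing of a single $H^1$ of an ideal sheaf, and then prove that vanishing by induction on $n+\delta$ using residuation with respect to a hyperplane of the configuration. For $0\le k\le\delta$ let $\Xi_{\delta,k,n}\subset\P n$ denote the zero-dimensional subscheme supported on $Z$ which is cut out locally by $\mathfrak m^2$ at each point $y_L=\cap_{i\in L}H_i$ (with $|L|=n$) satisfying $L\cap\{1,\dots,k\}=\emptyset$, i.e. at the points of $Z$ outside $\cup_{i=1}^kH_i$, and is reduced at the remaining points of $Z$. Writing $m:=2\delta+1-k$, one has $I_{\delta,k,n}=H^0(I_{\Xi_{\delta,k,n}}(m))$, and a length count gives $\deg\Xi_{\delta,k,n}=\binom{\delta+n}n+n\binom{\delta+n-k}n$, which, by the identity $\sum_{i=1}^{\delta+1}\binom{\delta+n-i}{n-1}=\binom{\delta+n}n$ recalled above, equals $(n+1)\binom{\delta+n}n-n\sum_{i=1}^k\binom{\delta+n-i}{n-1}$. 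Twisting $0\to I_{\Xi_{\delta,k,n}}\to\O_{\P n}\to\O_{\Xi_{\delta,k,n}}\to0$ by $\O(m)$ gives $h^0(I_{\Xi_{\delta,k,n}}(m))=\binom{m+n}n-\deg\Xi_{\delta,k,n}+h^1(I_{\Xi_{\delta,k,n}}(m))$, so once one knows
$$H^1\!\bigl(I_{\Xi_{\delta,k,n}}(m)\bigr)=0\qquad(\star)$$
nonnegativity of $h^0$ forces $\deg\Xi_{\delta,k,n}\le\binom{m+n}n$ and then that $I_{\delta,k,n}$ has codimension exactly $\deg\Xi_{\delta,k,n}$, i.e. the expected codimension. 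Thus it suffices to prove $(\star)$.

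I would prove $(\star)$ by induction on $n+\delta$, the base cases $n=1$ (settled by a length count on $\P1$, since $\deg\Xi_{\delta,k,1}=m+1$) and $\delta=0$ (a single double point against linear forms) being immediate. For the inductive step fix $(\delta,n,k)$ with $n\ge2$ and $\delta\ge1$ and choose a hyperplane $H$ of the family not among $H_1,\dots,H_k$: take $H=H_k$ if $k\ge1$, and $H=H_{n+\delta}$ if $k=0$; let $h$ be its equation. From the standard residuation exact sequence
$$0\to I_{\Xi:H}(m-1)\xrightarrow{\,\cdot h\,}I_{\Xi}(m)\to I_{\Xi\cap H/H}(m)\to0,\qquad\Xi:=\Xi_{\delta,k,n},$$
with $\Xi\cap H$ the scheme-theoretic trace on $H\cong\P{n-1}$ and $\Xi:H$ the residual scheme in $\P n$ (defined by $I_\Xi:h$), the long exact sequence reduces $(\star)$ for $\Xi$ to the vanishing of $H^1$ of the two outer sheaves.

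The heart of the matter is to recognize those two schemes. If $k\ge1$ and $H=H_k$: every double point of $\Xi$ lies off $H_k$ (its $y_L$ has $k\notin L$), so the trace $\Xi\cap H_k$ is the \emph{reduced} configuration cut on $\P{n-1}$ by the $n+\delta-1=(n-1)+\delta$ general hyperplanes $H_i\cap H_k$ ($i\ne k$), whose ideal is $(\delta+1)$-regular by Proposition \ref{castmumford}, so $h^1$ in degree $m\ge\delta+1$ vanishes; meanwhile $\Xi:H_k$ retains all double points and discards only the reduced points lying on $H_k$, hence is the scheme $\Xi_{\delta-1,k-1,n}$ for the $n+\delta-1=(\delta-1)+n$ general hyperplanes $H_i$ ($i\ne k$), for which $(\star)$ is stated in degree $2(\delta-1)+1-(k-1)=2\delta-k=m-1$, exactly the twist occurring here, so $H^1(I_{\Xi:H_k}(m-1))=0$ by induction ($(\delta-1)+n$ being smaller). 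If $k=0$ and $H=H_{n+\delta}$, so that $\Xi=2Z$: the trace is $2Z'$ with $Z'$ the full configuration of $(n-1)+\delta$ general hyperplanes on $\P{n-1}$, i.e. $\Xi_{\delta,0,n-1}$, for which $(\star)$ is stated in degree $2\delta+1=m$, so $H^1$ vanishes by induction; and the residual replaces each double point of $2Z$ on $H_{n+\delta}$ by a reduced point while keeping the remaining double points, hence is a scheme of type $\Xi_{\delta,1,n}$ (with $H_{n+\delta}$ in the role of the first hyperplane), for which $(\star)$ is stated in degree $2\delta=m-1$, and this $H^1$ vanishes not by the $n+\delta$ induction but by the $k=1$ instance of $(\star)$ for the same $(\delta,n)$ --- already available, since within each value of $n+\delta$ one establishes $(\star)$ first for all $k\ge1$ and only then for $k=0$. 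Substituting these vanishings into the exact sequence gives $(\star)$, completing the induction and the proof.

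The step expected to be the main obstacle is this last one: one must check carefully that residuating, resp. tracing, $\Xi_{\delta,k,n}$ against the chosen hyperplane again yields a scheme of the same combinatorial type --- a reduced configuration, or some $\Xi_{\delta',k',n'}$ --- and, crucially, that the shifted degrees $m-1$ (for the residual) and $m$ (for the trace) coincide \emph{exactly} with the degrees $2\delta'+1-k'$ at which $(\star)$ is asserted for those schemes, so that the induction closes with no leftover regularity twist. The only genuine wrinkle is $k=0$, where no $H_k$ is available and residuating against the auxiliary $H_{n+\delta}$ produces a $\Xi_{\delta,1,n}$ rather than a scheme with strictly smaller $n+\delta$; this is why the induction must be run ``all $k\ge1$ first, then $k=0$'' at each level, and why the $k=1$ case must be in place before $k=0$ is treated.
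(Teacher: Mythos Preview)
Your proof is correct and takes a genuinely different route from the paper's argument.

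Both proofs are residuation arguments with respect to a hyperplane of the configuration, but the organization is different. The paper always restricts to $H_{\delta+n}$ and runs a descending induction on $k$ (with induction on $n$), using $k=\delta$ --- which is Proposition~\ref{sing1} --- as the base case. The residual scheme obtained this way is not of the form $I_{\delta',k',n'}$; the paper introduces an auxiliary space $V_{\delta,k+1,n}\supseteq I_{\delta,k+1,n}$ and argues that its conditions are a subset of an already-known-independent set.

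Your approach recasts the statement as $H^1(I_{\Xi_{\delta,k,n}}(2\delta+1-k))=0$ and, for $k\ge 1$, residuates with respect to $H_k$ rather than $H_{\delta+n}$. This choice is the key simplification: since every double point of $\Xi_{\delta,k,n}$ lies off $H_k$, the trace is the \emph{reduced} hyperplane configuration on $H_k\cong\P{n-1}$ (whose $H^1$-vanishing in degree $\ge\delta+1$ is exactly the Castelnuovo--Mumford regularity of Proposition~\ref{castmumford}), and the residual is literally $\Xi_{\delta-1,k-1,n}$ for the remaining $n+(\delta-1)$ hyperplanes, in the correct twist $m-1=2(\delta-1)+1-(k-1)$. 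No auxiliary $V$-space is needed, and the base cases $n=1$, $\delta=0$ are immediate, so you never invoke Proposition~\ref{sing1}. The only subtlety, which you flag correctly, is the $k=0$ step: there $H_k$ is unavailable, you residuate with $H_{n+\delta}$, and the residual is a $\Xi_{\delta,1,n}$ at the \emph{same} level $n+\delta$; hence the need to prove all $k\ge 1$ before $k=0$ at each level. This ordering is sound, since each $k\ge 1$ case uses only strictly lower levels.

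In short: same residuation philosophy, but your choice of hyperplane for $k\ge 1$ keeps the induction inside the family $\Xi_{\delta',k',n'}$ and makes the argument cohomologically cleaner, at the cost of a slightly more delicate two-tier induction (on $n+\delta$, then on $k$) to close the $k=0$ case.
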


\begin{proof}
For $k={\d}$ the assertion  is Proposition  \ref{sing1} with $L_0=\{ \d+1\hd \d+n\}$. We work by induction on $n$ and by descending induction on $k$
(for fixed ${\d}$).
We restrict to the last hyperplane $H_{{\d}+n}$.

Let $V_{{\d},k+1,n}\supseteq I_{{\d},k+1,n}$ denote the set of    hypersurfaces
of degree $2{\d}-k$ which pass through $Z\setminus \{H_{{\d}+n}\cap\left(\cup_{i=1}^{k}H_i\right)\}$
and are singular on the points of $Z$ which lie outside $\left(\cup_{i=1}^{k}H_i\right)\cup \{H_{{\d}+n}\}$.
We have the exact sequence
$$0\rig{}V_{{\d},k+1,n}\rig{\psi}I_{{\d},k,n}\rig{\phi}I_{{\d},k,n-1}
$$
where $\phi$ is the restriction to $\BC^n\subset \BC^{n+1}$, and $\psi$ is the multiplication by $h_{{\d}+n}$.

Since by induction $I_{{\d},k+1,n}$ has the expected codimension in $S^{2{\d}-k}V^*$  
it follows that also $V_{{\d},k+1,n}$ has the expected codimension 
$$n\left[{{{\d}+n-1}\choose n}-\sum_{i=1}^k{{{\d}+n-1-i}\choose {n-1}}\right]+
{{{\d}+n}\choose n}-\sum_{i=1}^k{{{\d}+n-1-i}\choose {n-2}}$$
because the conditions imposed by $V_{{\d},k+1,n}$ are a subset of the conditions imposed by
$I_{{\d},k+1,n}$ and a subset of a set of independent conditions still consists of independent conditions.

Since the codimension of $I_{{\d},k,n-1}$ in $S^{2{\d}+1-k}\BC^{n}$ is the expected one,
it follows that the codimension of $I_{{\d},k,n}$ in $S^{2{\d}+1-k}V^*$ is at least    the expected one,
hence   equality holds. 
\end{proof}
 
The following Corollary proves   Lemma \ref{starodd}.

\begin{corollary}\label{hypsingular}
Let $Z$ be a set of ${{{\d}+n }\choose {n}}$ points in $\P n=\BP V$ obtained as the
intersection of ${\d}+n $ general hyperplanes $H_1\hd H_{{\d}+n}$ (${\d}\ge 1$), and let
$h_i\in V^*$ be an equation of $H_i$.
The linear system of hypersurfaces of degree $2{\d}+1$ which are singular on $Z$
has the expected codimension $(n+1){{{\d}+n}\choose n}$ and it is generated by the products  $h_I=\prod_{i\in I}h_i$ with 
$I$ a multi-index allowing repetitions   such that $|I|=2{\d}+1$ and the support of $I$ is at least $ {\d}+2$
and all the exponents are at most two. 
\end{corollary}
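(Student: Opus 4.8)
The goal is to deduce Corollary~\ref{hypsingular} from Proposition~\ref{sing2} together with Proposition~\ref{exp2}. The strategy is: first read off the codimension statement from Proposition~\ref{sing2}, then identify the generators.

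First I would establish the codimension. The linear system of hypersurfaces of degree $2\d+1$ which are singular on $Z$ is exactly $I_{\d,0,n}$ in the notation of Proposition~\ref{sing2} (the case $k=0$: there is no exceptional hyperplane, so singularity is imposed at every point of $Z$). Plugging $k=0$ into the codimension formula of Proposition~\ref{sing2} gives expected codimension $(n+1)\binom{\d+n}{n}$, since the sum $\sum_{i=1}^{0}(\cdots)$ is empty. This is the ``expected'' value because each of the $\binom{\d+n}{n}$ points of $Z$ imposes $n+1$ linear conditions (one for passing through, $n$ for the vanishing of the differential). So the codimension assertion is immediate once Proposition~\ref{sing2} is in hand; it merely requires checking that the specialization $k=0$ is legitimate, which it is since the proposition is stated for $k=0,\ldots,\d$.

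Next I would pin down the generators. By Proposition~\ref{exp2} (applied to $S^{2\d+1}V^*$), the subspace $H^0(I_Z(2\d+1))$ of polynomials of degree $2\d+1$ merely \emph{containing} $Z$ is already spanned by monomials $h_I$ with $|I|=2\d+1$, $|s(I)|\ge \d+1$, and all exponents at most $2$. Among these, I claim the ones that are moreover singular on all of $Z$ are precisely those with $|s(I)|\ge \d+2$. For this, fix a point $[y_{L_0}]\in Z$, where $L_0\subset\{1,\ldots,n+\d\}$ has cardinality $n$ and $y_{L_0}$ is the common zero of $\{h_i:i\in L_0\}$. A monomial $h_I$ vanishes to order $\ge 2$ at $[y_{L_0}]$ iff at least two of the linear forms dividing $h_I$ (counted with multiplicity, but since exponents are $\le 2$ this is a statement about $s(I)$ together with the exponents) vanish at that point, i.e. iff $\#\bigl(\text{multiset }I\bigr)\cap L_0\ge 2$. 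Since $|s(I)|\ge \d+1$ and $|I|=2\d+1$, the complement $s(I)^c$ has size $\le n-1<n=|L_0|$, so $s(I)\cap L_0\ne\emptyset$; the borderline case where $h_I$ is \emph{not} singular at $[y_{L_0}]$ is exactly $\#(I\cap L_0)=1$ with that one index appearing with exponent $1$. Running over all $L_0$, a monomial with $|s(I)|=\d+1$ fails to be singular at the unique point $[y_{s(I)^c}]$ (after checking this is the only bad point for such an $I$), whereas if $|s(I)|\ge \d+2$ one always has $\#(I\cap L_0)\ge 2$ for every $L_0$, so $h_I$ is singular at every point of $Z$. Thus the $h_I$ with $|s(I)|\ge\d+2$, $|I|=2\d+1$, exponents $\le 2$, all lie in the system; and conversely, starting from the spanning set of Proposition~\ref{exp2} and a singular polynomial $P=\sum c_I h_I$, the $|s(I)|=\d+1$ terms must cancel among themselves at each point $[y_{s(I)^c}]$, which forces (by the independence in Proposition~\ref{basism}(i), or by a dimension count) that $P$ is a combination of the $|s(I)|\ge\d+2$ monomials alone. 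Comparing the number of such monomials with the codimension count from the first step confirms they form a spanning set (indeed a basis).

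\textbf{Main obstacle.} The codimension half is essentially free. The real work is the bookkeeping in the generator statement: carefully verifying that for a monomial $h_I$ with $|s(I)|=\d+1$ and exponents $\le 2$ the \emph{only} point of $Z$ at which it fails to be singular is $[y_{s(I)^c}]$, and that these failures are ``independent'' enough that the $|s(I)|=\d+1$ monomials can be eliminated from any singular combination without introducing new terms. This is the step where one must be careful about multi-indices with repetitions and about the interplay between the support condition $|s(I)|\ge\d+1$ and the exponent bound; it is routine but combinatorially delicate, and it is where I would spend the most care. Everything else is a direct appeal to Propositions~\ref{basism}, \ref{exp2}, and \ref{sing2}.
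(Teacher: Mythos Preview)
Your approach is essentially the paper's: codimension from Proposition~\ref{sing2} at $k=0$, spanning set from Proposition~\ref{exp2}, then eliminate the $|s(I)|=\d+1$ monomials by examining points of $Z$. Two points need correction.

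First, the bad point for a monomial $h_I$ with $|s(I)|=\d+1$ is not $[y_{s(I)^c}]$: the complement $s(I)^c$ has only $n-1$ elements, so it does not index a point of $Z$. The correct point is $[y_{L_0}]$ with $L_0=s(I)^c\cup\{i^\star\}$, where $i^\star$ is the unique exponent-$1$ index of $I$; this is exactly the point the paper uses.

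Second, the parenthetical ``indeed a basis'' is false, so neither a raw dimension count nor Proposition~\ref{basism}(i) (which is about repetition-free products) finishes the argument. For instance with $n=2$, $\d=2$ there are four monomials $h_i^2\prod_{j\ne i}h_j$ with $|s(I)|=\d+2=4$, but the singular system has dimension $\binom{7}{2}-3\binom{4}{2}=3$; the relation comes from multiplying the linear dependence $\sum a_ih_i=0$ (four forms in three variables) by $h_1h_2h_3h_4$. What actually kills the $|s(I)|=\d+1$ coefficients is this: at each $P_0=[y_{L_0}]$ there are exactly $n$ monomials with $|s(I)|=\d+1$ that are \emph{not} singular at $P_0$ (one for each choice of $j^\star\in L_0$, namely $h_I=h_{j^\star}\prod_{i\notin L_0}h_i^2$), and their gradients at $P_0$ are nonzero multiples of the linearly independent covectors $\nabla h_{j^\star}$, $j^\star\in L_0$. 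Hence $\nabla f(P_0)=0$ forces all $n$ of those coefficients to vanish simultaneously. The paper's proof asserts instead that \emph{all} other summands are singular at $P_0$, which is not literally true for the same reason; the gradient-independence observation repairs both your sketch and the paper's.
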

\begin{proof} The statement about the codimension is the case $k=0$ of   Proposition \ref{sing2}. 
Note that every product $h_I$ with 
$I$   such that $|I|=2{\d}+1$ and $s(I)\ge {\d}+2$    is singular at any $P\in Z$ because $h_I$ contains at least two factors which vanish
at $P$. Let $f$ be a homogeneous polynomial of degree $2{\d}+1$ which is singular on $Z$.
By   Proposition \ref{exp2} we have the decomposition $f=\sum_{|s(I)|\ge {\d}+2}a_Ih_I+\sum_{|s(I)|= {\d+1}}b_Ih_I$,
where all the exponents are at most $2$. Let $I_0$ be     in the second summation with $|s(I_0)|= {\d}+1$.
There is a unique $h_i$ appearing in $h_{I_0}$ with exponent $1$ and $n-1$ hyperplanes appearing with exponent $0$.
Let $P_0$ be the point where these $1+(n-1)=n$ hyperplanes meet.
It follows that $h_{I_0}$ is nonsingular at $P_0$ while, for all the other summands, $h_I$ is singular in $P_0$.
It follows that $b_{I_0}=0$ and $f=\sum_{|s(I)|\ge {\d}+2}a_Ih_I$ as we wanted. 
\end{proof}

\section{Construction of $A_v^E$ via a presentation of $E$}\label{factoringsect}

\subsection{Motivating example}\label{motexsect}
Set $\tdim V=n+1$,  $d=2\d+1$ and $a=\lfloor \frac n2\rfloor$. We saw in \S\ref{yflatsect} that the
inclusion 
$S^dV\subset S_{\d+1,1^{n-a}}V^*\ot  S_{\d+1,1^a}V$ yields  new modules of equations for
$\s_r(v_d(\BP V))$. The relevant vector bundle here is 
$E=E_{ \d \o_1+ \o_{n-a+1}}= {\wedge^a Q(\d)}$  {where $Q$ is the tautological  quotient bundle}. The equations were
 initially presented  
  without reference to representation theory using \eqref{YFmap}.
Note that $\trank(YF_{d,n}(w^d))=\binom{n}{a}=\trank(E)$.

This   map between larger, but more elementary,   spaces has the same rank properties as the map $S_{\d+1,1^{n-a}}V^*\ra S_{\d+1,1^a}V$, because
when one decomposes the spaces in this map as $SL(V)$-modules, the other module maps are zero.

Observe that 
\begin{align*}
S^{\d}V^*\ot \wedge^a V&=H^0(\cO(\d)_{\BP V}\ot \wedge^aV),
\\
{\rm and\ } (S^{\d}V\ot \wedge^{a+1}V)^*&= H^0(\cO(\d)_{\BP V}\ot \wedge^{n-a+1}V).
\end{align*}

The vector bundle   $E {=\wedge^a Q(\d)}$ may  be recovered as the image of the map
 $$L_1=\O(\d)\otimes \wedge^a{V}\rig{p_E}\O(\d+1)\otimes \wedge^{a+1}{V}=L_0.
$$

\subsection{The general case}
In the general set-up,
in order to   factorize $E$, one begins with a base line bundle $M\ra X$, which
is $\cO_{\BP W}(1)$ when $X=v_d(\BP W)\subset \BP S^dW=\BP V$, and more generally is the ample generator of the Picard group
if $X$ has Picard number one (e.g., rational homogeneous varieties $G/P$ with $P$ maximal).

The key is to realize $E$ as the image of a map
$$
p_E : L_1\ra L_0
$$
where the
$L_i$ are direct sums of powers of $M$
and 
$$
\tim(p_E)=E.
$$

By construction,  
for   $x\in \hat X$,  $\trank(p_E(x))= e$.
Let   $v\in V=H^0(X,L)^{*}$.
Compose the map $H^0(L_0)\otimes H^0(L)^{*}\to H^0(L_0^{*}\otimes L)^{*}$
   with the map on sections induced from $p_E$,  to obtain  

$$P_v^E\colon H^0(L_1)\to H^0(L_0^{*}\otimes L)^{*}$$

and observe the linearity $P_{sv_1+tv_2}^E=sP_{v_1}^E+tP_{v_2}^E$ for $v_1,v_2\in V$, $s,t\in \BC$.

\subsection{Case $X=v_d(\BP W)$}

In the case $(X,M)=(\BP W,\cO(1))$, and $L=\cO(d)$, this means
$$L_j=\oplus (\cO(i))^{\op a^i_j}
$$
for some finite set of natural numbers $a^i_1,a^i_2$.
For the examples in this paper there is just one term in the summand, 
e.g., in \S\ref{motexsect}, 
$L_1=\cO(\d)^{\op \binom{n+1}a}$, $L_0=\cO(\d+1)^{\op \binom{n+1}{a+1}}$.

When $X=v_d(\BP W)$,      $P_v^E$ may be written explicitly in bases as follows: 

Let $L_1=\oplus_{j=1}^{m_1}\O(b_j)$ and $L_0=\oplus_{i=1}^{m_0}\O(a_i)$. Let  $p_{ij}\in S^{a_i-b_j}W^*$ denote
the   map $\cO(b_j)\ra \cO(a_i)$, given by
the composition
$$\cO(b_j)\ra L_1\rig{p_E}L_0\ra\cO(a_i)$$

Then, for any $\phi\in S^dW$, 
$P_v^E(\phi)$ is obtained by taking a 
matrix of $m_1\times m_0$ blocks with the  $(i,j)$-th block
  a matrix representing the catalecticant $\phi_{b_j,d-a_i}\in S^{b_j}W^*\otimes S^{d-a_i}W^*\otimes S^dW$ contracted by $p_{ij}$
so
the new matrix   is  of size $h^0(L_0^{*}\otimes L) \times h^0(L_1)$
 {with scalar entries}.
See Examples \ref{aronex}  and   \ref{cubic3fold} for explicit examples.

\subsection{Rank and irreducible component theorems in the presentation setting}
Under mild assumptions,
the following proposition shows that $P_v^E$ can be used in place
of $A_v^E$ in the theorems of \S\ref{constsect}.

\begin{proposition}\label{eqsigma}
Notations as above. 
Assume that
$H^0(L_1)\rig{i}H^0(E)$ and 
$H^0(L_0^{*}\otimes L)\rig{j}H^0(E^{*}\otimes L)$ are surjective.

Then the rank of $A_{v}^E$ equals the rank of $P_v^E$,
so that
 the size $(ke+1)$-minors of $P_v^E$ give equations for $\sigma_k(X)$.

If $(E,L)$ is a symmetric (resp. skew-symmetric) pair then there exists a symmetric (resp. skew-symmetric) presentation
where $L_1\simeq L_0^{*}\otimes L$ and 
  $P_v^E$ is   symmetric (resp. skew-symmetric).
\end{proposition}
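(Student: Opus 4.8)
The plan is to compare the two linear maps $A_v^E$ and $P_v^E$ by sandwiching them between the surjections on global sections induced by the presentation $p_E\colon L_1\to L_0$. Recall that $P_v^E$ is by construction the composite of $A^{L_0}$-type pairing with the map on sections induced by $p_E$; more precisely, $p_E$ factors as $L_1\twoheadrightarrow E\hookrightarrow L_0$, and dualizing-and-twisting, $L_0^*\otimes L\twoheadrightarrow E^*\otimes L\hookrightarrow L_1^*\otimes L$. Applying $H^0(-)$ and using the hypothesis that $H^0(L_1)\to H^0(E)$ and $H^0(L_0^*\otimes L)\to H^0(E^*\otimes L)$ are surjective, one gets a commutative diagram

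\begin{equation*}
\begin{array}{ccc}
H^0(L_1) & \rig{P_v^E} & H^0(L_0^*\otimes L)^* \\
\dow{i} & & \up{j^*} \\
H^0(E) & \rig{A_v^E} & H^0(E^*\otimes L)^*
\end{array}
\end{equation*}

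where $i$ is surjective and $j^*$ is injective (being the transpose of the surjection $j$). The key point is that the diagram commutes: $P_v^E$ is literally defined by precomposing the $E$-pairing with $p_E$ on sections and postcomposing with the inclusion $E^*\otimes L\hookrightarrow L_0^*\otimes L$ dualized, which is exactly the statement that $j^*\circ A_v^E\circ i = P_v^E$. Granting commutativity, $\operatorname{rank}(P_v^E)=\operatorname{rank}(j^*\circ A_v^E\circ i)=\operatorname{rank}(A_v^E)$, since composing a linear map with a surjection on the right or an injection on the left does not change the rank. Then Proposition \ref{construct} (applied to $A_v^E$) and the identity of ranks immediately give that the size $(ke+1)$ minors of $P_v^E$ cut out $\sigma_k(X)$.

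For the symmetric (resp.\ skew-symmetric) case, the plan is to produce the presentation with extra symmetry by hand. If $(E,L)$ is a symmetric pair, the isomorphism $\alpha\colon E\xrightarrow{\sim} E^*\otimes L$ is symmetric; one wants a presentation $p_E\colon L_1\to L_0$ with $L_1\simeq L_0^*\otimes L$ such that, under this identification, $p_E$ on sections is self-dual. The natural candidate: take any presentation $q\colon N_1\to L_0$ with image $E$, set $L_1:=L_0^*\otimes L$, and let $p_E$ be the composite $L_0^*\otimes L \twoheadrightarrow E^*\otimes L \xrightarrow{\alpha^{-1}\otimes 1} E \hookrightarrow L_0$. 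Because $\alpha$ is symmetric, dualizing $p_E$ and twisting by $L$ returns $p_E$ up to the canonical identifications, so $P_v^E$ — being built from $p_E$ — inherits symmetry. The skew case is identical with $\alpha$ antisymmetric. One should also check that the surjectivity hypotheses $H^0(L_1)\to H^0(E)$ and $H^0(L_0^*\otimes L)\to H^0(E^*\otimes L)$ still hold for this choice; since $L_1=L_0^*\otimes L$ and under $\alpha$ the bundle map $L_1\to E$ is the $L$-twisted dual of the inclusion $E\hookrightarrow L_0$, the two surjectivity conditions become equivalent, so it suffices to have picked a presentation $L_0$ of $E$ with $H^0(L_0^*\otimes L)\twoheadrightarrow H^0(E^*\otimes L)$, which exists by taking $L_0$ a sufficiently positive sum of line bundles.

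The main obstacle I expect is verifying the commutativity of the diagram carefully — i.e., unwinding the definitions of $A^E$, $P^E$, and the pairing map \eqref{pairingmap} through the factorization $L_1\twoheadrightarrow E\hookrightarrow L_0$ and checking that the natural map $A\colon H^0(L_0)\otimes H^0(L)^*\to H^0(L_0^*\otimes L)^*$ restricts compatibly to the corresponding map for $E$. This is essentially a diagram chase with the sheaf pairing $E\otimes(E^*\otimes L)\to L$ versus $L_0\otimes(L_0^*\otimes L)\to L$, using that $E\hookrightarrow L_0$ and $E^*\otimes L\hookrightarrow L_0^*\otimes L$ are adjoint inclusions; once set up correctly it is routine, but it is the step where one must be precise about which maps are surjections and which are injections. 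Everything else — the rank-invariance under pre/post-composition and the symmetry bookkeeping — is formal.
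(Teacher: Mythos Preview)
Your proposal is correct and follows exactly the paper's approach: the paper's proof consists of the same commutative square with $i$ surjective and $j^t$ injective, from which rank equality is immediate, and then simply declares the symmetric and skew assertions ``clear.'' You in fact supply more detail than the paper does---both on the commutativity chase and on the explicit construction of the symmetric presentation---so nothing is missing.
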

\begin{proof} 
Consider the 
commutative diagram
$$\begin{array}{ccc}
H^0(L_1)&\rig{P_v}&H^0(L_0^{*}\otimes L)^{*}\\
\dow{i}&&\up{j^{t}}\\
H^0(E)&\rig{A_v}&H^0(E^{*}\otimes L)^{*}\end{array}$$
The assumptions $i$ is surjective and $j^{t}$ is injective imply
  $\trank(A_{v})=\trank(P_v)$. The symmetric and skew assertions are clear.


\end{proof}

With the assumptions of Proposition \ref{eqsigma},
the natural map
$$H^0(L_1)\otimes H^0(L_0^{*})\to H^0(E)\otimes H^0(E^{*}\otimes L)$$ is surjective
and the affine  conormal space at $v$ is  the image of
$\ker P_v\otimes  {\left(\textrm{Im\ }P_v\right)^{\perp}}\to H^0(L)$.

The following variant of Proposition \ref{construct}  is often easy to implement in practice.
 
\begin{theorem}\label{irred}
Notations as above.
Let $v\in\sigma_k(X)$. Assume  
the maps
$H^0(L_1)\rig{}H^0(E)$ and
$H^0(L_0^{*}\otimes L)\rig{}H^0(E^{*}\otimes L)$ are surjective.
\medskip

(i) If the rank of $\ker P_{v}\otimes  {\left(\textrm{Im\ }P_{v}\right)^{\perp}}\rig{g } H^0(L)$
at $v$ equals  the codimension of $\sigma_k(X)$, then
$\sigma_k(X)$ is an irreducible component of 
 $Rank_k(E)$  passing through $v$.
\medskip

(ii) If $(E,L)$ is a symmetric (resp. skew-symmetric) pair, 
  assume that the rank of $S^2\left(\ker P_{v}\right)\to H^0(L)$
(resp. of $\wedge^2\left(\ker P_{v}\right)\to H^0(L)$)
at $v$ coincides with the codimension of $\sigma_k(X)$.

Then  
$\sigma_k(X)$ is an irreducible component of 
 $Rank_k(E)$  passing through $v$.
\end{theorem}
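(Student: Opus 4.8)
The plan is to reduce Theorem \ref{irred} directly to Theorem \ref{irredcrit} (or its proof) via Proposition \ref{eqsigma}. First I would record the standing setup: since $v\in\sigma_k(X)$, write $v=\sum_{i=1}^k x_i$ with $[x_i]\in X$ and set $Z=\{[x_1],\ldots,[x_k]\}$, taking the $x_i$ general so that $v$ is a smooth point of $\hat\sigma_k(X)$ and Terracini's Lemma applies, giving $\hat N^*_{[v]}\sigma_k(X)=H^0(I_{Z^2}\otimes L)$. The codimension of $\sigma_k(X)$ is thus $\dim H^0(I_{Z^2}\otimes L)$. The inclusion $\sigma_k(X)\subseteq Rank_k(E)$ is Proposition \ref{construct} (whose hypotheses are available since, by Proposition \ref{eqsigma}, $\trank(P_v)=\trank(A_v^E)$, so $Rank_k(E)$ is also cut out by the size $(ke+1)$ minors of $P_v$). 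To conclude that $\sigma_k(X)$ is an irreducible component through $v$, it suffices to show the affine conormal space of $Rank_k(E)$ at $v$ is contained in (hence equals) $\hat N^*_{[v]}\sigma_k(X)$, equivalently that $\dim \hat N^*_{[v]}Rank_k(E)\le \operatorname{codim}\sigma_k(X)$, since the reverse containment of conormal spaces always holds by $\sigma_k(X)\subseteq Rank_k(E)$.

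The next step is to identify the conormal space of $Rank_k(E)$ at $v$ in terms of $P_v$. By the discussion in \S\ref{conormal} restated for $A_v^E$, and the paragraph following Proposition \ref{eqsigma}, the affine conormal space of $Rank_k(E)$ at a smooth point $v$ is the image of the map $g\colon \ker P_v\otimes(\operatorname{Im} P_v)^\perp\to H^0(L)$; in the symmetric (resp. skew-symmetric) case, using the identification $\ker P_v\simeq(\operatorname{Im} P_v)^\perp$ furnished by the symmetric (resp. skew-symmetric) presentation of Proposition \ref{eqsigma}, it is the image of $S^2(\ker P_v)\to H^0(L)$ (resp. $\wedge^2(\ker P_v)\to H^0(L)$). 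Hence, for part (i), the hypothesis that the rank of $g$ at $v$ equals $\operatorname{codim}\sigma_k(X)$ says exactly that $\dim\hat N^*_{[v]}Rank_k(E)=\operatorname{codim}\sigma_k(X)$; combined with $\hat N^*_{[v]}Rank_k(E)\supseteq\hat N^*_{[v]}\sigma_k(X)$ and $\dim\hat N^*_{[v]}\sigma_k(X)=\operatorname{codim}\sigma_k(X)$, the two conormal spaces coincide. Since $Rank_k(E)$ therefore has a component through $v$ whose tangent space at $v$ equals $\hat T_{[v]}\sigma_k(X)$, and $\sigma_k(X)$ is irreducible of that dimension, $\sigma_k(X)$ is an irreducible component of $Rank_k(E)$. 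Part (ii) is identical, replacing $g$ by the symmetrized (resp. alternating) map and invoking the symmetric/skew-symmetric description of the conormal space.

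The one genuine subtlety — and the step I expect to need the most care — is the smoothness of $v$ as a point of $Rank_k(E)$, which is needed for the conormal-space formula to apply at $v$: a priori $v$ might be a singular point of $Rank_k(E)$. The clean way around this is to observe that we do not actually need $v$ smooth on $Rank_k(E)$; we need only the containment $\hat N^*_{[v]}Rank_k(E)\subseteq H^0(I_{Z^2}\otimes L)$ for a suitable (any) local branch, or better, to argue dimension-theoretically: the rank of $g$ at $v$ is a lower bound for the codimension of $Rank_k(E)$ at $v$ (this is the standard bound on the conormal space of a determinantal variety, valid at any point, cf. the computation of $\ker A_v\otimes\operatorname{Im}A_v^\perp\to H^0(L)$), so $\operatorname{codim}_v Rank_k(E)\ge\operatorname{rank}(g)_v=\operatorname{codim}\sigma_k(X)$; combined with $\sigma_k(X)\subseteq Rank_k(E)$ and the irreducibility of $\sigma_k(X)$, this forces $\sigma_k(X)$ to be a component of $Rank_k(E)$ through $v$. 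I would phrase the final write-up this way to avoid any unjustified smoothness claim, and note that this is precisely the argument already used in the proof of Theorem \ref{irredcrit}, so the cleanest exposition is simply: \emph{apply Theorem \ref{irredcrit} with $A_v^E$ replaced by $P_v^E$, which is legitimate by Proposition \ref{eqsigma} together with the identification of conormal spaces above.}
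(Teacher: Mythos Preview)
Your proposal is correct and follows essentially the same route the paper takes: the paper does not give a separate formal proof of Theorem~\ref{irred}, but treats it as an immediate consequence of the paragraph following Proposition~\ref{eqsigma} (identifying the conormal space of $Rank_k(E)$ at $v$ with the image of $g$) together with the containment $\sigma_k(X)\subseteq Rank_k(E)$, exactly as you outline. Your care about the smoothness of $v$ on $Rank_k(E)$ and the reformulation via a codimension bound is a welcome clarification that the paper glosses over (see the Remark after the theorem, which asserts the conormal-space identification without further justification).
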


Note that in the skew-symmetric case, $Rank_k(E)$ is defined by sub-Pfaffians.

\begin{remark} In Theorem \ref{irred},  the rank of the maps is always bounded above by
  the codimension of $\sigma_k(X)$, as the rank of the maps
is equal to the dimension of the conormal space of $Rank_k(E)$ at $v$.
\end{remark}

The previous theorem can be implemented in concrete examples.
Indeed the pairing $g$, appearing in the theorem, can be described as follows: given $f\in \ker P_{v}\subset H^0(L_1)$,
 $h\in  {\left(\textrm{Im\ }P_{v}\right)^{\perp}}\subset H^0(L_0^*\ot L) $, and
$\phi\in H^0(L)^*$, one has
  $g(f,h)(\phi)=h[P_{\phi}(f)]$.

\medskip

\begin{example} \label{cubic3fold}
{\bf [Cubic 3-folds revisited]}
Let $\tdim V=5$, let  $E=\Omega^2(4)=\wedge^2T^*\BP V(4)=E_{\o_2}$ on $X=\P 4$, 
the pair $(E,\O(3))$ is a symmetric pair presented by
$$L_1=\O(1)\otimes \wedge^3 V\rig{p}\O(2)\otimes \wedge^2V=L_0.
$$
Choose a basis $x_0\hd x_4$ of $V$, so 
  $p$ is represented by the $10\times 10$ symmetric matrix
called  $K_4$ in the introduction. 
Let $L=\O(3)$,  
for any $\phi\in S^{3}V^{*}$ the map
$A_\phi^E$   is the skew-symmetric morphism from
$H^0(\Omega^2(4))$ to its dual $H^0(\Omega^3(5))^{*}$
where both have  dimension $45$
and $P_\phi$ is represented by the $50\times 50$ block matrix
where the entries $\pm x_i$ in $K_4$ are replaced with $\pm (\frac{\partial\phi}{\partial x_j})_{1,1}$, which are
$5\times 5$ symmetric catalecticant matrices of the quadric $\frac{\partial\phi}{\partial x_j}$.
In \cite{ottwaring} it is shown that a matrix representing $A_\phi^E$
is obtained from  the matrix representing  $P_\phi$ by deleting five suitably chosen rows and columns.
Here $\det (A_{\phi})$ is the cube of the degree $15$ equation of $\sigma_7(v_3(\P 4))$.

Note that when $\phi=x_0^{3}$ is a cube,  then 
the rank of $A_\phi^E$ and the rank of $P_\phi$ are both equal to $6$, which is the rank of $\Omega^2(3)$.
\end{example}

\section{Decomposition of polynomials into a sum of powers}\label{decompsect}

Having a presentation for $E$ enables one to reduce the problem of decomposing a polynomial into
a sum of powers to a problem of solving a system of polynomial equations (sometimes linear) as we explain in this section.
We begin with a classical example:

\subsection{Catalecticant}
Let  $\phi\in S^dW$ be a general element of $\hat\sigma_k(v_d(\pp n))$
so it  can be written as $\phi=\sum_{i=1}^kl_i^d$. Let  $Z=\{l_1,\ldots ,l_k\}$, 
  let $\delta=\lfloor\frac{d}{2}\rfloor$, and let $k\le{{\delta+n-1}\choose n}$. Take
$E=\cO(\d)$ so 
$A_{\phi}^E\colon H^0(\O(\delta))\to H^0(\O(d-\delta))^*$. Then $\ker A_{\phi}=H^0(I_Z(\delta))$  and 
$\tker A_{\phi}\cap v_{\d}(\BP W)=\{ l_1^\d\hd l_k^\d\}$. Thus  if one can compute
the intersection, one can recover
 $Z$ and the decomposition. For a further discussion  of these ideas see \cite{OO}.

\subsection{$YF_{d,n}$}
For $\phi\in S^dW$, 
let $\delta=\lfloor\frac{d-1}{2}\rfloor$, $a=\lfloor\frac{n}{2}\rfloor$,
$k\le{{\delta+n}\choose n}$
and take $E=\wedge^aQ(\d)$, so $A_{\phi}\colon H^0(\wedge^aQ(\delta))\to H^0(\wedge^{n-a}Q(d-\delta))$,
then  $\ker A_{\phi}=H^0(I_Z\otimes \wedge^aQ(\delta))$
so that
$Z$ can be recovered as the base locus of $\ker A_{\phi}$. 

\subsection{General quintics in $S^5\BC^3$}\label{decompquintic}
A general element     $\phi\in S^5\BC^3$ is a unique sum of seven fifth powers (realized by
Hilbert, Richmond and Palatini \cite{hilblett,richm,palat}), 
write $\phi=x_1^5+\cdots +x_7^5$. Here is  an algorithm to compute
the seven summands: 

Let $E= {Q(2)}$ and consider the skew-symmetric morphism
$A_{\phi}^E\colon H^0(Q(2))\to H^0(Q(2))^{*}$.
(Note that $H^0(Q(2))=S_{3,2}V$ as an $SL_3$-module, and that   has dimension    {$15$}.)
 For a general $\phi$ the kernel of $A_{\phi}$ has dimension one and  {we let $s\in H^0(Q(2))$ denote a section spanning it}.
 {\it By   Proposition \ref{kerdec}, the seven points where $s$ vanishes correspond to the seven summands.} 
Explicitly, $s$ corresponds to  a morphism $f\colon S^2W\to W$ such that
the set $\{v\in W| f(v^2)\in <v>\}$ consists of the seven points $P_1\hd P_7$.

One way to describe the seven points is   to consider the
map 
\begin{align*}
F_{\phi}: \BP W& \ra \BP (\BC^2\ot W)
\\
 [v]&\mapsto [v\ot e_1+ f(v^2)\ot e_2] 
 \end{align*}
where $\BC^2$ has basis $e_1,e_2$. 
  Then, the seven points are the intersection
$F_{\phi}(\BP W)\cap Seg(\pp 1\times \BP W)$, which  is the set of two by two minors
of a matrix with linear and quadratic entries. Giving $W^*$ the
basis $x_0,x_1,x_2$, the matrix is of the form
$$\left[\begin{array}{ccc}
x_0&x_1&x_2\\
q_0&q_1&q_2
\end{array}\right]$$
where the $q_j$ are of degree two.
In practice this system is easily solved with, e.g. Maple.


\section{Grassmannians}\label{grassexsect}

\subsection{Skew-flattenings}
Some
 {natural}  equations for $\s_r(G(k,W))\subset \BP \wedge^k W$ are
obtained from the inclusions
$\wedge^k W\subset \wedge^a W\otimes \La{k-a}W$ which we will call
a {\it skew-flattening}. 
Let $W=\BC^{n+1}$,  let $\o_i$, $1\leq i\leq n$  denote the
fundamental weights of $\fsl(W)$, let $E_{\l}$ denote the
irreducible vector bundle induced from the $P$-module of highest weight $\l$.
The  skew-flattenings correspond to the bundle $E_{\o_a}\ra G(k,W)$.
Note that $\trank(E_{\o_a})=\binom ka$,   as $E_{\o_a}$ is isomorphic
to the $a$-th exterior power of the dual of the universal sub-bundle of rank $k$.
The map $v_{a,k-a}=A_v^E: \wedge^a W^*\ra \La{k-a}W$ thus has
rank $\binom k a$ (this is also easy to see directly).
Since $\tdim \wedge^a W=\binom {n+1}k$, one obtains 
equations possibly up to the
$\binom {n+1}k/\binom ka$-th secant variety.

The case $k=2$ is well known, $I(\s_r(G(2,W)))$ is generated in degree $r+1$
by sub-Pfaffians of size $2r+2$.

Let $E\in G(k,W)=\BG(\pp{k-1},\BP W)$. We slightly abuse notation and 
also write $E\subset W$ for the corresponding
linear subspace.
Then

\be\label{nhatgkw}
 {\hat N^*_EG(k,W)= \wedge^2 (E\upperp )\ww (\wedge^{k-2}W^*)\subset \wedge^kW^*.}
\ene
Now let $[E_1+E_2]\in \s_2(G(k,W))$.
By Terracini's lemma $\hat N^*_{[E_1+E_2]}G(k,W)= \hat N^*_{E_1}G(k,W)\cap \hat N^*_{E_2}G(k,W)$.
Let $U_{12}=E_1\upperp\cap E_2\upperp$, and $U_j= E_j\upperp$. We have

\be\label{nhats2gkw} {
\hat N^*_{[E_1+E_2]}G(k,W)=(\wedge^2 U_{12})\ww(\wedge^{k-2}W^*) 
+ U_{12}\ww U_1\ww U_2 \ww(\wedge^{k-3}W^*)
+ (\wedge^2U_1)\ww(\wedge^2 U_2)\ww (\La{k-4}W^*).}
\ene

To see this, note that each term must contain a  {$\wedge^2 (E_1\upperp)$} and a 
 {$\wedge^2 (E_2\upperp)$}. The notation
is designed so that  {$\La 2 (E_j\upperp)$} will appear if the $j$ index occurs at least twice
in the $U$'s. Similarly below, one takes all combinations of $U$'s such that each of $1,2,3$ appears
twice in the expression.

With similar notation,
 {\begin{align}\label{nhats3gkw}
N^*_{[E_1+E_2+E_3]}G(k,W)=& (\wedge^2 U_{123})\ww(\La{k-2}W^*) \\
&\nonumber
+ [U_{123}\ww U_{12}\ww U_3+  U_{123}\ww U_{13}\ww U_2+U_{123}\ww U_{23}\ww U_1]\ww
(\La{k-3}W^*)\\
\nonumber &
+U_{12}\ww U_{13}\ww U_{23}\ww (\La{k-3}W^*)
+U_{123}\ww U_{1 }\ww U_2\ww  U_3\ww(\La{k-4}W^*)
\\
\nonumber &
+ [(\wedge^2U_{12})\ww(\wedge^2 U_3)+ (\wedge^2U_{13})\ww
(\wedge^2 U_2)+(\wedge^2U_{23})\ww(\wedge^2 U_1)]  \ww 
(\La{k-4}W^*)\\
\nonumber &
+ (\wedge^2U_1)\ww(\wedge^2 U_2)\ww(\wedge^2 U_3)\ww 
(\La{k-6}W^*).
\end{align}}

 {We emphasize that with four or more subspaces analogous formulas are much more difficult,
because four subspaces have moduli, see \cite{MR0357428}.}

We now determine  to what extent the zero sets of the skew-flattenings provide
local equations for secant varieties of Grassmannians.

The first skew-flattening $\wedge^k W\subset W\ot \wedge^{k-1}W$ gives rise to the {\it subspace varieties}:
let
$$
Sub_{p}(\wedge^kW)=\{[z]\in \BP (\wedge^kW)\mid \exists W'\subset W, \ \tdim W'=p,\ z\in \wedge^k W'\}
$$
it admits a description via a Kempf-Weyman  \lq\lq collapsing\rq\rq\  of the  vector bundle
$\wedge^k\cS_p\ra G(p,  W)$, 
where $\cS_p$ is the tautological rank $p$ subspace bundle, i.e., the variety is the projectivization of the image of the total space of $\wedge^k \cS_p$ in $  \wedge^k W$. From this description one sees that
$Sub_{p}(\wedge^kW)$ is of dimension $p(n+1-p)+\binom pk$ and its affine conormal space
at a smooth point $z\in \wedge^kW'\subset \wedge^k W$ is
$$
\hat N^*_zSub_{p}(\wedge^kW)=  {\wedge^2(W'\upperp)} \ot \La{k-2}W^*
$$
As a set,  $Sub_{p}(\wedge^kW)$ is the zero locus of 
$\La{p+1}W^*\ot \La{p+1}(\La{k-1}W^*)$, the size $(p+1)$-minors of the skew-flattening.
The ideal of $Sub_{p}(\wedge^k W)$ is simply the set
of all modules $S_{\pi}W^*\subset Sym(W^*)$ such
that $\ell(\pi)>p$. Its generators are known only in very few
cases, see \cite[\S 7.3]{weyman}.

\subsection*{Question} In   what cases do the minors of the skew-flattenings
generate the ideal  of $Sub_{p}(\wedge^kW)$?

\medskip

The most relevant cases for the study of secant varieties of Grassmannians
are when $p=rk$, as
$\s_r(G(k,W))\subset Sub_{rk}(\wedge^k W)$. Note that in general $\s_r(G(k,W))$ is a much smaller subvariety
than $Sub_{rk}(\La k W)$.

\medskip

Now we compute $\tker\ot \tim\upperp$:

For $E\in G(k,W)$, write $E_{p,k-p}: \wedge^pW^*\ra \La{k-p}W$ for the
flattening.
Then
\begin{align}
\tker(E_{2,k-2})&=  {\wedge^2 (E\upperp)}\\
\tim(E_{2,k-2})\upperp &=  {(\La{k-2}E)\upperp}= E\upperp \ww  {(\La{k-3}W^*)}
\end{align}
and similarly
\begin{align}
\tker([E_1+E_2]_{2,k-2})&= U_{12}\ww W^* + U_1\ww U_2\\
\tim([E_1+E_2]_{2,k-2})\upperp &=  U_{12}\ww  {(\La{k-3}W^*)}+ U_1\ww U_2\ww  {(\La{k-4}W^*)}
\end{align}

And thus
\begin{align*}
&\tker([E_1+E_2]_{2,k-2})\ww \tim([E_1+E_2]_{2,k-2})\upperp
\\
&
=
\wedge^2U_{12}\ww  {(\La{k-2}W^*)}+ U_{12}\ww  U_1\ww U_2\ww  {(\La{k-3}W^*)}+ \wedge^2U_1\ww \wedge^2 U_2\ww  {(\La{k-4}W^*)}
\end{align*}
which agrees with \eqref{nhats2gkw}. (Note that if we had tried the $(1,k-1)$-flattening, we would
have missed the third term in \eqref{nhats2gkw}.)

Similarly
\begin{align}
\tker([E_1+E_2+E_3]_{2,k-2})&= U_{123}\ww W^* + U_{12}\ww U_3+U_{13}\ww U_2+U_{23}\ww U_1 \\
\tim([E_1+E_2+E_3]_{2,k-2})\upperp &=  U_{123}\ww  {(\La{k-3}W^*)}+ [U_{12}\ww U_3+U_{13}\ww U_2+U_{23}\ww U_1]\ww  {(\La{k-4}W^*)}\\
&\nonumber
+U_1\ww U_2\ww U_3\ww  {(\La{k-5}W^*)}
\end{align}
and $\tker\ot \tim\upperp$ does not contain   last term in \eqref{nhats3gkw}
if it is nonzero. However when  we consider
the $(3,k-3)$ flattening we will recover the full conormal space.
In general, we obtain:

\begin{theorem}\label{gkwthm}
The variety $\s_3(G(k,W))\subset \BP \wedge^kW$ is an irreducible component of the zero set
of the size $3k+1$ minors of the skew-flattening  $\wedge^sW\ot \La{k-s}W$, $s\leq k-s$, $k\leq \tdim W-k$, as long
as
$s\geq 3$.
\end{theorem}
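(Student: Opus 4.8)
The plan is to put the skew-flattening $\wedge^sW^{*}\to\wedge^{k-s}W$ into the vector-bundle setup of \S\ref{constsect} and then to verify the conormal criterion of Theorem \ref{irredcrit} at one well-chosen point. I would take $X=G(k,W)\subset\BP\wedge^kW=\BP V$, $L=\O(1)$ and $E=E_{\omega_s}$, so that $\trank(E)=\binom ks$ and $A^E_v$ is exactly the $(s,k-s)$ skew-flattening; then $Rank_3(E)$ is the zero locus of its size $3\binom ks+1$ minors and $\s_3(G(k,W))\subseteq Rank_3(E)$ by Proposition \ref{construct}. Since $E_{\omega_s}$ and $E_{\omega_s}^{*}\otimes L$ are globally generated on $G(k,W)$, Proposition \ref{kerdec} gives $H^0(I_Z\otimes E)=\tker A_v$ and $H^0(I_Z\otimes E^{*}\otimes L)=\tim(A_v)^{\perp}$ for $v=x_1+x_2+x_3$, $Z=\{[x_1],[x_2],[x_3]\}$, so by Theorem \ref{irredcrit} it is enough to exhibit three $k$-planes $E_1,E_2,E_3$ in general position for which the image of
\[
\tker\bigl([E_1+E_2+E_3]_{s,k-s}\bigr)\wedge\tim\bigl([E_1+E_2+E_3]_{s,k-s}\bigr)^{\perp}\longrightarrow\wedge^kW^{*}
\]
coincides with $\hat N^{*}_{[E_1+E_2+E_3]}G(k,W)$, which by Terracini's lemma is the right-hand side of \eqref{nhats3gkw}.

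First I would compute $\tker$ and $\tim^{\perp}$ of this rank-$3$ skew-flattening for generic $E_1,E_2,E_3$, in the spirit of the $s=2,3$ computations carried out just before the theorem. Writing $U_j=E_j^{\perp}$, $U_{ij}=U_i\cap U_j$, $U_{123}=U_1\cap U_2\cap U_3$, and using that for generic planes the three subspaces $\wedge^{k-s}E_j\subset\wedge^{k-s}W$ are in general (generically direct-sum) position, one gets $\tker(v_{s,k-s})=\bigcap_j(\wedge^sE_j)^{\perp}$ spanned by the decomposable $s$-forms whose ``$U$-legs'' cover each of the three indices at least once --- e.g. $U_{123}\wedge\wedge^{s-1}W^{*}$, $U_{12}\wedge U_3\wedge\wedge^{s-2}W^{*}$, $U_1\wedge U_2\wedge U_3\wedge\wedge^{s-3}W^{*}$, etc. --- and likewise $\tim(v_{s,k-s})^{\perp}=\bigcap_j(\wedge^{k-s}E_j)^{\perp}$ spanned by decomposable $(k-s)$-forms covering each index at least once. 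Wedging a generator of the first against a generator of the second yields a $k$-form covering each index at least twice, and a term-by-term inspection shows that for $s\ge3$ every one of the six summands of \eqref{nhats3gkw} is produced. The only summand not already reached by the $s=2$ flattening is $(\wedge^2U_1)\wedge(\wedge^2U_2)\wedge(\wedge^2U_3)\wedge\wedge^{k-6}W^{*}$, and it appears as $\bigl(U_1\wedge U_2\wedge U_3\wedge\wedge^{s-3}W^{*}\bigr)\wedge\bigl(U_1\wedge U_2\wedge U_3\wedge\wedge^{k-s-3}W^{*}\bigr)$; the first factor lies in $\tker$ precisely because $s\ge3$ (and the second lies in $\tim^{\perp}$ because $k-s\ge3$, which is forced by $s\le k-s$ whenever this summand is nonzero). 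The reverse containment of the image inside $\hat N^{*}_{[E_1+E_2+E_3]}G(k,W)$ is automatic from $\s_3(G(k,W))\subseteq Rank_3(E)$, so the two conormal spaces agree at $v$ and $\s_3(G(k,W))$ is an irreducible component of $Rank_3(E)$.

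The heart of the argument --- and the only genuinely laborious step --- is this bookkeeping: identifying $\tker$ and $\tim^{\perp}$ for the rank-$3$ skew-flattening in terms of $U_j,U_{ij},U_{123}$ and matching their wedge against the six explicit terms of \eqref{nhats3gkw}, with the inequality $s\ge3$ being exactly what is needed for the last term. Secondary points I would also check: the small-dimensional cases, where some of $U_{ij},U_{123}$ (or the subdiagrams $\wedge^{k-s}E_j$) drop below their naive expected dimension --- in those cases the corresponding terms of \eqref{nhats3gkw} degenerate or vanish as well, so the conormal comparison still goes through; that $v=E_1+E_2+E_3$ is a smooth point of both $\s_3(G(k,W))$ and of $Rank_3(E)$ (the latter since $3\binom ks$ is the generic rank of $v_{s,k-s}$ along $\hat\s_3(G(k,W))$ when $\tdim W\ge2k$); and that, in contrast with the Veronese situation, no downward-induction or weak-defectivity input (Lemma \ref{zeta}, Theorem \ref{weakclas}) is needed here because only $r=3$ is at stake and a single good point suffices.
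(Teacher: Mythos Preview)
Your proposal is correct and follows exactly the paper's approach. The paper's ``proof'' is the sequence of displayed computations in the paragraphs immediately preceding the theorem: \eqref{nhats3gkw} gives $\hat N^*_{[E_1+E_2+E_3]}\s_3(G(k,W))$ via Terracini, the explicit formulas for $\tker([E_1+E_2+E_3]_{2,k-2})$ and $\tim([E_1+E_2+E_3]_{2,k-2})^{\perp}$ show that for $s=2$ the product $\tker\wedge\tim^{\perp}$ misses the last summand $(\wedge^2U_1)\wedge(\wedge^2U_2)\wedge(\wedge^2U_3)\wedge\La{k-6}W^*$, and the single sentence ``However when we consider the $(3,k-3)$ flattening we will recover the full conormal space.\ In general, we obtain:'' is all the paper offers for $s\ge3$ --- the term-by-term matching you describe is exactly the omitted verification.
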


This theorem does not extend to $\s_4(G(k,W))$ because there it is possible to have e.g.,  sums of vectors
in pairwise intersections of spaces that add to a vector in a triple intersection, which does
not occur for the third secant variety.

\begin{remark} When $\tdim W$ is small, the range of Theorem \ref{gkwthm}
can be extended. For example, when $\tdim W=8$, $\s_4(G(4,8))$ is
an irreducible component of the size $9$ minors of
$\wedge^2\BC^8\ot \wedge^6\BC^8$.
\end{remark}



We expect the situation to be similar to that of Veronese varieties, where
for small secant varieties skew flattenings provide enough equations
to cut out the variety, then for a larger range of $r$ the 
secant variety is an irreducible component of the variety
of skew flattenings, and then for larger $r$ more equations will be needed.

\subsection{Skew-inheritance} 
If $ \tdim W>m$,  and $\pi$ is a partition with
at most $m$ parts, we say the module
$S_{\pi}W$ is {\it inherited}  from the module $S_{\pi}\BC^m$.
The following is a straight-forward variant of \cite[Prop. 4.4]{LM0}:
\begin{proposition} Equations (set-theoretic, scheme-theoretic  or ideal theoretic) for
$\s_r(G(k,W))$ for $\tdim W> {kr}$ are given by
\begin{itemize}
\item the modules inherited from the ideal of $\s_r(G(k, {kr}))$, 
\item and the modules generating the ideal of
 $Sub_{kr}(\wedge^k W)$.
\end{itemize}
\end{proposition}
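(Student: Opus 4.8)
The plan is to run the standard inheritance argument, following the Veronese analogue (Proposition \ref{syminherit}) and \cite[Prop.\ 4.4]{LM0} almost verbatim, the only Grassmannian-specific input being the description, recalled just above, of the ideal of $Sub_{kr}(\La k W)$ as the span of all modules $S_\pi W^*$ with $\ell(\pi)>kr$. First I would record the containment $\s_r(G(k,W))\subseteq Sub_{kr}(\La k W)$: a general point of $\s_r(G(k,W))$ is $[z_1+\dots+z_r]$ with each $z_i$ decomposable, spanning a $k$-plane $E_i\subseteq W$, so $z_1+\dots+z_r\in\La k(E_1+\dots+E_r)$ with $\tdim(E_1+\dots+E_r)\le kr$. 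Hence every module $S_\pi W^*$ with $\ell(\pi)>kr$ vanishes on $\s_r(G(k,W))$; this accounts for the second family in the statement and confines the common zero locus of those equations to the cone over $Sub_{kr}(\La k W)$.

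Next I would reduce to the model $W'\cong\BC^{kr}$. If $[z]$ lies in the common zero locus of the $Sub_{kr}(\La k W)$ modules, choose a $kr$-dimensional $W'\subseteq W$ with $z\in\La k W'$; by $GL(W)$-homogeneity all such choices are equivalent. Restriction of polynomial functions along the linear inclusion $\La k W'\hookrightarrow\La k W$ gives a $GL(W')$-equivariant surjection of graded rings $\rho\colon Sym((\La k W)^*)\to Sym((\La k W')^*)$; the representation-theoretic point, the same one underlying \cite[Prop.\ 4.4]{LM0} (see also \cite[Ch.\ 8]{Ltensorbook}), is that $\rho$ carries $S_\pi W^*$ onto $S_\pi (W')^*$ when $\ell(\pi)\le kr$ and kills it when $\ell(\pi)>kr$. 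Consequently the modules inherited from $I(\s_r(G(k,kr)))$ (those $S_\pi W^*$ with $S_\pi (W')^*\subseteq I(\s_r(G(k,W')))$) restrict under $\rho$ precisely onto $I(\s_r(G(k,W')))$, and conversely every isotypic component of $I(\s_r(G(k,W')))$ is obtained this way.

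With these two ingredients I would assemble the conclusion. Set-theoretically: if $[z]$ kills the $Sub_{kr}$ modules and the inherited modules, then $z\in\La k W'$ by the first, and $f(z)=\rho(f)(z)=0$ for every inherited $f$, with $\rho(f)$ ranging over a generating set of $I(\s_r(G(k,W')))$; hence $[z]\in\s_r(G(k,W'))\subseteq\s_r(G(k,W))$, and the reverse inclusion was already noted. For the scheme- and ideal-theoretic versions I would argue one graded piece at a time: decompose $I(\s_r(G(k,W)))_d$ into $GL(W)$-irreducibles, place the summands with $\ell(\pi)>kr$ into $I(Sub_{kr}(\La k W))_d$, and recognize the summands with $\ell(\pi)\le kr$ as inherited via $\rho$ (using the previous paragraph in degree $d$), concluding that $I(Sub_{kr}(\La k W))$ together with the inherited modules generate $I(\s_r(G(k,W)))$, respectively agree with it in all sufficiently large degrees.

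The step I expect to be the real obstacle — though it is known, being the content of the cited variant of \cite[Prop.\ 4.4]{LM0}, which is why the statement is ``straight-forward'' rather than immediate — is the claim that for $\ell(\pi)\le kr$ the module $S_\pi W^*$ lies in the ideal of $\s_r(G(k,W))$ \emph{if and only if} $S_\pi (W')^*$ lies in the ideal of $\s_r(G(k,W'))$, with no change of multiplicity in the passage. Proving this cleanly wants either the Kempf--Weyman collapsing description of $Sub_{kr}(\La k W)$, to control things transverse to the small stratum, or a direct prolongation computation; in the write-up I would simply invoke the Grassmannian analogue of \cite[Prop.\ 4.4]{LM0}, since the argument there transfers unchanged once $\s_r(G(k,W))\subseteq Sub_{kr}(\La k W)$ and the ideal of $Sub_{kr}(\La k W)$ are in hand, both of which are recalled in the excerpt.
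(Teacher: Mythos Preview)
The paper does not supply a proof of this proposition; it simply records it as a ``straight-forward variant of \cite[Prop.\ 4.4]{LM0}'' and moves on. Your proposal is a correct and faithful unpacking of exactly that variant---the containment $\s_r(G(k,W))\subseteq Sub_{kr}(\La k W)$, the reduction to a fixed $kr$-dimensional $W'$, and the module-by-module comparison via the restriction surjection $\rho$---so you are doing precisely what the paper intends the reader to do, only in more detail than the authors themselves wrote down.
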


 \section{Homogeneous varieties $G/P$}\label{gpexamsect}

Let $X=G/P\subset \BP V_{\l}$ be a homogeneously embedded homogeneous variety, where $V_{\l}$
denotes the irreducible $G$-module of highest weight 
$\l$. In particular, when $\l=\o_i$ is fundamental, 
 $P$ is the parabolic obtained by deleting the root spaces
corresponding to the negative   roots $\a=\sum m^j\a_j $ such that $m^i\neq 0$. Let $G_0$ be
the Levi-factor of $P$, if $W_{\mu}$ is an irreducible $\fg_0$-module given
by its highest weight $\mu$, we consider it as a $\fp$-module by letting
the nilpotent part of $\fp$ act trivially. We let $E_{\mu}\ra G/P$ denote
the corresponding irreducible homogeneous vector bundle. If $\mu$ is also
$\fg$-dominant, then   $H^0(E_{\mu})^*=V_{\mu}$.

\subsection{Example: The usual flattenings for triple Segre products}
Let $A,B,C$ be vector spaces, and use $\o,\eta,\psi$ respectively for 
the fundamental weights of the corresponding $SL$-modules.
Then take $E$ with highest weight $(\o_1,\eta_1,0)$, so
$E^*\ot L$ has highest weight $(0,0,\psi_1)$.
Note each of these bundles has rank  one, as is the rank of $A_v$
when $v\in \hat  Seg(\BP A\times \BP B\times \BP C)$ as the theory predicts.
We get the usual flattenings as in \cite{MR2097214} and \cite{MR2168286}.
The equations found for the unbalanced cases in \cite[Thm. 2.4]{MR2392585}
are of this form.

\subsection{Example: Strassen's equations} Let $\tdim A=3$.
Taking 
$E$ with  highest weight $(\o_2,\eta_1,0)$ so
$E^*\ot L$ has highest weight $(\o_2,0,\psi_1)$, yields
Strassen's equations (see \cite{ottrento} and \cite{MR2387532}).
Each  of these bundles has rank two, as is the rank of $A_v$
when $v\in \hat Seg(\BP A\times \BP B\times \BP C)$.

\subsection{Inheritance from usual Grassmannians}
Let $\o$ be a fundamental weight for $\fg$, and consider $G/P \subset \BP V_{\o }$. Say the Dynkin
diagram of $\fg$ is such that $\o$ is $s$-nodes from an end node with
associated  fundamental representation $V_{\eta}$.
Then $V_{\o}\subseteq \wedge^s V_{\eta}$ and $G/P \subset G(s, V_{\eta})$,
see, e.g., \cite{LM0}. Thus 
$\s_r(G/P )\subset \s_r(G(s, V_{\eta}))$, so any equations for the latter give equations
for $\s_r(G/P)$. There may be several such ways to consider $\o$, each  will give
(usually different) modules of equations.

For example, in the case $D_{n}/P_{n-2}$,  $V_{\o_{n-2}}$ is a submodule of $ \wedge^2 V_{\o_n},  \wedge^2 V_{\o_{n-1}}$
and $\La{n-2}V_{\o_1}$. 

In the case of the $E$ series 
we have inclusions
\begin{align*}
V_{\o_3}&\subset \wedge^2 V_{\o_1} \\
V_{\o_4}&\subset \wedge^2 V_{\o_2}\\
V_{\o_{n-1}}&\subset \wedge^2 V_{\o_n}
\end{align*}
Here, in each case the vector bundle $E$ has rank two, so we obtain minimal degree
equations for the secant varieties of  $E_n/P_3,E_n/P_4,E_n/P_{n-1}$.  The
dimension of $H^0(E)$ for  $(\fe_6,\fe_7,\fe_8)$ is respectively:
($27$, $133$, $3875$) for $\o_3$, 
($78$, $912$, $147250$) for $\o_4$, and 
($27$, $56$, $248$) for $\o_{n-1}$.

\subsection{Other cases}
From the paragraph above, it is clear the essential cases are the fundamental
$G/P\subset \BP V_{\o}$ where $\o$ appears at the end of a Dynkin diagram.
The primary difficulty in finding equations is locating
vector bundles $E$ such that $E$ and $E^*\ot L$ both have sections.
If $E$ corresponds to a node on the interior of a Dynkin diagram,
then $E^*$ will have a $-2$ over the   node of $\o$ and $E^*\ot L$ a
minus one on the   node, and thus no sections.
However if $E$ corresponds to a different end of the diagram, one obtains
nontrivial equations.

\begin{example}$E_n/P_{\a_1}\subset \BP V_{\o_1}$ for $n=6,7,8$.
Taking $E=E_{\o_n}$, then $E^*\ot L=E$ and 
$H^0(E_{\o_n})=V_{\o_n}$.
The rank of $E$ is   $2(n-1)$ and the fiber is the
standard representation of $SO(2n-2)$.
$\tdim V_{\o_n}$ is respectively $27,56,248$, so one gets equations
for $\s_r(E_n/P_{\a_1})$ for $r$ respectively up to $2,4,17$.
Since the matrix is square, it is possible these equations have
lower degrees than one naively expects. Especially since in the case
of $E_6$, the secant variety is known to be a cubic hypersurface.
\end{example}

\begin{example} $D_n/P_n\subset \BP V_{\o_n}$. Here we may take
$E=E_{\o_1}$ and then $E^*\ot L=E_{\o_{n-1}}$. The fiber of
$E$ is the standard representation of $A_{n-1}$
in particular it is  of dimension $n$, $H^0(E)=V_{\o_1}$
which is of dimension $2n$ and $H^0(E^*\ot L)=V_{\o_{n-1}}$.
Thus these give (high degree) equations for the spinor varieties
$D_n/P_n$ but no equations for their secant varieties.
Some equations for $\s_2(D_n/P_n)$ are known, see
\cite{manispin,LWchss}
\end{example}

\bibliographystyle{amsplain}
\bibliography{giorgio,Lmatrix}
\end{document}